 \documentclass[a4paper,11pt]{article}
 \pagestyle{plain}
 \setlength{\oddsidemargin}{12pt}
 \setlength{\evensidemargin}{12pt}
 \setlength{\topmargin}{0pt}
 \setlength{\textwidth}{15cm}
 \setlength{\textheight}{21.5cm}
 \setlength{\parindent}{0.5cm}
 \setlength{\parskip}{1ex plus 0.2ex minus0.2ex}

\synctex=1
\usepackage{pdfsync}

 \usepackage[plainpages=false]{hyperref}
 \usepackage{amsfonts,amsmath,amssymb,amsthm}
 \usepackage{latexsym,lscape,rawfonts,mathrsfs}
  \usepackage{mathtools}

 \usepackage[dvips]{color}
 \usepackage{multicol}


 \usepackage[all]{xy}
 \usepackage{eufrak}
 \usepackage{makeidx}
 \usepackage{graphicx,psfrag}
 \usepackage{pstool}
 \usepackage{float}

 \usepackage{array,tabularx}

 \usepackage{setspace}

 \usepackage{appendix}
 


\usepackage{txfonts}



 \newcommand{\D}[2]{\ensuremath{ \frac{\partial{#1}}{\partial{#2}}}}

 \newcommand{\R}{\ensuremath{\mathbb{R}}}
 
 \newcommand{\CP}{\ensuremath{\mathbb{CP}}}

 \newcommand{\ba}{\begin{align*}}
 \newcommand{\ea}{\end{align*}}


 \DeclareMathOperator{\Vol}{Vol}
 \DeclareMathOperator{\diam}{diam}


 \newcommand{\norm}[2]{{ \ensuremath{\left\|} #1 \ensuremath{\right\|}}_{#2}}

 \makeatletter
 \def\ExtendSymbol#1#2#3#4#5{\ext@arrow 0099{\arrowfill@#1#2#3}{#4}{#5}}
 
 \makeatother

 \makeatletter
 \def\ExtendSymbol#1#2#3#4#5{\ext@arrow 0099{\arrowfill@#1#2#3}{#4}{#5}}
 \newcommand\longright[2][]{\ExtendSymbol{-}{-}{\rightarrow}{#1}{#2}}
 \makeatother

 \definecolor{orange}{rgb}{1,0.5,0}
 \definecolor{brown}{rgb}{0.48,0.33,0.19}
 \definecolor{magenta}{rgb}{1,0,1}
 \definecolor{miao}{cmyk}{0.5,0,0.2,0.2}
 \definecolor{qiao}{gray}{0.96}


\newtheorem{prop}{Proposition}[section]

\newtheorem{proposition}[prop]{Proposition}

\newtheorem{theorem}[prop]{Theorem}

\newtheorem{lemma}[prop]{Lemma}
\newtheorem{claim}[prop]{Claim}

\newtheorem{corollary}[prop]{Corollary}

\newtheorem{remark}[prop]{Remark}

\newtheorem{definition}[prop]{Definition}

\newtheorem{conjecture}[prop]{Conjecture}

\numberwithin{equation}{section}


\title{The local entropy along Ricci flow\\ \large---Part A: the no-local-collapsing theorems}
  
\author{Bing Wang} 
\date{}

\begin{document}
\maketitle

\begin{abstract}
 We localize the entropy functionals of G. Perelman and generalize his no-local-collapsing theorem and pseudo-locality theorem.
 Our generalization is technically inspired by further development of Li-Yau estimate along the Ricci flow. 
 It can be used to show the Gromov-Hausdorff convergence of  the K\"ahler Ricci flow on each minimal projective manifold of general type. 
\end{abstract}

\tableofcontents

\section{Introduction}

A Ricci flow solution $\{(M^m, g(t)), t \in I \subset \R\}$ is a smooth family of metrics satisfying the evolution equation
\begin{align}
    \frac{\partial}{\partial t} g = -2 Rc,  \label{eqn:ML27_1}
\end{align}
where $M^m$ is a complete manifold of dimension $m$.    
For simplicity of our discussion, we also assume that $\sup_{M} |Rm|_{g(t)} <\infty$ 
for each time $t \in I$. This condition holds automatically if $M$ is a closed manifold. 
It is very often to put an extra term on the right hand side of (\ref{eqn:ML27_1}) to obtain the following rescaled Ricci flow
\begin{align}
    \frac{\partial}{\partial t} g= -2 \left\{ Rc + \lambda(t)g \right\},   \label{eqn:ML27_2} 
\end{align}
where $\lambda(t)$ is a function depending only on time.  Typically, $\lambda(t)$ is chosen as the average of the scalar curvature, i.e., $\frac{1}{m} \fint R dv$ or some fixed constant independent of time. 
In the case that $M$ is closed and $\lambda(t)=\frac{1}{m} \fint R dv$, the flow (\ref{eqn:ML27_2}) is also called the normalized Ricci flow.  

The Ricci flow equations (\ref{eqn:ML27_1}) and (\ref{eqn:ML27_2}) were introduced by R. Hamilton in his seminal paper~\cite{Ha82}.
Starting from a positive Ricci curvature metric on a 3-manifold, he showed that the normalized Ricci flow exists forever and converges to a space form metric.  
Hamilton developed the maximum principle for tensors to study the Ricci flow initiated from some metric with positive curvature conditions.
Along this direction, there are various convergence theorems of the flow (\ref{eqn:ML27_2}),  proved by G. Huisken~\cite{Hu85}, R. Hamilton~\cite{Ha86}, Bohm-Wilking~\cite{BoWi}, etc.  
Such developments finally lead to the sphere theorem of Brendle-Schoen~\cite{BrSc},  which asserts that starting from a manifold whose Riemannian curvature is quater-pinched, 
the normalized Ricci flow (\ref{eqn:ML27_2}) converges to a round metric. 
For metrics without positive curvature condition,  the study of Ricci flow was profoundly affected by the celebrated work of G. Perelman~\cite{Pe1}. 
He introduced new tools, i.e., the entropy functionals $\boldsymbol{\mu}$, $\boldsymbol{\nu}$, the reduced distance and the reduced volume, to investigate the behavior of the Ricci flow. 
Perelman's new input enabled him to revive Hamilton's program of Ricci flow with surgery, leading to solutions of the Poincar\'{e} conjecture and Thurston's geometrization conjecture(c.f.~\cite{Pe1},~\cite{Pe2},~\cite{Pe3}).

In the general theory of the Ricci flow developed by Perelman in~\cite{Pe1},  the entropy functionals $\boldsymbol{\mu}$ and $\boldsymbol{\nu}$ are of essential importance.
Perelman discovered the monotonicity of such functionals and applied them to prove the no-local-collapsing theorem(c.f. Theorem 4.1 of~\cite{Pe1}), which removes the stumbling block for Hamilton's program 
of Ricci flow with surgery.  By delicately using such monotonicity, he further proved the pseudo-locality theorem(c.f. Theorem 10.1 and Theorem 10.3 of~\cite{Pe1}), which claims that the Ricci flow can not quickly turn an almost Euclidean region 
into a very curved one, no matter what happens far away.
Besides the functionals, Perelman also introduced the reduced distance and reduced volume.   In terms of them, the Ricci flow space-time admits a remarkable comparison geometry picture(c.f. Section 6 and Section 7 of~\cite{Pe1}),
which is the foundation of his ``local"-version of the no-local-collapsing theorem(c.f. Theorem 8.2 of~\cite{Pe1}). 
Each of the tools has its own advantages and shortcomings. 
The functionals $\boldsymbol{\mu}$ and $\boldsymbol{\nu}$ have the advantage that their definitions only require the information for each time slice $(M, g(t))$ of the flow.
However,  they are global invariants of the underlying manifold $(M, g(t))$. It is not convenient to apply them to study the local behavior around a given point $x$. 
Correspondingly, the reduced volume and the reduced distance reflect the natural comparison geometry picture of the space-time. 
Around a base point $(x,t)$, the reduced volume and the reduced distance are closely related to the ``local" geometry of $(x, t)$. 
Unfortunately,  it is the space-time ``local", rather than the Riemannian geometry ``local" that is concerned by the reduced volume and reduced geodesic.
In order to apply them, some extra conditions of the space-time neighborhood of $(x, t)$ are usually required.
However, such strong requirement of space-time is hard to fulfill.
Therefore, it is desirable to have some new tools to balance the advantages of the reduced volume, the reduced distance and the entropy functionals.
In this paper, we localize the functionals $\boldsymbol{\mu}$ and $\boldsymbol{\nu}$ for this purpose(c.f. Section~\ref{sec:localization}). 
On one hand, our local functionals enjoy similar geometric pictures of the reduced distance and the reduced volume.
On the other hand, they only require local information of single time-slices of the underlying flow.
It turns out that the localized functionals are convenient tools.  We shall apply them to generalize the no-local-collapsing theorem and the pseudo-locality theorem of Perelman~\cite{Pe1} in this paper and the forthcoming paper~\cite{BW2}.

Our study is motivated by the comparison geometry picture of the Ricci flow space-time. 
Let $(M^m, g)$ be a complete Ricci-flat manifold, $x_0$ is a point on $M$ such that $d(x_0, x)<A$.    
Suppose the ball $B(x_0, r_0)$ is $A^{-1}-$non-collapsed,  i.e., $r_0^{-m}|B(x_0, r_0)| \geq A^{-1}$, can we obtain uniform non-collapsing for the ball $B(x,r)$,  whenever $0<r<r_0$ and $d(x,x_0)<Ar_0$?
This question can be answered easily by applying triangle inequalities and Bishop-Gromov volume comparison theorems.
In particular, there exists a $\kappa=\kappa(m, A) \geq 3^{-m} A^{-m-1}$(c.f. Remark~\ref{rmk:CF18_1})
such that $B(x,r)$ is $\kappa$-non-collapsed, i.e., 
$r^{-m}|B(x,r)| \geq \kappa$.  Consequently, there is an estimate of propagation speed of non-collapsing constant on the manifold $M$.
This is easily illustrated by Figure~\ref{fig:comparison1}.

 \begin{figure}[H]
 \begin{center}
 \psfrag{A}[c][c]{\color{brown}{$B(x_0, r_0)$}}
 \psfrag{B}[c][c]{\color{red}{$B(x,r)$}}
 \includegraphics[width=0.4 \columnwidth]{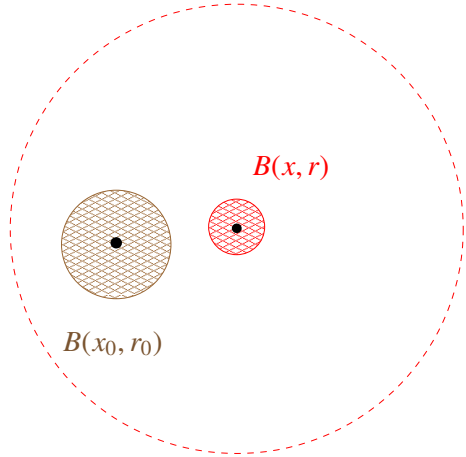}
 \caption{Propagation of non-collapsing on Ricci-flat manifolds}
 \label{fig:comparison1}
 \end{center}
 \end{figure}

Similar to the discussions in Section 2.7 of Chen-Wang~\cite{CW2},  we now regard $(M, g)$ as a trivial space-time $\{(M, g(t)), -\infty < t <\infty\}$ such that $g(t) \equiv g$. 
Clearly, $g(t)$ is a static Ricci flow solution by the Ricci-flatness of $g$.  
Then the above estimate can be explained as the propagation of volume non-collapsing constant on the space-time(c.f. Figure~\ref{fig:comparison2}).
However, in a more intrinsic way, it can also be interpreted as the propagation of non-collapsing constant  of  Perelman's reduced volume(c.f.~Section 7 of Perelman~\cite{Pe1} or Section~\ref{sec:reduced} of the current paper for a brief discussion of the reduced volume and the reduced distance). Recall that on the Ricci flat space-time, Perelman's reduced volume(c.f. equations (2.85) of Chen-Wang~\cite{CW2}) has a special formula

\begin{align*}
  \mathcal{V}( (x, t), r^2)=(4\pi)^{-\frac{m}{2}} r^{-m} \int_{M} e^{-\frac{d^2(y, x)}{4r^2}} dv_y,
\end{align*}
which is almost the volume ratio of $B_{g(t)}(x,r)$. 
On a general Ricci flow solution, the reduced volume is also well-defined and has monotonicity with respect to the parameter $r^2$, if one replace $\frac{d^2(y, x)}{4r^2}$ in the above formula by the reduced distance $l((x,t),(y,t-r^2))$. 
Therefore, via the comparison geometry of Bishop-Gromov type, one can regard a Ricci-flow as  an ``intrinsic-Ricci-flat" space-time.  
Consequently, the above reduced volume interpretation of non-collapsing propagation can be easily generalized to general Ricci flows, as done by Perelman in Section 8 of~\cite{Pe1}.

\begin{figure}[H]
 \begin{center}
 \psfrag{A}[c][c]{\color{brown}{$B(x_0, r_0)$}}
 \psfrag{B}[c][c]{\color{red}{$B(x,r)$}}
 \psfrag{C}[c][c]{$M$}
 \psfrag{D}[c][c]{$t$}
 \psfrag{E}[c][c]{$x_0$}
 \psfrag{F}[c][c]{$x$}
 \psfrag{G}[c][c]{$t=r_0^2$}
 \psfrag{H}[c][c]{$t=0$}
 \includegraphics[width=0.5 \columnwidth]{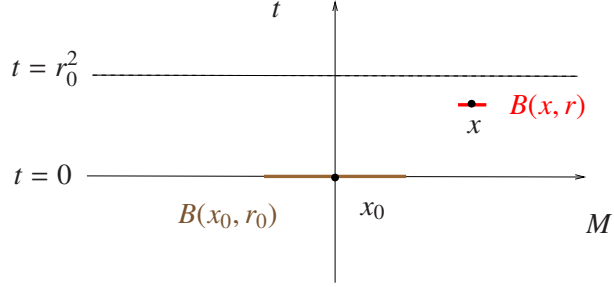}
 \caption{Propagation of non-collapsing on Ricci-flat space-time}
 \label{fig:comparison2}
 \end{center}
 \end{figure}

 However, the disadvantage of the reduced volume explanation(c.f. Theorem~\ref{thm:Pe8.2}) is also clear: it requires the curvature estimate in a whole space-time neighborhood around the point $(x,t)$, 
 rather than the scalar curvature estimate of a single time slice $t$.  We shall show that such strong requirement of space-time geometry is not necessary by the following version of  no-local-collapsing theorem(c.f. Figure~\ref{fig:step1}).

\begin{theorem}[\textbf{Improved version of no-local-collapsing}] 
  For every $A>1$ there exists $\kappa=\kappa(m,A)>0$ with the following property. 
  Suppose $\{(M^{m}, g(t)), 0 \leq t \leq  r_0^2\}$ is a solution of Ricci flow (\ref{eqn:ML27_1}) such that
  \begin{align}
   r_0^2 |Rm|(x,t) \leq m^{-1}, \quad \forall \; x \in B_{g(0)}(x_0, r_0),  \; 0 \leq t \leq r_0^2;   \quad   r_0^{-m} \left| B_{g(0)}(x_0, r_0) \right|_{dv_{g(0)}} \geq A^{-1}.  \label{eqn:ML27_3}
  \end{align}
  Then we have
  \begin{align}
    r^{-m} \left|B_{g(t)}(x,r) \right|_{dv_{g(t)}} \geq \kappa      \label{eqn:ML30_1}
  \end{align}
  whenever $A^{-1}r_0^2 \leq t \leq r_0^2$, $0<r \leq r_0$, and $B_{g(t)}(x,r) \subset B_{g(t)}(x_0,Ar_0)$ is a geodesic ball satisfying $r^2R(\cdot, t) \leq 1$. 
\label{thmin:ML14_1}
\end{theorem}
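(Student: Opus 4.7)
The plan is to replace the space-time curvature input used in Perelman's Theorem~8.2 by the purely spatial input of (\ref{eqn:ML27_3}), using the localization of $\boldsymbol{\mu}$ constructed in Section~\ref{sec:localization}. Schematically: obtain a lower bound on the local $\boldsymbol{\mu}$ at time $0$ over the almost-Euclidean ball $B_{g(0)}(x_0,r_0)$, transport it forward to time $t$ via the (near-)monotonicity of the local functional along the flow, and then convert the resulting bound at scale $r$ on the ball $B_{g(t)}(x,r)$, together with $r^2 R(\cdot,t)\leq 1$, into the non-collapsing inequality (\ref{eqn:ML30_1}).

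For the first step, (\ref{eqn:ML27_3}) forces $(B_{g(0)}(x_0,r_0),g(0))$ to be quantitatively close to a Euclidean ball of the same volume, so testing the local $\mathcal{W}$-functional at scale $r_0^2$ against a smooth cutoff of the Euclidean Gaussian $(4\pi r_0^2)^{-m/2}e^{-d_{g(0)}(x_0,\cdot)^2/(4r_0^2)}$ produces an explicit lower bound of the form $-C(m,A)$, by direct analogy with Perelman's Gaussian computation on $\R^m$. The curvature hypothesis controls all of the metric-dependent correction terms, and the volume hypothesis prevents the test function from being wasted on a thin set.

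For the second step, I would invoke the monotonicity of the local $\boldsymbol{\mu}$ established in Section~\ref{sec:localization} to propagate this lower bound to any $t\in[A^{-1}r_0^2,r_0^2]$ on a suitably large evolved domain containing $B_{g(t)}(x_0,Ar_0)$. Since $B_{g(t)}(x,r)\subset B_{g(t)}(x_0,Ar_0)$ with $r\leq r_0$, specializing the inequality to this sub-ball gives a lower bound for the local $\boldsymbol{\mu}$ at scale $r^2$ around $(x,t)$. Plugging in a Gaussian test function concentrated at $x$ at scale $r$ and using $r^2 R\leq 1$ on $B_{g(t)}(x,r)$, Perelman's standard argument then converts this functional bound into the volume ratio bound $r^{-m}|B_{g(t)}(x,r)|_{dv_{g(t)}}\geq \kappa(m,A)$.

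The principal obstacle is the propagation step: one needs the local $\boldsymbol{\mu}$ to enjoy a sharp enough monotonicity that the shift from a ball concentric with $B_{g(0)}(x_0,r_0)$ to an eccentric ball $B_{g(t)}(x,r)$ with $d_{g(t)}(x,x_0)$ potentially comparable to $Ar_0$ can be absorbed into the constant $\kappa(m,A)$. This rests on designing the cutoff built into the local functional so that the gradient and Laplacian errors produced along the backward conjugate heat equation for the minimizer integrate against controlled quantities in a Ricci-flow-compatible way, and on showing that the evolving support of this cutoff stays inside the region where the hypotheses of (\ref{eqn:ML27_3}) supply geometric control. Once that delicate localization is in place (which is the purpose of Section~\ref{sec:localization}), the outline above amounts to careful but essentially routine adaptations of Perelman's original no-local-collapsing and pseudo-locality estimates.
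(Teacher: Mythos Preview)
Your overall architecture---bound the local functional at the initial time, propagate it forward, then convert to a volume ratio using $r^2R\le 1$---matches the paper's. But the propagation step as you describe it has a genuine gap, and it is precisely the step that the paper identifies as the heart of the matter.

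The difficulty is one of \emph{scale reversal}. At time $t$ you need the local $\boldsymbol{\nu}$ on the large ball $B_{g(t)}(x_0,Ar_0)$; at time $0$ you only control the geometry of the small ball $B_{g(0)}(x_0,r_0)$. The effective monotonicity formula for the local functional (Theorem~\ref{thm:CF07_1}) reads
\[
\boldsymbol{\nu}(\Omega_0,g(0),\tau_0)-\boldsymbol{\nu}(\Omega,g(T),\tau_T)\le (4\tau_0 C_h+e^{-1})(c_u^{-1}-1),
\]
where $u$ is the conjugate heat solution starting from the square of a minimizer on $\Omega$ at time $T$, and $c_u=\inf_\lambda\int_{\Omega_0'}u^{(\lambda)}$. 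When $\Omega_0'$ is \emph{larger} than $\Omega$ one can make $c_u$ close to $1$ by a cutoff argument of the type you sketch (this is Theorem~\ref{thm:CA02_3}), but that yields a bound for $\boldsymbol{\nu}$ on a ball of radius $\sim Ar_0$ at time $0$, which you have no way to estimate. In the situation of Theorem~\ref{thmin:ML14_1} you must take $\Omega_0'$ \emph{small} (contained in $B_{g(0)}(x_0,r_0)$), and then there is no cutoff mechanism that forces $c_u$ away from $0$: the conjugate heat solution $u$ starts out supported on a ball of radius $Ar_0$ and could, a priori, spread its mass anywhere.

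The paper resolves this with Perelman's reduced distance. One proves (Proposition~\ref{prn:CC21_1}) that for any $y\in B_{g(T)}(x_0,A\sqrt{T})$ and any $x$ in a ball of radius $\sim 0.1\sqrt{T}$ around $x_0$ at time $T/2$, the reduced distance $l((y,T),(x,T/2))$ is bounded by an explicit constant depending only on $m,A$; this uses a maximum-principle argument with the unbounded cutoff of Lemma~\ref{lma:CE30_1}. Combined with the pointwise inequality $u\ge (4\pi\tau)^{-m/2}e^{-l}$ (Proposition~\ref{prn:CC23_2}), this gives the required lower bound on $c_u$ (Theorem~\ref{thm:CF21_1}). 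Nothing in Section~\ref{sec:localization}, which concerns only static properties of the local functionals, supplies this estimate. You should also note that in your first step, plugging a Gaussian into $\mathcal{W}$ gives an \emph{upper} bound for $\boldsymbol{\mu}$, not a lower bound; the lower bound on the small ball comes from the isoperimetric/Bishop--Gromov input of Theorem~\ref{thm:CF21_2}.
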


In Theorem~\ref{thmin:ML14_1}, we replace the requirement of space-time condition by a time-slice condition around $(x,t)$. 
Namely, we only need $R(\cdot, t) \leq r^{-2}$ in the ball $B_{g(t)}(x,r)$ to conclude the non-collapsing of $B_{g(t)}(x,r)$ whenever $(x,t)$ is not very far away from $(x_0, 0)$. 
However, we still require space-time condition (\ref{eqn:ML27_3}) around the base $(x_0, 0)$, which looks artificial. 
A condition depending only on the initial metric $g(0)$ should be more natural.  The quest of such a natural condition leads us to develop the pseudo-locality theorems, 
which unite and improve the  pseudo-locality theorems(c.f. Theorem 10.1 and Corollary 10.3 of~\cite{Pe1}) of Perelman 
and a similar pseudo-locality theorem(c.f. Proposition 3.1 and Theorem 3.1 of~\cite{TiWa}) of  Tian and the author. 
The details of the pseudo-locality theorems will be discussed in the second paper of this series~\cite{BW2}. 

  \begin{figure}[H]
 \begin{center}
 \psfrag{A}[c][c]{\color{blue}{Bounded geometry around the base}}
 \psfrag{B}[c][c]{\color{red}{Non-collapsing time-slice off the base}}
 \psfrag{C}[c][c]{$M$}
 \psfrag{D}[c][c]{$t$}
 \psfrag{E}[c][c]{$t=r_0^2$}
 \psfrag{F}[c][c]{$t=A^{-1}r_0^2$}
 \psfrag{G}[c][c]{$(x_0, 0)$}
 \psfrag{H}[c][c]{$(x, t)$}
 \includegraphics[width=0.5 \columnwidth]{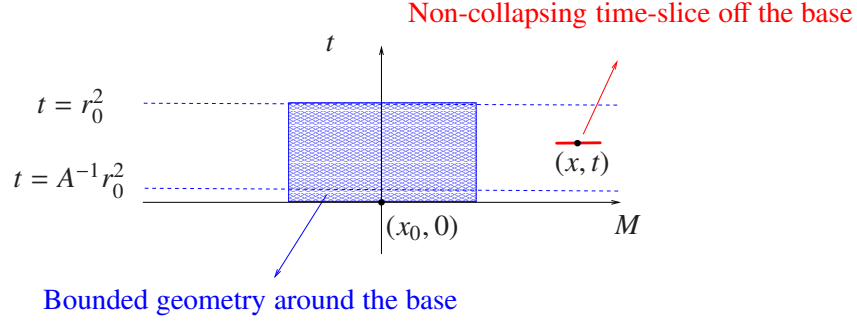}
 \caption{Propagation of non-collapsing when base neighborhood has bounded geometry}
 \label{fig:step1}
 \end{center}
 \end{figure}

In order to prove Theorem~\ref{thmin:ML14_1},  we need to revisit the work of Perelman~\cite{Pe1} and reorganize it from its starting point: 
the functionals $\boldsymbol{\mu}$ and $\boldsymbol{\nu}$.  We start from localizing them to each bounded domain $\Omega \subset M$ and study the properties of the minimizer functions.
Recall that on a Riemannian manifold $(M, g)$, for each positive number $\tau$, the functional $\boldsymbol{\mu}(g, \tau)$ is defined as the infimum of 
$$\mathcal{W}(g, \varphi,\tau)=-m -\frac{m}{2} \log (4\pi \tau) + \int_{\Omega} \left\{ \tau\left(R \varphi^2+\left|\nabla \varphi \right|^2 \right)-2\varphi^2 \log \varphi \right\}dv$$
among all smooth nonnegative functions $\varphi$ satisfying $\int_M \varphi^2 dv=1$. 
Now we let the infimum be achieved among all the $\varphi$'s satisfying an extra condition $\varphi \in C_c^{\infty}(\Omega)$. 
The infimum is denoted by $\boldsymbol{\mu}(\Omega, g, \tau)$, which is a functional of the domain $\Omega$.  Then we set $\displaystyle \boldsymbol{\nu}(\Omega, g, \tau)=\inf_{0<s \leq \tau} \boldsymbol{\mu}(\Omega, g, s)$. 
We call $\boldsymbol{\mu}(\Omega, g, \tau)$ and $\boldsymbol{\nu}(\Omega, g, \tau)$ as the local functionals, or the functionals localized on $\Omega$. 
Although not written down explicitly, it is implied by the work of Perelman(c.f.~\cite{Pe1}) that both $\boldsymbol{\mu}(\Omega, g, \tau)$ and $\boldsymbol{\nu}(\Omega, g, \tau)$ reveal information of the underlying geometry of $(\Omega, g)$.
In particular, if $\Omega$ is a geodesic ball of radius $r$ and the scalar curvature satisfies $R \leq r^{-2}$ in $\Omega$, then the uniform lower bound of $\boldsymbol{\nu}(\Omega, g, r^2)$ implies a uniform lower bound
of the volume ratio of $\Omega$(c.f. Theorem~\ref{thm:CF21_3}, Remark~\ref{rmk:MJ16_3} and Remark~\ref{rmk:MJ16_2}). 
Our new observation is that the minimizer function $\varphi$ of $\boldsymbol{\mu}(\Omega, g, \tau)$ carries more complete information than that of the value $\boldsymbol{\mu}(\Omega, g, \tau)$.
We could study the local geometry via the study of the the local minimizer functions, in particular under the Ricci flow evolution. 
For example, in the Ricci flow space-time, we focus our attention on a bounded domain $\Omega$ and time $T$, and choose $\varphi(T)$ as the minimizer function of  $\boldsymbol{\mu}(\Omega, g(T), \tau_T)$ for some $\tau_T>0$.
Let $u(T)=\varphi^2(T)$ and $u$ be the solution of the conjugate heat equation $\square^*u=(\partial_t - \Delta + R)u=0$.  
Similar to the Harnack inequality of Perelman, we have (c.f. Theorem~\ref{thm:CA02_2} for full details)
 \begin{align}
    v=\{(\tau_T+T-t) (R+2\Delta f - |\nabla f|^2) +f-m-\boldsymbol{\mu}\} u \leq 0,     \label{eqn:ML19_1}
 \end{align}
 where $\boldsymbol{\mu}$ is the value of $\boldsymbol{\mu}(\Omega, g(T), \tau_T)$ and $f=-\log u-\frac{m}{2} \log \{4\pi (\tau_T+T-t)\}$.
 The proof of (\ref{eqn:ML19_1}) follows from Section 9 of Perelman~\cite{Pe1} intuitively. 
 However, we need to deal with extra technical difficulties and regularity issues caused by $\partial \Omega$. 
 By careful heat kernel estimates, we show that the intuitive argument can actually be made rigorous.
 It is not hard to see(c.f. Remark~\ref{rmk:MJ13_3}) that (\ref{eqn:ML19_1}) is a generalization of Perelman's Harnack inequality.

 Note that (\ref{eqn:ML19_1}) provides a bridge between different domains at different time slices, via the study of the evolution of $u$. 
 In fact, for each domain $\Omega_0 \subset M$, we can study the restriction of $u$ on $\Omega_0$ to obtain the relationship between
  $\boldsymbol{\mu}(\Omega_0, g(0), \tau_T+T)$ and $\boldsymbol{\mu}(\Omega, g(T), \tau_T)$.
 Suppose $\Omega_0' \Subset \Omega_0$ and $h$ is a cutoff function which vanishes outside $\Omega_0$ and equals $1$ inside $\Omega_0'$,  
  the relationship can be calculated explicitly(c.f. (\ref{eqn:CF04_3}) in Theorem~\ref{thm:CF04_1}):
  \begin{align}
   \boldsymbol{\mu}(\Omega_0, g(0), \tau_T+T)- \boldsymbol{\mu}(\Omega, g(T), \tau_T) \leq  \left( 4\tau_0 C_h + e^{-1}\right)  \cdot \frac{\int_{\Omega_0 \backslash \Omega_0'} u}{\int_{\Omega_0'} u} 
 \label{eqn:CF14_1}  
 \end{align}
 where $C_h=\sup_{\Omega_0} \left|\nabla \sqrt{h} \right|^2$.
 It can be chosen as  $4r^{-2}$ if $\Omega_0$ is a ball of radius $2r$. 
 As $\int_{\Omega_0} u \leq 1$, the only nontrivial estimate in the above inequality is the lower bound of $\int_{\Omega_0'} u$.   
 Once we obtain uniform lower bound of $\int_{\Omega_0'} u$ which we denote by $c_u$,  (\ref{eqn:CF14_1}) implies (c.f. (\ref{eqn:CF05_1}) in Theorem~\ref{thm:CF07_1}) that
 \begin{align}
  \boldsymbol{\nu}(\Omega_0, g(0), \tau_T+T)- \boldsymbol{\nu}(\Omega, g(T), \tau_T) \leq  \left( 4\tau_0 C_h + e^{-1}\right) \left( c_u^{-1} -1 \right). 
 \label{eqn:CF14_2} 
 \end{align}
 Therefore,  $\boldsymbol{\nu}(\Omega, g(T), \tau_T)$ can be bounded from below by some number determined by $c_u$ and $ \boldsymbol{\nu}(\Omega_0, g(0), \tau_T+T)$.

 Now  the proof of Theorem~\ref{thmin:ML14_1} is clear.  Without loss of generality, we set $t=r_0=1$ and $r \in (0,1)$ and $\tau_1=r^2$.
 We set $\Omega=B_{g(1)}(x,r)$ and $\Omega_0 =B_{g(0)}(x_0, 0.1)$. 
 Based on reduced distance estimate similar to that in Section 8 of~\cite{Pe1} and the fact that $u \geq (4\pi [T-t])^{-\frac{m}{2}} e^{-l}$, we obtain a uniform lower bound of $c_u$.
 On the other hand, the uniformly bounded local geometry around $x_0$ provides a uniform lower bound of $\boldsymbol{\nu}(\Omega_0, g(0), \tau_1+1)$. 
 Therefore, we have a uniform lower bound of $\boldsymbol{\nu}(\Omega, g(1), \tau_1)$.
 However, the lower bound of $\boldsymbol{\nu}(\Omega, g(1), \tau_1)$ explicitly implies a non-collapsing constant(c.f. Theorem~\ref{thm:CF21_3}) if $R \leq r^{-2}$ inside $\Omega$.
 This finishes the proof. 
 
 Using the same idea, we indeed have a formula much more precise than the one stated in Theorem~\ref{thmin:ML14_1},  under much weaker conditions(c.f. (\ref{eqn:CE24_2}) in Theorem~\ref{thm:CE26_1}).
 Translating the information contained in $\boldsymbol{\nu}$ to volume ratios, we obtain an explicit formula(c.f. (\ref{eqn:CF09_4}) in Theorem~\ref{thm:CF03_1} and Remark~\ref{rmk:CF18_1}) of the propagation speed of the non-collapsing constant.
 We believe such precise formulas will be useful in the further study of the Ricci flow(collapsing case in particular), although the rough estimate in Theorem~\ref{thmin:ML14_1} is enough for many of our applications. 
 
 If one would like to sacrifice preciseness, there is an alternative shorter proof of Theorem~\ref{thmin:ML14_1}. 
 Actually, it follows from (\ref{eqn:ML19_1}) that
  \begin{align}
      \boldsymbol{\mu}   \geq (\tau_T+T-t) (R+2\Delta f - |\nabla f|^2) +f-m.   
  \label{eqn:CF14_3}   
  \end{align}
  The uniformly bounded geometry around $(x_0, 0)$ and the uniform lower bound of $u$ implies a uniform two sided bound of $u$ around $(x_0, 0.1)$,
   by the mean value formula 
  and the Harnack inequality of positive heat solutions(c.f. Theorem 10 on p.391 of L.C. Evans~\cite{Evans}).
  Also, note that all higher curvature derivatives around $(x_0, 0.1)$ are bounded by Shi's estimate(c.f.~\cite{Shi}, or chapter 6 of~\cite{CLN}).  Then the relationship $u=(4\pi [\tau_T+T-t])^{-\frac{m}{2}}e^{-f}$ implies that the right hand side of (\ref{eqn:CF14_3})
  is uniformly bounded from below. 

Although not natural in general Riemannian setting, the conditions (\ref{eqn:ML27_3}) in Theorem~\ref{thmin:ML14_1} are available whenever we study specific types of K\"ahler Ricci flow, up to an elementary parabolic rescaling. 
In fact, (\ref{eqn:ML27_3}) can often be obtained by regularity theory of parabolic Monge-Amp\`{e}re equation, which is deeply affected the fundamental work of Yau~\cite{Yau}. 
Therefore, it seems reasonable to believe that Theorem~\ref{thmin:ML14_1} will be useful in the study of general K\"ahler Ricci flow.  
As an evidence, we apply Theorem~\ref{thmin:ML14_1} to show the following convergence theorem.

\begin{theorem}[\textbf{Convergence of the K\"ahler Ricci flow}]
Let $X$ be a minimal projective manifold of general type.  Starting from a K\"ahler metric $g_0$, the flow solution of 
\begin{align*}
\displaystyle \partial_t g=-\left\{ Rc + g\right\}
\end{align*}
converges to the unique singular K\"ahler-Einstein metric $\omega_{KE}$ on the canonical model $X_{can}$ in the Gromov-Hausdorff  topology as $t \rightarrow \infty$.
\label{thmin:ML28_1}
\end{theorem}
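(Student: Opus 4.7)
The strategy is to combine Theorem~\ref{thmin:ML14_1} with the established convergence-of-currents theory for the normalized \Kahler Ricci flow on minimal projective manifolds of general type. Since $X$ is minimal of general type, $K_X$ is big and nef and the canonical model $X_{can}=\mathrm{Proj}\bigoplus_{m\geq 0} H^0(X, mK_X)$ carries a unique singular \Kahler-Einstein metric $\omega_{KE}$ with $Rc(\omega_{KE})=-\omega_{KE}$, by Eyssidieux-Guedj-Zeriahi. The Iitaka fibration $\pi:X\to X_{can}$ is a birational morphism that contracts an exceptional analytic subset $E\subset X$. After reducing the flow to a parabolic complex Monge-Amp\`ere equation and invoking the work of Cao, Tsuji, Tian-Zhang and Song-Tian, I may assume (i) long-time existence; (ii) a uniform $L^{\infty}$ bound on the normalized \Kahler potential; (iii) smooth convergence $\omega(t)\to \pi^{*}\omega_{KE}$ on compacta of $X\setminus E$, together with convergence of currents globally on $X$; (iv) a uniform scalar curvature bound $R(\cdot,t)\leq C$ via a Perelman-type estimate adapted to the general-type setting.

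For Gromov-Hausdorff convergence, three uniform-in-$t$ ingredients are needed: an upper volume bound (automatic from the cohomology class), an upper diameter bound, and a lower non-collapsing bound on balls at scales compatible with the scalar curvature. The diameter bound follows once non-collapsing is established, via a standard segment argument combined with (ii)--(iii). For non-collapsing, I fix a base point $x_0\in X\setminus E$ and a scale $r_0>0$ such that the smooth convergence in (iii) provides $r_0^2|Rm|(y,t)\leq m^{-1}$ on $B_{g(t)}(x_0,r_0)$ together with $r_0^{-m}|B_{g(t)}(x_0,r_0)|\geq A^{-1}$, for every $t\geq T_0$ and a fixed large $T_0, A$. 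Shifting the time origin to $T_0$, the hypothesis (\ref{eqn:ML27_3}) of Theorem~\ref{thmin:ML14_1} is satisfied at scale $r_0$. Coupled with (iv), the theorem then yields
\begin{equation*}
r^{-m}\bigl|B_{g(t)}(x,r)\bigr|_{dv_{g(t)}}\geq \kappa(m,A)
\end{equation*}
for every $t\in[T_0+A^{-1}r_0^2, T_0+r_0^2]$, every $r$ compatible with $R\leq r^{-2}$, and every $B_{g(t)}(x,r)\subset B_{g(t)}(x_0,Ar_0)$. Choosing $A$ large enough to exhaust $X$ (possible by the diameter bound) and letting $T_0\to\infty$ yields uniform non-collapsing on all of $X$ at arbitrarily large times.

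With uniform volume, diameter, and non-collapsing bounds in hand, Cheeger-Colding-Gromov precompactness extracts a subsequential Gromov-Hausdorff limit $(Y,d_Y)$ from any sequence $t_k\to\infty$. The smooth convergence in (iii) identifies the regular stratum of $Y$ isometrically with the regular part of $(X_{can},\omega_{KE})$; non-collapsing prevents dimensional drop and forces the image of $E$ to be a measure-zero set corresponding to the singular locus of $X_{can}$, so that $Y\cong X_{can}$ as metric spaces. Uniqueness of $\omega_{KE}$ upgrades subsequential to full convergence. The main obstacle I anticipate lies in the uniform choice of base point: verifying that a single configuration $(x_0,r_0,A)$ produces parabolic neighborhoods covering all of $X$ at every late time, despite the degeneration of the metric near $E$. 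This is precisely the regime Theorem~\ref{thmin:ML14_1} was designed to handle, because the target non-collapsing conclusion at $(x,t)$ requires only a single time-slice scalar curvature bound, rather than the space-time curvature control that would be unavailable near the exceptional set.
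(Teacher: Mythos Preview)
Your overall strategy is on the right track, and you have correctly identified the main obstacle, but as written the argument is genuinely circular. You derive the diameter bound from non-collapsing ``via a standard segment argument'', yet to obtain non-collapsing everywhere you appeal to Theorem~\ref{thmin:ML14_1} with ``$A$ large enough to exhaust $X$ (possible by the diameter bound)''. Since $\kappa=\kappa(m,A)\to 0$ as $A\to\infty$, letting $A$ grow without an a priori diameter bound gives nothing, and Theorem~\ref{thmin:ML14_1} by itself does not break this loop.

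The paper resolves the circularity by importing one extra ingredient you did not list: J.~Song's result that the metric completion $(X_{can},\omega_{KE})$ has \emph{finite} diameter $L<\infty$. With $L$ in hand, the diameter bound for the flow is proved by contradiction, not by a segment argument. If $d_{g(t_i)}(x_0,y_i)=L+2\epsilon$ for some sequence $t_i\to\infty$, then for every compact $K\Subset X\setminus\mathcal{B}$ one eventually has $B_{g(t_i)}(y_i,\epsilon)\cap K=\emptyset$; combined with the cohomological volume convergence $|X|_{\omega_{t_i}^n}\to |X\setminus\mathcal{B}|_{\omega_{KE}^n}$ this forces $|B_{g(t_i)}(y_i,\epsilon)|\to 0$. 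But now $d_{g(t_i)}(x_0,y_i)=L+2\epsilon$ is a \emph{fixed} finite number, so Theorem~\ref{thmin:ML14_1} applies with a fixed $A$ depending only on $L,\epsilon,r_0$, yielding $|B_{g(t_i)}(y_i,\epsilon)|\geq\kappa\epsilon^{2n}>0$, a contradiction. Only after this does one deduce uniform non-collapsing on all of $X$ and run the Gromov--Hausdorff argument. (Minor point: the uniform scalar curvature bound in this setting is due to Z.~Zhang, not a Perelman-type estimate.)
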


Theorem~\ref{thmin:ML28_1} confirms a long-standing conjecture(c.f. Conjecture~\ref{cje:GB03_1}),  whose low dimensional cases were confirmed by Guo-Song-Weinkove~\cite{GSW} in dimension 2, and Tian-Zhang~\cite{TiZhL} in dimension $\leq 3$ by
different methods. The key for the proof of Theorem~\ref{thmin:ML28_1} is to develop a uniform $\kappa$-non-collapsing estimate and a uniform diameter bound along the flow.   
Similar estimates along the Fano K\"ahler Ricci flow were discovered by Perelman(c.f. Remark~\ref{rmk:CF25_1}). 
It is interesting to observe that the statement of Theorem~\ref{thmin:ML28_1} mirrors that of the Fano K\"ahler Ricci flow(c.f. Remark~\ref{rmk:ML26_1}).
We provide the proof of Theorem~\ref{thmin:ML28_1} and necessary background and references in Section~\ref{sec:krf}.  

 This paper is organized as follows. 
 In Section~\ref{sec:localization}, we localize Perelman's functionals $\boldsymbol{\mu}$ and $\boldsymbol{\nu}$, together with other 
 closely related functionals $\bar{\boldsymbol{\mu}}$, $\bar{\boldsymbol{\nu}}$. We discuss the basic properties of the localized functionals and the minimizer functions. 
 In Section~\ref{sec:vratios}, we study the relationships among $\boldsymbol{\nu}$, $\bar{\boldsymbol{\nu}}$ and the volume ratios.  
 In Section~\ref{sec:LYHP}, we generalize the Harnack inequality of Perelman and provide an alternative approach to understand the meaning of
 Perelman's reduced distance, via Li-Yau's Harnack estimate.
 In Section~\ref{sec:almostmon}, we derive effective monotonicity formulas for local $\boldsymbol{\mu}$ and $\boldsymbol{\nu}$-functionals and consequently deduce one version of no-local-collapsing theorem.
 In Section~\ref{sec:reduced},  we generalize the reduced distance  and the reduced volume density functions to be defined from a probability measure and develop effective lower bound of the reduced volume density function. 
 In Section~\ref{sec:alter}, by combining the generalized reduced volume density function estimate with the generalized Harnack inequality, we can estimate the propagation speed of the local $\boldsymbol{\nu}$-functionals.
 Such an estimate in turn implies a strong version of the no-local-collapsing theorem, i.e.  Theorem~\ref{thmin:ML14_1}.   
 Finally, in Section~\ref{sec:krf}, we show the uniform $\kappa$-non-collapsing estimate and the uniform diameter bound along each K\"ahler Ricci flow
  on a minimal projective manifold of general type and consequently prove Theorem~\ref{thmin:ML28_1}. \\

{\bf Acknowledgements}:  
This paper is partially supported by NSF grant DMS-1510401.  The author would also like to acknowledge the invitation to MSRI Berkeley in spring 2016 supported by NSF grant DMS-1440140.
Part of this work was done while the author was visiting AMSS(Academy of Mathematics and Systems Science) in Beijing and USTC(University of Science and Technology of China) in Hefei, during the summer of 2016. 
 He wishes to thank AMSS and USTC  for their hospitality. He would also like to thank Mikhail Feldman, Jeff Viaclovsky, Lu Wang and Shaosai Huang for helpful discussions.

\section{Localization of Perelman's functionals}
\label{sec:localization}

Let $(M, g)$ be a complete Riemannian manifold of dimension $m$, and $\Omega$ be a connected, open subset of $M$ with smooth boundary.
Then we can regard $(\Omega, \partial \Omega, g)$ as a smooth manifold with boundary.  Let $\boldsymbol{a}$ be a smooth function on $\bar{\Omega}$, and $\tau$ be a positive constant.
Then we define
\begin{align}
  &\mathscr{S}(\Omega) \coloneqq \left\{ \varphi \left| \varphi \in W_0^{1,2}(\Omega), \quad \varphi \geq 0,  \quad \int_{\Omega} \varphi^2 dv=1 \right. \right\}, \label{eqn:MJ16_a}\\
  &\mathcal{W}^{(\boldsymbol{a})}(\Omega, g, \varphi, \tau) \coloneqq -m -\frac{m}{2} \log (4\pi \tau) + \int_{\Omega} \left\{ \tau\left(\boldsymbol{a}\varphi^2+4\left|\nabla \varphi \right|^2 \right)-2\varphi^2 \log \varphi \right\}dv, \label{eqn:MJ16_b} \\
  &\boldsymbol{\mu}^{(\boldsymbol{a})} \left( \Omega, g, \tau \right) \coloneqq \inf_{\varphi \in \mathscr{S}(\Omega)} \mathcal{W}^{(\boldsymbol{a})}(\Omega, g, \varphi, \tau), \label{eqn:MJ16_c}\\
  &\boldsymbol{\nu}^{(\boldsymbol{a})} \left( \Omega, g, \tau \right) \coloneqq \inf_{s \in (0, \tau]} \boldsymbol{\mu}^{(\boldsymbol{a})} \left( \Omega, g, s \right),  \label{eqn:MJ16_d}\\
  &\boldsymbol{\nu}^{(\boldsymbol{a})} \left( \Omega, g\right) \coloneqq \inf_{\tau \in (0, \infty)} \boldsymbol{\mu}^{(\boldsymbol{a})} \left( \Omega, g, \tau \right).  \label{eqn:MJ16_e}
\end{align} 
By the result of O. Rothaus(c.f.~\cite{ROS}), we know that for each smooth function $\boldsymbol{a}$ and positive number $\tau>0$, 
$\boldsymbol{\mu}^{(\boldsymbol{a})}(\Omega, g, \tau)$ is achieved by a function $\varphi \in W_{0}^{1,2}(\Omega)$ whenever $\Omega$ is bounded.    
Moreover, $\varphi$ is positive and smooth in $\Omega$, and $\varphi$ satisfies the following Euler-Lagrangian equation
\begin{align}
  -4\tau \Delta \varphi  + \tau \boldsymbol{a} \varphi -2\varphi \log \varphi - \left( \boldsymbol{\mu}^{(\boldsymbol{a})} +m+\frac{m}{2}\log (4\pi \tau) \right) \varphi=0.
  \label{eqn:GH01_1}  
\end{align}
We call $\varphi$ as the minimizer function of $\boldsymbol{\mu}^{(\boldsymbol{a})}(\Omega, g, \tau)$.   Since in our setting, $\partial \Omega$ is smooth, we can say more about the boundary behavior of $\varphi$. 
Note that if $\boldsymbol{a}=R$, $\Omega=M$ and let $\varphi^2=(4\pi \tau)^{-\frac{m}{2}} e^{-f}$, then we have
\begin{align*}
  \mathcal{W}^{(\boldsymbol{a})}(\Omega, g, \varphi, \tau)=\int_{M} \left\{ \tau(R+|\nabla f|^2) +f -m \right\} (4\pi \tau)^{-\frac{m}{2}} e^{-f} dv,
\end{align*}
which is the functional introduced by Perelman(c.f. (3.1) of~\cite{Pe1}).  If $\boldsymbol{a}=0$, $\Omega=M$, the corresponding functionals are the ones studied by L. Ni(c.f. (1.2) and (1.7) of~\cite{Ni}).

The functional $\boldsymbol{\mu}^{(\boldsymbol{a})}(\Omega, g, \tau)$ reveals the information of the Riemannian geometry of $(\Omega, \partial \Omega, g)$, by choosing different $\boldsymbol{a}$.
If $\boldsymbol{a}=0$, then $\boldsymbol{\mu}^{(0)}(\Omega, g, \tau)$ is exactly the classical Logarithmic Sobolev constant(c.f.~\cite{Gross},~\cite{Da}).
In~\cite{Pe1}, Perelman choose $\boldsymbol{a}$ as the scalar curvature function $R$ and discovered the monotonicity of the functional $\boldsymbol{\mu}^{(R)}(M, g(t), T-t)$, 
which plays a foundational role in his celebrated resolution of Poincar\'{e} conjecture and the geometrization conjecture(c.f.~\cite{Pe1},~\cite{Pe2},~\cite{Pe3}). 

The cases $\boldsymbol{a}=0$ and $R$ are the most important cases for the application of $\boldsymbol{\mu}^{(\boldsymbol{a})}$ in the study of the Ricci flow.
However, most discussion of $\boldsymbol{\mu}^{(\boldsymbol{a})}(\Omega, g, \tau)$  in the current literature of Ricci flow focuses on the closed manifold case.
In this paper, we shall pay our attention to manifolds with boundary.  For simplicity of notation, we define
\begin{align}
   &\boldsymbol{\mu} \coloneqq \boldsymbol{\mu}^{(R)}, \quad \boldsymbol{\nu} \coloneqq  \boldsymbol{\nu}^{(R)};   \label{eqn:MJ16_1}\\
   &\bar{\boldsymbol{\mu}} \coloneqq  \boldsymbol{\mu}^{(0)}, \quad \bar{\boldsymbol{\nu}} \coloneqq  \boldsymbol{\nu}^{(0)}.  \label{eqn:MJ16_2}
\end{align}

We list some elementary properties of the functionals $\boldsymbol{\mu}^{(\boldsymbol{a})}, \boldsymbol{\nu}^{(\boldsymbol{a})}$. 
For simplicity, the metric $g$ will not appear explicitly when it is clear in the context.

\begin{proposition}[\textbf{Monotonicity induced by inclusion}]
  Suppose $\Omega_1, \Omega_2$ are bounded domains of $M$ satisfying $\Omega_1 \subsetneq \Omega_2$.  Then we have
  \begin{align}
   &\boldsymbol{\mu}^{(\boldsymbol{a})}(\Omega_1, \tau) > \boldsymbol{\mu}^{(\boldsymbol{a})}(\Omega_2, \tau),  \label{eqn:CA01_1}\\
   &\boldsymbol{\nu}^{(\boldsymbol{a})}(\Omega_1, \tau) \geq \boldsymbol{\nu}^{(\boldsymbol{a})}(\Omega_2, \tau).  \label{eqn:CA01_2}
  \end{align}
\label{prn:CA01_2}  
\end{proposition}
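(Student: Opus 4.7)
The plan is to establish (\ref{eqn:CA01_1}) by a straightforward extension-by-zero argument to obtain the non-strict comparison, then upgrade it to strict inequality using the interior positivity of the minimizer. The inequality (\ref{eqn:CA01_2}) will then follow by taking the infimum over $s \in (0,\tau]$.

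First, I would take any admissible $\varphi_1 \in \mathscr{S}(\Omega_1)$ and extend it by zero to a function $\tilde\varphi_1$ on $\Omega_2$. Since $\varphi_1 \in W_0^{1,2}(\Omega_1)$, the extension lies in $W_0^{1,2}(\Omega_2)$ with $\int_{\Omega_2}\tilde\varphi_1^2\,dv = 1$, so $\tilde\varphi_1 \in \mathscr{S}(\Omega_2)$. With the standard convention $0\log 0 = 0$ (justified by $\lim_{x\to 0^+} x^2\log x = 0$), each integrand in $\mathcal{W}^{(\boldsymbol{a})}$ vanishes on $\Omega_2\setminus\Omega_1$, so $\mathcal{W}^{(\boldsymbol{a})}(\Omega_2,g,\tilde\varphi_1,\tau) = \mathcal{W}^{(\boldsymbol{a})}(\Omega_1,g,\varphi_1,\tau)$. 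Taking the infimum over $\varphi_1 \in \mathscr{S}(\Omega_1)$ gives the non-strict comparison $\boldsymbol{\mu}^{(\boldsymbol{a})}(\Omega_2,\tau) \leq \boldsymbol{\mu}^{(\boldsymbol{a})}(\Omega_1,\tau)$.

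Next I would upgrade this to strict inequality by contradiction. Suppose $\boldsymbol{\mu}^{(\boldsymbol{a})}(\Omega_1,\tau) = \boldsymbol{\mu}^{(\boldsymbol{a})}(\Omega_2,\tau)$. By the Rothaus result cited above (\ref{eqn:GH01_1}), the infimum on the bounded domain $\Omega_1$ is attained by some $\varphi_1$, and any function realizing the infimum on $\Omega_2$ is positive and smooth throughout $\Omega_2$. By the identity of functional values above, the extension $\tilde\varphi_1$ realizes the infimum on $\Omega_2$ and must therefore be such a minimizer. However, $\tilde\varphi_1 \equiv 0$ on the nonempty open set $\Omega_2\setminus\bar\Omega_1$, contradicting strict positivity. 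This is the main technical point of the proof, and its resolution leans entirely on the interior positivity assertion from Rothaus's theorem (no uniqueness of minimizers is needed).

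Finally, to obtain (\ref{eqn:CA01_2}), I would apply (\ref{eqn:CA01_1}) at each scale $s \in (0,\tau]$ to get $\boldsymbol{\mu}^{(\boldsymbol{a})}(\Omega_1,s) > \boldsymbol{\mu}^{(\boldsymbol{a})}(\Omega_2,s) \geq \boldsymbol{\nu}^{(\boldsymbol{a})}(\Omega_2,\tau)$, and then take the infimum on the left over $s \in (0,\tau]$. Only the non-strict inequality survives this step, since infima of strict inequalities need not be strict; this explains why (\ref{eqn:CA01_2}) is stated with $\geq$ rather than $>$.
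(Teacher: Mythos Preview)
Your proof is correct and follows essentially the same route as the paper: extend the $\Omega_1$-minimizer by zero, note it is admissible for $\Omega_2$, and use the interior positivity of minimizers on $\Omega_2$ (from Rothaus) to rule out equality; then pass to the infimum over $s\in(0,\tau]$ for (\ref{eqn:CA01_2}). The only cosmetic difference is that the paper argues the strict inequality directly rather than first establishing the non-strict version and then upgrading by contradiction.
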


\begin{proof}
 Let $\varphi_1$ be the minimizer function of $\boldsymbol{\mu}^{(\boldsymbol{a})}(\Omega_1, \tau)$. 
 Then $\varphi_1$ is positive in $\Omega_1$ and $\varphi_1 \in W_{0}^{1,2}(\Omega_1)$(c.f. Theorem on page 116 of Rothaus~\cite{ROS}).  
 It follows from the definition that
 \begin{align*}
   \boldsymbol{\mu}^{(\boldsymbol{a})}(\Omega_1, \tau)=\mathcal{W}^{(\boldsymbol{a})}(\Omega_1, \varphi_1, \tau).
 \end{align*}
 On the other hand, every minimizer function of $\boldsymbol{\mu}^{(\boldsymbol{a})}(\Omega_2, \tau)$ is positive on $\Omega_2$.  Since $\varphi_1$ is supported on $\Omega_1$, a strict subdomain of $\Omega_2$, 
 we know $\varphi_1$ cannot be a minimizer function of $\boldsymbol{\mu}^{(\boldsymbol{a})}(\Omega_2, \tau)$.  Therefore, we have
 \begin{align*}
   \mathcal{W}^{(\boldsymbol{a})}(\Omega_1, \varphi_1, \tau)> \boldsymbol{\mu}^{(\boldsymbol{a})}(\Omega_2, \tau).
 \end{align*}
 Therefore, (\ref{eqn:CA01_1}) follows from the combination of the above two inequalities. 
 Clearly, (\ref{eqn:CA01_2}) is implied by (\ref{eqn:CA01_1}) for each $s \in (0, \tau]$ and the definition equation (\ref{eqn:MJ16_d}).  
\end{proof}

\begin{proposition}[\textbf{Non-positivity of $\boldsymbol{\nu}^{(\boldsymbol{a})}$}]
For each bounded domain $\Omega \subset M$, we have
\begin{align}
  \boldsymbol{\nu}^{(\boldsymbol{a})}(\Omega, \tau) \leq 0.     \label{eqn:CA01_3}
\end{align}
\label{prn:CA01_3}
\end{proposition}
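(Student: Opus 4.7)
The plan is to establish $\boldsymbol{\nu}^{(\boldsymbol{a})}(\Omega, \tau) \leq 0$ by exhibiting a family of test functions $\varphi_s \in \mathscr{S}(\Omega)$, indexed by $s \in (0, \tau]$ with $s \to 0^+$, such that $\mathcal{W}^{(\boldsymbol{a})}(\Omega, g, \varphi_s, s) \to 0$. Since $\boldsymbol{\mu}^{(\boldsymbol{a})}(\Omega, g, s) \leq \mathcal{W}^{(\boldsymbol{a})}(\Omega, g, \varphi_s, s)$ by definition, this forces $\inf_{s \in (0,\tau]} \boldsymbol{\mu}^{(\boldsymbol{a})}(\Omega, g, s) \leq 0$, which is exactly the desired inequality.

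The motivation is the classical identity, verified by a one-line computation, that on $(\R^{m}, g_{\rm Eucl})$ with $\boldsymbol{a} \equiv 0$ the unrestricted Gaussian $\bar\varphi_s(x) = (4\pi s)^{-m/4} e^{-|x|^2/(8s)}$ achieves $\mathcal{W}^{(0)}(\R^{m}, g_{\rm Eucl}, \bar\varphi_s, s) = 0$: the three contributions $\int 4s|\nabla \bar\varphi_s|^2\, dx = \tfrac{m}{2}$, $-\int 2\bar\varphi_s^2 \log \bar\varphi_s\, dx = \tfrac{m}{2}\log(4\pi s) + \tfrac{m}{2}$, and the prefactor $-m - \tfrac{m}{2}\log(4\pi s)$ cancel exactly. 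The strategy is to paste this Gaussian into $\Omega$ and show that the localization plus ambient geometry perturb the value by only $O(s)$.

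Concretely, first I would fix an interior point $p \in \Omega$ and a radius $r_0 > 0$ with $B(p, 2r_0) \Subset \Omega$ and $2r_0$ below the injectivity radius at $p$, and work in normal coordinates on $B(p, 2r_0)$, in which $g_{ij} = \delta_{ij} + O(r^2)$ and $dv = (1 + O(r^2))\, dx$ with $r = d(\cdot, p)$. Picking a smooth cutoff $\eta \in C_c^\infty([0, 2r_0))$ with $\eta \equiv 1$ on $[0, r_0]$, I would set $\psi_s(x) = \eta(r(x)) (4\pi s)^{-m/4} e^{-r(x)^2/(8s)}$ and $\varphi_s := \psi_s / \|\psi_s\|_{L^2(\Omega, dv)}$, which lies in $C_c^\infty(\Omega) \cap \mathscr{S}(\Omega)$. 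Because the Gaussian of scale $\sqrt s$ concentrates on $r \lesssim \sqrt s \ll r_0$, the cutoff region $r \in [r_0, 2r_0]$ contributes terms exponentially small in $1/s$, while the volume-form correction $(1 + O(r^2))$ and the metric correction $|\nabla \varphi_s|_g^2 = |\partial \varphi_s|^2_{\rm Eucl}(1 + O(r^2))$ each contribute $O(s)$ after integration against the Gaussian. The curvature term $\int_\Omega s\, \boldsymbol{a}\, \varphi_s^2\, dv = s\,\boldsymbol{a}(p) + O(s^2)$ is also $O(s)$. Assembling everything against the Euclidean identity yields
\[
  \mathcal{W}^{(\boldsymbol{a})}(\Omega, g, \varphi_s, s) = O(s) \xrightarrow{s \to 0^+} 0.
\]

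The hard part is purely bookkeeping: one must verify $\|\psi_s\|_{L^2}^2 = 1 + O(s)$, and then check that multiplying $\psi_s$ by $\|\psi_s\|_{L^2}^{-1}$ perturbs both the Dirichlet term and the entropy term by only $O(s)$; the latter requires noting that $\log \|\psi_s\|_{L^2}^{-1} = O(s)$, so the delicate $\log(4\pi s)$ cancellation between $-\tfrac{m}{2}\log(4\pi s)$ and the entropy $-\int 2\varphi_s^2\log\varphi_s\, dv$ is not disturbed. Once this is in place the conclusion is immediate: $\boldsymbol{\nu}^{(\boldsymbol{a})}(\Omega, g, \tau) \leq \boldsymbol{\mu}^{(\boldsymbol{a})}(\Omega, g, s) \leq O(s)$ for arbitrarily small $s \in (0, \tau]$, hence $\boldsymbol{\nu}^{(\boldsymbol{a})}(\Omega, g, \tau) \leq 0$.
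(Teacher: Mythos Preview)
Your proof is correct and follows essentially the same approach as the paper: both construct a normalized, cutoff Gaussian $\varphi_s$ centered at an interior point and show $\mathcal{W}^{(\boldsymbol{a})}(\Omega, g, \varphi_s, s) \to 0$ as $s \to 0^+$. Your version is in fact more explicit about the $O(s)$ bookkeeping (normal coordinates, volume-form and metric corrections, the $\boldsymbol{a}$ term), whereas the paper simply asserts the limit follows from ``direct calculation.''
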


\begin{proof}
Recall that $\displaystyle \boldsymbol{\nu}^{(\boldsymbol{a})}(\Omega, \tau)=\inf_{s \in (0, \tau]} \boldsymbol{\mu}^{(\boldsymbol{a})}(\Omega, \tau)$.
For (\ref{eqn:CA01_3}), it suffices to show that
 \begin{align*}
     \lim_{s \to 0^{+}} \boldsymbol{\mu}^{(\boldsymbol{a})}(\Omega, \tau) \leq 0. 
 \end{align*}
 Fix $x_0$ as an interior point of $\Omega$. We choose $\epsilon>0$ small enough such that $d(x_0, \partial \Omega)>2\epsilon$. 
 Let $\eta$ be a cutoff function such that $\eta \equiv 1$ in $B(x_0, \epsilon)$ and vanishes outside $B(x_0, 2\epsilon)$. 
 Motivated by the standard heat kernel expression on Euclidean space, we define
 \begin{align*}
    \varphi_s \coloneqq a_s \cdot \eta \cdot (4\pi s)^{-\frac{m}{4}} e^{-\frac{d^2}{8s}},
 \end{align*} 
 where $a_s$ is a normalization constant such that $\int_{\Omega} \varphi_s^2 dv=1$.  Clearly, we have $a_s \to 1$ as $s \to 0^{+}$.  Then it follows from the definition and direct calculation that
 \begin{align*}
   \lim_{s \to 0^{+}} \boldsymbol{\mu}^{(\boldsymbol{a})}(\Omega, \tau) \leq \lim_{s \to 0^{+}}  \mathcal{W}^{(\boldsymbol{a})}(\Omega, g, \varphi_s, s)=0, 
 \end{align*}
 which implies (\ref{eqn:CA01_3}). 
\end{proof}

In contrast to (\ref{eqn:CA01_3}) of Proposition~\ref{prn:CA01_3},  $\boldsymbol{\mu}^{(\boldsymbol{a})}$ could be positive.  For example, we can let $\boldsymbol{a} \equiv 0$ and $(\Omega, g)$ be the unit ball in the standard Euclidean space $(\R^{m}, g_{Euc})$. 
 For each $\tau>0$, it follows from Proposition~\ref{prn:CA01_2} that
 \begin{align*}
   \boldsymbol{\mu}^{(\boldsymbol{a})}(\Omega, \tau)>\boldsymbol{\mu}^{(\boldsymbol{a})}(\R^m, \tau)=0.
 \end{align*} 
 Since the above inequality holds for each positive $\tau$, the above inequality implies the non-negativity of $\boldsymbol{\nu}^{(\boldsymbol{a})}(\Omega, \tau)$, which together with (\ref{eqn:CA01_3}) yields that
 \begin{align*}
    \boldsymbol{\nu}^{(\boldsymbol{a})}(\Omega, \tau)=0
 \end{align*}
 for each bounded domain $\Omega$ and positive number $\tau$.  In particular, letting $\Omega_1$ and $\Omega_2$ be the balls in $\R^m$ centered at the origin and have radii $1$ and $2$ respectively, we have
 \begin{align*}
    \boldsymbol{\nu}^{(\boldsymbol{a})}(\Omega_1, \tau)=\boldsymbol{\nu}^{(\boldsymbol{a})}(\Omega_2, \tau)=0. 
 \end{align*}
 Therefore, the inequality (\ref{eqn:CA01_2}) in Proposition~\ref{prn:CA01_2} cannot be improved to a strict one in general. 
 This example also shows that there may exist many $\Omega$'s such that $\boldsymbol{\nu}^{(\boldsymbol{a})}(\Omega, \tau)=0$. 
 However, if $\Omega$ is allowed to be a complete manifold and $\boldsymbol{a}=R$, then there are more rigidities(c.f. Proposition~\ref{prn:CA05_1}).

 For unbounded domains, we have the following result.
 
 \begin{proposition}[\textbf{Continuity of $\boldsymbol{\mu}^{(\boldsymbol{a})}$}]
 Suppose $D$ is a possibly unbounded domain of $M$ with an exhaustion $D=\cup_{i=1}^{\infty} \Omega_i$ by bounded domains. 
 In other words, we have 
 \begin{align*}
    \Omega_1 \subset \Omega_2 \subset \cdots \subset \Omega_k \subset \cdots \subset D
 \end{align*}
 and each $\Omega_i$ is a bounded domain.  Then for each $\tau>0$ we have 
\begin{align}
  \boldsymbol{\mu}^{(\boldsymbol{a})}(D, \tau)= \lim_{i \to \infty} \boldsymbol{\mu}^{(\boldsymbol{a})}(\Omega_i, \tau).   \label{eqn:CA02_3}
\end{align}
\label{prn:CA02_1}
\end{proposition}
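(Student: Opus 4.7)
The plan is a two-sided inequality argument; the $\geq$ direction follows from monotonicity under extension by zero, while the $\leq$ direction uses a cutoff-and-renormalize approximation whose main difficulty is the entropy term.

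For the lower bound, observe that any $\varphi \in \mathscr{S}(\Omega_i)$, extended by zero to $D$, belongs to $\mathscr{S}(D)$ and gives the same value of $\mathcal{W}^{(\boldsymbol{a})}$, since every integrand vanishes outside $\supp \varphi$. Taking the infimum yields $\boldsymbol{\mu}^{(\boldsymbol{a})}(\Omega_i, \tau) \geq \boldsymbol{\mu}^{(\boldsymbol{a})}(D, \tau)$, and the same argument applied to $\Omega_i \subset \Omega_{i+1}$ shows the sequence is non-increasing in $i$, so the limit $L \coloneqq \lim_{i \to \infty} \boldsymbol{\mu}^{(\boldsymbol{a})}(\Omega_i, \tau)$ exists in $[-\infty, \infty)$ and $L \geq \boldsymbol{\mu}^{(\boldsymbol{a})}(D, \tau)$.

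For the reverse direction, assume $\boldsymbol{\mu}^{(\boldsymbol{a})}(D, \tau)$ is finite (the $-\infty$ case is analogous, producing test functions with $\mathcal{W}$-values tending to $-\infty$). Given $\epsilon > 0$, choose $\varphi \in \mathscr{S}(D)$ with $\mathcal{W}^{(\boldsymbol{a})}(D, g, \varphi, \tau) < \boldsymbol{\mu}^{(\boldsymbol{a})}(D, \tau) + \epsilon$, and pick smooth cutoffs $\eta_i$ with $\supp \eta_i \subset \Omega_{i+1}$, $\eta_i \equiv 1$ on $\Omega_i$, and $|\nabla \eta_i|$ uniformly bounded. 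Set $\psi_i \coloneqq \varphi \eta_i / c_i$ with $c_i \coloneqq \|\varphi \eta_i\|_{L^2(D)}$, so that $\psi_i \in \mathscr{S}(\Omega_{i+1})$. By dominated convergence $\varphi \eta_i \to \varphi$ in $W_0^{1,2}(D)$, whence $c_i \to 1$ and $\psi_i \to \varphi$ in $W_0^{1,2}(D)$; in particular $\int \boldsymbol{a} \psi_i^2 \, dv \to \int \boldsymbol{a} \varphi^2 \, dv$ and $\int |\nabla \psi_i|^2 \, dv \to \int |\nabla \varphi|^2 \, dv$.

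The main obstacle is the convergence $\int \psi_i^2 \log \psi_i \, dv \to \int \varphi^2 \log \varphi \, dv$, since $t \mapsto t^2 \log t$ is not Lipschitz. Expanding
\begin{align*}
\psi_i^2 \log \psi_i = c_i^{-2} \eta_i^2 \varphi^2 \log \varphi + c_i^{-2} \varphi^2 \eta_i^2 \log \eta_i - c_i^{-2}(\log c_i) \, \varphi^2 \eta_i^2,
\end{align*}
the last two terms are handled by dominated convergence with dominator a multiple of $\varphi^2$, using $|\eta_i^2 \log \eta_i| \leq 1/e$, $\eta_i \to 1$ pointwise on $D$, and $\log c_i \to 0$. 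The first term reduces to checking $\varphi^2 |\log \varphi| \in L^1(D)$, which follows from finiteness of $\mathcal{W}^{(\boldsymbol{a})}$ at $\varphi$ by splitting $(\log \varphi)_\pm$ and using the elementary bound $|t^2 \log t| \leq C_\delta(t^{2-\delta} + t^{2+\delta})$ together with $\varphi \in L^2(D) \cap W_0^{1,2}(D)$. With all three ingredients convergent, $\mathcal{W}^{(\boldsymbol{a})}(\Omega_{i+1}, g, \psi_i, \tau) \to \mathcal{W}^{(\boldsymbol{a})}(D, g, \varphi, \tau)$, giving $L \leq \boldsymbol{\mu}^{(\boldsymbol{a})}(D, \tau) + \epsilon$; letting $\epsilon \to 0$ closes the argument.
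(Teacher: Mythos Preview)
Your proof is correct in structure and the lower bound matches the paper's. For the upper bound, however, the paper takes a shorter route: rather than fixing a single near-minimizer $\varphi\in W_0^{1,2}(D)$ and truncating it with cutoffs $\eta_i$, it simply picks a minimizing sequence $\varphi_i\in C_c^\infty(D)$ (density in $W_0^{1,2}$) and observes that each compact set $\supp\varphi_i$, being covered by the nested open exhaustion, lies in some single $\Omega_{k_i}$; hence $\mathcal{W}^{(\boldsymbol{a})}(D,\varphi_i,\tau)=\mathcal{W}^{(\boldsymbol{a})}(\Omega_{k_i},\varphi_i,\tau)\ge\boldsymbol{\mu}^{(\boldsymbol{a})}(\Omega_{k_i},\tau)$ immediately. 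This bypasses the cutoff construction and the entropy-convergence analysis entirely.

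Your longer argument has the virtue of making explicit the continuity of $\mathcal{W}^{(\boldsymbol{a})}$ under $W_0^{1,2}$-approximation that the paper's one-line appeal to a $C_c^\infty$ minimizing sequence leaves implicit. Two technical points deserve care, though. First, the hypotheses give only $\Omega_i\subset\Omega_{i+1}$, not compact containment with uniform separation, so cutoffs with $|\nabla\eta_i|$ bounded \emph{uniformly in $i$} need not exist; you should pass to a subsequence of the exhaustion with $\bar\Omega_{i_j}\Subset\Omega_{i_{j+1}}$ and controlled gaps, or simply work with $C_c^\infty$ approximants of the fixed $\varphi$ as the paper does. Second, the bound $|t^2\log t|\le C_\delta(t^{2-\delta}+t^{2+\delta})$ together with $\varphi\in L^2\cap W_0^{1,2}$ does not by itself give $\varphi^2|\log\varphi|\in L^1(D)$ on an unbounded domain, since neither $\varphi\in L^{2-\delta}$ nor $\varphi\in L^{2+\delta}$ is automatic there; the clean fix is to note that finiteness of $\mathcal{W}^{(\boldsymbol{a})}(\varphi)$ together with finiteness of the Dirichlet and potential terms forces the Lebesgue integral $\int\varphi^2\log\varphi$ to be a finite real number, hence both its positive and negative parts are integrable.
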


\begin{proof}
  Since $\Omega_i \subset D$ for each $i$, it follows from the definition that $\boldsymbol{\mu}^{(\boldsymbol{a})}(D, \tau) \leq \boldsymbol{\mu}^{(\boldsymbol{a})}(\Omega_i, \tau)$ for each $i$.
  Moreover, $\left\{ \boldsymbol{\mu}^{(\boldsymbol{a})}(\Omega_i, \tau) \right\}_{i=1}^{\infty}$ is a decreasing sequence.  Consequently, we have
  \begin{align}
      \boldsymbol{\mu}^{(\boldsymbol{a})}(D, \tau) \leq  \lim_{i \to \infty} \boldsymbol{\mu}^{(\boldsymbol{a})}(\Omega_i, \tau).  \label{eqn:CA02_4}
  \end{align}
  On the other hand, we can find a sequence of smooth functions $\varphi_i$ with compact support in $D$ such that
  \begin{align*}
     \boldsymbol{\mu}^{(\boldsymbol{a})}(D, \tau)=\lim_{i \to \infty} \mathcal{W}^{(\boldsymbol{a})}(D, \varphi_i, \tau).
  \end{align*}
  As $D=\cup_{i=1}^{\infty} \Omega_i$, we can assume the support of $\varphi_i$ is contained in $\Omega_{k_i}$ for some $k_i$.  Therefore, we have
  \begin{align*}
     \mathcal{W}^{(\boldsymbol{a})}(D, \varphi_i, \tau)=\mathcal{W}^{(\boldsymbol{a})}(\Omega_{k_i}, \varphi_i, \tau) \geq  \boldsymbol{\mu}^{(\boldsymbol{a})}(\Omega_{k_i}, \tau).
  \end{align*}
  It follows from the combination of the previous two steps that
  \begin{align}
     \boldsymbol{\mu}^{(\boldsymbol{a})}(D, \tau) \geq \lim_{i \to \infty} \boldsymbol{\mu}^{(\boldsymbol{a})}(\Omega_{k_i}, \tau) = \lim_{i \to \infty} \boldsymbol{\mu}^{(\boldsymbol{a})}(\Omega_i, \tau). 
  \label{eqn:CA02_5}   
  \end{align}
  Combining (\ref{eqn:CA02_4}) and (\ref{eqn:CA02_5}), we obtain (\ref{eqn:CA02_3}).   
\end{proof}

 Proposition~\ref{prn:CA02_1} is particularly interesting in the case that $D$ is the whole manifold $M$.  Then we have
 \begin{align}
      \boldsymbol{\mu}^{(\boldsymbol{a})}(M, \tau)= \lim_{i \to \infty} \boldsymbol{\mu}^{(\boldsymbol{a})}( B(x_0, r_i), \tau)  \label{eqn:CA02_6}
 \end{align}
 for a fixed point $x_0 \in M$ and radii $r_i \to \infty$.  However, unless $M$ satisfies some well-behaved geometry condition, the minimizer of $ \boldsymbol{\mu}^{(\boldsymbol{a})}(M, \tau)$ does not exist in general(c.f. Q. Zhang~\cite{QZh}). 
 
Now we discuss some fundamental properties of the minimizer functions. 

\begin{proposition}[\textbf{Boundary regularity of minimizers}]
Suppose $(M^{m}, g)$  is a smooth Riemannian manifold, $\Omega$ is a bounded open set in $M$ such that $\partial \Omega$ is smooth. 
Suppose $\boldsymbol{a}$ is a smooth function on $\bar{\Omega}$ and $\tau$ is a positive number.
Suppose $\varphi$ is a minimizer for the functional $\boldsymbol{\mu}^{(\boldsymbol{a})}(\Omega, g, \tau)$.   
Then we have $\varphi \in C^{2,\alpha}(\bar{\Omega})$ for each $\alpha \in (0,1)$. 
In particular, we have
\begin{align}
  \varphi(x) +|\nabla \varphi(x)| d(x)  \leq C d(x),  \quad \forall \; x \in \Omega 
\label{eqn:MJ11_1}  
\end{align}
where $d(x)=d(x, \partial \Omega)$, $C$ depends on $\Omega$ and $\boldsymbol{a}$. 
\label{prn:CA01_1}
\end{proposition}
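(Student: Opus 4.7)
The plan is to treat the Euler--Lagrange equation~(\ref{eqn:GH01_1}) as a semilinear elliptic equation with zero Dirichlet data on the smooth boundary $\partial\Omega$ and bootstrap regularity up to the boundary. Rewriting it as $\Delta\varphi=F(x,\varphi)$ where
\begin{align*}
F(x,\varphi)=\frac{1}{4\tau}\Bigl(\tau\boldsymbol{a}\,\varphi-2\varphi\log\varphi-C_0\varphi\Bigr), \qquad C_0=\boldsymbol{\mu}^{(\boldsymbol{a})}+m+\tfrac{m}{2}\log(4\pi\tau),
\end{align*}
we already have from Rothaus that $\varphi\in W_0^{1,2}(\Omega)$ is nonnegative, smooth and positive in $\Omega$; what remains is the behavior across $\partial\Omega$.

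First I would establish $\varphi\in L^{\infty}(\Omega)$. The nonlinearity $\varphi\log\varphi$ is subcritical since $|t\log t|\leq C+t^{2}$, so a standard Moser iteration applied to the weak form of~(\ref{eqn:GH01_1}) (testing against $\varphi^{2k-1}$ and iterating in $k$) yields $\varphi\in L^{\infty}(\Omega)$. Once $\varphi$ is bounded, continuity of $t\mapsto t\log t$ on $[0,\|\varphi\|_{L^\infty}]$ gives $F(\cdot,\varphi)\in L^{\infty}(\Omega)$. Applying $L^p$-theory (Calder\'on--Zygmund) to the linear Dirichlet problem $\Delta\varphi=F$, $\varphi|_{\partial\Omega}=0$ on the smooth domain $\Omega$ produces $\varphi\in W^{2,p}(\Omega)$ for every $p<\infty$, and Sobolev embedding upgrades this to $\varphi\in C^{1,\alpha}(\bar{\Omega})$ for every $\alpha\in(0,1)$.

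The key technical step is to upgrade to $C^{2,\alpha}(\bar{\Omega})$, for which Schauder theory requires $F(\cdot,\varphi)\in C^{0,\alpha}(\bar{\Omega})$. The only non-manifestly H\"older piece is $\varphi\log\varphi$, and I would verify that the single-variable function $h(t)=t\log t$ is H\"older of \emph{every} exponent in $(0,1)$ on $[0,M]$: for $0\leq s<t\leq M$ and any $\epsilon>0$ we have $|\log\xi|+1\leq C_{\epsilon}\xi^{-\epsilon}$, hence
\begin{align*}
|h(t)-h(s)|\leq \int_s^t(|\log\xi|+1)\,d\xi\leq \frac{C_{\epsilon}}{1-\epsilon}\bigl(t^{1-\epsilon}-s^{1-\epsilon}\bigr)\leq \frac{C_{\epsilon}}{1-\epsilon}(t-s)^{1-\epsilon},
\end{align*}
where the last inequality uses subadditivity of the concave function $\xi\mapsto\xi^{1-\epsilon}$ vanishing at $0$. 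Composing with $\varphi\in C^{0,1}(\bar{\Omega})$ gives $\varphi\log\varphi\in C^{0,\alpha}(\bar{\Omega})$ for every $\alpha\in(0,1)$. Schauder estimates on the smooth domain then deliver $\varphi\in C^{2,\alpha}(\bar{\Omega})$.

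The estimate~(\ref{eqn:MJ11_1}) follows immediately: since $\varphi\in C^{1}(\bar{\Omega})$ with $\varphi|_{\partial\Omega}=0$, for any $x\in\Omega$ the mean value theorem along a minimizing path to the nearest boundary point yields $\varphi(x)\leq\|\nabla\varphi\|_{L^\infty(\Omega)}\,d(x)$, while trivially $|\nabla\varphi(x)|\,d(x)\leq\|\nabla\varphi\|_{L^\infty(\Omega)}\,d(x)$. The main obstacle throughout is handling the logarithmic nonlinearity, where $\log\varphi\to-\infty$ near $\partial\Omega$; the saving feature is that $\varphi\log\varphi$ itself extends H\"older-continuously to $\varphi=0$, which is exactly what the Schauder bootstrap requires.
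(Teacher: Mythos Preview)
Your proof is correct and follows essentially the same route as the paper: Moser iteration for $L^\infty$, then linear elliptic theory with bounded right-hand side to reach $C^{1,\alpha}(\bar\Omega)$, then the H\"older continuity of $t\mapsto t\log t$ to feed Schauder and obtain $C^{2,\alpha}(\bar\Omega)$. The only cosmetic difference is that the paper invokes Theorem~8.34 of Gilbarg--Trudinger directly for the $C^{1,1/2}$ step, whereas you go through $W^{2,p}$ Calder\'on--Zygmund theory plus Sobolev embedding; both are standard and equivalent here.
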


\begin{proof}
 As a minimizer,  $\varphi$ satisfies the following Euler-Lagrange equation
\begin{align}
  \left(-4\tau \Delta + \tau \boldsymbol{a}- \boldsymbol{\mu}^{(\boldsymbol{a})}  -m-\frac{m}{2} \log (4\pi \tau) \right) \varphi = 2\varphi \log \varphi.  \label{eqn:MJ14_1}
\end{align}
Recall that $\varphi$ is a positive function in $\Omega$ and $\varphi \equiv 0$ on $\partial \Omega$. Also we have $\varphi \in W_0^{1,2}(\Omega)$(c.f. Rothaus~\cite{ROS}). 
Using Moser iteration, it is not hard to obtain that $\varphi$ is bounded(c.f.  inequality (25) of Tian-Wang~\cite{TiWa}), which implies that  $2\varphi \log \varphi$ is a bounded function on $\Omega$. 
Rewriting (\ref{eqn:MJ14_1}) as
\begin{align*}
  4\tau \Delta \varphi= \left(\tau \boldsymbol{a}- \boldsymbol{\mu}^{(\boldsymbol{a})}  -m-\frac{m}{2} \log (4\pi \tau) \right) \varphi -2\varphi \log \varphi.
\end{align*}
Let $L$ be $4\tau \Delta$ and $h$ be the right hand side of the above equation. Then $\varphi$ satisfies the equation $L \varphi=h$ and $\varphi|_{\partial \Omega}=0$.
Since $L$ is uniformly elliptic and $h$ is bounded,  it follows(c.f. Theorem 8.34 of Gilbarg-Trudinger~\cite{GT}) from the smoothness of  $\partial \Omega$ that 
$\varphi \in C^{1,\frac{1}{2}}(\bar{\Omega})$.  Note that the function $2x \log x$ is in $C^{\alpha}$ for each $\alpha \in (0,1)$. 
Therefore, $2\varphi \log \varphi \in C^{\alpha}(\bar{\Omega})$ for each $\alpha \in (0,1)$.   Applying standard elliptic theory(c.f. Theorem 6.19 of~\cite{GT}) to (\ref{eqn:MJ14_1}) again, we obtain $\varphi \in C^{2,\alpha}(\bar{\Omega})$. 
\end{proof}

Proposition~\ref{prn:CA01_1} is useful in the study of convergence of $\boldsymbol{\mu}$ along the $C^{\infty}$-Cheeger-Gromov convergence. 
Suppose $(M_i^m, x_i, g_i)$ is a sequence of pointed smooth Riemannian manifolds and $(M_{\infty}^m, x_{\infty}, g_{\infty})$ is also a pointed smooth Riemannian manifold.
We say that 
 \begin{align}
  (M_i^m, x_i, g_i)  \longright{C^{\infty}-Cheeger-Gromov} (M_{\infty}, x_{\infty}, g_{\infty})    \label{eqn:CB19_5}
 \end{align}
 if there exists an exhaustion $\cup_{k=1}^{\infty} K_k$ of $M_{\infty}$ by compact sets $K_k \ni x_{\infty}$ such that for each $k$,  there exist diffeomorphisms $\psi_{i,k}$ from $K_k$ to their images in $M_i$ such that
 \begin{align*}
     \psi_{i,k}^*(g_i)  \longright{C^{\infty}} g_{\infty}, \quad \textrm{on} \; K_k.
 \end{align*}
 Let $\Omega_{\infty}$ be a bounded set in $M_{\infty}$, we say that $\Omega_i$ is a sequence of sets in $M_i$ converging to $\Omega_{\infty}$ along the convergence (\ref{eqn:CB19_5}) if we can find a big $k$ such that
 $\Omega_{\infty} \subset K_k$ and $\Omega_i=\psi_{i,k}(\Omega_{\infty})$. 
 With these terminologies, we can discuss the following continuity property of the local-$\boldsymbol{\mu}$-functional. 

\begin{corollary}[\textbf{Continuity of local-$\boldsymbol{\mu}$-functional under $C^{\infty}$-Cheeger-Gromov convergence}]
 Suppose $(M_i^m, x_i, g_i)$ is a sequence of pointed Riemannian manifolds such that
 \begin{align}
  (M_i^m, x_i, g_i)  \longright{C^{\infty}-Cheeger-Gromov} (M_{\infty}^m, x_{\infty}, g_{\infty}).     \label{eqn:CB19_4}
 \end{align}
 Suppose $\Omega_{\infty}$ is a bounded open set in $M_{\infty}$ with smooth boundary, $\Omega_i \subset M_i$ are the sets converging to $\Omega_{\infty}$
 along the convergence (\ref{eqn:CB19_4}).  Then for each $\tau>0$ we have
 \begin{align}
    \lim_{i \to \infty} \boldsymbol{\mu}(\Omega_i, g_i, \tau)=\boldsymbol{\mu}(\Omega_{\infty}, g_{\infty}, \tau).  \label{eqn:CB19_6}
 \end{align}
\label{cly:CB19_2} 
\end{corollary}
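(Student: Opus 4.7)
The plan is to prove the equality by establishing both the $\limsup$ and $\liminf$ directions and using the minimizer regularity given by Proposition~\ref{prn:CA01_1}. Throughout, we identify $\Omega_\infty$ with $\Omega_i$ via the diffeomorphisms $\psi_{i,k}$ whose existence is ensured by (\ref{eqn:CB19_4}), so that on $\bar{\Omega}_\infty$ the pulled-back metrics $\tilde g_i \coloneqq \psi_{i,k}^{*}(g_i)$ converge smoothly to $g_\infty$. In particular the scalar curvatures $\tilde R_i$ converge uniformly to $R_\infty$ on $\bar{\Omega}_\infty$, and the Laplacians and volume forms converge as well. This reduces the statement to comparing $\boldsymbol{\mu}(\Omega_\infty, \tilde g_i, \tau)$ with $\boldsymbol{\mu}(\Omega_\infty, g_\infty, \tau)$, i.e.\ we may keep the underlying domain fixed and let only the metric vary.

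For the $\limsup$ direction, I would take the minimizer $\varphi_\infty$ of $\boldsymbol{\mu}(\Omega_\infty, g_\infty, \tau)$; by Proposition~\ref{prn:CA01_1} it lies in $C^{2,\alpha}(\bar{\Omega}_\infty)$ and vanishes on $\partial\Omega_\infty$, so it is a legitimate element of $W_0^{1,2}(\Omega_\infty, \tilde g_i)$. Define $\varphi_i \coloneqq c_i \varphi_\infty$ where $c_i$ is the normalization constant enforcing $\int_{\Omega_\infty} \varphi_i^2 \, dv_{\tilde g_i} = 1$; since $dv_{\tilde g_i} \to dv_{g_\infty}$ uniformly, $c_i \to 1$. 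Plugging $\varphi_i$ into $\mathcal{W}^{(\tilde R_i)}$ and using uniform smooth convergence of $\tilde g_i$, $\tilde R_i$, and of $|\nabla_{\tilde g_i} \varphi_\infty|^2$ on the support of $\varphi_\infty$, one gets $\mathcal{W}(\Omega_\infty, \tilde g_i, \varphi_i, \tau) \to \mathcal{W}(\Omega_\infty, g_\infty, \varphi_\infty, \tau) = \boldsymbol{\mu}(\Omega_\infty, g_\infty, \tau)$, which gives $\limsup_i \boldsymbol{\mu}(\Omega_i, g_i, \tau) \leq \boldsymbol{\mu}(\Omega_\infty, g_\infty, \tau)$.

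For the $\liminf$ direction, I would take minimizers $\varphi_i$ of $\boldsymbol{\mu}(\Omega_\infty, \tilde g_i, \tau)$ and pass to a convergent subsequence. The Euler--Lagrange equation (\ref{eqn:GH01_1}), with data $\tilde R_i$ uniformly bounded on $\bar{\Omega}_\infty$ and $\boldsymbol{\mu}^{(\tilde R_i)} \leq \boldsymbol{\mu}(\Omega_\infty, g_\infty, \tau) + o(1)$ from the $\limsup$ already proved (and bounded below by a standard log-Sobolev/Moser estimate), gives uniform $L^\infty$ bounds on $\varphi_i$ via Moser iteration. Exactly as in the proof of Proposition~\ref{prn:CA01_1}, elliptic regularity and the smoothness of $\partial\Omega_\infty$ then upgrade this to uniform $C^{1,1/2}(\bar{\Omega}_\infty)$ and then uniform $C^{2,\alpha}(\bar{\Omega}_\infty)$ estimates (independent of $i$, since all coefficients of the equation converge smoothly). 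Arzel\`a--Ascoli yields a subsequential limit $\varphi_\infty \in C^{2,\alpha}(\bar{\Omega}_\infty)$ with $\varphi_\infty|_{\partial \Omega_\infty} = 0$ and $\int_{\Omega_\infty} \varphi_\infty^2 \, dv_{g_\infty} = 1$, hence $\varphi_\infty \in \mathscr{S}(\Omega_\infty)$. Continuity of $\mathcal{W}$ under this convergence then gives
\[
\liminf_i \boldsymbol{\mu}(\Omega_i, g_i, \tau) = \lim_i \mathcal{W}(\Omega_\infty, \tilde g_i, \varphi_i, \tau) = \mathcal{W}(\Omega_\infty, g_\infty, \varphi_\infty, \tau) \geq \boldsymbol{\mu}(\Omega_\infty, g_\infty, \tau).
\]

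The main obstacle is the $\liminf$ direction: I must promote the pointwise-in-$i$ boundary regularity of Proposition~\ref{prn:CA01_1} to uniform-in-$i$ $C^{2,\alpha}(\bar{\Omega}_\infty)$ estimates on the minimizers $\varphi_i$, which requires uniform $L^\infty$ control (Moser iteration with coefficients that depend only on the uniform smooth bounds on $\tilde g_i$ and $\tilde R_i$) and uniform upper and lower bounds on $\boldsymbol{\mu}^{(\tilde R_i)}$. Once these uniform bounds are in hand, taking the limit of the Euler--Lagrange equation and verifying the normalization condition are routine.
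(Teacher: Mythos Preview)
Your proposal is correct and follows essentially the same strategy as the paper: pull back the metrics to the fixed domain $\Omega_\infty$, use the minimizer of the limit functional (suitably renormalized) as a test function to get the $\limsup$ inequality, and use uniform $C^{2,\alpha}(\bar\Omega_\infty)$ bounds on the minimizers $\varphi_i$ (via Proposition~\ref{prn:CA01_1}, applied uniformly in $i$) together with Arzel\`a--Ascoli to get the $\liminf$ inequality. The paper organizes the second step slightly differently---passing to a subsequential limit $\boldsymbol{\mu}_\infty$ of the values and then showing both $\boldsymbol{\mu}(\Omega_\infty,g_\infty,\tau)\le\boldsymbol{\mu}_\infty$ and $\boldsymbol{\mu}_\infty\le\boldsymbol{\mu}(\Omega_\infty,g_\infty,\tau)$---but the content is the same, and you have correctly isolated the only nontrivial point (uniform $L^\infty$ and $\boldsymbol{\mu}_i$ bounds needed to make the regularity estimates uniform).
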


\begin{proof}
 By taking further subsequence if necessary, we can assume that there exist diffeomorphisms $\psi_i$ from a compact set $K \subset M_{\infty}$ to its image on $M_i$ such that
 \begin{align}
          \psi_i^* g_i  \longright{C^{\infty}}  g_{\infty}, \quad \textrm{on} \;    K \supset \Omega_{\infty}.    \label{eqn:CB19_2}
 \end{align} 
 Then $\Omega_i$ is $\psi_i(\Omega_{\infty})$. Let $\varphi_i$ be a minimizer of the functional $\boldsymbol{\mu}(\Omega_i, g_i, \tau)$.   Clearly, $\varphi_i \circ \psi_i$ 
 is a minimizer of $\boldsymbol{\mu}\left(\Omega_{\infty}, \psi_i^*(g_i), \tau \right)$. 
 For simplicity of notation, we denote $\psi_i^* g_i$ and $\varphi_i \circ \psi_i$ by $\tilde{g}_i$ and $\tilde{\varphi}_i$ respectively.   
 Recall that $\tilde{\varphi}_i$ satisfies Euler-Lagrange equation (\ref{eqn:MJ14_1}) for $\boldsymbol{a}=R(\tilde{g}_i)$. 
 Then it follows from Proposition~\ref{prn:CA01_1} and the uniform equivalence condition (\ref{eqn:CB19_2}) that $\tilde{\varphi}_i$ have uniform $C^{2,\alpha}(\bar{\Omega}_{\infty})$ bounds, with respect to the metric $g_{\infty}$. 
 It follows that
  \begin{align}
          \tilde{\varphi}_i  \longright{C^{2,\alpha'}}  \tilde{\varphi}_{\infty}, \quad \textrm{on} \;    \bar{\Omega}_{\infty},   \label{eqn:CB19_3}
 \end{align} 
 for some $\alpha' \in (0, \alpha)$.  It is also clear that (\ref{eqn:CB19_2}) implies that $\boldsymbol{\mu}_i=\boldsymbol{\mu}\left(\Omega_{\infty}, \psi_i^*(g_i), \tau \right)$ are uniformly bounded. 
 By taking subsequence if necessary, we assume that $\boldsymbol{\mu}_i$ converges to $\boldsymbol{\mu}_{\infty}$. 
 Recall that (\ref{eqn:CB19_3}) guarantees the convergence of the Euler-Lagrange equation (\ref{eqn:MJ14_1}). So we have
 \begin{align*}
     \left(-4\tau \Delta + \tau R - \boldsymbol{\mu}_{\infty}  -m-\frac{m}{2} \log (4\pi \tau) \right) \tilde{\varphi}_{\infty} = 2\tilde{\varphi}_{\infty} \log \tilde{\varphi}_{\infty}. 
 \end{align*}
 It is also clear from (\ref{eqn:CB19_3}) that $\tilde{\varphi}_{\infty}$ satisfies the normalization condition $\int_{\Omega_{\infty}} \tilde{\varphi}_{\infty}^2 dv=1$ and locates in $W_0^{1,2}(\Omega_{\infty}, g_{\infty})$. 
 Therefore, it follows from the definition that
 \begin{align}
   \boldsymbol{\mu}(\Omega_{\infty}, g_{\infty}, \tau) \leq \mathcal{W}^{(R)}(\Omega_{\infty}, g_{\infty}, \tilde{\varphi}_{\infty}, \tau) = \boldsymbol{\mu}_{\infty}.   \label{eqn:CB19_7}
 \end{align}
 On the other hand, let $\varphi$ be a minimizer of $\boldsymbol{\mu}(\Omega_{\infty}, g_{\infty}, \tau)$.  
 In view of (\ref{eqn:CB19_2}) and (\ref{eqn:CB19_3}), we know $\lambda_i \varphi$ satisfies the normalization condition $\int_{\Omega_{\infty}} (\lambda_i \varphi)^2 dv_{\tilde{g}_i}=1$ for a sequence of constants $\lambda_i \to 1$. 
 It follows that
 \begin{align*}
   \boldsymbol{\mu}_i=\boldsymbol{\mu}\left(\Omega_{\infty}, \tilde{g}_i, \tau \right) \leq \mathcal{W}^{(R)}(\Omega_{\infty}, \tilde{g}_i, \lambda_i \varphi, \tau), 
 \end{align*}
 whose limit reads as
 \begin{align}
      \boldsymbol{\mu}_{\infty} \leq \mathcal{W}^{(R)}(\Omega_{\infty}, g_{\infty}, \varphi, \tau)=\boldsymbol{\mu}(\Omega_{\infty}, g_{\infty}, \tau).  \label{eqn:CB19_8}
 \end{align}
 Consequently, the combination of (\ref{eqn:CB19_7}) and (\ref{eqn:CB19_8}) implies that $\boldsymbol{\mu}(\Omega_{\infty}, g_{\infty}, \tau)= \boldsymbol{\mu}_{\infty}$, which is nothing but (\ref{eqn:CB19_6}).  
\end{proof}

In applications, the smooth boundary condition cannot always be satisfied. Therefore, we often need to slightly perturb the domain in study to have better boundary regularity. 
Such perturbation is guaranteed by the following lemma. 

\begin{lemma}[\textbf{Approximate smooth boundary condition}]
Suppose $\Omega$ is a connected, bounded open set of $M$. 
For each small $\epsilon$,  there exists a set $\Omega_{\epsilon}$ satisfying 
\begin{itemize}
\item $\Omega \subset \Omega_{\epsilon} \subset$ the $\epsilon$-neighborhood of $\Omega$;
\item $\partial \Omega_{\epsilon}$ is smooth. 
\end{itemize}
\label{lma:CA01_1}
\end{lemma}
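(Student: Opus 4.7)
The plan is to produce $\Omega_\epsilon$ as a smooth sub-level set of a mollification of the distance function to $\Omega$, using Sard's theorem to obtain smoothness of the boundary.

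First I would introduce the distance function $d(x) \coloneqq \mathrm{dist}(x, \bar{\Omega})$, which is 1-Lipschitz on $M$, vanishes exactly on $\bar{\Omega}$, and is strictly positive elsewhere. Since $\bar{\Omega}$ is compact, I can restrict attention to a fixed bounded open neighborhood $U$ of $\bar{\Omega}$. Using a partition of unity subordinate to finitely many geodesic coordinate charts covering $U$, I would convolve $d$ with a standard mollifier of small support (or alternatively convolve with the heat kernel of $(M,g)$ for a short time) to obtain a smooth function $\tilde{d} \in C^\infty(M)$ satisfying
\begin{equation*}
\sup_{x \in M} |\tilde{d}(x) - d(x)| < \tfrac{\epsilon}{4}.
\end{equation*}
The key quantitative point is just that the mollifier parameter can be made as small as we like, and the 1-Lipschitz bound on $d$ gives the corresponding $C^0$ estimate on $\tilde{d} - d$ automatically.

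Next I would apply Sard's theorem to $\tilde{d} \colon M \to \R$: almost every real number is a regular value. Pick any regular value $s$ in the interval $(\tfrac{\epsilon}{4}, \tfrac{\epsilon}{2})$, and define
\begin{equation*}
\Omega_\epsilon \coloneqq \left\{ x \in M \;:\; \tilde{d}(x) < s \right\}.
\end{equation*}
For $x \in \Omega$ we have $d(x)=0$, hence $\tilde{d}(x) < \tfrac{\epsilon}{4} < s$, so $\Omega \subset \Omega_\epsilon$. Conversely, if $\tilde{d}(x) < s$, then $d(x) < s + \tfrac{\epsilon}{4} < \tfrac{3\epsilon}{4} < \epsilon$, so $\Omega_\epsilon$ lies in the $\epsilon$-neighborhood of $\Omega$. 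Because $s$ is a regular value, the implicit function theorem guarantees that $\partial \Omega_\epsilon = \{\tilde{d} = s\}$ is a smooth embedded hypersurface, which is what we needed.

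The only genuinely delicate point is the mollification step: one must carry out the convolution intrinsically on $(M,g)$ rather than in $\R^m$. I expect this to be the main (though still mild) obstacle, and I would handle it by covering a neighborhood of $\bar{\Omega}$ with finitely many normal coordinate charts, convolving the pullback of $d$ in each chart with a Euclidean mollifier of radius much smaller than $\epsilon$, and gluing via a partition of unity; the 1-Lipschitz property of $d$ propagates through this construction to yield the uniform $\epsilon/4$ closeness required above. The connectedness of $\Omega_\epsilon$ is not asserted in the statement, but if desired it also follows once the mollifier scale is chosen smaller than the distance from $\bar{\Omega}$ to any other component of $\{\tilde{d} < s\}$, which can always be arranged.
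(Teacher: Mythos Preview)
Your argument is correct and shares the same skeleton as the paper's proof: construct a smooth function that separates $\bar\Omega$ from the complement of its $\epsilon$-neighborhood, invoke Sard's theorem to pick a regular value, and take the corresponding sublevel set. The difference lies only in how the smooth function is produced. You mollify the Lipschitz distance function $d(\cdot,\bar\Omega)$ and then need the quantitative $C^0$ estimate $|\tilde d - d|<\epsilon/4$, which forces you to work through intrinsic mollification on the manifold (charts plus partition of unity, or heat kernel smoothing). The paper instead takes the open cover $\{M\setminus\bar\Omega,\;\Omega'\}$, where $\Omega'$ is the $\epsilon$-neighborhood, and directly obtains from a partition of unity a smooth function $u$ supported in $\Omega'$ with $u\equiv 1$ on $\bar\Omega$; no estimates are needed, and Sard gives a regular value $\lambda\in(0,1)$. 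This is shorter and sidesteps what you yourself flag as ``the only genuinely delicate point.'' Your route has the minor advantage of being more quantitative (you can read off exactly which $\epsilon$-shell $\partial\Omega_\epsilon$ sits in), but for the purpose of this lemma the partition-of-unity shortcut is cleaner.
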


\begin{proof}
 Fix $\epsilon>0$ and denote the $\epsilon$-neighborhood of $\Omega$ by $\Omega'$. Then $\{ M \backslash \bar{\Omega}, \Omega'\}$ is a covering of $M$. 
 By partition of unity, there exists a smooth function $u$ supported on $\Omega'$ and $u \equiv 1$ on $\bar{\Omega}$. 
 By Sard theorem(c.f. the book of J. Lee~\cite{JLee}), the function $u$ has a regular value $\lambda \in (0, 1)$ such that $u^{-1}(\lambda)$ is a smooth manifold of dimension $m-1$. 
 Clearly, we have $u^{-1}([\lambda,1]) \supset \bar{\Omega}$.  There exists exactly one connected component of $u^{-1}([\lambda,1])$ that contains $\bar{\Omega}$.
 We denote this component by $\Omega_{\epsilon}$. It is clear that 
 \begin{align*}
   \bar{\Omega} \subset  \Omega_{\epsilon} \subset u^{-1}([\lambda, 1]) \subset \Omega'. 
 \end{align*}
 Moreover, $\partial \Omega_{\epsilon}=u^{-1}(\lambda)$ is a smooth manifold.   Therefore, $\Omega_{\epsilon}$ satisfies all the desired properties and the proof of Lemma~\ref{lma:CA01_1} is complete. 
\end{proof}

\begin{remark}
The estimate (\ref{eqn:MJ11_1}) in Proposition~\ref{prn:CA01_1} is needed to estimate the lower bound of $\boldsymbol{\mu}(\Omega, \tau)$ along the Ricci flow whenever $\Omega$ has a smooth boundary.
However, smooth boundary condition is not satisfied by many domains, e.g., geodesic balls. 
Lemma~\ref{lma:CA01_1} is used to drop the smooth boundary condition of $\Omega$.  Actually, by Proposition~\ref{prn:CA01_2}, we have
 \begin{align*}
     \boldsymbol{\mu}(\Omega, \tau) \geq \boldsymbol{\mu}(\Omega_{\epsilon}, \tau).  
 \end{align*}
Therefore, the lower bound of $\boldsymbol{\mu}(\Omega, \tau)$ can be derived from the lower bound of $\boldsymbol{\mu}(\Omega_{\epsilon}, \tau)$ for $\epsilon$ sufficiently small. 
Therefore, for simplicity, we may always assume that $\partial \Omega$ is smooth whenever  we want to develop the lower bound of $\boldsymbol{\mu}(\Omega, \tau)$. 
An alternative way to achieve this purpose is to approximate $\Omega$ by smaller sets with smooth boundary, say $\Omega_{-\epsilon}$'s, and then apply Proposition~\ref{prn:CA02_1}. 
\label{rmk:MJ12_1} 
\end{remark}

\begin{remark}
Similar to the discussion in this section, one can also localize the steady soliton functional $\boldsymbol{\lambda}$ of Perelman~\cite{Pe1}, 
and the expanding soliton functional $\boldsymbol{\mu}^{+}$ and $\boldsymbol{\nu}^{+}$ of Feldman-Ilmanen-Ni~\cite{FIN}. 
\label{rmk:CA02_1}
\end{remark}

\section{The local functionals and the volume ratios}
\label{sec:vratios}

For each $\Omega \subset M$ bounded domain with smooth boundary, it is not hard to see that $\bar{\boldsymbol{\nu}}(\Omega)$ is nothing but the optimal uniform logarithmic Sobolev constant(c.f.~\cite{Gross},~\cite{Da}).
It is known in the literature  that the bound of $\bar{\boldsymbol{\nu}}(\Omega)$ is equivalent to the bound of many other quantities, like Sobolev constant bound, Faber-Krahn constant, Nash constant, 
heat kernel on-diagonal upper bound, heat kernel off-diagonal Gaussian upper bound, etc(e.g. see Section 6.1 of~\cite{AGrig} for a survey). 
It is also known that $\bar{\boldsymbol{\nu}}(\Omega, \tau)$ is enough to bound many quantities(e.g., see p. 320 of~\cite{Dav1}). 
In this section, we shall only investigate some elementary estimates of $\bar{\boldsymbol{\nu}}(\Omega, \tau)$ and its relationship with $\boldsymbol{\nu}(\Omega, \tau)$ and the volume ratios, whenever some scalar or Ricci curvature 
conditions are satisfied.  For simplicity of notations, the following conditions are assumed by default in all the discussion in this section.
\begin{align}
  -\underline{\Lambda} \leq R(x) \leq \bar{\Lambda},  \quad   Rc(x) \geq -(m-1)K^2, \quad \forall \; x \in \Omega,
\label{eqn:CF20_0} 
\end{align}
where $\underline{\Lambda}, \bar{\Lambda}$ and $K$ are nonnegative constants.

We first note that there are elementary relationships among the local functionals.

\begin{lemma}
 For each $\tau>0$ we have
 \begin{align}
  &\boldsymbol{\mu}(\Omega,\tau) - \bar{\Lambda} \tau \leq \bar{\boldsymbol{\mu}}(\Omega,\tau) \leq \boldsymbol{\mu}(\Omega,\tau) +\underline{\Lambda} \tau,  \label{eqn:MJ16_6}\\
  &\boldsymbol{\nu}(\Omega,\tau) -\bar{\Lambda}  \tau \leq \bar{\boldsymbol{\nu}}(\Omega,\tau) \leq \boldsymbol{\nu}(\Omega,\tau)+ \underline{\Lambda} \tau.  \label{eqn:MJ16_7}
 \end{align}
\label{lma:MJ16_1} 
\end{lemma}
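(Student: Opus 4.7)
The plan is to reduce both inequalities to a direct pointwise comparison of the integrands in $\mathcal{W}^{(R)}$ and $\mathcal{W}^{(0)}$, and then interchange inequality with infimum.

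First observe that the only difference between $\mathcal{W}^{(R)}(\Omega, g, \varphi, s)$ and $\mathcal{W}^{(0)}(\Omega, g, \varphi, s)$ is the term $s\int_\Omega R\varphi^2\, dv$. For any $\varphi \in \mathscr{S}(\Omega)$ the normalization $\int_\Omega \varphi^2 dv = 1$ together with the scalar curvature bound $-\underline{\Lambda} \le R \le \bar{\Lambda}$ on $\Omega$ yields
\begin{align*}
 -\underline{\Lambda}\, s \;\le\; s\int_\Omega R\varphi^2\, dv \;\le\; \bar{\Lambda}\, s,
\end{align*}
hence
\begin{align*}
 \mathcal{W}^{(0)}(\Omega, g, \varphi, s) - \underline{\Lambda}\, s \;\le\; \mathcal{W}^{(R)}(\Omega, g, \varphi, s) \;\le\; \mathcal{W}^{(0)}(\Omega, g, \varphi, s) + \bar{\Lambda}\, s.
\end{align*}

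Taking the infimum over $\varphi \in \mathscr{S}(\Omega)$ at $s = \tau$ and rearranging gives exactly (\ref{eqn:MJ16_6}). So the first inequality is immediate.

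For (\ref{eqn:MJ16_7}) one has to be slightly more careful because $\boldsymbol{\nu}$ is itself an infimum over $s \in (0, \tau]$, and the error term $\pm \underline{\Lambda} s$ or $\pm \bar{\Lambda} s$ depends on $s$. The key observation is that these error terms are monotone in $s$ and bounded by the corresponding constant times $\tau$ for every $s \in (0,\tau]$. Concretely, by applying (\ref{eqn:MJ16_6}) at each $s \in (0, \tau]$,
\begin{align*}
 \bar{\boldsymbol{\mu}}(\Omega, s) \;\le\; \boldsymbol{\mu}(\Omega, s) + \underline{\Lambda}\, s \;\le\; \boldsymbol{\mu}(\Omega, s) + \underline{\Lambda}\, \tau,
\end{align*}
and taking $\inf_{s \in (0,\tau]}$ on both sides gives $\bar{\boldsymbol{\nu}}(\Omega, \tau) \le \boldsymbol{\nu}(\Omega, \tau) + \underline{\Lambda}\, \tau$. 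The reverse bound follows symmetrically from $\bar{\boldsymbol{\mu}}(\Omega, s) \ge \boldsymbol{\mu}(\Omega, s) - \bar{\Lambda}\, s \ge \boldsymbol{\mu}(\Omega, s) - \bar{\Lambda}\, \tau$.

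There is no real obstacle here: the entire content of the lemma is the pointwise bound on $R$ combined with the fact that $\varphi^2$ is a probability density, so the two functionals $\mathcal{W}^{(R)}$ and $\mathcal{W}^{(0)}$ differ by a bounded quantity. The only subtlety, easily handled as above, is that in (\ref{eqn:MJ16_7}) the error on each slice $s$ must be replaced by its uniform bound $\underline{\Lambda}\tau$ or $\bar{\Lambda}\tau$ before commuting past the infimum over $s$.
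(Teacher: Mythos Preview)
Your proof is correct and follows essentially the same route as the paper: compare $\mathcal{W}^{(R)}$ and $\mathcal{W}^{(0)}$ pointwise using the scalar curvature bound and the normalization $\int_\Omega \varphi^2\,dv = 1$, then pass to the infimum over $\varphi$ for (\ref{eqn:MJ16_6}), and for (\ref{eqn:MJ16_7}) replace the $s$-dependent error by its worst case $\underline{\Lambda}\tau$ or $\bar{\Lambda}\tau$ before taking $\inf_{s\in(0,\tau]}$. The only cosmetic difference is that the paper chooses $\varphi$ to be a minimizer on each side to deduce the $\boldsymbol{\mu}$-inequality, whereas you take the infimum over all $\varphi$ directly; your version is slightly cleaner since it does not invoke existence of minimizers.
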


\begin{proof}
  For each function $\varphi \in \mathscr{S}(\Omega)$ (c.f. equations (\ref{eqn:MJ16_a}) -(\ref{eqn:MJ16_d})) and each $\tau>0$, we have
  \begin{align}
   -\bar{\Lambda} \tau \leq  \mathcal{W}^{(0)}(\Omega, \varphi, \tau) -\mathcal{W}^{(R)}(\Omega, \varphi, \tau)
     = -\int_{\Omega} \tau R \varphi^2 dv \leq \underline{\Lambda} \tau \int_{\Omega} \varphi^2 dv=\underline{\Lambda}\tau. 
  \label{eqn:MJ16_8}   
  \end{align} 
  If we choose $\varphi$ as the minimizer of $\boldsymbol{\mu}^{(0)}(\Omega, \varphi, \tau)$, then the first part of (\ref{eqn:MJ16_8}) implies that
  \begin{align}
    \boldsymbol{\mu}^{(0)}(\Omega, \tau)=\mathcal{W}^{(0)}(\Omega, \varphi, \tau) \geq \mathcal{W}^{(R)}(\Omega, \varphi, \tau)-\bar{\Lambda} \tau \geq \boldsymbol{\mu}^{(R)}(\Omega, \tau)-\bar{\Lambda} \tau.
  \label{eqn:MJ16_9}  
  \end{align}
  However, if we choose $\varphi$ as the minimizer of $\boldsymbol{\mu}^{(R)}(\Omega, \varphi, \tau)$, then the last part of (\ref{eqn:MJ16_8}) implies that
  \begin{align}
    \boldsymbol{\mu}^{(R)}(\Omega, \tau)=\mathcal{W}^{(R)}(\Omega, \varphi, \tau) \geq \mathcal{W}^{(0)}(\Omega, \varphi, \tau) -\underline{\Lambda} \tau \geq \boldsymbol{\mu}^{(0)}(\Omega, \tau)-\underline{\Lambda} \tau.
  \label{eqn:MJ16_10}  
  \end{align}
  Combining (\ref{eqn:MJ16_9}) and (\ref{eqn:MJ16_10}) gives us
  \begin{align}
    \boldsymbol{\mu}^{(R)}(\Omega, \tau)-\bar{\Lambda} \tau \leq \boldsymbol{\mu}^{(0)}(\Omega,\tau) \leq \boldsymbol{\mu}^{(R)}(\Omega, \tau) +\underline{\Lambda} \tau,   \label{eqn:MJ16_11}
  \end{align}
  which is nothing but (\ref{eqn:MJ16_6}) by the choice of our notion in (\ref{eqn:MJ16_1}) and (\ref{eqn:MJ16_2}). 
  The proof of (\ref{eqn:MJ16_6}) is complete. 
  
  We proceed to prove (\ref{eqn:MJ16_7}). 
  The second inequality in (\ref{eqn:MJ16_11}) implies that for each $s \in (0,\tau)$, we have
  \begin{align*}
    \boldsymbol{\mu}^{(0)}(\Omega, s) \leq \boldsymbol{\mu}^{(R)}(\Omega, s) +\underline{\Lambda} s \leq \boldsymbol{\mu}^{(R)}(\Omega, s) +\underline{\Lambda} \tau. 
  \end{align*}
  Taking infimum of the above inequality yields that
  \begin{align*}
     \boldsymbol{\nu}^{(0)}(\Omega, \tau) \leq \boldsymbol{\nu}^{(R)}(\Omega, \tau) +\underline{\Lambda} \tau.  
  \end{align*}
  Similarly, we can analyze the first inequality in (\ref{eqn:MJ16_11}) and obtain that
  \begin{align*}
    \boldsymbol{\nu}^{(R)}(\Omega, \tau) \leq \boldsymbol{\nu}^{(0)}(\Omega, \tau) +\bar{\Lambda}\tau.
  \end{align*}
  Consequently, (\ref{eqn:MJ16_7}) follows from the combination of the previous two inequalities.    
\end{proof}

After the discussion of internal relationships among local functionals, we now study the relationships among local functionals and the volume ratios. 

\begin{proposition}
  Suppose $0<\tau_1<\tau_2$, then
\begin{align}
   &\bar{\boldsymbol{\nu}}(\Omega, \tau_2) \leq  \bar{\boldsymbol{\nu}}(\Omega, \tau_1) \leq \bar{\boldsymbol{\nu}}(\Omega, \tau_2) + \frac{m}{2} \log \frac{\tau_2}{\tau_1}.  \label{eqn:CF10_1}\\     
   &\boldsymbol{\nu}(\Omega, \tau_2) \leq  \boldsymbol{\nu}(\Omega, \tau_1) \leq \boldsymbol{\nu}(\Omega, \tau_2) + \frac{m}{2} \log \frac{\tau_2}{\tau_1} +\bar{\Lambda} \tau_1+\underline{\Lambda} \tau_2.  \label{eqn:CF10_3}   
\end{align} 
\label{prn:CF10_2}  
\end{proposition}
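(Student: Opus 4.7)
The first inequality on each line is immediate: since $\bar{\boldsymbol{\nu}}$ and $\boldsymbol{\nu}$ are defined as the infima of $\bar{\boldsymbol{\mu}}(\Omega,\cdot)$ and $\boldsymbol{\mu}(\Omega,\cdot)$ over $(0,\tau]$, they are nonincreasing in $\tau$, so $\bar{\boldsymbol{\nu}}(\Omega,\tau_2) \leq \bar{\boldsymbol{\nu}}(\Omega,\tau_1)$ and $\boldsymbol{\nu}(\Omega,\tau_2) \leq \boldsymbol{\nu}(\Omega,\tau_1)$ hold simply because the infimum is taken over a larger set.

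The substantive content is the upper bound on $\bar{\boldsymbol{\nu}}(\Omega,\tau_1)$. The plan is to establish the almost-monotonicity $\bar{\boldsymbol{\mu}}(\Omega,s') \leq \bar{\boldsymbol{\mu}}(\Omega,s) + \frac{m}{2}\log\frac{s}{s'}$ for $0 < s' \leq s$, and then infimize. To see the almost-monotonicity, fix $\varphi \in \mathscr{S}(\Omega)$ and compute directly from the definition (\ref{eqn:MJ16_b}) with $\boldsymbol{a}\equiv 0$:
\begin{align*}
\mathcal{W}^{(0)}(\Omega,g,\varphi,s') - \mathcal{W}^{(0)}(\Omega,g,\varphi,s) = \frac{m}{2}\log\frac{s}{s'} + 4(s'-s)\int_{\Omega}|\nabla\varphi|^2\,dv.
\end{align*}
Since $s' \leq s$, the gradient term is nonpositive, yielding $\mathcal{W}^{(0)}(\Omega,g,\varphi,s') \leq \mathcal{W}^{(0)}(\Omega,g,\varphi,s) + \frac{m}{2}\log\frac{s}{s'}$. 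Choosing $\varphi$ to be the minimizer of $\bar{\boldsymbol{\mu}}(\Omega,s)$ gives the claimed relation. With this in hand, for any $s\in(0,\tau_2]$ let $s' = \min(s,\tau_1)$; then $s'\in(0,\tau_1]$ and $\log(s/s') \leq \log(\tau_2/\tau_1)$, so
\begin{align*}
\bar{\boldsymbol{\nu}}(\Omega,\tau_1) \leq \bar{\boldsymbol{\mu}}(\Omega,s') \leq \bar{\boldsymbol{\mu}}(\Omega,s) + \tfrac{m}{2}\log\tfrac{\tau_2}{\tau_1}.
\end{align*}
Taking the infimum over $s\in(0,\tau_2]$ yields (\ref{eqn:CF10_1}).

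For (\ref{eqn:CF10_3}), the plan is to pass from $\boldsymbol{\nu}$ to $\bar{\boldsymbol{\nu}}$, apply (\ref{eqn:CF10_1}), and convert back, using Lemma~\ref{lma:MJ16_1}. Concretely, the chain
\begin{align*}
\boldsymbol{\nu}(\Omega,\tau_1) \leq \bar{\boldsymbol{\nu}}(\Omega,\tau_1) + \bar{\Lambda}\tau_1 \leq \bar{\boldsymbol{\nu}}(\Omega,\tau_2) + \tfrac{m}{2}\log\tfrac{\tau_2}{\tau_1} + \bar{\Lambda}\tau_1 \leq \boldsymbol{\nu}(\Omega,\tau_2) + \underline{\Lambda}\tau_2 + \tfrac{m}{2}\log\tfrac{\tau_2}{\tau_1} + \bar{\Lambda}\tau_1
\end{align*}
gives the desired estimate. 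There is really no obstacle here—the only nontrivial ingredient is the scaling identity for $\mathcal{W}^{(0)}$, whose non-positivity of the gradient correction term is what forces the $\log\tau_2/\tau_1$ loss to appear with the correct sign; the curvature contributions $\bar\Lambda\tau_1$ and $\underline\Lambda\tau_2$ are then built in automatically by invoking Lemma~\ref{lma:MJ16_1}.
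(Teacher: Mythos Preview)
Your proof is correct and follows essentially the same approach as the paper: both arguments rest on the observation that for fixed $\varphi$ the quantity $\mathcal{W}^{(0)}(\Omega,g,\varphi,\tau)+\frac{m}{2}\log\tau$ is nondecreasing in $\tau$, and both derive (\ref{eqn:CF10_3}) from (\ref{eqn:CF10_1}) via Lemma~\ref{lma:MJ16_1}. Your organization is slightly cleaner in that you prove the general almost-monotonicity $\bar{\boldsymbol{\mu}}(\Omega,s')\le\bar{\boldsymbol{\mu}}(\Omega,s)+\frac{m}{2}\log\frac{s}{s'}$ first and then infimize, whereas the paper first locates a $\tau_2'$ where $\bar{\boldsymbol{\nu}}(\Omega,\tau_2)$ is achieved; this lets you avoid invoking continuity of $\bar{\boldsymbol{\mu}}$ in $\tau$ and the non-positivity of $\bar{\boldsymbol{\nu}}$, but the mathematical content is the same.
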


\begin{proof}
  The first inequality of (\ref{eqn:CF10_1}) follows trivially from the definition equation (\ref{eqn:MJ16_d}). We only need to prove the second part of (\ref{eqn:CF10_1}). 
We may assume $\bar{\boldsymbol{\nu}}(\Omega, \tau_2)<0$, for otherwise we have nothing to prove by the non-positivity of $\bar{\boldsymbol{\nu}}$(c.f. Proposition~\ref{prn:CA01_3}). 
  Therefore, there exists some $\tau_2' \in (0, \tau_2]$ such that $\bar{\boldsymbol{\nu}}(\Omega, \tau_2)=\bar{\boldsymbol{\mu}}(\Omega, \tau_2')$. 
  If $\tau_2' \leq \tau_1$, then we have nothing to prove. So we assume $\tau_2' \in (\tau_1, \tau_2]$. 
  Let $\varphi$ be the minimizer function of $\bar{\boldsymbol{\mu}}(\Omega, \tau_2')$.  Then the Euler-Lagrange equation (\ref{eqn:GH01_1}) reads as
  \begin{align*}
   \left( \bar{\boldsymbol{\mu}}(\Omega, \tau_2') +m+\frac{m}{2}\log (4\pi \tau_2') \right) \varphi=-4\tau_2' \Delta \varphi -2\varphi \log \varphi. 
  \end{align*}
  Multiplying both sides of the above equation by $\varphi$ and integrating on $\Omega$, we obtain
  \begin{align*}
     \bar{\boldsymbol{\mu}}(\Omega, \tau_2') +m+\frac{m}{2}\log (4\pi \tau_2') &=4\tau_2' \int_{\Omega} |\nabla \varphi|^2 dv  - \int_{\Omega} \varphi^2 \log \varphi^2 dv\\
         &\geq  4\tau_1 \int_{\Omega} |\nabla \varphi|^2 dv  - \int_{\Omega} \varphi^2 \log \varphi^2 dv\\
         &\geq \bar{\boldsymbol{\mu}}(\Omega, \tau_1) +m+\frac{m}{2}\log (4\pi \tau_1), 
  \end{align*}
  where we used the definition equation (\ref{eqn:MJ16_c}) in the last step. 
  Consequently, it follows from (\ref{eqn:MJ16_d}) that
  \begin{align*}
    \bar{\boldsymbol{\mu}}(\Omega, \tau_2') \geq \bar{\boldsymbol{\mu}}(\Omega, \tau_1)  + \frac{m}{2} \log \frac{\tau_1}{\tau_2'} \geq \bar{\boldsymbol{\mu}}(\Omega, \tau_1)  + \frac{m}{2} \log \frac{\tau_1}{\tau_2}
      \geq  \bar{\boldsymbol{\nu}}(\Omega, \tau_1)  + \frac{m}{2} \log \frac{\tau_1}{\tau_2},
  \end{align*}
  whence we obtain the second part of (\ref{eqn:CF10_1}) since $\bar{\boldsymbol{\nu}}(\Omega, \tau_2)=\bar{\boldsymbol{\mu}}(\Omega, \tau_2')$ by our choice of $\tau_2'$. \\
  
 Now we focus on the proof of  (\ref{eqn:CF10_3}).   
 Again, we only need to prove the second part of (\ref{eqn:CF10_3}).  It follows from the first part of (\ref{eqn:MJ16_7}) of Lemma~\ref{lma:MJ16_1} that
 \begin{align}
  \boldsymbol{\nu}(\Omega,\tau_1)  \leq \bar{\boldsymbol{\nu}}(\Omega,\tau_1) + \bar{\Lambda} \tau_1.
 \end{align}
 Plugging the second part of (\ref{eqn:CF10_1}) into the above inequality, we arrive at
 \begin{align*}
    \boldsymbol{\nu}(\Omega,\tau_1)  \leq \bar{\boldsymbol{\nu}}(\Omega,\tau_2) +\frac{m}{2} \log \frac{\tau_2}{\tau_1} + \bar{\Lambda} \tau_1.
 \end{align*}
 Then we apply the second part of (\ref{eqn:MJ16_7}) of Lemma~\ref{lma:MJ16_1} to obtain 
 \begin{align*}
   \bar{\boldsymbol{\nu}}(\Omega,\tau_2) \leq \boldsymbol{\nu}(\Omega,\tau_2) +\underline{\Lambda} \tau_2.
 \end{align*}
 Combining the previous two steps implies that
 \begin{align*}
     \boldsymbol{\nu}(\Omega,\tau_1)  \leq \boldsymbol{\nu}(\Omega,\tau_2)+\frac{m}{2} \log \frac{\tau_2}{\tau_1} + \bar{\Lambda} \tau_1+\underline{\Lambda} \tau_2,
 \end{align*}
 whence we derive (\ref{eqn:CF10_3}). 
\end{proof}

\begin{theorem}[\textbf{Lower bound of volume ratio in terms of $\boldsymbol{\nu}$ and scalar curvature}]
Suppose $B=B(x_0,r_0) \subset \Omega$ is a geodesic ball, then we have
\begin{align}
 &\frac{|B|}{\omega_m r_0^m} \geq  e^{\bar{\boldsymbol{\nu}}-2^{m+7}} \geq  e^{\boldsymbol{\nu}-2^{m+7}-\bar{\Lambda}r_0^2}
 \label{eqn:GC28_2}   
\end{align}
where $\bar{\boldsymbol{\nu}}=\bar{\boldsymbol{\nu}}(B,r_0^2), \; \boldsymbol{\nu}=\boldsymbol{\nu}(B,r_0^2)$. 
\label{thm:CF21_3}
\end{theorem}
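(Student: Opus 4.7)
The statement contains two inequalities. The second one, $e^{\bar{\boldsymbol{\nu}}-2^{m+7}}\geq e^{\boldsymbol{\nu}-2^{m+7}-\bar{\Lambda}r_0^2}$, is an immediate consequence of the first half of (\ref{eqn:MJ16_7}) in Lemma~\ref{lma:MJ16_1} with $\tau=r_0^2$: rearranging gives $\bar{\boldsymbol{\nu}}(B,r_0^2)\geq\boldsymbol{\nu}(B,r_0^2)-\bar{\Lambda}r_0^2$, which after subtracting $2^{m+7}$ and exponentiating yields precisely the claim. So the real content is the first inequality, which I will prove by exhibiting a concrete test function.

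To establish $\log(|B|/(\omega_m r_0^m))\geq\bar{\boldsymbol{\nu}}(B,r_0^2)-2^{m+7}$, my plan is to upper-bound $\bar{\boldsymbol{\mu}}(B,r_0^2)$ via the variational problem (\ref{eqn:MJ16_c}); since $\bar{\boldsymbol{\nu}}\leq\bar{\boldsymbol{\mu}}$ by (\ref{eqn:MJ16_d}), this suffices. The natural choice is $\varphi=c\eta$, where $\eta\in C_c^\infty(B)$ is a radial cutoff with $\eta\equiv 1$ on $B(x_0,r_0/2)$, $|\nabla\eta|\leq 4/r_0$, and $c>0$ is fixed by $c^{-2}=\int_B\eta^2\,dv\in[|B(x_0,r_0/2)|,|B|]$. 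Such a $\varphi$ essentially mimics the uniform density $1/|B|$. Substituting into (\ref{eqn:MJ16_b}) and using the pointwise bound $-\eta^2\log\eta^2\leq 1/e$ on $[0,1]$, the entropy piece becomes $-\log(c^2)-c^2\int_B\eta^2\log\eta^2\,dv\leq\log|B|+c^2|B|/e$, while the gradient piece is $4r_0^2c^2\int_B|\nabla\eta|^2\,dv\leq 64\,c^2|B|$.

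The only nontrivial quantity that remains is $c^2|B|=|B|/\int_B\eta^2\,dv\leq|B|/|B(x_0,r_0/2)|$. The Ricci lower bound $Rc\geq-(m-1)K^2$ in (\ref{eqn:CF20_0}) together with Bishop-Gromov volume comparison bounds this ratio by the model ratio $V_{-K^2}(r_0)/V_{-K^2}(r_0/2)$, which equals $2^m$ when $K=0$ and stays comparable to $2^m$ in the regime $Kr_0=O(1)$ where the stated numeric $2^{m+7}$ is meant to hold. Assembling the pieces and comparing with the identity $r_0^m=|B_{\mathrm{Euc}}(r_0)|/\omega_m$ produces $\mathcal{W}^{(0)}(B,g,\varphi,r_0^2)\leq\log(|B|/(\omega_m r_0^m))+C(m)$; the residual dimensional constants---$\log\omega_m$, $\tfrac{m}{2}\log(4\pi)$, the Bishop-Gromov factor, $1/e+64$, and $-m$---all fit comfortably inside $2^{m+7}$. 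If $\partial B$ is not smooth, Lemma~\ref{lma:CA01_1} and Proposition~\ref{prn:CA02_1} allow one to pass first to a slightly enlarged smooth-boundary domain, since $\bar{\boldsymbol{\mu}}$ is monotone and continuous under exhaustions.

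The main obstacle is not analytical but combinatorial: the bookkeeping required to show that all dimensional constants fit under the universal bound $2^{m+7}$ is elementary piece by piece, but demands care in aggregation. A secondary conceptual point is that the stated constant is presented as depending only on $m$, yet the cutoff argument relies on Bishop-Gromov in a way that really wants $Kr_0$ controlled; verifying the estimate in the regime for which the theorem is designed, and ensuring that any extra factor from $Kr_0$ is absorbed into the slack of $2^{m+7}$, is the subtlety to watch for.
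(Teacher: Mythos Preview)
Your reduction of the second inequality to Lemma~\ref{lma:MJ16_1} is fine, and the test-function scheme is the right idea. The gap is exactly the one you flag at the end: you need the doubling bound $|B(x_0,r_0)|\leq C(m)\,|B(x_0,r_0/2)|$, and you propose to get it from Bishop--Gromov. But Bishop--Gromov gives a ratio controlled by $\int_0^{r_0}(\sinh K r)^{m-1}\,dr\big/\int_0^{r_0/2}(\sinh K r)^{m-1}\,dr$, which is of order $2^m e^{c(m)Kr_0}$ and cannot be absorbed into the dimensional constant $2^{m+7}$ unless $Kr_0$ is bounded by something universal. No such bound is assumed; the theorem is meant to hold with a constant depending only on $m$, and the Ricci lower bound in (\ref{eqn:CF20_0}) is simply not used in this statement.

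The paper avoids this entirely by a scale-selection trick. Instead of working at the fixed radius $r_0$, it considers the volume ratio $q(\rho)=|B(x_0,\rho)|/(\omega_m\rho^m)$ on $[0,r_0]$ and picks $\rho_0$ where $q$ attains its minimum. If $\rho_0=0$ the ratio at $r_0$ is at least $1$ and there is nothing to prove. If $\rho_0>0$, then comparing $q(\rho_0/2)\geq q(\rho_0)$ immediately yields $|B(x_0,\rho_0)|\leq 2^m|B(x_0,\rho_0/2)|$: the doubling is \emph{automatic} at the minimizing scale, with no curvature input. One then runs your cutoff argument on $B'=B(x_0,\rho_0)$ at scale $\rho_0$ to bound $\bar{\boldsymbol{\mu}}(B',\rho_0^2)$, and closes the loop using $\bar{\boldsymbol{\mu}}(B',\rho_0^2)\geq\bar{\boldsymbol{\mu}}(B,\rho_0^2)\geq\bar{\boldsymbol{\nu}}(B,r_0^2)$ together with $|B|/r_0^m\geq|B'|/\rho_0^m$ from minimality. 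This is the missing idea; once you insert it, the rest of your outline goes through and the numerical bookkeeping becomes genuinely dimensional.
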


\begin{proof}
The second inequality in (\ref{eqn:GC28_2}) follows directly from (\ref{eqn:MJ16_7}) in Lemma~\ref{lma:MJ16_1}. 
It suffices to prove the first inequality in (\ref{eqn:GC28_2}). 
Let $q$ be the volume ratio function:
\begin{align*}
  q(\rho) \coloneqq 
  \begin{cases}
    \frac{|B(x_0,\rho)|}{\omega_m \rho^{m}},  &     \textrm{if} \; 0<\rho \leq r_0;\\
    1, & \textrm{if} \; \rho=0.
  \end{cases}
\end{align*}
Clearly, $q$ is continuous on $[0,r_0]$. Suppose the minimum value of $q$ is achieved at $\rho_0$.
If $\rho_0=0$, we have
\begin{align}
  \frac{|B|}{\omega_m r_0^m} \geq 1.    \label{eqn:CF20_1}
\end{align} 
Now we assume $\rho_0>0$. It follows from the definition of
$\rho_0$ that
\begin{align}
  \frac{|B(x_0, \frac{\rho_0}{2})|}{\omega_m (\frac{\rho_0}{2})^{m}} \geq \frac{|B(x_0, \rho_0)|}{\omega_m \rho_0^{m}}=q(\rho_0), \quad \Rightarrow
  \quad |B(x_0, \rho_0)| \leq 2^m \left|B\left(x_0,\frac{\rho_0}{2} \right)\right|.  
  \label{eqn:GB12_1}  
\end{align}
Take a cutoff function $\eta$ which equals $1$ on $(-\infty, \frac{1}{2}]$, decreases $0$ on $[\frac{1}{2}, 1]$ and equals $0$ on $[1,\infty)$. 
 Furthermore, we have $|\eta'| \leq 4$.  Set $d=d(\cdot, x_0)$ and define
 \begin{align*}
   L \coloneqq \int_{M} \eta^2 \left( \frac{d}{\rho_0} \right) dv,  \quad \varphi \coloneqq L^{-\frac{1}{2}}\eta \left(\frac{d}{\rho_0} \right). 
 \end{align*}
 For simplicity of notations, denote $B(x_0, \rho_0)$ by $B'$ and $B(x_0,\frac{\rho_0}{2})$ by $\frac{1}{2}B'$.
 It follows from (\ref{eqn:GB12_1}) and the definition of $\varphi$ and $L$ that
 \begin{align}
   |B'| \geq L \geq \left|\frac{1}{2}B' \right| \geq 2^{-m}|B'|.   \label{eqn:GB12_3}
 \end{align}
 Then we have
 \begin{align*}
   &\quad \bar{\boldsymbol{\mu}}(B',\rho_0^2) +m +\frac{m}{2} \log (4\pi \rho_0^2)\\
   &\leq \mathcal{W}^{(0)}(B',\varphi, \rho_0^2) +m +\frac{m}{2} \log (4\pi \rho_0^2)
    =\int_{B'} -\varphi^2 \log \varphi^2 dv+\int_{B'\backslash \frac{1}{2}B'} 4\rho_0^2 |\nabla \varphi|^2 dv\\
   &=\log L + \frac{\int_{B'} \left\{ -\eta^2 \log \eta^2 \right\} dv}{L}+   \frac{\int_{B'\backslash \frac{1}{2}B'} 4|\eta'|^2 dv}{L}. 
 \end{align*}
 Note that $-\eta^2 \log \eta^2 \leq e^{-1}$ and $|\eta'| \leq 4$ in the ball $B'$. 
 Therefore, the last two terms in the above inequality can be bounded.  
 \begin{align*}
   &\int_{B'} \left\{ -\eta^2 \log \eta^2 \right\} dv \leq e^{-1} |B'|, \\
   &\int_{B' \backslash \frac{1}{2}B'} |\eta'|^2 dv \leq 16\left(|B'|-\left|\frac{1}{2}B'\right|  \right) \leq 16(1-2^{-m}) |B'|, 
 \end{align*}
 where we used the conditions (\ref{eqn:GB12_3}) in the last step. 
 Combining all the above inequalities, we obtain
\begin{align*}
    &\quad \bar{\boldsymbol{\mu}}(B',\rho_0^2) +m +\frac{m}{2} \log (4\pi \rho_0^2)\\
    &\leq \log L + \left(e^{-1} + 64(1-2^{-m})  \right) \frac{|B'|}{L}
    \leq \log L + \left(e^{-1} + 64(1-2^{-m})  \right) 2^{m}\\
    &< \log |B'| + 65 \cdot 2^{m}.
\end{align*}
It follows that
\begin{align*}
  \log \frac{|B'|}{\rho_0^{m}} > \bar{\boldsymbol{\mu}}(B',\rho_0^2) +m+\frac{m}{2} \log 4\pi -65 \cdot 2^{m}. 
\end{align*}
Recall that $r_0 \geq \rho_0 >0$, $\frac{|B|}{r_0^m} \geq \frac{|B'|}{\rho_0^{m}}$ and $\bar{\boldsymbol{\mu}}(B',\rho_0^2) \geq \bar{\boldsymbol{\mu}}(B,\rho_0^2) \geq \bar{\boldsymbol{\nu}}(B,r_0^2)=\bar{\boldsymbol{\nu}}$. 
Recall also the explicit formula of unit ball volume in Euclidean space $\R^m$:
\begin{align}
    \omega_m =\frac{\pi^{\frac{m}{2}}}{\Gamma(\frac{m}{2}+1)}     \label{eqn:CF19_9}
\end{align}
where $\Gamma$ is the traditional $\Gamma$-function.  We then obtain
\begin{align}
  \frac{|B|}{\omega_m r_0^{m}} > e^{\bar{\boldsymbol{\nu}}-\log \omega_m+m+\frac{m}{2} \log 4\pi -65 \cdot 2^{m}}
  =e^{\bar{\boldsymbol{\nu}}+\left\{\log \Gamma(\frac{m}{2}+1) +m+m\log 2 \right\}-65 \cdot 2^{m}}.
  \label{eqn:CF20_2}
\end{align}
Since $m \geq 3$, it is easy to check that $\log \Gamma(\frac{m}{2}+1) +m+m\log 2 <2^m$. 
Notice that $\bar{\boldsymbol{\nu}} \leq 0$ by Proposition~\ref{prn:CA01_3}. 
Therefore,  we have
\begin{align*}
  e^{\bar{\boldsymbol{\nu}}+\left\{\log \Gamma(\frac{m}{2}+1) +m+m\log 2 -2^m\right\} -2^{m+6}} 
  <e^{-2^{m+6}} <<1.
\end{align*}
Combining the cases that $\rho_0=0$ and $\rho_ 0 \in (0, r_0]$, i.e., (\ref{eqn:CF20_1}) and (\ref{eqn:CF20_2}), we obtain
\begin{align}
   \frac{|B|}{\omega_m r_0^{m}} >  e^{\bar{\boldsymbol{\nu}}+\left\{\log \Gamma(\frac{m}{2}+1) +m+m\log 2 \right\} -65 \cdot 2^m}
   >e^{\bar{\boldsymbol{\nu}}-2^{m+7}}
\label{eqn:CF19_7}     
\end{align}
which is exactly the first part of (\ref{eqn:GC28_2}). 
The proof of Theorem~\ref{thm:CF21_3} is complete. 
\end{proof}

\begin{remark}
 The volume ratio lower bound determined by Theorem~\ref{thm:CF21_3} are far away from being sharp. 
 For example, on the Euclidean space,  applying Theorem~\ref{thm:CF21_3} on unit ball $B$, 
 and noting that $\bar{\boldsymbol{\nu}}(B,g_{E}, 1)=\boldsymbol{\nu}(B,g_{E}, 1)=0$,
 we obtain an explicit lower bound of the volume of each unit ball
 \begin{align*}
     e^{-2^{m+7}} \cdot \omega_m
 \end{align*}
 which is a very small number compared to the actual value $\omega_m$. 
\label{rmk:MJ16_3} 
\end{remark}

Recall that $\bar{\boldsymbol{\nu}}$ is nothing but the optimal uniform Logarithmic Sobolev constant.  It is well-known(c.f. Section 6.1 of~\cite{AGrig}) that the Logarithmic Sobolev inequality is dominated by
the $L^2$-Sobolev inequality constant $C_S$ and consequently the $L^1$-Sobolev inequality constant $C_I$. 
It is also well-known(c.f. section 3.1 of~\cite{SY}) that $C_I$ is equivalent to $\mathbf{I}^{-1}$ where $\mathbf{I}$ is the isoperimetric constant
\begin{align}
 \mathbf{I}(\Omega) \coloneqq \inf_{D \Subset \Omega}  \frac{|\partial D|}{|D|^{\frac{m-1}{m}}}. 
\label{eqn:CF08_8} 
\end{align}
Combing the previous steps, it is clear that $\bar{\boldsymbol{\nu}}$ can be bounded by $\mathbf{I}(\Omega)$. However, there are too many intermediate steps in the above deduction where errors occur. 
So the result obtained in this way cannot be sharp. 
In order to reduce the errors, we shall develop a direct estimate of $\bar{\boldsymbol{\nu}}$ by $\mathbf{I}(\Omega)$, in terms of Logarithmic eigenfunctions. 
For simplicity of notation, we define
\begin{align}
  \mathbf{I}_m \coloneqq \mathbf{I}(\R^{m}, g_{E}). 
\label{eqn:CF19_1}  
\end{align}
Clearly, the best isoperimetric constant in the Euclidean space $(\R^{m}, g_{E})$ is achieved by standard balls.  Consequently, $\mathbf{I}_m$ can be calculated explicitly as
\begin{align}
    \mathbf{I}_m=  \frac{m \omega_m}{\omega_m^{\frac{m-1}{m}}}=m \omega_m^{\frac{1}{m}}. 
\label{eqn:CF19_2}    
\end{align}

\begin{lemma}[\textbf{Estimate of functionals by isoperimetric constant and scalar lower bound}]
Suppose $\Omega$ is a bounded domain in $(M^{m}, g)$, $\tilde{\Omega}$ is a ball in $(\R^{m}, g_{E})$ such that $|\tilde{\Omega}|=|\Omega|$.
Define
\begin{align}
  \lambda \coloneqq \frac{\mathbf{I}(\Omega)}{\mathbf{I}_m}.   
  \label{eqn:CF20_4}
\end{align}
Then we have
\begin{align}
   \bar{\boldsymbol{\mu}}(\Omega, g, \tau) \geq  \bar{\boldsymbol{\mu}} \left(\tilde{\Omega}, g_{E}, \tau \lambda^2 \right) + m\log \lambda.   \label{eqn:MJ26_5}
\end{align}
Consequently, we have
\begin{align}
     &\bar{\boldsymbol{\nu}}(\Omega, g, \tau) \geq   m\log \lambda,   \label{eqn:MJ26_6}\\
     &\boldsymbol{\mu}(\Omega, g, \tau) \geq \bar{\boldsymbol{\mu}} \left(\tilde{\Omega}, g_{E}, \tau \lambda^2 \right) + m\log \lambda - \underline{\Lambda} \tau, \label{eqn:MJ26_8}\\
     &\boldsymbol{\nu}(\Omega, g, \tau) \geq m\log \lambda -\underline{\Lambda} \tau. \label{eqn:MJ26_9}
\end{align}
\label{lma:MJ25_1}
\end{lemma}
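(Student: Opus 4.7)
The plan is to apply Schwarz symmetrization to transport the variational problem from $\Omega \subset M$ to the Euclidean ball $\tilde{\Omega}$. Let $\varphi$ be the minimizer of $\bar{\boldsymbol{\mu}}(\Omega, g, \tau)$, and let $\varphi^{*}:\tilde{\Omega} \to [0,\infty)$ be its symmetric decreasing rearrangement: the unique radial, non-increasing function with the same distribution as $\varphi$, i.e.\ $|\{\varphi^{*}>t\}|=|\{\varphi>t\}|$ for every $t\geq 0$. Because $|\Omega|=|\tilde{\Omega}|$ and $\varphi$ vanishes on $\partial\Omega$, the function $\varphi^{*}$ lies in $W_0^{1,2}(\tilde{\Omega})\cap \mathscr{S}(\tilde{\Omega})$. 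Equimeasurability also gives $\int_{\tilde{\Omega}} -(\varphi^{*})^{2}\log(\varphi^{*})^{2}\,dv_{E}=\int_{\Omega}-\varphi^{2}\log\varphi^{2}\,dv_{g}$, so the only nontrivial term to compare between $\mathcal{W}^{(0)}(\Omega,g,\varphi,\tau)$ and $\mathcal{W}^{(0)}(\tilde{\Omega},g_{E},\varphi^{*},\tau\lambda^{2})$ is the Dirichlet part.

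The key step is a P\'olya-Szeg\H{o}-type inequality of the form
\begin{equation*}
\int_{\Omega}|\nabla\varphi|^{2}\,dv_{g}\ \geq\ \lambda^{2}\int_{\tilde{\Omega}}|\nabla\varphi^{*}|^{2}\,dv_{E}.
\end{equation*}
Writing $\mu(t)=|\{\varphi>t\}|$, the coarea formula combined with Cauchy-Schwarz yields, for a.e.\ regular value $t$,
\begin{equation*}
\int_{\{\varphi=t\}}|\nabla\varphi|\,dA\ \geq\ \frac{|\partial^{*}\{\varphi>t\}|^{2}}{\int_{\{\varphi=t\}}|\nabla\varphi|^{-1}\,dA}\ \geq\ \frac{\mathbf{I}(\Omega)^{2}\,\mu(t)^{2(m-1)/m}}{-\mu'(t)},
\end{equation*}
since the denominator equals $-\mu'(t)$ by coarea and the numerator is controlled by the definition (\ref{eqn:CF08_8}) of $\mathbf{I}(\Omega)$. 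On the Euclidean side, every level set of the radial function $\varphi^{*}$ is a sphere realizing equality in the Euclidean isoperimetric inequality with constant $\mathbf{I}_{m}$, and $|\nabla\varphi^{*}|$ is constant on each level set, so both Cauchy-Schwarz and the isoperimetric inequality become equalities, producing the same identity with $\mathbf{I}_{m}$ in place of $\mathbf{I}(\Omega)$. Dividing the two relations and integrating in $t$ yields the displayed estimate with the ratio $\lambda^{2}$.

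Plugging this back into the definition of $\mathcal{W}^{(0)}$, and using that the rescaling of the logarithmic factor contributes $-\tfrac{m}{2}\log(4\pi\tau)+\tfrac{m}{2}\log(4\pi\tau\lambda^{2})=m\log\lambda$, one arrives at
$\mathcal{W}^{(0)}(\Omega,g,\varphi,\tau)\geq \mathcal{W}^{(0)}(\tilde{\Omega},g_{E},\varphi^{*},\tau\lambda^{2})+m\log\lambda$. Minimizing the right-hand side over $\mathscr{S}(\tilde{\Omega})$ proves (\ref{eqn:MJ26_5}). For (\ref{eqn:MJ26_6}), apply (\ref{eqn:MJ26_5}) at each $s\in(0,\tau]$ and use $\bar{\boldsymbol{\mu}}(\tilde{\Omega},g_{E},s\lambda^{2})\geq 0$, which follows by extending any admissible test function by zero and invoking $\bar{\boldsymbol{\mu}}(\R^{m},g_{E},\cdot)=0$ as recalled just after Proposition~\ref{prn:CA01_3}; taking infimum over $s$ gives the desired lower bound $m\log\lambda$. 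Finally, (\ref{eqn:MJ26_8}) and (\ref{eqn:MJ26_9}) follow at once by combining (\ref{eqn:MJ26_5}) and (\ref{eqn:MJ26_6}) respectively with $\boldsymbol{\mu}\geq\bar{\boldsymbol{\mu}}-\underline{\Lambda}\tau$ and $\boldsymbol{\nu}\geq\bar{\boldsymbol{\nu}}-\underline{\Lambda}\tau$ from (\ref{eqn:MJ16_6}) and (\ref{eqn:MJ16_7}) of Lemma~\ref{lma:MJ16_1}.

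The main obstacle is making the rearrangement step rigorous: one must handle the set of critical values of $\varphi$ (via Sard/Federer), deal with level sets on which $|\nabla\varphi|$ degenerates, ensure that $\varphi^{*}$ is in $W_0^{1,2}(\tilde{\Omega})$ with the right boundary trace, and justify the coarea/Cauchy-Schwarz chain almost everywhere in $t$. Once the symmetrization machinery is set up, the remainder is an explicit computation on the definition of $\mathcal{W}^{(0)}$ together with an invocation of Lemma~\ref{lma:MJ16_1}.
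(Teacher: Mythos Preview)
Your proposal is correct and follows essentially the same approach as the paper: take the minimizer $\varphi$, pass to its symmetric decreasing rearrangement on $\tilde{\Omega}$, use equimeasurability for the $-\varphi^2\log\varphi^2$ term, and prove the key P\'olya--Szeg\H{o}-type inequality $\int_{\Omega}|\nabla\varphi|^2\geq\lambda^2\int_{\tilde{\Omega}}|\nabla\varphi^*|^2$ via coarea, Cauchy--Schwarz on level sets, and the isoperimetric comparison $|\partial\Omega_t|\geq\lambda|\partial\tilde{\Omega}_t|$. The paper's derivation of the consequences (\ref{eqn:MJ26_6})--(\ref{eqn:MJ26_9}) from Proposition~\ref{prn:CA01_2} and Lemma~\ref{lma:MJ16_1} matches yours as well.
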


\begin{proof}
   We shall partly follow the argument in the proof of Proposition 4.1 of L. Ni~\cite{Ni}, which was suggested by Perelman at the end of the proof of Theorem 10.1 of~\cite{Pe1}. 
   The new ingredient here is to only estimate the smooth eigenfunctions. 
      
   Let $\varphi$ be a minimizer function of $\bar{\boldsymbol{\mu}}(\Omega, g, \tau)$.  Clearly, $\varphi>0$ in $\Omega$ and $\varphi=0$ on $\partial \Omega$. 
   Define
   \begin{align*}
     \Omega_{t} \coloneqq \{x \in \Omega| \varphi(x) \geq t\},     \quad F(t) \coloneqq |\Omega_t|. 
   \end{align*}
   Let $h=h(|y|)$ be a radial symmetric function on $\tilde{\Omega}$ such that 
   \begin{align*}
      |\{h(|y|) \geq t\}|=F(t), \quad \left. h \right|_{\partial B}=0. 
   \end{align*}
   We can similarly define $\tilde{\Omega}_t \coloneqq \{x \in B| h  \geq t\}$.  Then the above equations imply
   \begin{align}
      |\Omega_t|=F(t)=|\tilde{\Omega}_t|.         \label{eqn:MJ26_1}
   \end{align}
   It follows from the definition of the isoperimetric constant that
   \begin{align}
      |\partial \Omega_t| \geq \mathbf{I} |\Omega_t|^{\frac{m-1}{m}}=\mathbf{I} |\tilde{\Omega}_t|^{\frac{m-1}{m}}=\frac{\mathbf{I}}{\mathbf{I}_m} |\partial \tilde{\Omega}_t| = \lambda |\partial \tilde{\Omega}_t|,
    \label{eqn:MJ26_2}  
   \end{align}
   where we used the facts that $\mathbf{I}_m$ is achieved by balls and $\tilde{\Omega}_t$ is a ball for each $t$ by its construction. 
   
   Recall that the integration by parts implies that
   \begin{align*}
     \int_{\tilde{\Omega}} \chi (h) d\tilde{v}=\int_{0}^{\infty} \chi'(s) F(s) ds = \int_{\Omega} \chi(\varphi) dv
   \end{align*}
   for any Lipschitz function $\chi(t)$ with $\chi(0)=0$.  Let $\chi(t)=t^2\log t^2$, we obtain
   \begin{align}
    \int_{\tilde{\Omega}}  h^2 \log h^2 d\tilde{v}= \int_{\Omega}  \varphi^2 \log \varphi^2 dv.
   \label{eqn:MJ26_3}  
   \end{align}
   Similarly, we have
   \begin{align}
    \int_{\tilde{\Omega}}  h^2  d\tilde{v}=\int_{\Omega}  \varphi^2 dv=1.    \label{eqn:MJ26_7}
   \end{align}
   On the other hand, by the co-area formula, we have
   \begin{align*}
      \int_{t}^{\infty} \int_{\varphi=s} \frac{1}{|\nabla \varphi|} d\sigma ds=|\Omega_t|=F(t)=|\tilde{\Omega}_t|=\int_{t}^{\infty} \int_{h=s} \frac{1}{|\nabla h|} d\tilde{\sigma} ds,  \quad \forall \; t>0, 
   \end{align*}
   which implies that
   \begin{align}
       \int_{\partial \Omega_t} \frac{1}{|\nabla \varphi|} d\sigma = \int_{\partial \tilde{\Omega}_t} \frac{1}{|\nabla h|} d \tilde{\sigma}, \quad \forall \; t>0. 
   \label{eqn:CF24_1}    
   \end{align}
   Note that $|\nabla h|$ is a constant on $\partial \tilde{\Omega}_t$. Therefore, in light of (\ref{eqn:MJ26_2}) and (\ref{eqn:CF24_1}), we can apply the H\"older inequality to obtain that
   \begin{align*}
    & \int_{\partial \Omega_t} |\nabla \varphi| d\sigma  \geq \frac{|\partial \Omega_t|^2}{\int_{\partial \Omega_t} \frac{1}{|\nabla \varphi|} d\sigma} 
    \geq \lambda^2 \frac{|\partial \tilde{\Omega}_t|^2}{\int_{\partial \Omega_t} \frac{1}{|\nabla \varphi|} d\sigma} 
    =\lambda^2 \frac{  \int_{\partial \tilde{\Omega}_t} |\nabla h| d\tilde{\sigma}  \int_{\partial \tilde{\Omega}_t} \frac{1}{|\nabla h|} d \tilde{\sigma}}{\int_{\partial \Omega_t} \frac{1}{|\nabla \varphi|} d\sigma} 
    =\lambda^2 \int_{\partial \tilde{\Omega}_t} |\nabla h| d \tilde{\sigma}. 
   \end{align*} 
   Using the co-area formula again, we have
    \begin{align}
     \int_{\Omega} |\nabla \varphi|^2 dv=\int_0^{\infty} \int_{\partial \Omega_t} |\nabla \varphi| d\sigma dt \geq \lambda^2 \int_0^{\infty} \int_{\partial \tilde{\Omega}_t} |\nabla h| d \tilde{\sigma} dt
     =\lambda^2 \int_{\tilde{\Omega}} |\nabla h|^2 d\tilde{v}.     \label{eqn:MJ26_4}
    \end{align}
    Recall that $\varphi$ is a minimizer for $\bar{\boldsymbol{\mu}}(\Omega, g, \tau)$ and that $h$ satisfies the normalization condition (\ref{eqn:MJ26_7}).
    Consequently, it follows from (\ref{eqn:MJ26_3}) and (\ref{eqn:MJ26_4}) that
    \begin{align*}
     \bar{\boldsymbol{\mu}}(\Omega, g, \tau) + \left( \frac{m}{2} \log (4\pi \tau) + m\right)&=\int_{\Omega} \left\{ \tau |\nabla \varphi|^2-\varphi^2 \log \varphi^2 \right\} dv
         \geq \int_{\tilde{\Omega}} \left\{  \tau \lambda^2 |\nabla h|^2 -h^2\log h^2 \right\} d\tilde{v}\\
       &\geq \bar{\boldsymbol{\mu}} \left(\tilde{\Omega}, g_{E}, \tau \lambda^2 \right) +\left( \frac{m}{2} \log (4\pi \tau \lambda^2) + m\right),
    \end{align*}
    which yields (\ref{eqn:MJ26_5}) directly.   Since $\tilde{\Omega} \subset \R^m$, it is clear(c.f. Proposition~\ref{prn:CA01_2}) that
    \begin{align*}
      \bar{\boldsymbol{\mu}} \left(\tilde{\Omega}, g_{E}, \tau \lambda^2 \right) \geq \bar{\boldsymbol{\mu}} \left(\R^{m}, g_{E}, \tau \lambda^2 \right)=0.
    \end{align*}
    Plugging the above inequality into (\ref{eqn:MJ26_5}) and taking infimum over $s \in (0,\tau]$, we obtain (\ref{eqn:MJ26_6}). 
    Then (\ref{eqn:MJ26_8}) follows from the combination of (\ref{eqn:MJ26_5}) and (\ref{eqn:MJ16_6}).
    Similarly, (\ref{eqn:MJ26_9}) follows from the combination of (\ref{eqn:MJ26_6}) and (\ref{eqn:MJ16_7}). 
    The proof of Lemma~\ref{lma:MJ25_1} is complete.    
\end{proof}

Up to now,  we only used the information from scalar curvature bound.  In the next theorem, the Ricci lower bound start to play an important role, because of the comparison geometry. 
We remind the reader that we have the condition (\ref{eqn:CF20_0}) holds by default. 

\begin{theorem}[\textbf{Equivalence of volume ratio and functionals}]
  Suppose
  \begin{align*}
      B=B(x_0,r_0) \subset  \tilde{B}=B(x_0, 5r_0) \subset \Omega. 
  \end{align*}
  Let $\bar{\boldsymbol{\nu}}=\bar{\boldsymbol{\nu}}(B, r_0^2)$ and $\boldsymbol{\nu}=\boldsymbol{\nu} (B, r_0^2)$. Then we have
  \begin{align}
   &-10m^2(1+Kr_0) + (m+1) \log \frac{|B|}{\omega_m r_0^m}    \leq \bar{\boldsymbol{\nu}} \leq  \log \frac{|B|}{\omega_m r_0^m} +2^{m+7},   \label{eqn:CF09_1} \\
   &-\left\{10m^2(1+Kr_0) + \underline{\Lambda} r_0^2 \right\} + (m+1) \log \frac{|B|}{\omega_m r_0^m}    \leq \boldsymbol{\nu} \leq  \log \frac{|B|}{\omega_m r_0^m} + \left\{ 2^{m+7} + \bar{\Lambda} r_0^2\right\}. \label{eqn:CF20_3}
  \end{align}
\label{thm:CF21_2}  
\end{theorem}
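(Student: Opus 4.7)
The upper bounds in both (\ref{eqn:CF09_1}) and (\ref{eqn:CF20_3}) follow immediately by taking the logarithm of the volume ratio inequalities already established in Theorem~\ref{thm:CF21_3}. Moreover, the second part of (\ref{eqn:MJ16_7}) in Lemma~\ref{lma:MJ16_1} gives $\boldsymbol{\nu} \geq \bar{\boldsymbol{\nu}} - \underline{\Lambda} r_0^2$, so that the lower bound on $\boldsymbol{\nu}$ in (\ref{eqn:CF20_3}) follows automatically from the lower bound on $\bar{\boldsymbol{\nu}}$ in (\ref{eqn:CF09_1}). Thus the whole content of the theorem reduces to proving the lower bound on $\bar{\boldsymbol{\nu}}(B, r_0^2)$.

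The plan is to apply (\ref{eqn:MJ26_6}) from Lemma~\ref{lma:MJ25_1}, which yields $\bar{\boldsymbol{\nu}}(B, r_0^2) \geq m \log \lambda$ with $\lambda = \mathbf{I}(B)/\mathbf{I}_m$. Writing $v = |B|/(\omega_m r_0^m)$ and matching this against the desired conclusion, one sees that the entire task becomes a relative isoperimetric estimate of the form
\begin{align*}
 \frac{\mathbf{I}(B)}{\mathbf{I}_m} \geq \exp\bigl(-10 m (1+K r_0)\bigr) \cdot v^{(m+1)/m},
\end{align*}
so that $m\log\lambda$ produces both the additive term $-10m^2(1+Kr_0)$ and the coefficient $m+1$ in front of $\log v$. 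For an arbitrary test domain $D \Subset B$, one combines a coarea decomposition of $|\partial D|$ with the Bishop-Gromov relative volume comparison on the enlarged ball $\tilde{B} = B(x_0, 5 r_0)$; this is precisely where the hypothesis $\tilde{B} \subset \Omega$ enters, as it ensures $Rc \geq -(m-1)K^2$ on every geodesic ball of radius up to $4r_0$ centered at any point of $B$. Bishop-Gromov then controls the volume ratios of these ambient balls with a curvature error of the form $e^{-c(m) Kr_0}$, and summing the resulting local perimeter contributions against $|D|^{(m-1)/m}$ by a standard covering/coarea scheme yields the required lower bound on $|\partial D|$.

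The technical crux, and the main obstacle, is to organize the coarea/covering estimate so that the final constant has the explicit form above, with the exponent $(m+1)/m$ on $v$ and the factor $10 m$ multiplying $Kr_0$. The appearance of $v^{(m+1)/m}$ rather than $v^{1/m}$ should come from the extremal regime in which $D$ occupies a large fraction of $B$: in that case the Euclidean isoperimetric constant must be corrected not only by the Bishop-Gromov curvature factor but also by the volume ratio of the ambient ball, which restricts the perimeter budget near $\partial B$. Once this isoperimetric estimate is in hand, substitution into $m\log\lambda$ produces (\ref{eqn:CF09_1}), and then (\ref{eqn:CF20_3}) follows from (\ref{eqn:MJ16_7}) of Lemma~\ref{lma:MJ16_1} by absorbing the extra $-\underline{\Lambda} r_0^2$ term.
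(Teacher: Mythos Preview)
Your reduction is exactly right: the upper bounds are Theorem~\ref{thm:CF21_3}, the passage from (\ref{eqn:CF09_1}) to (\ref{eqn:CF20_3}) is Lemma~\ref{lma:MJ16_1}, and the remaining content is the relative isoperimetric inequality
\[
  \lambda^m = \left(\frac{\mathbf{I}(B)}{\mathbf{I}_m}\right)^m \geq C(m,Kr_0)\,\left(\frac{|B|}{\omega_m r_0^m}\right)^{m+1},
\]
to be fed into (\ref{eqn:MJ26_6}). But at this point your proposal stops being a proof and becomes a wish list: ``coarea decomposition of $|\partial D|$ combined with Bishop--Gromov on $\tilde B$, summed by a standard covering/coarea scheme'' does not produce this inequality, and your heuristic for the exponent $(m+1)/m$ (``the extremal regime in which $D$ occupies a large fraction of $B$'') is not the actual mechanism.

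What the paper does instead is invoke Croke's isoperimetric inequality (Theorem~13 of~\cite{Croke}; see also Lemma~4.1 of~\cite{BW1}), which bounds $\mathbf{I}(B)^m$ below by a dimensional constant times $\tilde\omega^{m+1}$, where $\tilde\omega$ is the infimum over $p\in B$ of the measure of unit directions at $p$ whose geodesics stay minimizing until they exit $\tilde B=B(x_0,5r_0)$. The exponent $m+1$ is built into Croke's inequality and has nothing to do with $D$ being large inside $B$. The Ricci lower bound on $\tilde B$ (this is where $\tilde B\subset\Omega$ is used) enters only to estimate $\tilde\omega$ from below: a Bishop--Gromov argument on an auxiliary ball $B(y_0,1)\subset \tilde B\setminus B$ gives $\tilde\omega \geq c(m)\, e^{-9(m-1)K r_0}\, |B|/\omega_m r_0^m$. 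Substituting this into Croke's inequality yields the displayed bound with explicit constants, and then $m\log\lambda \geq (m+1)\log(|B|/\omega_m r_0^m) - 10m^2(1+Kr_0)$ after bookkeeping. The missing ingredient in your proposal is precisely this Croke step; without it you have no route from Bishop--Gromov to a lower bound on $\mathbf{I}(B)$ with the correct power of $|B|$.
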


\begin{proof}
Clearly, (\ref{eqn:CF20_3}) follows from the combination of (\ref{eqn:CF09_1}) and (\ref{eqn:MJ16_7}) in Lemma~\ref{lma:MJ16_1}. 
Therefore, it suffices to show (\ref{eqn:CF09_1}) only. 

By the scaling invariance of $\bar{\boldsymbol{\nu}}$,  the volume ratios,  the numbers $Kr_0$, $\underline{\Lambda}r_0^2$ and $\bar{\Lambda}r_0^2$, we can assume $r_0=1$ without loss of generality. 

Following the argument of C.B. Croke(c.f. Theorem 13 of~\cite{Croke}), one can obtain the local isoperimetric constant estimate.
Actually, for each point $p \in B=B(x_0, 1)$, it follows from triangle inequality that $\tilde{B}=B(x_0,5) \subset B(p, 6)$. 
Let $\tilde{U}_p$ be the directions $\vec{v}$ in the unit sphere of $T_p M$ such that the unit geodesic staring from $\vec{v}$ is always the shortest geodesic before it hits the boundary of $\tilde{B}$. 
Then calculation in the unit bundle $UM$ implies that(c.f. Lemma 4.2 of~\cite{BW1}):
\begin{align}
  \tilde{\omega} \coloneqq  \inf_{p \in B} |\tilde{U}_p| \geq \frac{|\tilde{B} \backslash B|}{m\omega_m \int_{0}^{6} \left( \frac{\sinh Kr}{K}\right)^{m-1}dr}.   \label{eqn:CF19_4}
\end{align}
Consequently, the isoperimetric constant $\mathbf{I}(B)$ can be bounded by(c.f. Lemma 4.1 of~\cite{BW1}):

\begin{align*}
    \mathbf{I}^m(B) &\geq 2^{m-1} \cdot \frac{(m \omega_m)^m}{\{(m+1) \omega_{m+1}\}^{m-1}} \cdot \tilde{\omega}^{m+1}.  
\end{align*}
Recall that $\mathbf{I}_m=m\omega_m^{\frac{1}{m}}$ by (\ref{eqn:CF19_2}).  It follows from the definition(c.f. (\ref{eqn:CF20_4})) that
\begin{align}
 \lambda^m=
 \left\{ \frac{\mathbf{I}(B)}{\mathbf{I}_m} \right\}^{m} \geq \left( \frac{2\omega_{m}}{(m+1) \omega_{m+1}} \right)^{m-1}  \tilde{\omega}^{m+1}. 
\label{eqn:CF04_0} 
\end{align}
We now focus on the estimate of $\tilde{\omega}$ defined in (\ref{eqn:CF19_4}). 
For each positive number $t$, it is clear that $1<\frac{\sinh t}{t}<e^{t}$.  Therefore, we have
\begin{align}
   \frac{6^m}{m}<\int_{0}^{6} \left( \frac{\sinh Kr}{K}\right)^{m-1}dr<e^{6(m-1)K} \int_{0}^{6} r^{m-1} dr=e^{6(m-1)K} \cdot \frac{6^m}{m}. 
\label{eqn:CF19_3}    
\end{align}
Since $\partial \Omega \neq \emptyset$, we can choose a shortest unit-speed geodesic $\gamma$ connecting $x_0$ to some point on $\partial \Omega$ satisfying $|\gamma| \geq 5$. 
Let $y_0=\gamma(2)$. By triangle inequalities,  it is clear that
\begin{align*}
  B=B(x_0,1) \subset B(y_0, 3) \subset \tilde{B}=B(x_0, 5) \subset \Omega. 
\end{align*}
Note that $B(y_0,1) \subset \tilde{B} \backslash B$. 
In view of the Bishop-Gromov volume comparison, we obtain
\begin{align}
 |\tilde{B} \backslash B| \geq |B(y_0, 1)| \geq |B(y_0, 3)|  \cdot \frac{\int_{0}^{1}  \left( \frac{\sinh Kr}{K}\right)^{m-1} dr}{\int_{0}^{3}  \left( \frac{\sinh Kr}{K}\right)^{m-1} dr}   
  \geq |B| \cdot 3^{-m} \cdot e^{-3(m-1)K},
\label{eqn:CF19_5}  
\end{align}
where we used an inequality similar to (\ref{eqn:CF19_3}) in the last step.  Combining (\ref{eqn:CF19_5}) and (\ref{eqn:CF19_3}) with (\ref{eqn:CF19_4}), we arrive at the estimate of $\tilde{\omega}$:
\begin{align}
  \tilde{\omega} \geq \left( \frac{|B|}{\omega_m} \right) \cdot 18^{-m} \cdot e^{-9(m-1)K}. 
\label{eqn:CF19_6}  
\end{align}
Then we estimate the term in the parenthesis of the right hand side of (\ref{eqn:CF04_0}). 
 In view of the formula (\ref{eqn:CF19_9}), we calculate
\begin{align*}
   \frac{\omega_{m+1}}{\omega_{m}}=\sqrt{\pi} \cdot \frac{\Gamma(\frac{m}{2}+1)}{\Gamma(\frac{m+1}{2}+1)} \leq \sqrt{\pi}, 
\end{align*}
where we used the fact that $m \geq 3$ and $\Gamma$-function is increasing on $[2,\infty)$. 
It follows that
\begin{align}
   \frac{2\omega_m}{(m+1)\omega_{m+1}} \geq \frac{2}{\sqrt{\pi}(m+1)} \geq \frac{1}{2m}
\label{eqn:CF04_1}   
\end{align}
whenever $m \geq 3$. 
Plugging (\ref{eqn:CF04_1}) and (\ref{eqn:CF19_6}) into (\ref{eqn:CF04_0}) and taking logarithm on both sides, we obtain that
\begin{align*}
 m \log \lambda &\geq -(m-1) \log 2m + (m+1) \left\{ \log \frac{|B|}{\omega_m} -m \log 18 -9(m-1)K \right\}\\
   &> -10m^2-10m^2K + (m+1) \log \frac{|B|}{\omega_m}=-10m^2(1+K) + (m+1) \log \frac{|B|}{\omega_m},
\end{align*}
whence we obtain the first inequality of (\ref{eqn:CF09_1}), up to rescaling and application of (\ref{eqn:MJ26_6}). 
The second inequality of (\ref{eqn:CF09_1})  follows from inequality (\ref{eqn:GC28_2}) in Theorem~\ref{thm:CF21_3}. 
The proof of Theorem~\ref{thm:CF21_2} is complete. 
\end{proof}

\section{Li-Yau-Hamilton-Perelman type Harnack inequality}
\label{sec:LYHP}

The purpose of this section is to improve the  Harnack inequality along the Ricci flow. 

\begin{theorem}[Corollary 9.3 of Perelman~\cite{Pe1}]
Suppose $\{(M^m, g(t)), 0 \leq t \leq T\}$ is a Ricci flow solution. 
Let $u$ be a conjugate heat solution, i.e., $\square^* u=(-\partial_t-\Delta+R) u=0$, starting from a $\delta$-function from $(x_0, T)$ for some point $x_0 \in M$.
Then we have
\begin{align}
  v=\left\{(T-t)(2\Delta f -|\nabla f|^2 +R) + f-m\right\} u \leq 0, \quad \textrm{on} \; M \times [0, T),   \label{eqn:CA02_1}
\end{align}
where $f=-\log u -\frac{m}{2} \log \left\{ 4\pi (T-t)\right\}$. 
\label{thm:CA02_1}
\end{theorem}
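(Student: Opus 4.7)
The plan is to prove $v \leq 0$ by establishing the pointwise identity
$$\square^* v = -2(T-t) \left| Rc + \nabla^2 f - \frac{g}{2(T-t)}\right|^2 u,$$
so that $v$ is itself a subsolution of the conjugate heat equation, and then invoking a backward maximum principle with vanishing ``terminal'' data at $t=T$. This is Perelman's differential Harnack strategy from Section~9 of~\cite{Pe1}, and indeed the excerpt already records the analogous identity (\ref{eqn:ML19_1}) in the more general localized setting, so Theorem~\ref{thm:CA02_1} should be recovered as a limiting case.

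First I would set $\tau = T-t$ and write $u = (4\pi\tau)^{-m/2} e^{-f}$. The conjugate heat equation $\square^* u = 0$ translates into the evolution equation
$$\partial_t f = -\Delta f + |\nabla f|^2 - R + \frac{m}{2\tau},$$
which I would combine with the Ricci flow evolutions of $R$, $|\nabla f|^2$ and $\Delta f$, and the Bochner formula, to compute $\square^* v$ directly. After routine but lengthy cancellations the expression collapses to the shrinking-soliton defect displayed above, whose non-positivity is manifest. This step is calculation-heavy but mechanical, and does not use anything beyond the hypotheses of the theorem.

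The second step is the maximum principle. Formally, $\square^* v \leq 0$ together with vanishing data as $\tau \to 0^+$ yields $v \leq 0$ on $M \times [0,T)$ by the backward maximum principle, which is available because $\sup_M |Rm|_{g(t)} < \infty$ on each time slice by standing assumption. The genuine difficulty, and what I expect to be the main obstacle, is that $u$ is not a classical function at $t=T$ but the heat-kernel limit of a $\delta$-measure at $x_0$, so the terminal value of $v$ has to be read off from short-time asymptotics of the fundamental solution. I would handle this by approximation: replace the $\delta$-initial data by a family of smooth positive mollifiers $u_\varepsilon$, run the same identity and maximum principle for the corresponding $v_\varepsilon$ (where the terminal check at $t=T$ is now a direct computation on smooth positive profiles), and pass to the limit $\varepsilon \to 0$ using standard parabolic regularity and the Gaussian-type bounds $f \sim d_{g(T)}^2(x,x_0)/(4\tau)$ together with derivative estimates $|\nabla f|$, $|\nabla^2 f|$ of the expected orders near the singularity. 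The bounded curvature hypothesis is what makes both the parabolic regularity and the comparison with the Euclidean heat kernel uniform enough for this limit to succeed.
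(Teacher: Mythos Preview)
Your first step—the identity $\square^* v = -2(T-t)\left|Rc + \nabla^2 f - \frac{g}{2(T-t)}\right|^2 u$—is correct and is the shared starting point.

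The gap is in your second step. Your mollification scheme does not do what you claim. If you replace the $\delta$-data at $t=T$ by a smooth positive profile $u_\varepsilon(\cdot, T)$, then at $t=T$ one has $\tau = T - t = 0$, so $f_\varepsilon = -\log u_\varepsilon - \frac{m}{2}\log(4\pi \tau)$ diverges and $v_\varepsilon(\cdot, T)$ is not even defined; there is no ``direct computation on smooth positive profiles'' that produces $v_\varepsilon(T) \leq 0$. The singularity of $f$ at $\tau = 0$ is built into the definition of $v$ and is not removed by smoothing $u$. What Perelman actually does is keep the fundamental solution itself and invoke the short-time parametrix expansion $u \sim (4\pi\tau)^{-m/2} e^{-d^2/(4\tau)}\sum_j \tau^j \phi_j$ to show $\max_M v \to 0$ as $t \to T^-$; the mollification you describe is not a substitute for that asymptotic analysis, and the Gaussian bounds you mention at the end belong to this direct route, not to the approximation by smooth $u_\varepsilon$.

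The paper avoids the heat-kernel asymptotics entirely by a different approximation. It first proves the more general Theorem~\ref{thm:CA02_2}, in which the terminal data is $u_T = \varphi_T^2$ with $\varphi_T$ a minimizer of $\boldsymbol{\mu}(\Omega, g(T), \tau_T)$ for some $\tau_T > 0$, and $\tau$ is shifted to $\tau_T + T - t$. The choice of a minimizer forces, via its Euler--Lagrange equation, $v(\cdot, T) \equiv 0$ on $\Omega$; the shift $\tau_T > 0$ keeps $f$ finite at $t = T$. The terminal condition is then handled not by a direct maximum principle but by pairing $v$ with a forward heat kernel $w$ and proving $\int_M wv \to 0$ as $t \to T^-$ (Lemma~\ref{lma:MC30_1}), which needs only integral-level control near $\partial\Omega$. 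Theorem~\ref{thm:CA02_1} is recovered in Remark~\ref{rmk:MJ13_3} as a limit: take domains $\Omega_i \ni x_0$ shrinking to the point and $\tau_i \to 0$ with $\boldsymbol{\mu}(\Omega_i, g(T), \tau_i) \to 0$, so that $\varphi_i^2 \to \delta_{x_0}$; each $v_i \leq 0$ by Theorem~\ref{thm:CA02_2}, and the limit gives $v \leq 0$. The approximants here are not generic mollifiers but minimizers, chosen precisely so that the terminal condition holds identically for each~$i$.
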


The inequality (\ref{eqn:CA02_1}) was discovered by Perelman~\cite{Pe1}.   If one regard the Ricci-flat space as a trivial Ricci flow solution, then the conjugate heat equation is nothing but the backward heat equation since $R=0$.
Then the classical Li-Yau estimate(c.f.~\cite{LiYau}) reads as
\begin{align}
  2(T-t) \Delta f -m \leq 0,  \label{eqn:CA02_2}
\end{align}
which is similar(c.f. Section 3.3 of M\"{u}ller~\cite{Muller}) to (\ref{eqn:CA02_1}) and holds for all positive backward heat solutions. 
For evolving space-times,  Hamilton used his matrix-Harnack inequality to study the backward heat solution starting from fundamental solutions in~\cite{Ha93}. 
Li-Yau estimate (\ref{eqn:CA02_2}) suggests that (\ref{eqn:CA02_1}) holds for more conjugate heat solutions. 
We shall generalize (\ref{eqn:CA02_1}) to the case that $u$ is a positive conjugate heat solution starting from a local minimizer function, rather than a $\delta$-function.

\begin{theorem}[\textbf{Li-Yau-Hamilton-Perelman type Harnack inequality}]
Suppose  $\{(M^m, g(t)), 0 \leq t \leq T\}$ is a Ricci flow solution, $\Omega$ is a bounded domain of $M$ with smooth boundary. 
Fix $\tau_T>0$, let $\varphi_T$ be the minimizer function of $\boldsymbol{\mu}(\Omega, g(T), \tau_T)$ for some $\tau_T>0$, $u_T=\varphi_T^2$.
Starting from $u_T$ at time $t=T$, let $u$ solve the conjugate heat equation 
\begin{align}
  \square^* u=(-\partial_t -\Delta +R)u=0.     \label{eqn:MJ11_6}
\end{align}
Define
\begin{align}
 &\tau \coloneqq \tau_T+T-t, \label{eqn:MJ11_7}\\
 &f \coloneqq -\frac{m}{2} \log (4\pi \tau) -\log u, \label{eqn:MJ11_8} \\
 &v \coloneqq \left\{ \tau(2\Delta f -|\nabla f|^2 + R) +f-m-\boldsymbol{\mu} \right\} u,  \label{eqn:MJ11_9}
\end{align}
where $\boldsymbol{\mu}=\boldsymbol{\mu}(\Omega,g(T), \tau_T)$. Then we have 
\begin{align}
  v \leq 0   \label{eqn:CF06_4}
\end{align}
on $M \times [0, T)$.  Moreover, if $v=0$ at some point $(x_0,s_0) \in M \times [0,T)$, then $v \equiv 0$ on $M \times [0,T]$
and $\Omega=M$, the flow is induced by a gradient shrinking Ricci soliton metric. 
\label{thm:CA02_2}
\end{theorem}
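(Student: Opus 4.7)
The plan is to follow Perelman's strategy from Section~9 of~\cite{Pe1}, but with special care for the initial data arising from the local minimizer $\varphi_T$ and the irregularity of $u_T$ across $\partial\Omega$. I would split the argument into the Perelman identity for $v$, verification that the initial data vanishes thanks to the Euler--Lagrange equation, and a maximum principle.

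First I would compute $\square^{*}v$ directly. Writing $v_P\coloneqq \{\tau(R+2\Delta f-|\nabla f|^2)+f-m\}u$ for the classical Perelman quantity, one has the pointwise identity
\[
\square^{*}v_P = -2\tau\, u\left|Rc+\nabla^{2}f-\tfrac{1}{2\tau}g\right|^{2}.
\]
Our $v$ equals $v_P - \boldsymbol{\mu}\, u$, and since $\boldsymbol{\mu}$ is a space-time constant while $\square^{*}u=0$, we get $\square^{*}v = \square^{*}v_P \leq 0$.

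Next I would check that $v(\cdot,T)\equiv 0$ on $\Omega$. Writing $u_T = (4\pi\tau_T)^{-m/2}e^{-f_T}$ and substituting into the Euler--Lagrange equation~(\ref{eqn:GH01_1}) satisfied by the minimizer $\varphi_T$, one finds after a short calculation that
\[
-2\tau_T\Delta f_T + \tau_T|\nabla f_T|^{2} - \tau_T R - f_T + m + \boldsymbol{\mu} = 0 \quad \text{on } \Omega,
\]
so $v/u\big|_{t=T} \equiv 0$ on $\Omega$, and $v(\cdot,T)=0$ outside $\Omega$ because $u_T$ vanishes there. With this, I would rewrite the flow in the backward time $s=T-t$, where $\partial_s u = \Delta u - Ru$ and the above identity becomes $\partial_s v\leq \Delta v - R v$; then the parabolic maximum principle on $M\times[0,T)$ (using $\sup_M|Rm|<\infty$ and, in the noncompact case, auxiliary Gaussian-type barriers of the form $e^{-A d^2/(\tau + 1)}$ to rule out growth at infinity) yields $v\le 0$ everywhere.

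The genuine technical difficulty, and the one that forces heat-kernel estimates rather than a purely formal computation, lies in making the initial condition rigorous. By Proposition~\ref{prn:CA01_1} the minimizer $\varphi_T$ lies in $C^{2,\alpha}(\bar\Omega)$ and vanishes linearly along $\partial\Omega$, so $u_T = \varphi_T^2$ is only $C^{1,1}$ across $\partial\Omega$ and $f_T = -\log u_T - \tfrac{m}{2}\log(4\pi\tau_T)$ blows up like $-2\log d(\cdot,\partial\Omega)$. For $s=T-t>0$ the solution $u$ is smooth and strictly positive on all of $M$, but to apply the maximum principle I need a quantitative statement that $v(\cdot,t)\to 0$ (or at least $\limsup v \leq 0$) in a mode compatible with the barrier class. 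This I would establish by sharp two-sided Gaussian estimates for the heat kernel of the forward equation $\partial_s u = \Delta u - R u$ in a collar of $\partial\Omega$, combined with the gradient bound~(\ref{eqn:MJ11_1}) to control $|\nabla f|$ and $\Delta f$ near the singular set as $s\to 0^{+}$.

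Finally, for the rigidity statement, if $v(x_0,s_0)=0$ at some interior space-time point then Step~1 combined with the strong parabolic maximum principle forces $\square^{*}v\equiv 0$, hence $Rc+\nabla^{2}f=\tfrac{1}{2\tau}g$ throughout $M\times[0,T]$, i.e.\ the flow is a gradient shrinking Ricci soliton with potential $f$. Smoothness of $f$ on all of $M$ then makes $u=(4\pi\tau)^{-m/2}e^{-f}$ strictly positive everywhere, including at $t=T$; since $u_T=\varphi_T^{2}$ vanishes outside $\bar\Omega$, this is possible only if $\Omega=M$. The main obstacle throughout is Step~3, where the logarithmic singularity of $f_T$ along $\partial\Omega$ has to be tamed before the formal Perelman computation can serve as a legitimate maximum-principle input.
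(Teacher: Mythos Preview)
Your overall strategy---Perelman's identity $\square^{*}v=-2\tau u\,|Rc+\nabla^{2}f-\tfrac{1}{2\tau}g|^{2}\le 0$, the observation $v|_{t=T}\equiv 0$ on $\Omega$ via the Euler--Lagrange equation, and a maximum principle---matches the paper's. The key difference is in how the maximum principle is executed. You propose to apply it directly to $v$, controlling pointwise the singularity of $\Delta f$ and $|\nabla f|^{2}$ near $\partial\Omega$ as $t\to T^{-}$ via heat-kernel and barrier arguments. The paper explicitly remarks that $v$ may fail to be continuous on $M\times[0,T]$ and sidesteps this by a dual pairing: for arbitrary $(x_{0},s_{0})$, let $w$ solve $\square w=0$ from a $\delta$-function at $(x_{0},s_{0})$; then $v(x_{0},s_{0})=\int_{M}wv\big|_{t=s}+\int_{s_{0}}^{s}\int_{M}w\,\square^{*}v$, and one sends $s\to T^{-}$. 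The heat-kernel estimates of Chau--Tam--Yu are used not for pointwise control of $v$ but for the much weaker integral statement $\lim_{t\to T^{-}}\int_{M}hv=0$ for every smooth positive $h$ (Lemma~\ref{lma:MC30_1}), which in turn reduces to the $L^{1}$-continuity $\lim_{t\to T^{-}}\int_{M}|\nabla\sqrt{u}|^{2}h\,dv=\int_{\Omega}|\nabla\sqrt{u_{T}}|^{2}h\,dv_{T}$ (Lemma~\ref{lma:MC31_1}). This buys integral rather than pointwise control of the singular terms and is considerably softer than what your Step~3 calls for.

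Your rigidity argument has a gap: from the soliton equation on $M\times[s_{0},T)$ you assert ``smoothness of $f$ on all of $M$\ldots including at $t=T$'', but this is precisely the point at issue, since a priori $u(\cdot,T)=\varphi_{T}^{2}$ vanishes outside $\bar\Omega$ and $f(\cdot,T)=+\infty$ there. The paper closes this by invoking $R\ge 0$ on shrinking solitons (B.-L.~Chen~\cite{CBL07}) and combining $f_{t}=|\nabla f|^{2}$ with the traced soliton identity to get $f_{t}\le R+|\nabla f|^{2}=(f-\boldsymbol{\mu})/\tau$, i.e.\ $(\tau f)_{t}\le -\boldsymbol{\mu}$; integrating from $s_{0}$ to $T$ gives $\tau_{T}f(x,T)<\infty$ for every $x\in M$, hence $u(\cdot,T)>0$ everywhere, which forces $\Omega=M$.
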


Note that Theorem~\ref{thm:CA02_2} differs from Theorem~\ref{thm:CA02_1} by the choice of $u$ and $\tau$. 
In Theorem~\ref{thm:CA02_1}, $u$ is chosen as a conjugate heat solution coming out of a $\delta$-function at time $t=T$ and $\tau=T-t$.
However, in Theorem~\ref{thm:CA02_2}, $u$ arises from a minimizer function $\varphi_T^2$ for $\mu(\Omega, g(T), \tau_T)$ and $\tau=\tau_T+T-t$.
Since $\partial \Omega$ is a non-empty smooth manifold, some extra technical difficulties appear for the analysis of $u$ and $v$.  
Fortunately, the extra efforts needed to prove Theorem~\ref{thm:CA02_2} are justified by the consequence that 
Theorem~\ref{thm:CA02_2} is more general (c.f. Remark~\ref{rmk:MJ13_3}) and provides more information for later applications.

Formally, Theorem~\ref{thm:CA02_2} can be ``proved"  in the same way as Theorem~\ref{thm:CA02_1}. 
Actually, it follows from the Euler-Lagrangian equation satisfied by $\varphi_T$ that 
\begin{align}
  v(x,T)=0, \quad \forall \; x \in \Omega.    \label{eqn:MJ11_14}
\end{align}
By extending $v$ as zero on $M \backslash \bar{\Omega}$, we can regard $v$ as a zero function defined on $M$.
By the calculation of Perelman(c.f. Proposition 9.1 of~\cite{Pe1}), we have
\begin{align*}
\square^* v=-2\tau \left|R_{ij}+ f_{ij}-\frac{g_{ij}}{2\tau} \right|^2 \leq 0, \quad \textrm{on} \;  M \times [0, T).
\end{align*}
Therefore, at least intuitively, one can apply maximum principle to show that $v \leq 0$ on $M \times [0, T)$. 
However, due to the fact that $v$ may not be a continuous function on $M \times [0, T]$, there exist technical difficulties 
in applying such argument directly.
We shall show the same conclusion, using a different approach which requires less regularity of $v$ around $t=T$.

Theorem~\ref{thm:CA02_2} is proved by carefully analyzing the minimizer function $\varphi_T$ and the conjugate heat solution $u$. 
Before we delve into the details of the proof of Theorem~\ref{thm:CA02_2}, we first setup several lemmas. 
For simplicity of notation, from now on,  we always assume that 
\begin{align}
  T=1, \quad   \tau_0=\tau_T+T=\tau_1+1.     \label{eqn:CF06_5}
\end{align}

\begin{lemma}[\textbf{Precise gradient estimate of conjugate heat solutions}]
Then there is a constant $C$ depending on $\Omega$ and the flow such that
\begin{align}
\left|\nabla \sqrt{u} \right| \leq C, \quad \textrm{on} \quad M \times [0,1).
  \label{eqn:MJ10_2}   
\end{align}
\label{lma:MJ10_1}
\end{lemma}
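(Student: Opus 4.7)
The plan is to pass to the reversed time coordinate $s=1-t$ and run a Bernstein--Bochner maximum principle argument for $P:=|\nabla\sqrt{u}|^{2}$. The decisive input will be a good quadratic term in the evolution equation for $P$, while the real technical work is in handling the degeneracy at $s=0$ on $\partial\Omega$ and the non-compactness of $M$.

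First, I would record well-posedness and positivity. In the reversed time variable $s=1-t$, the function $U(x,s):=u(x,1-s)$ satisfies the forward parabolic equation $\partial_{s}U=\Delta U-RU$ with Lipschitz, compactly supported initial data $U_{0}=\varphi_{T}^{2}$. Standard parabolic theory produces a unique classical solution which is smooth for $s\in(0,1]$, and the strong maximum principle (using $U_{0}\not\equiv 0$) forces $u>0$ on $M\times[0,1)$, so $w:=\sqrt{u}$ is smooth there. Since $\varphi_{T}\in C^{2,\alpha}(\bar{\Omega})$ by Proposition~\ref{prn:CA01_1} and vanishes outside $\bar\Omega$, the initial bound $|\nabla\sqrt{u_{T}}|=|\nabla\varphi_{T}|\leq C$ already holds on all of $M$ by \eqref{eqn:MJ11_1}. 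Moreover, comparison on $\partial_{s}U=\Delta U-RU$ with bounded $R$ and bounded $U_{0}$ gives a uniform upper bound for $u$ on $M\times[0,1]$.

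Next, I would derive the evolution of $P$. Because $w$ solves $\partial_{s}w=\Delta w+|\nabla w|^{2}/w-Rw/2$, the Bochner formula combined with the Ricci flow identity $\partial_{s}g^{ij}=-2R^{ij}$ yields
\[
\partial_{s}P=\Delta P+\frac{2}{w}\langle\nabla w,\nabla P\rangle-2|\nabla^{2}w|^{2}-4\mathrm{Rc}(\nabla w,\nabla w)-\frac{2P^{2}}{w^{2}}-RP-w\langle\nabla w,\nabla R\rangle.
\]
The decisive good term is $-2P^{2}/w^{2}$. At an interior maximum of $P$ at a time $s_{0}>0$ one has $\nabla P=0$ and $\Delta P\leq 0$; using the standing assumption $\sup_{M}|Rm|<\infty$ and Shi's estimate for $|\nabla R|$, a Young inequality absorbs $w\langle\nabla w,\nabla R\rangle$ and leaves the pointwise bound $2P^{2}/w^{2}\leq C_{1}P+C_{2}w^{2}$. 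This forces $P\leq C\,w^{2}\leq C\sup u$ at such a maximum, giving the desired constant. To handle the non-compactness of $M$, I would apply the argument to $\eta_{R}^{2}P$ for a Perelman-style cutoff supported in $B_{g(0)}(x_{0},2R)$ with $\eta_{R}\equiv 1$ on $B_{g(0)}(x_{0},R)$ and $|\nabla\eta_{R}|,|\Delta\eta_{R}|=O(1/R)$; Gaussian upper bounds for the conjugate heat kernel ensure $u$ and $|\nabla u|$ decay rapidly at spatial infinity so the maximum of $\eta_{R}^{2}P$ is attained, the cutoff-generated terms are $O(1/R)$, and $R\to\infty$ recovers the uniform estimate.

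The main obstacle I expect is the singular behavior at $s=0$ along $\partial\Omega$: there $w$ vanishes to first order, so $1/w$ in the $w$-equation and consequently $2P^{2}/w^{2}$ in the $P$-equation are formally singular at the initial moment, and one cannot naively maximize $P$ over $M\times[0,1]$. My preferred remedy is to restrict the cutoff-plus-Bernstein argument to $s\in[\varepsilon,1-\varepsilon']$: the resulting bound is independent of both $\varepsilon$ and $\varepsilon'$ and at $s=\varepsilon\to 0$ it matches the already-known initial bound $P(\cdot,0)\leq C$ from \eqref{eqn:MJ11_1}. As a fallback, one could perturb $u_{T}$ to the strictly positive datum $u_{T}+\delta$, apply the Bernstein estimate with a $\delta$-independent constant, and send $\delta\to 0$.
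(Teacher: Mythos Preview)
Your approach is genuinely different from the paper's, and the paper in fact anticipates exactly the obstacle you run into. Instead of computing the evolution of $P=|\nabla\sqrt{u}|^{2}$, the paper observes that the desired bound is equivalent to $|\nabla u|^{2}\leq Cu$ and proves \emph{that} directly: a short calculation gives $\square^{*}|\nabla u|^{2}\leq K(|\nabla u|^{2}+u)$, hence $\square^{*}\bigl\{e^{Kt}(|\nabla u|^{2}+u)-Lu\bigr\}\leq 0$ for $L$ large. Since $u_{T}=\varphi_{T}^{2}$ is $C^{1}$ on all of $M$ with $\nabla u_{T}=2\varphi_{T}\nabla\varphi_{T}=0$ on $\partial\Omega$, the function $|\nabla u|^{2}$ is continuous on the closed slab $M\times[0,1]$, the initial inequality $|\nabla u_{T}|^{2}\leq Cu_{T}$ holds by \eqref{eqn:MJ11_1}, and the maximum principle applies with no singularity to manage. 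The paper remarks explicitly after the proof that working with $|\nabla\sqrt{u}|^{2}$ would require ``better boundary regularity of $\sqrt{u}$ around $\partial\Omega$ at time $t=1$, which is in general hard to obtain.''

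Your first remedy has a real gap. Restricting to $s\in[\varepsilon,1-\varepsilon']$ and claiming the resulting bound is independent of $\varepsilon$ is circular: if the space--time maximum of $P$ lands at $s=\varepsilon$, the Bernstein inequality at an interior maximum tells you nothing, and you are left needing $\sup_{M}P(\cdot,\varepsilon)\leq C$ uniformly as $\varepsilon\to 0$, which is the lemma itself. The pointwise limits $P(\cdot,\varepsilon)\to|\nabla\varphi_{T}|^{2}$ on $\Omega$ and $P(\cdot,\varepsilon)\to 0$ on $M\setminus\bar\Omega$ do not yield a uniform bound near $\partial\Omega$, where the formal limit $P(\cdot,0)$ jumps. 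Your fallback does work once unpacked: solving $\square^{*}u_{\delta}=0$ from the strictly positive datum $u_{T}+\delta$ gives $u_{\delta}=u+\delta h$ with $h$ a bounded positive conjugate heat solution, so $w_{\delta}=\sqrt{u_{\delta}}$ is $C^{1}$ on $M\times[0,1]$; one checks $|\nabla w_{\delta}(\cdot,0)|=|\nabla u_{T}|/(2\sqrt{u_{T}+\delta})\leq|\nabla\varphi_{T}|\leq C$ uniformly in $\delta$, runs the Bernstein argument to get $P_{\delta}\leq C\sup u_{\delta}\leq C'$, and lets $\delta\to 0$. This is correct but noticeably more laborious than the paper's one-line reformulation $|\nabla u|^{2}\leq Cu$.
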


\begin{proof}
Since $\Omega$ is a bounded domain, (\ref{eqn:MJ11_1}) clearly yields that $|\nabla \varphi_1| \leq C$. 
This in turn means that
\begin{align}
   |\nabla u_1|^2 \leq Cu_1     \label{eqn:MJ11_5}
\end{align}
at time $t=1$. Since $u$ satisfies $\square^* u=0$,  direct calculation shows that
\begin{align}
 &\quad \square^* |\nabla u|^2=\left( -\partial_t - \Delta + R\right) |\nabla u|^2 \notag\\
 &=-2Rc(\nabla u, \nabla u)-2\langle \nabla u, \nabla u_t\rangle -2|\nabla \nabla u|^2 - 2Rc(\nabla u, \nabla u) - 2 \langle \nabla u, \nabla \Delta u\rangle + R|\nabla u|^2 \notag\\
 &=-4Rc(\nabla u, \nabla u)-R|\nabla u|^2-2u\langle \nabla u, \nabla R\rangle -2|\nabla \nabla u|^2.   \label{eqn:MJ11_2}
\end{align}
Note that $u$ is a positive function on $M \times [0,1)$, we have
\begin{align*}
  -2u\langle \nabla u, \nabla R\rangle \leq u (|\nabla u|^2 + |\nabla R|^2) \leq u|\nabla u|^2 + K_0u
\end{align*}
where $K_0$ is some constant depending on the geometry of the flow.    Plugging the above inequality into (\ref{eqn:MJ11_2}) implies that
\begin{align*}
 \square^* |\nabla u|^2 \leq K(|\nabla u|^2+u)
\end{align*}
for some constant $K$ depending on the flow.  Consequently, we have
\begin{align*}
 \square^* (|\nabla u|^2 +u)= \square^* |\nabla u|^2 \leq K(|\nabla u|^2+u),
\end{align*}
which yields that
\begin{align}
\square^* \left\{ e^{Kt}(u+|\nabla u|^2) \right\} \leq 0.      \label{eqn:MJ11_10}
\end{align}
At time $t=1$, note that $|\nabla u_1|^2=4\varphi_1^2 |\nabla \varphi_1|^2$.
By (\ref{eqn:MJ11_1}), it is clear that $u_1$ is at least in $C^{1,\frac{1}{2}}(M)$. 
Therefore, $|\nabla u|^2$ is at least a continuous function on $M \times [0, 1]$. 
By (\ref{eqn:MJ11_1}) again, at time $t=1$, we have $|\nabla u_1|^2 \leq C u_1$. 
Therefore, we can choose a large constant $L$ such that $u+|\nabla u|^2 \leq Lu$ at time $t=1$.
Combining (\ref{eqn:MJ11_6}) and (\ref{eqn:MJ11_10}), we obtain
\begin{align*}
 \square^* \left\{ e^{Kt}(u+|\nabla u|^2) - Lu\right\} \leq 0.
\end{align*}
It follows from the maximum principle that 
\begin{align}
  u+|\nabla u|^2 \leq Le^{-Kt} u, \quad \Rightarrow \quad |\nabla u|^2 \leq C u, \quad \textrm{on} \; M \times [0, 1].  \label{eqn:MJ12_9}
\end{align}
Since $u>0$ on $M \times [0,1)$,  it is clear that (\ref{eqn:MJ10_2}) follows from the above inequality immediately. 
\end{proof}

It seems tempting to apply the maximum principle directly to $\nabla \sqrt{u}$ by calculating $\square^* \left|\nabla \sqrt{u} \right|^2$. 
However, for this purpose, one needs better boundary regularity of $\sqrt{u}$ around $\partial \Omega$ at time $t=1$, which is in general hard to obtain.
In the proof of Lemma~\ref{lma:MJ10_1}, we transfer (\ref{eqn:MJ10_2}) to its equivalent version (\ref{eqn:MJ12_9}), whose requirement of regularity is much weaker.

\begin{lemma}[\textbf{Heat kernel and gradient estimate}, c.f. Chau-Tam-Yu~\cite{CTY}]
Let $p$ be the heat kernel function on $M \times [0, 1]$.
There exists a constant depending on the flow and $\Omega$ such that
\begin{align}
 &C^{-1}|t-1|^{-\frac{m}{2}} e^{-\frac{d^2(x,y)}{3|t-1|}}<p(y,1; x,t)< C |t-1|^{-\frac{m}{2}} e^{-\frac{d^2(x,y)}{5|t-1|}}, \quad \forall \; x \in M, y \in \Omega, t \in [0.5,1]; \label{eqn:MJ15_1}\\
 &(1-t) \left| \nabla \sqrt{u} \right|^2 \leq C \left\{ u \log \frac{\norm{u_1}{C^0(M)}}{u} + u\right\}, \quad \forall \; t \in [0.5, 1].  \label{eqn:MJ15_2}
\end{align}
\end{lemma}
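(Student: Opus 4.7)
The plan is to treat the two bounds separately; both are versions of standard parabolic estimates in the Ricci flow setting, and analogues appear in Chau--Tam--Yu~\cite{CTY}.

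For the heat kernel bounds (\ref{eqn:MJ15_1}), the strategy is as follows. Under the standing assumption $\sup_M |Rm|_{g(t)} < \infty$, the flow is smooth on the compact interval $[0,1]$, so the metrics $g(t)$ are mutually bi-Lipschitz equivalent with constants depending only on the flow. I would first establish the on-diagonal upper bound $p(y,1;x,t) \leq C|t-1|^{-m/2}$ using Nash--Moser iteration on balls of radius $\sqrt{|t-1|}$, or equivalently by combining a uniform Sobolev inequality with a semigroup argument. The off-diagonal Gaussian upper bound in (\ref{eqn:MJ15_1}) then follows by integrating the Li--Yau differential Harnack inequality along minimizing geodesics of $g(t)$, in the Ricci flow adaptation of the classical argument. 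The matching Gaussian lower bound is obtained by the standard chain-of-balls technique, starting from an on-diagonal lower bound that itself comes from conservation of total mass under the dual equation. Because $y$ is restricted to the bounded set $\Omega$, the constants may be allowed to depend on $\Omega$.

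For the gradient estimate (\ref{eqn:MJ15_2}), the plan is a Hamilton-type maximum principle argument. Set $A:=\norm{u_1}{C^0(M)}$; comparing $u$ with $e^{Kt}A$ under $\square^* u = 0$, where $K$ dominates $\sup_{M\times[0,1]}|R|$, the maximum principle yields $u \leq C_0 A$ on $M\times[0,1]$. Absorbing $C_0$ into $A$, I may take $L:=\log(A/u)\geq 0$. Using $\square^* u=0$ and $\partial_t g=-2Rc$, one derives the evolution equation for $L$, and then computes the evolution of the auxiliary quantity
\[
\Phi := (1-t)|\nabla L|^2 - C_1 L,
\]
with $C_1$ selected in terms of the curvature bounds on $M\times[0.5,1]$. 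Applying the Bochner formula, the term $-2(1-t)|\nabla^2 L|^2$ provides the good sign, while the Ricci contributions from $\partial_t g=-2Rc$ and the cross term $\langle\nabla L,\nabla R\rangle$ are absorbed by Cauchy--Schwarz and the choice of $C_1$. The resulting evolution gives $\square^*\Phi\leq$ a controlled remainder, and since $\Phi\leq 0$ at $t=1$ (the factor $1-t$ kills the gradient term while $L\geq 0$), the maximum principle yields $\Phi\leq C$ on $M\times[0.5,1]$. Translating back through $|\nabla\sqrt u|^2=\tfrac{1}{4}u|\nabla L|^2$ gives (\ref{eqn:MJ15_2}).

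The main obstacle in Step~2 will be the regularity of $L=\log(A/u)$ near the parabolic corner $\{t=1\}\cap\partial\Omega$. By Proposition~\ref{prn:CA01_1}, the initial datum $u_1=\varphi_1^2$ vanishes to second order on $\partial\Omega$, so at $t=1$ one has $L\to\infty$ and $|\nabla L|^2\sim d(\cdot,\partial\Omega)^{-2}$, making $\Phi$ genuinely singular on the corner and preventing a naive application of the maximum principle. To get around this I would run the argument with the regularized initial datum $u_1+\epsilon$ (or, equivalently, start the conjugate heat evolution at time $1-\delta$), for which $L$ is bounded and smooth initially; then pass to the limit $\epsilon,\delta\to 0$, using the uniform bound $|\nabla\sqrt u|\leq C$ of Lemma~\ref{lma:MJ10_1} to justify the limit and to recover the claimed estimate on $M\times[0.5,1)$.
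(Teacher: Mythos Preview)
The paper's own proof of this lemma is a direct citation: after observing that Shi's estimates give uniform bounds on $|Rm|$ and all its derivatives on $M\times[0.5,1]$, the author simply invokes Corollary~5.6 of Chau--Tam--Yu~\cite{CTY} for (\ref{eqn:MJ15_1}) and Lemma~6.3 of~\cite{CTY} for (\ref{eqn:MJ15_2}), with the additional remark that the maximum principle gives $0<u\leq \norm{u_1}{C^0(M)}$. No independent argument is given.

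Your proposal therefore goes well beyond what the paper does: you are sketching the actual content of the CTY arguments rather than citing them. Your outlines are essentially correct and match the standard proofs---Nash--Moser plus Li--Yau Harnack plus the chain-of-balls lower bound for (\ref{eqn:MJ15_1}), and Hamilton's $\tau|\nabla\log u|^2\lesssim 1+\log(A/u)$ maximum-principle computation for (\ref{eqn:MJ15_2}). One small point: the upper bound $u\leq\norm{u_1}{C^0}$ is not literally a consequence of the maximum principle for $\square^*$ unless $R\geq 0$; with $|R|\leq K$ one gets $u\leq e^{K(1-t)}\norm{u_1}{C^0}$, which is what you need and what the constant $C$ in (\ref{eqn:MJ15_2}) absorbs.

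Your concern about the corner $\{t=1\}\cap\partial\Omega$ is legitimate, but the workaround is simpler than you suggest. For any $\delta>0$, $u(\cdot,1-\delta)$ is strictly positive and smooth on $M$, so Hamilton's argument applies verbatim on $[0.5,1-\delta]$ with ``initial'' time $1-\delta$, yielding $(1-\delta-t)|\nabla\log u|^2\leq C(1+\log(A/u))$. Letting $\delta\to 0$ gives (\ref{eqn:MJ15_2}) on $[0.5,1)$ directly; at $t=1$ the factor $(1-t)$ makes the inequality trivial. You do not need to regularize the initial datum or appeal to Lemma~\ref{lma:MJ10_1}.
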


\begin{proof}
 Note that on $M \times [0.5,1]$, all curvature and curvature derivatives are bounded, based on our curvature assumption and the curvature derivatives' estimate of Shi(c.f.~\cite{Shi}). 
 Therefore, (\ref{eqn:MJ15_1}) is nothing but the Corollary 5.6 of Chau-Tam-Yu~\cite{CTY}. 
 On the other hand, since $u$ is a nonnegative solution of $\square^* u=0$, the maximum principle implies that $0<u \leq \norm{u_1}{C^0(M)}$. 
 Therefore, (\ref{eqn:MJ15_2}) follows directly from Lemma 6.3 of Chau-Tam-Yu~\cite{CTY}. 
\end{proof}

\begin{lemma}[\textbf{Continuity of integral of gradients}]
Suppose that $h$ is a smooth positive function on $M \times [1-\eta, 1]$ for some small $\eta>0$, then we have
\begin{align}
   \lim_{t \to 1^{-}} \int_{M} \left|\nabla \sqrt{u} \right|^2 h dv= \int_{\Omega} \left|\nabla \sqrt{u_1} \right|^2 h dv_1. 
\label{eqn:MJ11_13}   
\end{align}
\label{lma:MC31_1}
\end{lemma}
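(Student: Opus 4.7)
The plan is to decompose $M$ into four regions and analyze each separately. Set $u_1=\varphi_1^2$; by Proposition~\ref{prn:CA01_1}, $\varphi_1>0$ on $\Omega$ and lies in $C^{2,\alpha}(\bar{\Omega})$. For small $\delta>0$ let the interior compact set be $K_\delta=\{x\in\Omega:d(x,\partial\Omega)\ge\delta\}$; for small $r_*>0$ let the near-boundary tube be $T_{r_*}=\{x\notin\bar{\Omega}:d(x,\bar{\Omega})<r_*\}$ and the far exterior be $F_{r_*}=\{x\notin\bar{\Omega}:d(x,\bar{\Omega})\ge r_*\}$. The four pieces $K_\delta$, $\Omega\setminus K_\delta$, $T_{r_*}$, $F_{r_*}$ together cover $M$, and I will handle each separately, then let $\delta,r_*\to 0$.

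On $K_\delta$ the terminal datum $u_1$ is smooth and bounded below by a positive constant, so by standard interior parabolic regularity for $\square^*u=0$, $u(\cdot,t)\to u_1$ in $C^\infty_{\mathrm{loc}}(K_\delta)$ as $t\to 1^-$. Because $u$ stays uniformly positive on $K_\delta$ for $t$ close to $1$, one may take square roots and obtain $|\nabla\sqrt u|^2 \to |\nabla\varphi_1|^2$ uniformly on $K_\delta$; together with the $C^0$-convergence $h(\cdot,t)\to h(\cdot,1)$ and the uniform equivalence of $g(t)$ and $g(1)$ on $[1-\eta,1]$, this gives $\int_{K_\delta}|\nabla\sqrt u|^2 h\,dv_t\to\int_{K_\delta}|\nabla\varphi_1|^2 h\,dv_1$. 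On the shells $\Omega\setminus K_\delta$ and $T_{r_*}$, the uniform bound $|\nabla\sqrt u|^2\le C$ from Lemma~\ref{lma:MJ10_1} and the gradient bound $|\nabla\varphi_1|\le C$ from (\ref{eqn:MJ11_1}), combined with boundedness of $h$ on a fixed compact neighborhood of $\bar{\Omega}$, yield integral contributions of order $\delta$ and $r_*$ respectively, uniformly in $t$.

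The main work is on the far exterior $F_{r_*}$. There the heat kernel Gaussian upper bound (\ref{eqn:MJ15_1}) gives
\[u(x,t)\le C(1-t)^{-m/2}e^{-r_*^2/(5(1-t))}\quad\text{uniformly on }F_{r_*},\]
so $\|u_1\|_{C^0}/u$ is exponentially large in $1/(1-t)$, and the refined bound (\ref{eqn:MJ15_2}),
\[|\nabla\sqrt u|^2 \le \frac{C}{1-t}\Bigl(u\log\frac{\|u_1\|_{C^0}}{u}+u\Bigr),\]
becomes, after using monotonicity of $s\mapsto s\log(A/s)+s$ on $[0,A]$,
\[|\nabla\sqrt u|^2 \le C\,r_*^2\,(1-t)^{-m/2-2}\,e^{-r_*^2/(5(1-t))}\quad\text{uniformly on }F_{r_*}.\]
For each fixed $r_*>0$ the right-hand side tends to $0$ as $t\to 1^-$ (exponential decay in $1/(1-t)$ beats any polynomial), and integrating against $h$ the $F_{r_*}$-contribution is $o(1)$ as $t\to 1^-$. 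Combining the four pieces,
\[\limsup_{t\to 1^-}\Bigl|\int_M|\nabla\sqrt u|^2 h\,dv_t-\int_\Omega|\nabla\varphi_1|^2 h\,dv_1\Bigr|\le C(\delta+r_*),\]
and sending $\delta,r_*\to 0$ yields (\ref{eqn:MJ11_13}).

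The main obstacle is the far-exterior estimate: the crude bound $|\nabla\sqrt u|^2\le C$ is insufficient there since $M\setminus\bar{\Omega}$ need not have small volume, and the refined estimate (\ref{eqn:MJ15_2}) is essential precisely because it transfers the Gaussian smallness of $u$ itself—given by (\ref{eqn:MJ15_1}) outside $\bar{\Omega}$ as $t\to 1^-$—into Gaussian smallness of $|\nabla\sqrt u|^2$.
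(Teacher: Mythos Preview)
Your decomposition and the treatment of $K_\delta$, $\Omega\setminus K_\delta$, and $T_{r_*}$ are correct and essentially match the paper's argument. The gap is in the far-exterior step. You derive the \emph{uniform} pointwise bound
\[
|\nabla\sqrt{u}|^2(x,t)\le C\,r_*^2\,(1-t)^{-m/2-2}\,e^{-r_*^2/(5(1-t))}\qquad\text{on all of }F_{r_*},
\]
and then assert that ``integrating against $h$ the $F_{r_*}$-contribution is $o(1)$''. But $M$ is allowed to be noncompact (only $\sup_M|Rm|<\infty$ is assumed), so $F_{r_*}$ can have infinite volume, and the lemma places no integrability or growth hypothesis on $h$. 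A pointwise bound that tends to zero uniformly on a set of infinite measure does not make the integral tend to zero.

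The fix is exactly what the paper does: do not throw away the spatial dependence in the heat-kernel bound. For $x$ with $d(x,\Omega)$ large (say $x\notin B(y_0,2L)$ with $\Omega\subset B(y_0,L)$), the estimate (\ref{eqn:MJ15_1}) gives $u(x,t)\le C(1-t)^{-m/2}e^{-d(x,y_0)^2/(20(1-t))}$, and feeding this into (\ref{eqn:MJ15_2}) yields
\[
|\nabla\sqrt{u}|^2(x,t)\le C\,e^{-d(x,y_0)^2/(40(1-t))}.
\]
Now sum over dyadic annuli $B(y_0,2^{i+1}L)\setminus B(y_0,2^{i}L)$, using the volume growth $|B(y_0,r)|\le e^{Kr^2}$ that bounded curvature provides; the Gaussian in $1/(1-t)$ beats the volume growth once $t$ is close enough to $1$, and the series is dominated by its first term, giving $\int_{M\setminus B(y_0,2L)}|\nabla\sqrt{u}|^2\,dv\le Ce^{-L^2/(20(1-t))}\to 0$. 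With this spatially decaying bound in hand, your combining step goes through. If you prefer to keep $h$ in the integrand throughout, the same annular argument works provided $h$ has at most exponential growth in $d(\cdot,y_0)^2$---which is automatic in the paper's only application (Lemma~\ref{lma:MC30_1}), where $h$ is a heat kernel.
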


\begin{proof}

Since all metrics $g(t)$ and measures $dv_{g(t)}$ are equivalent on $M \times [0.5, 1]$ by the uniform bound of $|Rm|$. In the following discussion, we shall assume $g(1)$ as the default metric and $dv_{g(1)}$
as the default measure.

Fix $\delta>0$.  Define
\begin{align*}
  \Omega_{-\delta} \coloneqq \{x \in M| d(x, \Omega) \leq \delta\}, \quad
  \Omega_{\delta} \coloneqq \{x \in M| d(x, \Omega^c) \geq \delta\}. 
\end{align*}
Choose $y_0 \in \Omega$ and $L$ as a large number such that $\Omega \subset B(y_0, L)$. See Figure~\ref{fig:omegadelta}.

\begin{figure}[H]
 \begin{center}
 \psfrag{A}[c][c]{$y_0$}
 \psfrag{B}[c][c]{$\color{green}\Omega_{\delta}$}
 \psfrag{C}[c][c]{$\color{red}{\Omega}$}
 \psfrag{D}[c][c]{$\color{blue}{\Omega_{-\delta}}$}
 \psfrag{E}[c][c]{$B(y_0, L)$}
 \psfrag{F}[c][c]{$B(y_0, 2L)$}
 \includegraphics[width=0.5 \columnwidth]{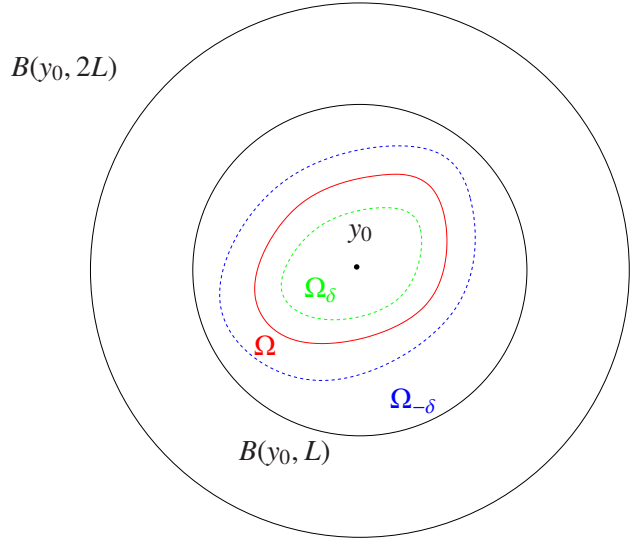}
 \caption{Separating integrals into several domains}
 \label{fig:omegadelta}
 \end{center}
 \end{figure}

  By triangle inequality, for each $x \in M \backslash \Omega_{-\delta}$ and $y \in \Omega$, we have 
  \begin{align*}
  d^2(x,y) \leq (d(x, y_0) + d(y_0, y))^2 \leq 2 d^2(x,y_0)+ 2L^2. 
  \end{align*}
  Recall that $u(x,t)=\int_{\Omega} u_1(y) p(y,1; x, t) dv_{y}$.  By the heat kernel estimate, i.e. (\ref{eqn:MJ15_1}), for each $t \in [0.5,1]$, we have
\begin{align}
 C|t-1|^{-\frac{m}{2}} e^{-\frac{2(d^2(x,y_0)+L^2)}{3|t-1|}}  \leq u(x,t) \leq C |t-1|^{-\frac{m}{2}} e^{-\frac{d^2(x, \Omega)}{5|t-1|}},  \quad \forall\; x \in M \backslash \Omega_{-\delta}.
 \label{eqn:MJ15_3}
\end{align}
For simplicity of notation, we denote $d(x,y_0)$ by $r=r(x)$. 
If $r(x)<2L$, then (\ref{eqn:MJ15_3}) yields that
\begin{align}
 C|t-1|^{-\frac{m}{2}} e^{-\frac{4L^2}{|t-1|}}  \leq u(x,t) \leq C |t-1|^{-\frac{m}{2}} e^{-\frac{\delta^2}{5|t-1|}},  \quad \forall\; x \in B(y_0, 2L) \backslash \Omega_{-\delta}.
 \label{eqn:MJ15_4}
\end{align}
Now we suppose $r(x)>2L$.  Let $z$ be a point in $\bar{\Omega}$ such that $d(x, \Omega)$ is achieved at $z$. 
Then the triangle inequality implies that
\begin{align*}
  d(x,\Omega)=d(x,z)>r(x)-d(z,y_0)>r(x)-L>0.5 r(x). 
\end{align*}
Therefore, (\ref{eqn:MJ15_3}) implies that
\begin{align}
  C|t-1|^{-\frac{m}{2}} e^{-\frac{r^2}{|t-1|}} \leq u(x,t) \leq C|t-1|^{-\frac{m}{2}} e^{-\frac{r^2}{20|t-1|}}, \quad \forall \; x \in M \backslash B(y_0, 2L). \label{eqn:MJ15_5}
\end{align}

We first estimate the integral of $\left| \nabla \sqrt{u} \right|^2$ on $B(y_0, 2L) \backslash \Omega_{-\delta}$. 
In this case, combining (\ref{eqn:MJ15_4}) with the gradient estimate (\ref{eqn:MJ15_2}) implies that
\begin{align*}
  \left|\nabla \sqrt{u} \right|^2(x,t) &\leq C |t-1|^{-\frac{m}{2}-1} e^{-\frac{\delta^2}{5|t-1|}} \left\{ 1+ \frac{m}{2}  \log |t-1| + \frac{4L^2}{|t-1|}\right\} \leq C e^{-\frac{\delta^2}{10|t-1|}},
\end{align*}
where $C$ depends on $m,L, \delta, \Omega$ and the given flow.  It follows that
\begin{align}
  \lim_{t \to 1^{-}} \int_{B(y_0, 2L) \backslash \Omega_{-\delta}}  \left|\nabla \sqrt{u} \right|^2 dv \leq C \Vol(B(y_0, 2L)) \lim_{t \to 1^{-}} e^{-\frac{\delta^2}{10|t-1|}}=0.   \label{eqn:MJ15_6}
\end{align}
Next we estimate the integral of $\left| \nabla \sqrt{u} \right|^2$ on $M \backslash B(y_0, 2L)$. 
Now the combination of (\ref{eqn:MJ15_5}) and (\ref{eqn:MJ15_2}) gives us 
\begin{align*}
  \left|\nabla \sqrt{u} \right|^2(x,t) &\leq C |t-1|^{-\frac{m}{2}-1} e^{-\frac{r^2}{20|t-1|}} \left\{ 1+ \frac{m}{2}  \log |t-1| + \frac{r^2}{|t-1|}\right\} \leq C e^{-\frac{r^2}{40|t-1|}},
\end{align*}
for some $C$ depending on $m,L, \Omega$ and the flow. 
Since $|Rm|$ is bounded for space-time $M \times [0.5,1]$, there is a constant $K$ such that $|B(y_0, r)| \leq e^{Kr^2}$ for each $r>2L$.  
Therefore, by calculating integrals on annular parts $B(y_0, 2^{i+1}L \backslash B(y_0, 2^{i}L))$, we have
\begin{align*}
 \int_{M \backslash B(y_0, 2L)}  \left|\nabla \sqrt{u} \right|^2 dv 
 &\leq  C\sum_{i=1}^{\infty}  e^{KL^2 4^{i}}  e^{-\frac{4^iL^2}{40|t-1|}} \leq C \sum_{i=1}^{\infty} e^{-\frac{4^iL^2}{80|t-1|}}
\end{align*}
when $|t-1|<\frac{1}{80KL^2}$.   Note that the series on the right hand side converges faster than the geometric series when $|t-1|$ very small.
It follows that
\begin{align*}
    \int_{M \backslash B(y_0, 2L)}  \left|\nabla \sqrt{u} \right|^2 dv  \leq C e^{-\frac{L^2}{20|t-1|}}. 
\end{align*}
Consequently, we obtain
\begin{align*}
  \lim_{t \to 1^{-}} \int_{M \backslash B(y_0, 2L)}  \left|\nabla \sqrt{u} \right|^2 dv \leq C \lim_{t \to 1^{-}} e^{-\frac{L^2}{20|t-1|}}=0, 
\end{align*}
which together with (\ref{eqn:MJ15_6}) implies that
\begin{align}
   \lim_{t \to 1^{-}} \int_{M \backslash \Omega_{-\delta}}  \left|\nabla \sqrt{u} \right|^2 dv=0.   \label{eqn:MJ11_3}
\end{align}

On the other hand, note that $u$ is a $C^1$-function at time $t=1$. It follows from standard parabolic equation theory that $|\nabla u|^2$ is a continuous function on $M \times [0, 1]$.
Clearly, $u$ is a continuous function on $M \times [0, 1]$ and $u_1>0$ on $\Omega_{\delta}$. It follows that $u>0$ on $\Omega_{\delta} \times [1-\xi, 1]$ for some small positive number $\xi$.  
Consequently, $|\nabla \sqrt{u}|^2=\frac{|\nabla u|^2}{4u}$ is a continuous function on $\Omega_{\delta} \times [1-\xi, 1]$. 
Because $h$ is a smooth function on $M \times [1-\eta, 1] \supset \Omega_{\delta} \times [1-\xi, 1]$, we immediately obtain
\begin{align}
  \lim_{t \to 1^{-}} \int_{\Omega_{\delta}}  \left|\nabla \sqrt{u} \right|^2 h dv = \int_{\Omega_{\delta}} \left|\nabla \sqrt{u_1} \right|^2 h dv_1.   \label{eqn:MJ11_4}
\end{align}
Combining (\ref{eqn:MJ11_3}) and (\ref{eqn:MJ11_4}), we have
\begin{align*}
 &\quad \lim_{t \to 1^{-}} \left| \int_{M} \left|\nabla \sqrt{u} \right|^2 hdv - \int_{\Omega} \left|\nabla \sqrt{u_1} \right|^2 h dv_1 \right| 
 \leq \lim_{t \to 1^{-}} \left| \int_{\Omega_{-\delta} \backslash \Omega_{\delta}} \left|\nabla \sqrt{u} \right|^2 h dv \right| +  \left| \int_{\Omega_{-\delta} \backslash \Omega_{\delta}} \left|\nabla \sqrt{u_1} \right|^2 h dv_1 \right|\\
 & \leq C_{h} \left\{  \lim_{t \to 1^{-}} \left| \int_{\Omega_{-\delta} \backslash \Omega_{\delta}} \left|\nabla \sqrt{u} \right|^2  dv \right| +  \left| \int_{\Omega_{-\delta} \backslash \Omega_{\delta}} \left|\nabla \sqrt{u_1} \right|^2  dv_1 \right| \right\}
     \leq C\delta, 
\end{align*}
where we used (\ref{eqn:MJ10_2}) and (\ref{eqn:MJ11_5}) in the last step. 
Since the choice of $\delta$ can be arbitrarily small, we obtain (\ref{eqn:MJ11_13}) from the above inequality. 

\end{proof}

 \begin{lemma}[\textbf{Interplay between $v$ and positive smooth functions}]
 For every smooth positive function $h$ defined on $M \times [0, 1]$, we have
 \begin{align}
    \lim_{t \to 1^{-}} \int_M hv = 0,   
 \label{eqn:MC30_1}   
 \end{align}
 where the integral is with respect to the classical volume element induced by $g(t)$. 
 \label{lma:MC30_1}
 \end{lemma}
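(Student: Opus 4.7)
The plan is to rewrite $\int_M hv$ via integration by parts so that all singular terms involving $\nabla f$ (which blows up where $u\to 0$) are replaced by expressions in $u$ and $\sqrt u$ that are continuous at $t=1$, and then to identify the resulting limit with the Euler--Lagrange equation of $\varphi_T=\varphi_1$.

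First, I would use the pointwise identities $u\nabla f=-\nabla u$ and $u|\nabla f|^2=4|\nabla\sqrt u|^2$ (valid wherever $u>0$, which is all of $M$ for $t<1$ by the strong maximum principle), together with the pointwise formula $u\Delta f=-\Delta u+4|\nabla\sqrt u|^2$, and integrate once by parts. A direct calculation then yields
\begin{align*}
\int_M hv\,dv = \tau\int_M hRu\,dv - 2\tau\int_M u\,\Delta h\,dv + 4\tau\int_M h\,|\nabla\sqrt u|^2\,dv - \int_M hu\log u\,dv - \Bigl[\tfrac{m}{2}\log(4\pi\tau)+m+\boldsymbol{\mu}\Bigr]\int_M hu\,dv.
\end{align*}
Each term on the right is expressed purely in terms of $u$ and $\sqrt u$, bypassing the singular logarithm.

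Next I would pass to the limit $t\to 1^{-}$ in each term separately. The integrals $\int_M hu$, $\int_M hRu$ and $\int_M u\,\Delta h$ all converge to their $t=1$ counterparts by dominated convergence: the Gaussian upper bound (\ref{eqn:MJ15_1}) forces $u$ to concentrate on a neighborhood of $\Omega$ as $t\uparrow 1$, while on $\Omega_\delta$ the function $u$ is continuous up to $t=1$. The term $\int_M hu\log u$ is handled in the same way using the elementary bound $|s\log s|\le Cs^{1/2}$ on any bounded interval and the same heat-kernel decay off $\Omega$. The one delicate term, $4\tau\int_M h|\nabla\sqrt u|^2$, converges to $4\tau_1\int_\Omega h|\nabla\varphi_1|^2\,dv_1$ by Lemma~\ref{lma:MC31_1} (which is exactly why that lemma was proved).

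Finally I would simplify the resulting limit. Integration by parts on $\Omega$, in which the boundary contributions from $u_1=\varphi_1^2$ and $\partial_n(\varphi_1^2)=2\varphi_1\partial_n\varphi_1$ vanish on $\partial\Omega$ since $\varphi_1\in W_0^{1,2}(\Omega)$, gives
\begin{align*}
-2\int_\Omega u_1\Delta h\,dv_1 = -4\int_\Omega h\varphi_1\Delta\varphi_1\,dv_1 - 4\int_\Omega h|\nabla\varphi_1|^2\,dv_1.
\end{align*}
After substitution, the $|\nabla\varphi_1|^2$ contributions cancel and the limit collapses to
\begin{align*}
\lim_{t\to 1^-}\int_M hv\,dv = \int_\Omega h\varphi_1\Bigl[-4\tau_1\Delta\varphi_1 + \tau_1 R\varphi_1 - 2\varphi_1\log\varphi_1 - \bigl(\boldsymbol{\mu}+m+\tfrac{m}{2}\log(4\pi\tau_1)\bigr)\varphi_1\Bigr]dv_1,
\end{align*}
and the bracket is identically zero by the Euler--Lagrange equation (\ref{eqn:GH01_1}) for the minimizer $\varphi_1$. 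The principal obstacle is the honest $L^2$-convergence of $\nabla\sqrt u$, which is exactly the content of Lemma~\ref{lma:MC31_1}; the heat-kernel machinery introduced there also underpins the convergence of the remaining terms.
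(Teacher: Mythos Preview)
Your proposal is correct and follows essentially the same route as the paper. Both arguments integrate by parts once to replace $2\tau u\Delta f$ by $-2\tau u\Delta h$ plus a $|\nabla\sqrt u|^2$ term, both invoke Lemma~\ref{lma:MC31_1} for the one delicate limit $\int_M h|\nabla\sqrt u|^2\to\int_\Omega h|\nabla\varphi_1|^2$, both use $\partial_n(\varphi_1^2)=2\varphi_1\partial_n\varphi_1=0$ on $\partial\Omega$ to justify the Green identity at $t=1$, and both conclude via the Euler--Lagrange equation for $\varphi_1$. The only cosmetic difference is that you keep everything in the $(u,\sqrt u,\varphi_1)$ variables and appeal to (\ref{eqn:GH01_1}) directly, whereas the paper retains the $f$ variable and recognizes the limiting integrand as $\{\tau_1(R+2\Delta f-|\nabla f|^2)+f-m-\boldsymbol{\mu}\}u\,h$, which vanishes pointwise on $\Omega$ by (\ref{eqn:MJ11_14}).
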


 \begin{proof}
 For simplicity, we omit the default volume element, which is the classical one induced by $g(t)$. 
  
At each time $t<1$, everything is smooth. Hence integration by parts implies that
 \begin{align*}
   \int_M hv =\int_M \left\{ \tau (R+|\nabla f|^2) +f-m-\boldsymbol{\mu} \right\} uh   -2\tau  \int_M  u \Delta h.
 \end{align*}
 Note that $u$ is a nonnegative continuous function on $M \times [0, 1]$.  Therefore, $u\log u$ and hence $uf$ are continuous functions on $M \times [0, 1]$. 
 Since $h$ is a smooth positive function, we can take limit on both sides of the above inequality to obtain that
 \begin{align}
   \lim_{s \to 1^{-}} \left. \int_M hv \right|_{t=s}= \lim_{s \to 1^{-}} \left. \int_M \tau |\nabla f|^2 uh \right|_{t=s}   +\left. \int_M \left( \tau_1R + f-m-\boldsymbol{\mu} \right) uh \right|_{t=1} -  \left. 2\tau_1\int_M u \Delta h \right|_{t=1}.
 \label{eqn:GC31_1}  
 \end{align}
 Splitting the right hand side of the above inequality into three terms, we shall show that
 \begin{align}
 &\lim_{s \to 1^{-}} \left. \int_M \tau |\nabla f|^2 uh \right|_{t=s}=\left. \int_{\Omega} \tau |\nabla f|^2 uh \right|_{t=1}, \label{eqn:GD01_1} \\
 &\left. \int_M \left( \tau_1R + f-m-\boldsymbol{\mu} \right) uh \right|_{t=1}=\left. \int_{\Omega} \left( \tau_1R + f-m-\boldsymbol{\mu} \right) uh \right|_{t=1}, \label{eqn:GD01_2}\\
 & \left.\int_M u \Delta h \right|_{t=1} = \left. \int_{\Omega} h \Delta u \right|_{t=1}.   \label{eqn:GD01_3}
 \end{align}
 Actually, it follows from the definition of $f$, i.e., equation (\ref{eqn:MJ11_8}), that $u|\nabla f|^2=4|\nabla \sqrt{u}|^2$. 
 Therefore, (\ref{eqn:GD01_1}) is nothing but (\ref{eqn:MJ11_13}). 
 The equation (\ref{eqn:GD01_2}) holds since $u$ and $uf$ are supported on $\Omega$ at time $t=1$.  
 Recall that $\partial \Omega$ is smooth and $\varphi_1 \in C^{2,\alpha}(\bar{\Omega})$ for each $\alpha \in (0,1)$.  
 At time $t=1$, note that $u=\varphi_1^2$ satisfies 
 \begin{align*}
    \frac{\partial u}{\partial \vec{n}}(x)=  2\varphi_1(x) \frac{\partial \varphi_1}{\partial \vec{n}}(x)=0, \quad \; \forall \; x \in \partial \Omega 
 \end{align*}
 where $\vec{n}$ is the outward normal vector field on $\partial \Omega$.  Therefore, it follows from Green's formula that
  \begin{align*}
     \int_M u \Delta h=\int_{\Omega} u \Delta h=\int_{\Omega} h \Delta u + \int_{\partial \Omega} u \nabla h \cdot \vec{n}-h \nabla u \cdot \vec{n}=\int_{\Omega} h \Delta u, 
 \end{align*}
 which is exactly (\ref{eqn:GD01_3}).  Therefore, we have finished the proof of (\ref{eqn:GD01_1}), (\ref{eqn:GD01_2}) and (\ref{eqn:GD01_3}). 
Plugging them into (\ref{eqn:GC31_1}), we obtain
 \begin{align*}
    \lim_{s \to 1^{-}} \left. \int_M hv \right|_{t=s} =\left. \int_{\Omega} \left\{ \tau_1(R+2\Delta f-|\nabla f|^2) +f-m-\boldsymbol{\mu} \right\} uh \right|_{t=1}=0,  
 \end{align*}
which is nothing but (\ref{eqn:MC30_1}).   The proof of Lemma~\ref{lma:MC30_1} is complete. 
 \end{proof}

Now we are ready to prove Theorem~\ref{thm:CA02_2}. 

\begin{proof}[Proof of Theorem~\ref{thm:CA02_2}] 
Recall that $ v=\left\{ \tau\left( 2\Delta f-|\nabla f|^2+R \right)+f-m-\boldsymbol{\mu} \right\}u$ is a smooth function on $M \times [0,1)$
satisfying (\ref{eqn:MC30_1}) in  Lemma~\ref{lma:MC30_1}.
Direct calculation(c.f. Section of Perelman~\cite{Pe1}) shows that
  \begin{align}
    \square^* v=-2\tau \left| R_{ij}+f_{ij}-\frac{g_{ij}}{2\tau}\right|^2 u \leq 0. 
  \label{eqn:GC27_1}  
  \end{align}
 Fix an arbitrary point $(x_0, s_0) \in M \times [0,1)$. Let $w$ be a heat solution, i.e., $\square w=(\partial_t -\Delta) w=0$,  starting from a $\delta$-function 
 at $(x_0, s_0)$.  Because of (\ref{eqn:GC27_1}),  for each $s \in (s_0, 1)$,  integration by parts implies that
 \begin{align*}
   v(x_0, s_0)=\left. \int_{M} wv \right|_{t=s_0}=\left. \int_{M} wv \right|_{t=s} +\int_{s_0}^{s} \int_M w \square^* v=\left. \int_{M} wv \right|_{t=s} -\int_{s_0}^{s} \int_M 2\tau   \left| R_{ij}+f_{ij}-\frac{g_{ij}}{2\tau}\right|^2 wu.
 \end{align*}
 Letting $s \to 1^{-}$, it follows from (\ref{eqn:MC30_1}) that
 \begin{align}
  v(x_0, s_0)=-\int_{s_0}^{1} \int_M 2\tau   \left| R_{ij}+f_{ij}-\frac{g_{ij}}{2\tau}\right|^2 wu \leq 0.     \label{eqn:CF06_6}
 \end{align}
 Therefore,  by the arbitrary choice of $(x_0, s_0)$, we obtain $v \leq 0$ on $M \times [0, 1)$ and we finish the proof of (\ref{eqn:CF06_4}). 
 
 Suppose $v(x_0, s_0)=0$. By strong maximum principle, it is clear that $v \equiv 0$ on $M \times [s_0, 1)$. 
 In other words, we have
 \begin{align}
   \tau(2\Delta f -|\nabla f|^2 +R) + f-m-\boldsymbol{\mu} \equiv 0.   \label{eqn:CF06_8}
 \end{align}
  It follows from (\ref{eqn:CF06_6}) that
 \begin{align}
   R_{ij}+f_{ij}-\frac{g_{ij}}{2\tau} \equiv 0   \label{eqn:CF06_7}
 \end{align}
 on $M \times [s_0, 1)$.  
 We need to show that $\Omega=M$. Actually, taking trace of the shrinking Ricci soliton equation (\ref{eqn:CF06_7}) implies that
 \begin{align*}
    \tau(R+\Delta f)-\frac{m}{2}=0,
 \end{align*}
 which combined with (\ref{eqn:CF06_8}) implies that
 \begin{align}
     \tau (R+|\nabla f|^2) -f+\boldsymbol{\mu}=0.   \label{eqn:CF06_9}
 \end{align}
 Since $\square^* u=0$, it follows from the definition (\ref{eqn:MJ11_9}) that 
   \begin{align*}
      f_{t}=-\Delta f + |\nabla f|^2-R + \frac{m}{2\tau}=|\nabla f|^2.    
   \end{align*}
 On a shrinking Ricci soliton, it is well known(c.f. B.L. Chen~\cite{CBL07}) that $R \geq 0$. Then we have
 \begin{align*}
   f_t=|\nabla f|^2 \leq R+|\nabla f|^2=\frac{f-\boldsymbol{\mu}}{\tau},
 \end{align*}
 where we applied (\ref{eqn:CF06_9}) in the last step.  The above inequality can be written as
 \begin{align*}
   (\tau f)_{t} \leq -\boldsymbol{\mu}.
 \end{align*}
 Note that $u(\cdot, s_0)>0$ everywhere on $M$. Fix an arbitrary point $x \in M$, integrating the above inequality implies that
 \begin{align*}
   \tau_1 f(x, 1) \leq (\tau_1+1-s_0) f(x, s_0) -\boldsymbol{\mu}(1-s_0)<\infty. 
 \end{align*}
 Since $\tau_1>0$, this means $f(x,1)<\infty$ and $u(x,1)>0$. 
 By the arbitrary choice of $x$, we know the support of $u_1=u(\cdot, 1)$ is $M$.
 However, the support of $u_1$ is $\Omega$ according to our assumption.  
 Therefore, we obtain $M=\Omega$. 
 
 We already know that the space-time $M \times [s_0, 1]$ is induced by a gradient shrinking Ricci soliton metric. 
 It follows from the backward uniqueness of the Ricci flow solution(c.f. Kotschwar~\cite{Kot}) that the whole space-time $M \times [0,1]$ is induced by a gradient shrinking Ricci soliton metric.  
 The proof of Theorem~\ref{thm:CA02_2} is complete. 
\end{proof}
 
 Following the proof of Theorem~\ref{thm:CA02_2}, we can obtain several  rigidity results.
 
 \begin{proposition}[\textbf{Strong maximum principle in terms of $v$}]
  Same conditions as in Theorem~\ref{thm:CA02_1}.  
  If  $v=0$ at some point $(x_0, s_0) \in M \times [0, T)$, then $v \equiv 0$ on $M \times [0, T]$ and the flow is induced by
  the Gaussian shrinking soliton metric on $\R^m$.
 \label{prn:CF10_1} 
 \end{proposition}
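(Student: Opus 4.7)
The plan is a direct transcription of the argument used for Theorem~\ref{thm:CA02_2}, adapted to the Perelman setting of Theorem~\ref{thm:CA02_1}, with one new rigidity step at the end identifying the soliton. Since Theorem~\ref{thm:CA02_1} already supplies $v\le 0$ on $M\times[0,T)$, the hypothesis $v(x_0,s_0)=0$ places $(x_0,s_0)$ at an interior maximum of $v$.

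First I would invoke Perelman's identity (Proposition~9.1 of~\cite{Pe1})
\begin{align*}
    \square^{*} v \;=\; -2(T-t)\Bigl|R_{ij}+f_{ij}-\tfrac{g_{ij}}{2(T-t)}\Bigr|^{2}u\;\le\;0.
\end{align*}
Because $u>0$ on $M\times[0,T)$, the strong maximum principle for the backward-parabolic operator $\square^{*}$ (applied on $M\times[s_0,T'']$ for each $T''<T$ and letting $T''\to T^{-}$) forces $v\equiv 0$ on $M\times[s_0,T)$. The vanishing of the right-hand side above then gives $R_{ij}+f_{ij}=\tfrac{g_{ij}}{2(T-t)}$, so the flow is a gradient shrinking Ricci soliton with potential $f$ on $M\times[s_0,T)$. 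Backward uniqueness of Ricci flow (Kotschwar~\cite{Kot}) propagates this structure to the whole slab $M\times[0,T]$, and in particular $v\equiv 0$ there.

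Second, tracing the soliton equation yields $R+\Delta f=\tfrac{m}{2(T-t)}$; feeding this into $v=0$ gives the identity
\begin{align*}
    f\;=\;(T-t)\bigl(R+|\nabla f|^{2}\bigr)
\end{align*}
on $M\times[0,T)$. Since $u$ is the fundamental solution of $\square^{*}$ out of a $\delta$-function, $\int_{M}u\,dv_{g(t)}\equiv 1$, and a standard integration by parts converts $\int_{M}u(2\Delta f-|\nabla f|^{2})\,dv$ into $\int_{M}u|\nabla f|^{2}\,dv$. Hence
\begin{align*}
    \mathcal{W}^{(R)}\bigl(M,g(t),\sqrt{u},T-t\bigr)\;=\;\int_{M}v\,dv\;=\;0,
\end{align*}
which, combined with the non-positivity of $\boldsymbol{\mu}$ (Proposition~\ref{prn:CA01_3}), shows that $\boldsymbol{\mu}(M,g(t),T-t)=0$ is attained by $\sqrt{u}$ (admissible in $W^{1,2}(M)$ since the soliton potential grows at least quadratically, so $e^{-f}$ decays fast enough).

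Finally, the equality case of the sharp Gross logarithmic Sobolev inequality forces $(M,g(t))$ to be isometric to $(\mathbb{R}^{m},g_{E})$, and the soliton potential must then be the Gaussian one $f(x,t)=\tfrac{|x-x_0|^{2}}{4(T-t)}$, so the flow is the Gaussian shrinking soliton on $\mathbb{R}^{m}$. The main obstacle I anticipate is precisely this last rigidity step: while the maximum-principle and backward-uniqueness parts are routine transcriptions of the proof of Theorem~\ref{thm:CA02_2}, verifying that $\sqrt{u}$ is genuinely admissible as a minimizer of $\boldsymbol{\mu}(M,g(t),T-t)$ on a possibly non-compact $M$, and then invoking sharp Euclidean log-Sobolev rigidity, is the delicate non-compact subtlety foreshadowed by the earlier reference to Q.~Zhang~\cite{QZh}.
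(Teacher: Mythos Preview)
Your first two steps---strong maximum principle from $\square^{*} v\le 0$ yielding $v\equiv 0$ and the soliton equation $R_{ij}+f_{ij}=\frac{g_{ij}}{2(T-t)}$ on $M\times[s_0,T)$, then backward uniqueness to $[0,T]$---match the paper exactly. The divergence is entirely in the rigidity step identifying the Gaussian soliton.

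The paper's argument here is shorter and sidesteps log-Sobolev rigidity altogether. It uses that on a gradient shrinking soliton with singular time $T$, self-similarity gives $Q(t)(T-t)\equiv T\,Q(0)$ where $Q(t)=\sup_M|Rm|(\cdot,t)$. The paper's standing hypothesis is that each time slice of the flow on $[0,T]$ has bounded curvature; letting $t\to T^{-}$ in the scaling identity then forces $Q(0)=0$, so $(M,g(t))$ is flat. The soliton equation reduces to $f_{ij}=\frac{g_{ij}}{2(T-t)}$, making $(M,g(t))$ a flat metric cone, hence $(\mathbb{R}^m,g_E)$.

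Your route has a genuine gap, more basic than the admissibility worry you flagged. From $\mathcal{W}^{(R)}(M,g(t),\sqrt{u},T-t)=\int_M v=0$ together with the definition of $\boldsymbol{\mu}$ as an infimum, you only obtain $\boldsymbol{\mu}(M,g(t),T-t)\le 0$. Proposition~\ref{prn:CA01_3} concerns $\boldsymbol{\nu}$, not $\boldsymbol{\mu}$, and in any case points in the same direction; nothing you have written supplies the reverse bound $\boldsymbol{\mu}\ge 0$, so you cannot conclude $\boldsymbol{\mu}=0$ nor that $\sqrt{u}$ is a minimizer. Moreover, within the paper the only route from ``$\boldsymbol{\mu}=0$'' (or ``$\boldsymbol{\nu}=0$'') to ``Euclidean'' runs through Propositions~\ref{prn:MJ18_1} and~\ref{prn:CA05_1}, whose proofs already invoke Proposition~\ref{prn:CF10_1}; so even granting $\boldsymbol{\mu}=0$, that path would be circular.
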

 
 \begin{proof}
  By the exact same argument as in the proof of Theorem~\ref{thm:CA02_2}, we know that $v \equiv 0$ and (\ref{eqn:CF06_7}) holds on $M \times [s_0, T)$.
  Note that (\ref{eqn:CF06_7}) now reads as
  \begin{align}
     R_{ij} +f_{ij} -\frac{g_{ij}}{2(T-t)}=0.   \label{eqn:CF07_7}
  \end{align}
  In other words, $M \times [s_0, T)$ is induced by a gradient shrinking Ricci soliton metric with termination time $T$. 
  We shall show that this soliton must be the Gaussian shrinking soliton on $\R^m$.  Actually, by the self-similar property of the shrinking soliton, we have
 \begin{align*}
    Q(t)(T-t) \equiv TQ(0),
 \end{align*}
 where $\displaystyle Q(t)=\sup_{x \in M} |Rm|(x,t)$.  By our hypothesis, the flow $\{(M^m, x_0, g(t)),  0 \leq t \leq T\}$ has bounded curvature. 
 So we have
 \begin{align*}
  C \geq  \limsup_{t \to T^{-}} Q(t)= Q(0) \lim_{t \to T^{-}} \frac{T}{T-t},
 \end{align*}
 which forces that $Q(0)=0$ and consequently $Q(t) \equiv 0$ for each $t \in [0,T]$.  In particular, $Rc \equiv 0$ on $M \times [0, T]$.
 At time $t \in [s_0, T)$,  (\ref{eqn:CF07_7}) becomes
 \begin{align*}
     f_{ij}=\frac{g_{ij}}{2(T-t)}, 
 \end{align*}
 which implies that $(M, g(t))$ is a metric cone.  However, a flat manifold which is also a metric cone can only be the Euclidean space $(\R^{m}, g_{E})$ and $f=\frac{d^2(\cdot, y_0)}{4(T-t)}$ for some point $y_0 \in \R^m$.
 Therefore, $\{(M^m, x_0, g(t)),  s_0 \leq t \leq T\}$ is the flat Ricci flow induced by the Gaussian soliton metric on $(\R^{m}, g_{E})$. 
 It then follows from backward uniqueness of Ricci flow again(c.f.~\cite{Kot}) that the whole space-time $M \times [0,1)$ is induced by the Gaussian soliton metric. 
 \end{proof}

 \begin{proposition}[\textbf{Rigidity of Ricci flows in terms of $\boldsymbol{\mu}$}]
 Let $\{(M^m, x_0, g(t)),  0 \leq t \leq T\}$ be a Ricci flow solution. Then we have
 \begin{align}
   \boldsymbol{\mu}(M,g(0),T) \leq 0.   \label{eqn:MJ18_5}
 \end{align}
 Moreover, the equality in (\ref{eqn:MJ18_5}) holds if and only if the flow is the static flow on Euclidean space $(\R^{m}, g_{E})$. 
\label{prn:MJ18_1} 
\end{proposition}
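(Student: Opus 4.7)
The strategy is to apply Perelman's differential Harnack inequality (Theorem~\ref{thm:CA02_1}) to the fundamental solution of the conjugate heat equation based at $(x_0, T)$, and extract from the resulting pointwise bound an admissible test function for $\boldsymbol{\mu}(M, g(0), T)$. The argument is a global-in-space version of the localized argument sketched in the introduction, but with the local minimizer replaced by a delta function at terminal time.

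First, let $u(x,t)$ denote the fundamental solution of $\square^* u = 0$ on $M \times [0,T)$ with $u(\cdot,t) \to \delta_{x_0}$ as $t \to T^-$, and set $\tau = T-t$, $f = -\log u - \frac{m}{2}\log(4\pi\tau)$, and $v = \{(T-t)(2\Delta f - |\nabla f|^2 + R) + f - m\}u$. Theorem~\ref{thm:CA02_1} guarantees $v \leq 0$ on $M \times [0,T)$. The key identity is
\begin{align*}
\int_M v(\cdot, 0)\, dv_{g(0)} \;=\; \mathcal{W}^{(R)}\bigl(M, g(0), \sqrt{u(\cdot,0)}, T\bigr),
\end{align*}
which I would obtain by one integration by parts (using $\nabla u = -u\nabla f$) together with the conservation law $\int_M u(\cdot, 0)\, dv_{g(0)} = 1$ for the conjugate heat equation. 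Combined with $v \leq 0$, this immediately yields $\mathcal{W}^{(R)}(M, g(0), \sqrt{u(\cdot,0)}, T) \leq 0$.

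Next, to convert this into a bound on $\boldsymbol{\mu}(M, g(0), T)$---whose definition uses compactly supported test functions via Proposition~\ref{prn:CA02_1}---I would exhaust $M$ by geodesic balls $\Omega_i = B_{g(0)}(x_0, r_i)$ with $r_i \to \infty$, pick cutoffs $\chi_i$ vanishing outside $\Omega_i$ and equal to $1$ on $\Omega_{i-1}$, and form normalized approximants $\varphi_i := \chi_i \sqrt{u(\cdot,0)} \big/ \norm{\chi_i \sqrt{u(\cdot,0)}}{L^2(g(0))} \in \mathscr{S}(\Omega_i)$. The Gaussian decay and gradient estimates for the conjugate heat kernel $u(\cdot, 0)$ (entirely analogous to the estimates (\ref{eqn:MJ15_1})--(\ref{eqn:MJ15_2}) established at $t \to T^-$) ensure $\mathcal{W}^{(R)}(\Omega_i, g(0), \varphi_i, T) \to \mathcal{W}^{(R)}(M, g(0), \sqrt{u(\cdot,0)}, T)$ as $i \to \infty$. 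Combined with Proposition~\ref{prn:CA02_1} this gives
\begin{align*}
\boldsymbol{\mu}(M, g(0), T) \;=\; \lim_{i \to \infty} \boldsymbol{\mu}(\Omega_i, g(0), T) \;\leq\; \lim_{i \to \infty} \mathcal{W}^{(R)}(\Omega_i, g(0), \varphi_i, T) \;\leq\; 0,
\end{align*}
which proves (\ref{eqn:MJ18_5}).

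For the equality statement, suppose $\boldsymbol{\mu}(M, g(0), T) = 0$. Then the chain of inequalities above collapses, forcing $\int_M v(\cdot, 0)\, dv_{g(0)} = 0$. Since $v$ is smooth with $v \leq 0$ pointwise and $u(\cdot, 0) > 0$ throughout $M$, this implies $v(\cdot, 0) \equiv 0$ on $M$. Proposition~\ref{prn:CF10_1} then identifies the flow as the Gaussian shrinking soliton on $(\R^m, g_E)$, which is precisely the static Ricci flow on Euclidean space. Conversely, on static $(\R^m, g_E)$ the classical Gaussian $\varphi = (4\pi T)^{-m/4} e^{-|x|^2/(8T)}$ is a global minimizer of the logarithmic Sobolev functional and realizes $\mathcal{W}^{(R)} = \mathcal{W}^{(0)} = 0$, so $\boldsymbol{\mu}(\R^m, g_E, T) = 0$, closing the ``if and only if''. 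The principal obstacle throughout is the noncompact approximation step: verifying that $\varphi_i \to \sqrt{u(\cdot, 0)}$ in a mode strong enough to transfer $\mathcal{W}^{(R)}$ to the limit requires uniform integrability of $|\nabla \sqrt{u(\cdot, 0)}|^2$ and tail control of $-u \log u$ at spatial infinity, both of which hinge on precise heat kernel bounds of Chau-Tam-Yu type and are technically parallel to Lemma~\ref{lma:MC31_1}. In the closed-manifold case these subtleties disappear and the conclusion follows at once from the integral identity applied on $\Omega = M$.
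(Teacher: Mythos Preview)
Your proof is correct and follows essentially the same route as the paper's: both plug the conjugate heat kernel based at $(x_0,T)$ into $\mathcal{W}^{(R)}$ at $t=0$, observe that this quantity equals $\int_M v(\cdot,0)\leq 0$ by Theorem~\ref{thm:CA02_1}, and invoke Proposition~\ref{prn:CF10_1} for the rigidity statement. The paper phrases the nonpositivity via the monotonicity chain $\mathcal{W}^{(R)}(M,g(0),\sqrt{u},T)\leq \mathcal{W}^{(R)}(M,g(t),\sqrt{u},T-t)\to 0$ as $t\to T^-$ rather than citing $v\leq 0$ pointwise at $t=0$, and it omits the noncompact cutoff approximation you spell out, but these are cosmetic differences.
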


\begin{proof}
 Let $u$ solve the conjugate heat equation $\square^* u=0$ and start from a $\delta$-function at $(x_0,T)$. Then we have
 \begin{align}
  &\quad \boldsymbol{\mu}(M,g(0),T) \notag\\
  &\leq \mathcal{W}^{(R)}\left(M, g(0), \sqrt{u}, T\right) \leq \mathcal{W}^{(R)} \left( M, g(t), \sqrt{u}, T-t \right) \notag\\
  &\leq \lim_{t \to T^{-}} \mathcal{W}^{(R)} \left( M, g(t), \sqrt{u}, T-t \right)=0, \label{eqn:MJ18_6}
 \end{align}
 for every $t \in (0,T)$. Therefore, we finish the proof of (\ref{eqn:MJ18_5}).   
 Clearly, (\ref{eqn:MJ18_5}) becomes an equality if the underlying flow is a static flow on $\R^m$.
 Now we assume (\ref{eqn:MJ18_5}) is an equality.  Combining the equality form of (\ref{eqn:MJ18_5}) and (\ref{eqn:MJ18_6}), we obtain
 \begin{align*}
     \mathcal{W}^{(R)} \left(M, g(t), \sqrt{u}, T-t \right)  \equiv 0, \quad \forall \; t \in (0,T). 
 \end{align*}
 Note that the left hand side of the above equation is exactly $\int_{M} v$, where $v \leq 0$ pointwise by Theorem~\ref{thm:CA02_1}.
 Therefore, we have $v \equiv 0$. Then it follows from Proposition~\ref{prn:CF10_1} that the flow is induced by the Gaussian shrinking soliton  on $(\R^{m}, g_{E})$. 
 \end{proof}

\begin{proposition}[\textbf{Non-positivity of $\boldsymbol{\nu}$}]
Suppose $(M^{m}, g)$ is a complete Riemannian manifold with bounded curvature, $T$ is a positive number.
Then 
\begin{align}
    \boldsymbol{\nu}(M,g,T) \leq 0.  \label{eqn:CA05_1}
\end{align}
Moreover, the equality in (\ref{eqn:CA05_1}) holds if and only if  $(M, g)$ is isometric to the Euclidean space $(\R^{m}, g_{Euc})$.
\label{prn:CA05_1}
\end{proposition}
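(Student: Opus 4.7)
The plan is to reduce this statement to Proposition~\ref{prn:MJ18_1}, which was established for Ricci flow solutions. Since $(M^m, g)$ is complete with bounded curvature, Shi's short-time existence theorem produces a smooth Ricci flow $\{(M, g(t)), 0 \leq t \leq \epsilon\}$ with $g(0) = g$ and $\sup_M |Rm|_{g(t)} < \infty$ for each $t \in [0, \epsilon]$. By shrinking $\epsilon$ if necessary, we may assume $0 < \epsilon \leq T$. Applying Proposition~\ref{prn:MJ18_1} to this flow on the interval $[0, \epsilon]$ yields
\begin{align*}
\boldsymbol{\mu}(M, g, \epsilon) = \boldsymbol{\mu}(M, g(0), \epsilon) \leq 0.
\end{align*}
Since $\epsilon \in (0, T]$, the defining formula $\boldsymbol{\nu}(M, g, T) = \inf_{s \in (0, T]} \boldsymbol{\mu}(M, g, s)$ immediately gives $\boldsymbol{\nu}(M, g, T) \leq \boldsymbol{\mu}(M, g, \epsilon) \leq 0$, establishing (\ref{eqn:CA05_1}).

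For the rigidity statement, suppose the equality $\boldsymbol{\nu}(M, g, T) = 0$ holds. Then the infimum definition forces $\boldsymbol{\mu}(M, g, s) \geq 0$ for every $s \in (0, T]$; in particular $\boldsymbol{\mu}(M, g, \epsilon) \geq 0$. Combined with the previous paragraph, this forces $\boldsymbol{\mu}(M, g(0), \epsilon) = 0$. The equality case of Proposition~\ref{prn:MJ18_1} then says the flow $\{(M, g(t)), 0 \leq t \leq \epsilon\}$ must be the static Ricci flow on Euclidean space, so $(M, g) = (M, g(0))$ is isometric to $(\R^m, g_{Euc})$. Conversely, on Euclidean space $R \equiv 0$, so $\boldsymbol{\mu}$ coincides with $\bar{\boldsymbol{\mu}}$, and Gross's sharp logarithmic Sobolev inequality (combined with the Cheeger-Gromov exhaustion argument of Proposition~\ref{prn:CA02_1}) yields $\bar{\boldsymbol{\mu}}(\R^m, g_{Euc}, \tau) = 0$ for every $\tau > 0$. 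Taking infimum over $s \in (0, T]$ gives $\boldsymbol{\nu}(\R^m, g_{Euc}, T) = 0$, completing the characterization.

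The main technical point that needs checking is that the proof of Proposition~\ref{prn:MJ18_1} genuinely goes through on our complete, possibly non-compact, $(M, g(t))$: specifically, the conjugate heat kernel starting from a delta point must exist globally with sufficient Gaussian decay so that the test-function inequality $\boldsymbol{\mu}(M, g(0), T) \leq \mathcal{W}^{(R)}(M, g(t), \sqrt{u}, T-t)$ makes sense and the limit $t \to T^{-}$ is valid. All of these are guaranteed on $[0, \epsilon]$ by the uniform curvature bound from Shi's estimates, together with the heat-kernel Gaussian bounds already quoted in (\ref{eqn:MJ15_1}). Apart from this verification, the entire proof is essentially a direct corollary of Proposition~\ref{prn:MJ18_1} combined with Shi's short-time existence theorem.
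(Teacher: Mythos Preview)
Your proof is correct and follows essentially the same approach as the paper's own argument: both invoke Shi's short-time existence to produce a Ricci flow on $[0,\epsilon]$, then appeal to Proposition~\ref{prn:MJ18_1} to obtain $\boldsymbol{\mu}(M,g,\epsilon)\leq 0$, and handle the rigidity case via the equality clause of that same proposition. Your additional paragraph flagging the need to verify Proposition~\ref{prn:MJ18_1} in the complete non-compact setting is a reasonable caution that the paper leaves implicit.
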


\begin{proof}
Since $(M,g)$ has bounded curvature,  starting from $g$, there exists a Ricci flow solution $\{(M, g(t)), 0 \leq t \leq \epsilon^2\}$ for some $\epsilon$.
Moreover, each time slice of this flow has bounded curvature.  Without loss of generality, we may assume that $0<\epsilon^2<T$.  It follows from the definition of $\boldsymbol{\mu}$, $\boldsymbol{\nu}$ and Proposition~\ref{prn:MJ18_1} that
  \begin{align}
  \boldsymbol{\nu}(M, g, T)  \leq \boldsymbol{\nu}(M, g, \epsilon^2) \leq \boldsymbol{\mu}(M, g, \epsilon^2)=\boldsymbol{\mu}(M, g(0), \epsilon^2) \leq  0,   \label{eqn:CA05_2}
  \end{align}
  which implies (\ref{eqn:CA05_1}). 
  
  If equality in (\ref{eqn:CA05_1}) holds, then every inequality in (\ref{eqn:CA05_2}) becomes equality. In particular, we have $\boldsymbol{\mu}(M, g(0), \epsilon^2)=0$.
  By Proposition~\ref{prn:MJ18_1} again, we know the flow $\{(M, g(t)), 0 \leq t \leq \epsilon^2\}$ is the static Ricci flow on Euclidean space. In particular, $(M, g)=(M, g(0))$ is isometric to the Euclidean space $(\R^{m}, g_{Euc})$. 
  On the other hand, if $(M,g)$ is isometric to $(\R^{m}, g_{Euc})$, it is clear that $ \boldsymbol{\nu}(M, g, T)=0$.  The proof of Proposition~\ref{prn:CA05_1} is complete. 
\end{proof}

 \begin{remark}
  Theorem~\ref{thm:CA02_1} is implied by Theorem~\ref{thm:CA02_2}.
  Actually, fix an arbitrary point $x_0 \in M$, we can choose small domains $\Omega_i$ with smooth boundaries and $\diam_{g(T)}(\Omega_i) \to 0$.
  Accordingly, we choose positive $\tau_i<<\diam_{g(T)}(\Omega_i)$ such that $\boldsymbol{\mu}(\Omega_i, g(T), \tau_i) \to 0$. 
  Let $\varphi_i$ be the minimizer of $\boldsymbol{\mu}(\Omega, g(T), \tau_i)$, and $u_i$ be the conjugate heat solution $\square^* u=0$, starting from $\varphi_i^2$ at time $T$. 
  Then we define $v_i$ by Theorem~\ref{thm:CA02_2}. It is not hard to see that $u_i$ converges to a solution $u$ such that $\square^* u=0$ on $M \times [0, T)$ and $\displaystyle \lim_{t \to T^{-}} u$
  is the $\delta$-function based at $(x_0, T)$. In this way, it is clear that $v$ is the smooth limit of $v_i$ on $M \times [0, T)$.
  Since each $v_i \leq 0$ on $M \times [0,T)$ by Theorem~\ref{thm:CA02_2}, we obtain $v \leq 0$ and consequently prove Theorem~\ref{thm:CA02_1}. 
 \label{rmk:MJ13_3} 
 \end{remark}

 \begin{theorem}[\textbf{Harnack inequality in terms of positive heat solution values}]
 Same conditions as in Theorem~\ref{thm:CA02_2}.
 Then the following differential Harnack inequality holds:
  \begin{align}
       \frac{d}{d\tau} \left\{ \sqrt{\tau}  f(\boldsymbol{\gamma}(\tau)) \right\} \leq \frac{1}{2} \sqrt{\tau} \left(R+|\dot{\gamma}|^2 \right) + \frac{\boldsymbol{\mu}}{2\sqrt{\tau}}. 
  \label{eqn:CF06_1}     
  \end{align}
 Suppose $x_0, y_0$ are two points on $M$, $\boldsymbol{\gamma}(\tau)=(\gamma(\tau), \tau_T+T-t)$ is a space-time curve parametrized by $\tau=\tau_T+T-t \in [\tau_{T}, \tau_T+T]$ such that
 $\boldsymbol{\gamma}(\tau_T)=(x_0, T)$ and $\boldsymbol{\gamma}(\tau_0)=(y_0, 0)$.  Then the following Harnack inequality holds:
 \begin{align}
       u(y_0, 0) \geq (4\pi \tau_0)^{-\frac{m}{2}} \cdot e^{\left( -1+\sqrt{\frac{\tau_T}{\tau_0}}\right) \boldsymbol{\mu}} \cdot e^{-\frac{\mathcal{L}(\boldsymbol{\gamma})}{2\sqrt{\tau_0}}} \cdot 
       \left\{ (4\pi \tau_T)^{\frac{m}{2}} 
       \cdot   u(x_0, T) \right\}^{\sqrt{\frac{\tau_T}{\tau_0}}}    
 \label{eqn:CF06_2}      
 \end{align}
 where $\mathcal{L}$ is the Lagrangian defined by
 \begin{align}
   \mathcal{L}(\boldsymbol{\gamma}) \coloneqq \int_{\tau_T}^{\tau_0} \sqrt{\tau} \left( R + |\dot{\gamma}|^2 \right) d\tau.   
 \label{eqn:CF06_3}  
 \end{align}
 \label{thm:CF05_1}
 \end{theorem}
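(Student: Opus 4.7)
The starting point is the pointwise inequality $v\le 0$ on $M\times[0,T)$ established in Theorem~\ref{thm:CA02_2}. Rewriting the definition of $v$, this says
\begin{align*}
2\tau\,\Delta f \;\le\; \tau|\nabla f|^{2}-\tau R -f + m + \boldsymbol{\mu}
\end{align*}
on $M\times[0,T)$, which I will use to eliminate $\Delta f$ in a pointwise bound. In parallel, since $u=(4\pi\tau)^{-m/2}e^{-f}$ solves $\square^{*}u=0$ and $\partial_{t}\tau=-1$, a direct computation of $\partial_{t}\log u$ yields the standard identity
\begin{align*}
\partial_{\tau}f \;=\; \Delta f-|\nabla f|^{2}+R-\tfrac{m}{2\tau},
\end{align*}
where $\partial_{\tau}$ is the partial derivative at fixed space point. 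Both of these ingredients live on the open space-time $M\times[0,T)$ where $u>0$ and hence $f$ is smooth, so there are no regularity issues along any smooth curve $\boldsymbol{\gamma}(\tau)$ that stays in this region.

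Next I will compute the derivative of $\sqrt{\tau}\,f$ along $\boldsymbol{\gamma}(\tau)=(\gamma(\tau),\tau_{T}+T-\tau)$ by the chain rule, substitute $\partial_{\tau}f$ from the evolution equation above, and then replace $2\tau\Delta f$ using $v\le 0$. After cancellation this collapses to
\begin{align*}
\frac{d}{d\tau}\{\sqrt{\tau}f(\boldsymbol{\gamma}(\tau))\}
\;\le\;\tfrac{\sqrt{\tau}}{2}R+\tfrac{\boldsymbol{\mu}}{2\sqrt{\tau}}
-\tfrac{\sqrt{\tau}}{2}|\nabla f|^{2}+\sqrt{\tau}\,\nabla f\cdot\dot\gamma.
\end{align*}
Completing the square via the identity $-\tfrac12|\nabla f|^{2}+\nabla f\cdot\dot\gamma=\tfrac12|\dot\gamma|^{2}-\tfrac12|\nabla f-\dot\gamma|^{2}$ discards the unwanted $|\nabla f|^{2}$ term and produces (\ref{eqn:CF06_1}). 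The inequality is sharp precisely along curves where $\dot\gamma=\nabla f$, matching the comparison-geometry picture of Perelman's $\mathcal{L}$-geodesics.

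For the integrated form (\ref{eqn:CF06_2}), I integrate (\ref{eqn:CF06_1}) from $\tau_{T}$ to $\tau_{0}$, using $\int_{\tau_{T}}^{\tau_{0}}\frac{d\tau}{2\sqrt{\tau}}=\sqrt{\tau_{0}}-\sqrt{\tau_{T}}$ and the definition (\ref{eqn:CF06_3}) of $\mathcal{L}(\boldsymbol{\gamma})$, to obtain
\begin{align*}
\sqrt{\tau_{0}}\,f(y_{0},0)
\;\le\;\sqrt{\tau_{T}}\,f(x_{0},T)+\tfrac12\mathcal{L}(\boldsymbol{\gamma})
+\boldsymbol{\mu}(\sqrt{\tau_{0}}-\sqrt{\tau_{T}}).
\end{align*}
Dividing by $\sqrt{\tau_{0}}$, substituting back $f=-\log u-\frac{m}{2}\log(4\pi\tau)$ at the two endpoints, and exponentiating converts this into the asserted lower bound on $u(y_{0},0)$.

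The computation itself is routine once $v\le 0$ is in hand, so the only genuine subtlety to watch is admissibility of the curve: one needs $\boldsymbol{\gamma}$ to remain in the region where $u$ is strictly positive, which is automatic on $[0,T)$ by the strong maximum principle applied to $\square^{*}u=0$ starting from the nonnegative, non-trivial datum $\varphi_{T}^{2}$. The endpoint $\tau=\tau_{T}$ corresponds to $t=T$ where $u(\cdot,T)=\varphi_{T}^{2}$ vanishes on $\partial\Omega$, so the statement is meaningful only when $x_{0}\in\Omega$, in which case $\varphi_{T}(x_{0})>0$ and $f(x_{0},T)$ is finite by Proposition~\ref{prn:CA01_1}; this justifies passing to the limit $\tau\to\tau_{T}^{+}$ in the integrated inequality.
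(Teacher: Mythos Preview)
Your proof is correct and follows essentially the same approach as the paper: both combine the evolution equation $f_{\tau}=\Delta f-|\nabla f|^{2}+R-\tfrac{m}{2\tau}$ from $\square^{*}u=0$ with the inequality $v\le 0$ to eliminate $\Delta f$, then use Cauchy--Schwarz (equivalently, completing the square) on $\langle\nabla f,\dot\gamma\rangle$ to arrive at (\ref{eqn:CF06_1}), and integrate. The only cosmetic difference is that the paper first bounds $\tfrac{d}{d\tau}f(\boldsymbol{\gamma}(\tau))$ and then rewrites as a derivative of $\sqrt{\tau}f$, whereas you compute $\tfrac{d}{d\tau}\{\sqrt{\tau}f\}$ directly; your added remark on the endpoint $\tau=\tau_{T}$ is a welcome clarification the paper omits.
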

 
 \begin{proof}
 Since $\square^* u=0$, it is clear that
   \begin{align}
      f_{\tau}=\Delta f - |\nabla f|^2+R - \frac{m}{2\tau}.    \label{eqn:CF05_2}
   \end{align}
   The fact $v \leq 0$ implies that
   \begin{align}
        \left( \Delta f- \frac{1}{2}|\nabla f|^2+ \frac{1}{2} R \right) + \frac{f-m-\boldsymbol{\mu}}{2\tau} \leq 0.   \label{eqn:CF05_3}
   \end{align}
   Combining (\ref{eqn:CF05_2}) and (\ref{eqn:CF05_3}) yields that
   \begin{align*}
        f_{\tau} - \frac{1}{2}R + \frac{1}{2}|\nabla f|^2 +\frac{f-\boldsymbol{\mu}}{2\tau} \leq 0. 
   \end{align*}
   Consequently, we have
   \begin{align*}
      \frac{d}{d \tau} f(\boldsymbol{\gamma}(\tau))&=f_{\tau} + \langle \nabla f, \dot{\gamma}(\tau)\rangle \leq f_{\tau} +\frac{1}{2}|\nabla f|^2 +\frac{1}{2} |\dot{\gamma}(\tau)|^2
      \leq \frac{1}{2} \left( R+ |\dot{\gamma}|^2 \right) + \frac{\boldsymbol{\mu}-f}{2\tau}, 
   \end{align*}
   whence we obtain (\ref{eqn:CF06_1}).    Integrating (\ref{eqn:CF06_1}), we can apply the definition of $\mathcal{L}$ in (\ref{eqn:CF06_3}) to obtain
   \begin{align*}
       \sqrt{\tau_0} f(\boldsymbol{\gamma}(\tau_0))-\sqrt{\tau_T} f(\boldsymbol{\gamma}(\tau_T)) 
       &\leq \frac{1}{2} \int_{\tau_T}^{\tau_0} \sqrt{\tau} \left( R + |\dot{\gamma}|^2 \right) d\tau + \left( \sqrt{\tau_0} -\sqrt{\tau_T}\right)\boldsymbol{\mu}\\
       &\leq \frac{1}{2} \mathcal{L}(\boldsymbol{\gamma}) + \left( \sqrt{\tau_0} -\sqrt{\tau_T}\right)\boldsymbol{\mu}. 
   \end{align*}
   Thus, we have
   \begin{align*}
      f(\boldsymbol{\gamma}(\tau_0)) \leq  \frac{\mathcal{L}(\boldsymbol{\gamma})}{2\sqrt{\tau_0}} +\sqrt{\frac{\tau_T}{\tau_0}} f(\boldsymbol{\gamma}(\tau_T)) + \left( 1- \sqrt{\frac{\tau_T}{\tau_0}}\right) \boldsymbol{\mu}. 
   \end{align*}
   Note that $\gamma(\tau_0)=y_0, \gamma(\tau_T)=x_0$.  Recall also that $f=-\log u-\frac{m}{2} \log (4\pi \tau)$.  It follows that
   \begin{align*}
     \log u(y_0, 0) + \frac{m}{2} \log (4\pi \tau_0) \geq -\frac{\mathcal{L}(\boldsymbol{\gamma})}{2\sqrt{\tau_0}} +\sqrt{\frac{\tau_T}{\tau_0}} \left( \log u(x_0, T)+\frac{m}{2} \log (4\pi \tau_T)\right) + \left( -1+\sqrt{\frac{\tau_T}{\tau_0}}\right) \boldsymbol{\mu}, 
   \end{align*}
   which is equivalent to (\ref{eqn:CF06_2}). 
 \end{proof}

 \begin{remark}
 The proof of Theorem~\ref{thm:CF05_1} follows the route of the proof of Theorem 2.1 in the fundamental work of Li-Yau~\cite{LiYau}.  Similar argument was used by Perelman in Corollary 9.4 of~\cite{Pe1}. 
 The precise definition of $\mathcal{L}$ under Ricci flow was first given by Perelman in Section 7.1 of his celebrated work~\cite{Pe1}, motivated by physics and infinite dimensional geometry(c.f. Section 5 and 6 of \cite{Pe1}). 
 In the setup of Theorem~\ref{thm:CF05_1},  the quest of the space-time curve $\boldsymbol{\gamma}$ minimizing $\mathcal{L}$ leads naturally to the concept of reduced distance and  reduced geodesic of Perelman(c.f. Section 7 of~\cite{Pe1} or Section~\ref{sec:alter} of the current paper). 
 Therefore, it seems that Theorem~\ref{thm:CF05_1} provides a new perspective to understand the definition of  reduced geodesic and reduced distance, which is  similar to the $\rho$-functional of Li-Yau(c.f. equation (3.1) of~\cite{LiYau}). 
 \label{rmk:CF06_1} 
 \end{remark}
 
 \begin{remark}
 Suppose $\left\{(M, g(t)), 0 \leq t < \infty \right\}$ is the flat Ricci flow on Euclidean space $\R^m$.
 Let $\vec{a} \in \R^m$ and let $x_0=\sqrt{\tau_T} \vec{a}$ and $y_0=\sqrt{\tau_0} \vec{a}$. 
 Let $u_T(\cdot)=(4\pi \tau_T)^{-\frac{m}{2}} e^{-\frac{d^2(\cdot, 0)}{4}}$, which is a minimizer for $\boldsymbol{\mu}=\boldsymbol{\mu}(M, g(T), \tau_T)=0$. 
 Let $\boldsymbol{\gamma}(\tau)=(\sqrt{\tau} \vec{a}, \tau_T+T-\tau)$, which connects $(x_0, T)$ and $(y_0, 0)$.  
 Then direct calculation shows that both (\ref{eqn:CF06_1}) and (\ref{eqn:CF06_2}) become equality.   
 This means that all constants in (\ref{eqn:CF06_1}) and (\ref{eqn:CF06_2})  are sharp.
 \label{rmk:CF06_2}
 \end{remark}

\section{Effective monotonicity formulas for local functionals}
\label{sec:almostmon}

  The formula of $\frac{v}{u}$ in Theorem~\ref{thm:CA02_2}  contains an extra term $-\boldsymbol{\mu}=-\boldsymbol{\mu}(\Omega, g(T), \tau)$, which carries the information of the Riemannian manifold(with boundary) $(\Omega, g(T))$.  We can use this information to relate the geometry of $(\Omega, g(T))$
  with other domains at some time $t<T$.  The study along this direction leads to effective monotonicity formulas along the Ricci flow, which generalize the global monotonicity formulas of Perelman(c.f. Section 3 of~\cite{Pe1}). 
 
 \begin{theorem}[\textbf{Harnack inequality in terms of local $\boldsymbol{\mu}$-functional values}]
  Same conditions as in Theorem~\ref{thm:CA02_2}.  Let $\Omega_0' \subset \Omega_0 \subset M$ and $h: M \to [0,1]$ be a cutoff function supported on $\Omega_0$ and $h \equiv 1$ on $\Omega_0'$.
  Set 
  \begin{align}
     C_h \coloneqq \sup_{\Omega_0} \left|\nabla \sqrt{h} \right|_{g(0)}^2.    \label{eqn:CF07_0}
  \end{align}
  For each $\tau_T>0$,  set
  \begin{align}
   \boldsymbol{\mu}_0 \coloneqq \boldsymbol{\mu} \left( \Omega_0, g(0), \tau_0 \right), \quad \boldsymbol{\mu}_T \coloneqq \boldsymbol{\mu}(\Omega, g(T), \tau_T). 
  \label{eqn:CF04_7}  
  \end{align}
  Then we have 
    \begin{align}
   \boldsymbol{\mu}_0-\boldsymbol{\mu}_{T}
   \leq \frac{\int_{\Omega_0 \backslash \Omega_0'}  \left\{ \tau_0 |\nabla h|^2 h^{-1} - h \log h\right\} u}{\int_{\Omega_0} uh}
   \leq \left( 4\tau_0 C_h + e^{-1}\right)  \cdot \frac{\int_{\Omega_0 \backslash \Omega_0'} u}{\int_{\Omega_0'} u}. 
  \label{eqn:CF04_3} 
  \end{align}
 \label{thm:CF04_1} 
 \end{theorem}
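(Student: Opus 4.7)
The plan is to test the variational definition of $\boldsymbol{\mu}_0$ against a function cut off from the conjugate heat solution $u$, and identify the resulting quantity with $\boldsymbol{\mu}_T$ up to the sign-definite expression $v$ plus a cutoff error. Concretely, set $A \coloneqq \int_{\Omega_0} uh\, dv_{g(0)}$ and take $\psi \coloneqq \sqrt{uh/A}$ at time $t=0$. Since $h$ is supported in $\Omega_0$ and $u>0$ on $M\times[0,T)$, $\psi$ belongs to $\mathscr{S}(\Omega_0)$, so by definition $\boldsymbol{\mu}_0\le\mathcal{W}^{(R)}(\Omega_0,g(0),\psi,\tau_0)$. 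The conservation law $\partial_t\int u\,dv=0$ (a consequence of $\square^*u=0$) combined with $\int\varphi_T^2\,dv_{g(T)}=1$ gives $\int u\,dv_{g(0)}=1$ and hence $A\le 1$, so $\log A\le 0$; this fact will be used at the end.

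Next I would expand $\mathcal{W}^{(R)}$ directly. Writing $u=(4\pi\tau_0)^{-m/2}e^{-f}$ on $\{u>0\}$, using $4|\nabla\sqrt{uh}|^2=|\nabla(uh)|^2/(uh)$ on $\{h>0\}$, and the identity $|\nabla h|^2/h=4|\nabla\sqrt{h}|^2$, one obtains after grouping
\begin{align*}
  \mathcal{W}^{(R)}(\Omega_0,g(0),\psi,\tau_0)
  =\frac{1}{A}\int_{\Omega_0} uh\bigl[\tau_0(R+|\nabla f|^2)+f-m\bigr]\,dv_0
  +\frac{2\tau_0}{A}\int_{\Omega_0}\langle\nabla u,\nabla h\rangle\,dv_0\\
  +\frac{4\tau_0}{A}\int_{\Omega_0} u|\nabla\sqrt{h}|^2\,dv_0
  -\frac{1}{A}\int_{\Omega_0} uh\log h\,dv_0 +\log A.
\end{align*}
The key algebraic move is to rewrite $|\nabla f|^2$ as $2\Delta f-|\nabla f|^2$ so as to recognize $v/u$. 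Using $u\nabla f=-\nabla u$ and integrating by parts on the compactly supported cutoff $h$ (so no $\partial\Omega_0$ contribution appears), one finds
\begin{align*}
  \int_{\Omega_0} uh\,\tau_0|\nabla f|^2\,dv_0
  =\int_{\Omega_0} uh\,\tau_0(2\Delta f-|\nabla f|^2)\,dv_0
  -2\tau_0\int_{\Omega_0}\langle\nabla u,\nabla h\rangle\,dv_0,
\end{align*}
and the cross term cancels exactly against the second summand above. Recalling the defining formula for $v$ and rearranging $\tau_0(R+2\Delta f-|\nabla f|^2)+f-m=v/u+\boldsymbol{\mu}_T$, the expansion collapses to
\begin{align*}
  \mathcal{W}^{(R)}(\Omega_0,g(0),\psi,\tau_0)
  =\boldsymbol{\mu}_T+\frac{1}{A}\int_{\Omega_0} hv\,dv_0
  +\frac{4\tau_0}{A}\int_{\Omega_0} u|\nabla\sqrt{h}|^2\,dv_0
  -\frac{1}{A}\int_{\Omega_0} uh\log h\,dv_0 +\log A.
\end{align*}

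Theorem~\ref{thm:CA02_2} furnishes $v\le 0$ on $M\times[0,T)$, so the $\int hv$ term is nonpositive and may be discarded; together with $\log A\le 0$, and the observation that $|\nabla h|^2$ and $-h\log h$ both vanish on $\Omega_0'$ where $h\equiv 1$, this yields the first inequality in \eqref{eqn:CF04_3}. For the second inequality, bound the integrand pointwise by $|\nabla\sqrt{h}|^2\le C_h$ (by definition of $C_h$) and $-h\log h\le e^{-1}$ (elementary: the function $-x\log x$ on $[0,1]$ peaks at $x=1/e$), and bound the denominator below by $\int_{\Omega_0}uh\ge\int_{\Omega_0'}u$, again using $h\equiv 1$ on $\Omega_0'$.

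The only nontrivial step is the cross-term cancellation that converts $|\nabla f|^2$ into $2\Delta f-|\nabla f|^2$; all other ingredients are either direct expansion, the elementary pointwise bounds above, or the already-proved Harnack inequality $v\le 0$. Regularity issues at $t=0$ are not an obstacle since $u>0$ and smooth on $M\times[0,T)$, and $h$ is smooth and compactly supported in $\Omega_0$, so the integration by parts is clean.
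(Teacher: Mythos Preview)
Your proof is correct and follows essentially the same approach as the paper: test $\boldsymbol{\mu}_0$ against $\sqrt{uh/A}$, expand $\mathcal{W}^{(R)}$, integrate by parts to convert $|\nabla f|^2$ into $2\Delta f-|\nabla f|^2$ so that $v$ appears, and then discard the nonpositive terms $\int hv$ and $\log A$. The paper organizes the computation through the auxiliary function $\tilde f=f-\log h+\log S$ rather than expanding $4|\nabla\psi|^2$ directly, but the underlying identity and the use of Theorem~\ref{thm:CA02_2} are identical.
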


 \begin{proof}  
  Since $-x \log x \leq e^{-1}$ for each positive $x$, it is obvious that the second inequality of (\ref{eqn:CF04_3}) follows from the first inequality of (\ref{eqn:CF04_3}). 
  Therefore, we only need to prove the first part of (\ref{eqn:CF04_3}), which will be discussed in details in the next paragraph.  
  
 Without loss of generality, we assume $T=1$.  
 For simplicity of notations,  we define
 \begin{align}
     S \coloneqq \left. \int_{\Omega_0} u h \right|_{t=0} \leq \left. \int_{\Omega_0} u  \right|_{t=0} \leq \left. \int_{M} u  \right|_{t=0}=1.    \label{eqn:CF04_2}
 \end{align}
 Let $v=\left\{\tau(2\Delta f-|\nabla f|^2 +R) +f-m-\boldsymbol{\mu}_1 \right\}u$ as in Theorem~\ref{thm:CA02_2}. 
  At time $t=0$,  let $\tilde{u}$ be $\frac{uh}{S}$. Then $\tilde{u}$ satisfies the normalization condition(c.f. (\ref{eqn:MJ17_3})) $\int_{M} \tilde{u}=1$ and is supported on $\Omega_0$.
  Accordingly, we define
  \begin{align*}
    \tilde{f} \coloneqq -\log \tilde{u}-\frac{m}{2} \log (4\pi \tau_0)=-\log u-\frac{m}{2} \log (4\pi \tau_0)-\log h+\log S=f-\log h + \log S,
  \end{align*}
  where $\tau_0=\tau_1+1$. 
  Plugging $\sqrt{\tilde{u}}$ into the formula (\ref{eqn:MJ16_b}), we obtain
  \begin{align*}
     &\quad \boldsymbol{\mu}_0+m\\
     &\leq \mathcal{W}^{(R)}\left( \Omega_0, g(0), \sqrt{\tilde{u}}, \tau_0 \right)+m=\int_{\Omega_0} \left\{ \tau_0(R+ 2\Delta \tilde{f}-|\nabla \tilde{f}|^2) + \tilde{f}\right\}  \tilde{u}\\
     &=\frac{1}{S}\int_{\Omega_0} \left\{ \tau_0 \left(R+ 2\Delta f -2\Delta \log h-|\nabla f |^2-|\nabla \log h|^2 +2 \langle \nabla f, \nabla \log h \rangle \right) +f -\log h + \log S \right\} uh.
  \end{align*}
  Putting the expression of $v$ into the above inequality, we arrive at
  \begin{align}
   \boldsymbol{\mu}_0 &\leq \log S + \int_{\Omega_0}  \left(\frac{v}{u}+\boldsymbol{\mu}_1 \right) \tilde{u} + \frac{1}{S} \int_{\Omega_0} \left\{ \tau_0 \left( -2\Delta \log h-|\nabla \log h|^2 +2 \langle \nabla f, \nabla \log h\rangle \right) -\log h\right\} uh \notag\\
     &=\log S + \boldsymbol{\mu}_1 + \frac{1}{S} \int_{\Omega_0} vh +  \frac{1}{S} \int_{\Omega_0} \left\{ \tau_0 |\nabla \log h|^2   -\log h\right\} uh \notag\\
     &=\boldsymbol{\mu}_1+\left\{ \log S + \frac{1}{S} \int_{\Omega_0} vh \right\} +\frac{1}{S} \int_{\Omega_0} \left\{ 4\tau_0 \left|\nabla \sqrt{h} \right|^2   - h\log h\right\} u. 
  \label{eqn:CF07_5}   
  \end{align}
  At time $t=0$, note that both $\left|\nabla \sqrt{h} \right|$ and $h \log h$ are supported on $\Omega_0 \backslash \Omega_0'$.  
  Recall the fact that $v \leq 0$ and $0<S \leq 1$(c.f. Theorem~\ref{thm:CA02_2} and inequality (\ref{eqn:CF04_2})). Then it follows from the above inequality that
  \begin{align}
   \boldsymbol{\mu}_0-\boldsymbol{\mu}_1 &\leq \left\{ \log S + \frac{1}{S} \int_{\Omega_0} vh \right\} + \frac{1}{S}\int_{\Omega_0 \backslash \Omega_0'}  \left\{ 4\tau_0 \left|\nabla \sqrt{h} \right|^2 - h \log h\right\} u \notag\\
    &\leq \frac{1}{S}\int_{\Omega_0 \backslash \Omega_0'}  \left\{ 4\tau_0 \left|\nabla \sqrt{h} \right|^2 - h \log h\right\} u,    \label{eqn:CF18_2}
  \end{align}
  whence we obtain (\ref{eqn:CF04_3}) by the choice of $\boldsymbol{\mu}_0$, $\boldsymbol{\mu}_1$, $\tau_0$ and $S$(c.f. (\ref{eqn:CF04_7}) and (\ref{eqn:CF04_2})). 
 \end{proof}

 \begin{theorem}[\textbf{Effective monotonicity of local functionals}]
 Same conditions and notations as in Theorem~\ref{thm:CA02_2}. 
 For each $\lambda \in [0, \tau_T]$, let $\varphi_T^{(\lambda)}$ be the minimizer function of $\boldsymbol{\mu}(\Omega, g(T), \lambda)$. 
 Let $u^{(\lambda)}$ be the conjugate heat solution starting from $\left( \varphi_T^{(\lambda)} \right)^2$. Set
 \begin{align}
      c_u \coloneqq \inf_{\lambda \in (0, \tau_T]} \int_{\Omega_0'} u^{(\lambda)}.   \label{eqn:CF07_1} 
 \end{align} 
 Define
  \begin{align}
   \boldsymbol{\nu}_0 \coloneqq \boldsymbol{\nu} \left( \Omega_0, g(0), \tau_0 \right), \quad \boldsymbol{\nu}_T \coloneqq \boldsymbol{\nu}(\Omega, g(T), \tau_T). 
  \label{eqn:CF07_3}  
  \end{align}
 Then we have
  \begin{align}
     \boldsymbol{\nu}_0-\boldsymbol{\nu}_{T}
    \leq \inf_{\tau \in [T, \tau_T+ T]} \boldsymbol{\mu} \left( \Omega_0, g(0), \tau \right)-\boldsymbol{\nu}(\Omega, g(T), \tau_T) 
    \leq \left( 4\tau_0 C_h + e^{-1}\right)  \cdot \left\{ c_u^{-1}-1 \right\}. 
  \label{eqn:CF04_5}   
  \end{align}
 Consequently, we have
 \begin{align}
   \max \left\{\boldsymbol{\mu}_0-\boldsymbol{\mu}_{T}, \boldsymbol{\nu}_0-\boldsymbol{\nu}_{T} \right\} \leq \left( 4\tau_0 C_h + e^{-1}\right) \cdot \left\{c_u^{-1}-1 \right\}.
 \label{eqn:CF05_1}  
 \end{align}
 \label{thm:CF07_1}
 \end{theorem}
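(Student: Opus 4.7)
The plan is to reduce Theorem~\ref{thm:CF07_1} to a one-parameter family of applications of Theorem~\ref{thm:CF04_1}. For each $\lambda \in (0, \tau_T]$, the pair $(\varphi_T^{(\lambda)}, u^{(\lambda)})$ satisfies the hypotheses of Theorem~\ref{thm:CA02_2} with $\tau_T$ replaced by $\lambda$; consequently Theorem~\ref{thm:CF04_1}, applied with the substitution $\tau_T \leftarrow \lambda$ and therefore $\tau_0 \leftarrow \lambda + T$, yields
\begin{align*}
\boldsymbol{\mu}(\Omega_0, g(0), \lambda + T) - \boldsymbol{\mu}(\Omega, g(T), \lambda) \leq \bigl( 4(\lambda + T) C_h + e^{-1} \bigr) \cdot \frac{\int_{\Omega_0 \setminus \Omega_0'} u^{(\lambda)}}{\int_{\Omega_0'} u^{(\lambda)}}.
\end{align*}

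Next I would eliminate the $\lambda$-dependence of the right-hand side. Since $\lambda \leq \tau_T$ implies $\lambda + T \leq \tau_0$, the prefactor is bounded by $4\tau_0 C_h + e^{-1}$. Mass conservation of the conjugate heat flow gives $\int_M u^{(\lambda)} \leq 1$ at time $t = 0$, hence $\int_{\Omega_0 \setminus \Omega_0'} u^{(\lambda)} \leq 1 - \int_{\Omega_0'} u^{(\lambda)}$, and together with the definition (\ref{eqn:CF07_1}) of $c_u$ this gives
\begin{align*}
\frac{\int_{\Omega_0 \setminus \Omega_0'} u^{(\lambda)}}{\int_{\Omega_0'} u^{(\lambda)}} \leq \frac{1}{\int_{\Omega_0'} u^{(\lambda)}} - 1 \leq c_u^{-1} - 1.
\end{align*}
Substituting these two bounds produces, for every $\lambda \in (0, \tau_T]$, the uniform inequality
\begin{align*}
\boldsymbol{\mu}(\Omega_0, g(0), \lambda + T) \leq \boldsymbol{\mu}(\Omega, g(T), \lambda) + \bigl(4\tau_0 C_h + e^{-1}\bigr)\bigl(c_u^{-1} - 1\bigr),
\end{align*}
in which the error term is now independent of $\lambda$.

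At this stage the left side dominates $\inf_{\tau \in [T, \tau_0]} \boldsymbol{\mu}(\Omega_0, g(0), \tau)$, while passing to the infimum in $\lambda \in (0, \tau_T]$ on the right collapses $\boldsymbol{\mu}(\Omega, g(T), \lambda)$ to $\boldsymbol{\nu}_T$; this proves the second inequality in (\ref{eqn:CF04_5}). The first inequality there is immediate from the definition of $\boldsymbol{\nu}_0$: enlarging the range from $[T, \tau_0]$ to $(0, \tau_0]$ only decreases the infimum. Finally, for (\ref{eqn:CF05_1}) the $\boldsymbol{\nu}$-part is exactly (\ref{eqn:CF04_5}), while the $\boldsymbol{\mu}$-part follows by specializing the argument above to $\lambda = \tau_T$, in which case $u^{(\tau_T)}$ coincides with the $u$ of Theorem~\ref{thm:CA02_2} and the same two estimates on prefactor and ratio apply directly. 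The only place requiring care is the order of operations: one must absorb all $\lambda$-dependence on the right side into a single uniform constant via $c_u$ \emph{before} taking the infimum in $\lambda$, since the infimum of a sum does not decompose and a naive splitting would introduce extra suprema rather than the clean $c_u^{-1}$.
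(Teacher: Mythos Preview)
Your proof is correct and follows essentially the same approach as the paper's own argument: apply Theorem~\ref{thm:CF04_1} with the parameter $\tau_T$ replaced by each $\lambda\in(0,\tau_T]$ (the paper uses the symbol $\tau_T'$), bound the prefactor by $\lambda+T\le\tau_0$, bound the ratio via $\int_M u^{(\lambda)}=1$ and the definition of $c_u$, and then take the infimum over $\lambda$; the $\boldsymbol{\mu}$-part of (\ref{eqn:CF05_1}) is the specialization $\lambda=\tau_T$. Your closing remark on the order of operations is a helpful clarification not made explicit in the paper, and your ``$\int_M u^{(\lambda)}\le 1$'' is in fact an equality, though only the inequality is needed.
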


 \begin{proof}
  It is clear that the first inequality of (\ref{eqn:CF04_5}) follows directly from the  definition of $\boldsymbol{\nu}$(c.f. equation (\ref{eqn:MJ16_d}) and (\ref{eqn:MJ16_1})). 
  According to inequality (\ref{eqn:CF04_3}), we have
 \begin{align*}
     \boldsymbol{\mu} \left( \Omega_0, g(0), \tau_T'+T \right)  \leq \left( 4(\tau_T'+T) C_h + e^{-1}\right)  \cdot \frac{\int_{\Omega_0 \backslash \Omega_0'} u}{\int_{\Omega_0'} u} 
     +\boldsymbol{\mu} \left(\Omega, g(T), \tau_T' \right), 
 \end{align*}
 where $u=u^{(\tau_T')}$ is clearly a positive function on $M \times [0,T)$.  Consequently, we have
  \begin{align*}
     \frac{\int_{\Omega_0 \backslash \Omega_0'} u}{\int_{\Omega_0'} u} \leq \frac{\int_{M \backslash \Omega_0'} u}{\int_{\Omega_0'} u}=\frac{\int_M u-\int_{\Omega_0'} u}{\int_{\Omega_0'} u}=\frac{1}{\int_{\Omega_0'} u}-1
     \leq c_u^{-1}-1
 \end{align*}
 where we used the definition (\ref{eqn:CF07_1}) in the last step.  Combining the previous two steps, we arrive at
 \begin{align}
    \boldsymbol{\mu} \left( \Omega_0, g(0), \tau_T'+T \right)  \leq \left( 4(\tau_T'+T) C_h + e^{-1}\right)  \cdot \left\{ c_u^{-1} -1 \right\}
     +\boldsymbol{\mu} \left(\Omega, g(T), \tau_T' \right)
 \label{eqn:CF07_4}    
 \end{align}
  for each $\tau_T' \in (0, \tau_T]$.  Letting $\tau_T'$ run over $[0, \tau_T]$ and taking infimum of the right hand side of the above inequality, we obtain the second inequality of (\ref{eqn:CF04_5}). 
 So we finish the proof of (\ref{eqn:CF04_5}).  
 
The inequality (\ref{eqn:CF05_1}) follows from the combination of (\ref{eqn:CF04_5}) and (\ref{eqn:CF07_4}) by setting $\tau_T'=\tau_T$.  
 \end{proof}

 A  particular case of  Theorem~\ref{thm:CF07_1} is to choose $\Omega=\Omega_0=\Omega_0'=M$.    
 Then we  obtain the monotonicity of the global functionals $\boldsymbol{\mu}$ and $\boldsymbol{\nu}$ of Perelman(c.f. Section 3 of~\cite{Pe1}). 
 
\begin{theorem}[\textbf{Monotonicity of $\boldsymbol{\mu}$ and $\boldsymbol{\nu}$-functionals, Perelman}]
Let $\{(M^m, g(t)), 0 \leq t \leq T\}$ be a Ricci flow solution on a closed manifold. 
Then we have
 \begin{align}
    &\boldsymbol{\mu}(M, g(T), \tau_T) -\boldsymbol{\mu}(M, g(0), \tau_{T}+T) \geq 0,     \label{eqn:MJ17_12}\\
    &\boldsymbol{\nu}(M, g(T), \tau_T) -\boldsymbol{\nu}(M, g(0), \tau_{T}+T) \geq 0 ,   \label{eqn:CF06_10}
  \end{align}
  for every $\tau_T>0$.  Moreover, if equality in (\ref{eqn:MJ17_12}) or (\ref{eqn:CF06_10}) holds, then the flow is induced by a gradient shrinking soliton metric.
\label{thm:CA05_1}  
\end{theorem}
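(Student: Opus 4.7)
The plan is to recognize Theorem~\ref{thm:CA05_1} as the specialization of Theorem~\ref{thm:CF07_1} to the case $\Omega = \Omega_0 = \Omega_0' = M$. Since $M$ is closed, I take the cutoff $h \equiv 1$, which gives $C_h = 0$ in (\ref{eqn:CF07_0}). The conjugate heat equation $\square^{*} u = 0$ preserves the total mass on a closed manifold (integration by parts with no boundary terms), so $\int_M u^{(\lambda)} \equiv 1$ at every time and for every parameter $\lambda$; hence $c_u = 1$ in (\ref{eqn:CF07_1}) and $c_u^{-1} - 1 = 0$. Plugging into (\ref{eqn:CF05_1}) immediately yields both (\ref{eqn:MJ17_12}) and (\ref{eqn:CF06_10}).

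For the rigidity in (\ref{eqn:MJ17_12}), I would retrace the proof of Theorem~\ref{thm:CF04_1} with the same specialization. Inequality (\ref{eqn:CF07_5}) collapses to
\begin{align*}
\boldsymbol{\mu}(M, g(0), \tau_T + T) - \boldsymbol{\mu}(M, g(T), \tau_T) \leq \log S + \frac{1}{S} \int_M v \bigg|_{t=0},
\end{align*}
where $S = \int_M u|_{t=0} = 1$. Theorem~\ref{thm:CA02_2} gives $v \leq 0$ pointwise on $M \times [0,T)$, so equality in (\ref{eqn:MJ17_12}) forces $\int_M v|_{t=0} = 0$, whence $v \equiv 0$ at $t=0$ by smoothness and non-positivity. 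The rigidity clause of Theorem~\ref{thm:CA02_2} then concludes that the flow is induced by a gradient shrinking Ricci soliton metric.

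For the rigidity in (\ref{eqn:CF06_10}), I would first argue that the infimum defining $\boldsymbol{\nu}(M, g(T), \tau_T)$ is achieved at some $\tau^{*} \in (0, \tau_T]$. Indeed, $\tau \mapsto \boldsymbol{\mu}(M, g(T), \tau)$ is continuous on $(0, \infty)$ (as in Proposition~\ref{prn:CF10_2}), tends to $0$ as $\tau \to 0^{+}$ (Proposition~\ref{prn:CA01_3}), and is strictly negative on $(0, \tau_T]$ (for otherwise Proposition~\ref{prn:MJ18_1} applied to a short forward extension of $g(T)$ would force $(M, g(T))$ to be isometric to $(\R^m, g_E)$, which is impossible for closed $M$). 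Once $\tau^*$ is secured, the chain
\begin{align*}
\boldsymbol{\mu}(M, g(T), \tau^{*}) = \boldsymbol{\nu}(M, g(T), \tau_T) = \boldsymbol{\nu}(M, g(0), \tau_T + T) \leq \boldsymbol{\mu}(M, g(0), \tau^{*} + T) \leq \boldsymbol{\mu}(M, g(T), \tau^{*})
\end{align*}
collapses to equalities, and the already-established $\boldsymbol{\mu}$-rigidity at parameter $\tau^{*}$ yields the conclusion.

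The only real technical point is the attainment of the infimum for $\boldsymbol{\nu}$ in the rigidity step; apart from that, the statement is a clean corollary of the effective monotonicity Theorem~\ref{thm:CF07_1} together with the rigidity part of Theorem~\ref{thm:CA02_2}.
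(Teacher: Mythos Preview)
Your proposal is correct and follows essentially the same route as the paper: specialize Theorem~\ref{thm:CF07_1} with $\Omega=\Omega_0=\Omega_0'=M$ and $h\equiv 1$ to obtain the inequalities, revisit (\ref{eqn:CF07_5}) together with the rigidity clause of Theorem~\ref{thm:CA02_2} for the $\boldsymbol{\mu}$-equality case, and then reduce the $\boldsymbol{\nu}$-equality case to the $\boldsymbol{\mu}$-equality case by locating a $\tau^*\in(0,\tau_T]$ at which the infimum is attained. The only cosmetic difference is that the paper quotes Proposition~\ref{prn:CA05_1} directly to get $\boldsymbol{\nu}(M,g(T),\tau_T)<0$ on a closed manifold, whereas you reproduce that argument via Proposition~\ref{prn:MJ18_1}; also, the continuity of $\tau\mapsto\boldsymbol{\mu}(M,g(T),\tau)$ is not literally stated in Proposition~\ref{prn:CF10_2}, though it is standard.
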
 

\begin{proof}
Without loss of generality, we assume $T=1$

Since $\Omega_0'=M$, it is clear that $\int_{\Omega_0'} u^{(\lambda)} \equiv 1$ for each $\lambda \in (0, \tau_1]$.  It follows from (\ref{eqn:CF07_1}) that $c_u=1$.
Therefore, both (\ref{eqn:MJ17_12}) and (\ref{eqn:CF06_10}) follows from (\ref{eqn:CF05_1}) of Theorem~\ref{thm:CF07_1}. 

We now focus on the equality case of (\ref{eqn:MJ17_12}).
We follow the notations in the proof of Theorem~\ref{thm:CF04_1}. 
It follows from (\ref{eqn:CF07_5}) that
\begin{align*}
 -\left\{ \log S + \frac{1}{S} \int_{\Omega_0} vh \right\}  \leq  (\boldsymbol{\mu}_1-\boldsymbol{\mu}_0)-\frac{1}{S} \int_{\Omega_0} \left\{ 4\tau_0 \left|\nabla \sqrt{h} \right|^2   - h\log h\right\} u.
\end{align*}
In the current situation, we have $h \equiv 1$ on $\Omega_0=M$ and $\boldsymbol{\mu}_1-\boldsymbol{\mu}_0=0$.  So the right hand side of the above inequality vanishes. 
Recall that the left hand side of the above term is nonnegative(c.f. (\ref{eqn:CF04_2}) and  (\ref{eqn:CF06_4}) in Theorem~\ref{thm:CA02_2}).  Therefore, it is forced to be zero and we obtain $S=1$ and $v \equiv 0$ on $M$ at $t=0$. 
In light of the second part of Theorem~\ref{thm:CA02_2}, we obtain that the flow is induced by a gradient shrinking Ricci soliton metric. 

If (\ref{eqn:CF06_10}) becomes an equality, we shall show the flow is also induced by a soliton metric.
Actually, it follows from Proposition~\ref{prn:CA05_1} that $\boldsymbol{\nu}(M, g(1), \tau_1)<0$ as $M$ is closed. 
On the other hand, from the proof of Proposition~\ref{eqn:CA01_3}, it is clear that 
\begin{align*}
\lim_{s \to 0^+} \boldsymbol{\nu}(M, g(1), s)=0.
\end{align*}
Therefore, there exists some $\tau_1' \in (0, \tau_1]$ such that 
\begin{align*}
  \boldsymbol{\nu}(M, g(1), \tau_1)=\inf_{\tau \in (0, \tau_1]}  \boldsymbol{\mu}(M, g(1), \tau)=\boldsymbol{\mu}(M, g(1), \tau_1'). 
\end{align*}
Using (\ref{eqn:MJ17_12}), we then have
\begin{align*}
    \boldsymbol{\nu}(M, g(1), \tau_1)=\boldsymbol{\mu}(M, g(1), \tau_1') \geq \boldsymbol{\mu}(M, g(0), 1+\tau_1') \geq \boldsymbol{\nu}(M, g(0), 1+\tau_1). 
\end{align*}
Since now (\ref{eqn:CF06_10}) is an equality, we know all the inequalities in the above line become equalities. In particular, we obtain
\begin{align*}
  \boldsymbol{\mu}(M, g(1), \tau_1')=\boldsymbol{\mu}(M, g(0), 1+\tau_1').
\end{align*}
Therefore we return to the equality case of (\ref{eqn:MJ17_12}) and obtain that the flow is induced by a gradient shrinking Ricci soliton metric. 
\end{proof}

 In light of Theorem~\ref{thm:CF07_1}, the estimate of the difference of the local functionals 
 is reduced to the estimate of two numbers: the upper bound of $C_h$(c.f. (\ref{eqn:CF07_0})) and the lower bound of $c_u$ (c.f. (\ref{eqn:CF07_1})).
 If we assume $\Omega_0$ and $\Omega_0'$ to be concentric geodesic balls, say $\Omega_0=B(x_0,2r)$ and $\Omega_0'=B(x_0,r)$, then there is a natural way to estimate $C_h$.
 Actually, we can choose $\sqrt{h}$ as a cutoff function such that $\left|\nabla \sqrt{h} \right|<2r^{-1}$.  Then it follows from (\ref{eqn:CF07_0}) that 
 \begin{align}
      C_h<4r^{-2}.    \label{eqn:CF07_6}
 \end{align}
 The difficult step is to estimate the lower bound of $c_u$. 
 In the remaining part of this section, we shall estimate $c_u$ in the case that $\Omega_0'$ is very large compared to $\Omega$.
 However, the really hard case is that $\Omega_0'$ is  very small compared to $\Omega$, which situation will be discussed in Section~\ref{sec:alter}.

\begin{theorem}[\textbf{Almost monotonicity of local-$\boldsymbol{\mu}$-functional}]
Let $A \geq 1000m$ be a large constant. 
Let $\{(M^m, g(t)), 0 \leq t \leq T\}$ be a Ricci flow solution satisfying
\begin{align}
    t \cdot Rc(x,t) \leq (m-1)A, \quad \forall \; x \in B_{g(t)} \left(x_0, \sqrt{t} \right), \; t \in (0, T].   \label{eqn:MJ17_11}
\end{align}
Then we have
\begin{align}
  \boldsymbol{\mu}(\Omega_T', g(T), \tau_T) -\boldsymbol{\mu}(\Omega_0, g(0), \tau_{T}+T) \geq -A^{-2},  \quad \forall \;  \tau_T \in (0, A^2 T),  \label{eqn:MJ17_9}
\end{align} 
where $\Omega_T'=B_{g(T)}\left(x_0,  8A \sqrt{T} \right)$ and $\Omega_0=B_{g(0)}\left(x_0, 20A\sqrt{T} \right)$.   In particular, we have
\begin{align}
  \boldsymbol{\nu}(\Omega_T', g(T), \tau_T) \geq -A^{-2} +\inf_{\tau \in [T, \tau_T+T]} \boldsymbol{\mu}(\Omega_0, g(0), \tau) \geq -A^{-2} + \boldsymbol{\nu}(\Omega_0, g(0), \tau_{T}+T). 
  \label{eqn:MJ17_10}
\end{align}
\label{thm:CA02_3}
\end{theorem}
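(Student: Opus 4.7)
The plan is to reduce (\ref{eqn:MJ17_9}) to the effective almost-monotonicity inequality (\ref{eqn:CF05_1}) in Theorem~\ref{thm:CF07_1}, applied with $\Omega := \Omega_T' = B_{g(T)}(x_0, 8A\sqrt{T})$ at time $T$, outer domain $\Omega_0 = B_{g(0)}(x_0, 20A\sqrt{T})$ at time $0$, and an intermediate inner ball $\Omega_0' := B_{g(0)}(x_0, r)$ for some $r$ strictly between these two radii. Choose a smooth cutoff $h : M \to [0,1]$ supported in $\Omega_0$ with $h \equiv 1$ on $\Omega_0'$ and $|\nabla \sqrt{h}|_{g(0)}^2$ of order $(A\sqrt{T})^{-2}$. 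Since the hypothesis $\tau_T \in (0, A^2 T)$ gives $\tau_0 = \tau_T + T < 2A^2 T$, the prefactor in (\ref{eqn:CF05_1}) is bounded by an absolute constant $C_0 = C_0(m)$: namely, $4 \tau_0 C_h + e^{-1} \leq C_0$.

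With these choices, (\ref{eqn:CF05_1}) reduces the theorem to the single mass-concentration estimate
\begin{align*}
c_u \;=\; \inf_{\lambda \in (0,\tau_T]} \int_{\Omega_0'} u^{(\lambda)}(\cdot, 0)\, dv_{g(0)} \;\geq\; 1 - C_0^{-1} A^{-2}.
\end{align*}
Equivalently, using the mass-preservation $\int_M u^{(\lambda)}(\cdot, 0) \, dv_{g(0)} = 1$, one must show that for every $\lambda \in (0,\tau_T]$ the conjugate heat solution out of the minimizer $(\varphi_T^{(\lambda)})^2$ (supported in $\Omega_T'$ at time $T$) deposits at most $O(A^{-2})$ of its total mass outside $\Omega_0'$ at time $0$.

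To obtain such concentration I would combine two ingredients. First, a Perelman-type distance-distortion bound for the $g(t)$-geodesics emanating from $x_0$: since (\ref{eqn:MJ17_11}) provides the required Ricci bound on $B_{g(t)}(x_0,\sqrt{t})$, Lemma 8.3(a) of \cite{Pe1} applies slice-by-slice with $K = A/t$ and $r_0 = \sqrt{t}$, yielding $\partial_t d_{g(t)}(x_0, \cdot) \geq -c(m) A/\sqrt{t}$ in the barrier sense; integration over $[0, T]$ produces a total distortion of order $c(m) A\sqrt{T}$, allowing $\Omega_T'$ to be located inside a $g(0)$-ball well within $\Omega_0'$ once $A \geq 1000m$. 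Second, a Gaussian-type upper bound for the conjugate heat kernel in the controlled parabolic neighborhood of $(x_0, 0)$ --- either from Chau-Tam-Yu~\cite{CTY} invoked as in Section~\ref{sec:LYHP}, or directly via a maximum-principle computation on $u^{(\lambda)}$ tested against a weighted spatial cutoff of the form $\exp(\alpha d_{g(t)}(x_0, \cdot)^2 / T)$ --- which suppresses by a Gaussian factor $\exp(-c(m) A^2)$ the mass crossing an $A\sqrt{T}$-thick barrier across the annulus $\Omega_0 \setminus \Omega_0'$ during the elapsed time $T$, much smaller than $A^{-2}$ for $A \geq 1000m$.

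The main obstacle is the second ingredient, because the curvature bound (\ref{eqn:MJ17_11}) holds only on the shrinking ball $B_{g(t)}(x_0,\sqrt{t})$, so a global Gaussian heat-kernel bound on $\Omega_0'$ or $\Omega_T'$ is not directly available. The resolution is to localize: first use the distance-distortion step to relocate the support of the initial data into the controlled region at time $0$, then quantify outflow across $\Omega_0 \setminus \Omega_0'$ by a cutoff/weighted-$L^2$ computation whose gradient is controlled by $(A\sqrt{T})^{-2}$ on that annulus. Once (\ref{eqn:MJ17_9}) is in hand, (\ref{eqn:MJ17_10}) is automatic: applying (\ref{eqn:MJ17_9}) with $\tau_T$ replaced by an arbitrary $s \in (0, \tau_T]$ (so that $s + T$ ranges over $(T, \tau_T + T]$) and taking the infimum yields the first inequality of (\ref{eqn:MJ17_10}), while the second follows from the definition (\ref{eqn:MJ16_d}) of $\boldsymbol{\nu}$.
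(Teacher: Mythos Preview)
Your overall plan—reduce to (\ref{eqn:CF05_1}) and then prove a mass-concentration estimate $c_u \geq 1 - O(A^{-2})$—is exactly right, and your derivation of (\ref{eqn:MJ17_10}) from (\ref{eqn:MJ17_9}) is fine. But your mechanism for the mass concentration has a genuine gap.

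The distance-distortion ingredient is correct, but the way you use it (``locating $\Omega_T'$ inside a $g(0)$-ball'') does not control where the \emph{mass} of $u^{(\lambda)}$ goes; the conjugate heat solution is instantly positive everywhere. What you need is to feed the distortion bound into a \emph{moving} barrier. Your second ingredient is where the argument breaks: the Chau--Tam--Yu Gaussian bounds require curvature control on the whole region, which (\ref{eqn:MJ17_11}) does not provide outside $B_{g(t)}(x_0,\sqrt t)$, and a static cutoff $h=\eta(d_{g(0)}/(cA\sqrt T))$ cannot do better than $O(1)$ mass loss. Indeed, $\square h = \tfrac{\eta'}{cA\sqrt T}\,\square d - \tfrac{\eta''}{(cA\sqrt T)^2}$, and since $\square d$ is only bounded below by $-A/\sqrt t$ (from the local Ricci bound) while $\eta'\le 0$, the first term contributes $|\eta'|/(c\sqrt t)$, which integrates to an $O(1)$ quantity over $t\in[0,T]$—not $O(A^{-2})$. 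So ``gradient controlled by $(A\sqrt T)^{-2}$'' alone is insufficient.

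The paper's resolution is to build a \emph{space-time} cutoff that is itself a heat supersolution. One takes a profile $\psi:\mathbb R\to[0,1]$ with $\psi\equiv 1$ on $(-\infty,1]$, $\psi\equiv 0$ on $[2,\infty)$, and the crucial second-order properties $\psi''\ge -10\psi$, $(\psi')^2\le 10\psi$, and sets
\[
h(x,t)=e^{-t/(10A^2)}\,\psi\!\left(\frac{d_{g(t)}(x_0,x)+2A\sqrt t}{10A}\right).
\]
The shift $+2A\sqrt t$ exactly cancels the drift: by Lemma~\ref{lma:CC22_1} one has $\square(d+2A\sqrt t)\ge 0$, so $\psi'\cdot\square(\,\cdot\,)\le 0$; the profile inequality $\psi''\ge -10\psi$ then controls the Hessian term, yielding $\square\psi\le \psi/(10A^2)$ and hence $\square h\le 0$. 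Now $\frac{d}{dt}\int_M u^{(\lambda)}h\le 0$, and since $h(\cdot,T)\equiv e^{-T/(10A^2)}$ on $\Omega_T'=\mathrm{supp}\,u^{(\lambda)}(\cdot,T)$, one reads off $\int_{\Omega_0'}u^{(\lambda)}(\cdot,0)\ge e^{-O(A^{-2})}$ directly. The same $h$ at $t=0$ serves as the cutoff in (\ref{eqn:CF05_1}), with $C_h=\tfrac{(\psi')^2}{400A^2\psi}\le \tfrac{1}{40A^2}$ thanks to $(\psi')^2\le 10\psi$; this is what makes the prefactor $4\tau_0 C_h+e^{-1}$ bounded despite $\tau_0\sim A^2 T$. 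Both special properties of $\psi$ are doing real work here, and neither a Gaussian weight nor a generic bump function supplies them.
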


Note that the almost monotonicity inequality (\ref{eqn:MJ17_9}) in Theorem~\ref{thm:CA02_3} can also be regarded as a generalization of the monotonicity inequality (\ref{eqn:MJ17_12})
 in Theorem~\ref{thm:CA05_1}.  Actually, for a given Ricci flow $M \times [0,T]$, a given point $x_0 \in M$ and a given scale $\tau_T>0$, we can always choose a very large $A$ such that (\ref{eqn:MJ17_11}) holds. 
  Then applying (\ref{eqn:MJ17_9}) and letting $A \to \infty$,  in light of (\ref{eqn:CA02_6}), we obtain (\ref{eqn:MJ17_12}).
  Similarly,  one can obtain (\ref{eqn:CF06_10}) by (\ref{eqn:MJ17_10}). 
 
  A key step of the proof of Theorem~\ref{thm:CA02_3} is to apply the condition (\ref{eqn:MJ17_11}) to obtain the $c_u$ lower bound in Theorem~\ref{thm:CF07_1}. 
  For the convenience of the readers, we recall the following distance estimates which will be used repeatedly in this paper.   
  Their detailed proofs can be found in Section 26 of Kleiner-Lott~\cite{KL}. 
 
 \begin{lemma}[\textbf{Distance estimate}, c.f. Lemma 8.3 of Perelman~\cite{Pe1} and Section 17 of Hamilton~\cite{Ha95}]
 Suppose $\{(M, g(t)), 0 \leq t \leq T\}$ is a Ricci flow solution on a complete manifold $M$ and $t_0 \in [0, T]$. 
 
 (a). If $Rc(x,t_0) \leq (m-1)K$ in the ball $B_{g(t_0)}(x_0, r_0)$, then we have
    \begin{align}
      \square d=  \left( \partial_t -\Delta \right) d \geq -(m-1) \left( \frac{2}{3}Kr_0 + r_0^{-1}\right)    \label{eqn:ML14_1}
    \end{align}
    whenever $t=t_0$ and $d=d_{g(t)}(\cdot, x_0)>r_0$. 
    
  (b). If $Rc(x, t_0) \leq (m-1)K$ in the union of balls $B_{g(t_0)}(x_0,r_0) \cup B_{g(t_0)}(y_0,r_0)$, then we have
    \begin{align}
       \frac{d}{dt} d_{g(t)}(x_0, y_0) \geq -2(m-1) \left( \frac{2}{3}Kr_0 + r_0^{-1}\right)     \label{eqn:ML14_2}
    \end{align}  
    at time $t=t_0$. 
 \label{lma:ML14_1}   
 \end{lemma}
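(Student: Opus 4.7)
Both statements are classical and reduce to combining the first and second variation formulas along a minimizing geodesic at time $t_0$.  The plan is to first prove (a) by bounding $\partial_t d$ via first variation and $\Delta d$ via the index form with a one-sided cutoff supported near $x_0$, and then prove (b) by the same index-form trick applied with a \emph{two-sided} cutoff that vanishes at both endpoints, so that minimality of the connecting geodesic directly gives the sign of the index integral.

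For part (a), I would fix a point $x$ with $L:=d_{g(t_0)}(x,x_0)>r_0$, assume first that $d$ is smooth at $(x,t_0)$, and pick a unit-speed minimizing geodesic $\gamma:[0,L]\to M$ from $x_0$ to $x$ at time $t_0$.  The first variation formula gives
\begin{equation*}
   \partial_t d_{g(t)}(x,x_0)\bigr|_{t=t_0}=-\int_0^L Rc(\dot\gamma,\dot\gamma)\,ds.
\end{equation*}
To control $\Delta d$, I would take a parallel orthonormal frame $\{E_i\}_{i=1}^{m-1}$ along $\gamma$ perpendicular to $\dot\gamma$, choose $\phi(s)=s/r_0$ on $[0,r_0]$ and $\phi\equiv 1$ on $[r_0,L]$, and test the index form with $V_i=\phi E_i$; summing the resulting $m-1$ comparison inequalities yields
\begin{equation*}
   \Delta d\;\leq\;\int_0^L\bigl[(m-1)(\phi')^2-\phi^2 Rc(\dot\gamma,\dot\gamma)\bigr]\,ds.
\end{equation*}
Subtracting this from the formula for $\partial_t d$, the full-length Ricci integral cancels and I am left only with the error term $(m-1)\int_0^{r_0}(1-(s/r_0)^2)\,Rc(\dot\gamma,\dot\gamma)\,ds$, which is controlled by the hypothesis $Rc\le(m-1)K$ on $B_{g(t_0)}(x_0,r_0)$ and evaluates to $(m-1)K\cdot \tfrac{2r_0}{3}$; the $(\phi')^2$ term contributes $(m-1)/r_0$.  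Combined this gives exactly (\ref{eqn:ML14_1}).

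For part (b), the first variation formula again gives $\tfrac{d}{dt}d_{g(t)}(x_0,y_0)|_{t_0}=-\int_0^L Rc(\dot\gamma,\dot\gamma)\,ds$ along a minimal geodesic $\gamma$ joining $x_0$ to $y_0$ at time $t_0$, but now there is no curvature bound in the middle of $\gamma$.  The plan is to introduce the symmetric cutoff $\phi(s)=\min(s/r_0,(L-s)/r_0,1)$ (or $r_0\mapsto L/2$ if $L\le 2r_0$) and the variations $V_i=\phi E_i$, which vanish at \emph{both} endpoints.  Minimality of $\gamma$ forces $\sum_i I(V_i,V_i)\ge 0$, which rearranges to $\int_0^L \phi^2 Rc(\dot\gamma,\dot\gamma)\,ds\le (m-1)\int_0^L(\phi')^2\,ds=2(m-1)/r_0$.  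On the two end-intervals $[0,r_0]\cup[L-r_0,L]$, where $1-\phi^2$ lives and where the Ricci hypothesis applies (since those segments of $\gamma$ stay in the balls around the two endpoints by the triangle inequality), I bound $\int(1-\phi^2)Rc\le 2(m-1)K\cdot\tfrac{2r_0}{3}$.  Adding the two contributions yields $\int_0^L Rc(\dot\gamma,\dot\gamma)\,ds\le 2(m-1)(r_0^{-1}+\tfrac{2Kr_0}{3})$, which is (\ref{eqn:ML14_2}).

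The main technical obstacle is the standard one: $d(\cdot,x_0)$ need not be smooth at $x$, and $d_{g(t)}(x_0,y_0)$ need not be differentiable in $t$ at $t_0$.  I would handle this via the usual barrier/support-function argument of Calabi: the inequality (a) is to be understood in the barrier sense, replacing $d$ by the smooth length function associated to a fixed minimizing geodesic realized at $(x,t_0)$ (so the barrier is $\ge d$ on a parabolic neighborhood and equal at $(x,t_0)$), and (b) is understood as a lower bound on the lim-inf of forward difference quotients, justified by the fact that any minimizing geodesic at time $t_0$ produces a barrier from above on a one-sided time interval.  A small secondary issue is the borderline case $L\le 2r_0$ in (b), which I would absorb by using $r_0':=\min(r_0,L/2)$ in the cutoff; since the full geodesic then lies in the union of the two balls, the trivial bound $\int Rc\le(m-1)KL$ combined with the $(\phi')^2$ term still produces the asserted estimate after a straightforward comparison of constants.
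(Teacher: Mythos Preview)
The paper does not give its own proof of this lemma; it simply recalls the statement and refers the reader to Section~26 of Kleiner--Lott.  Your argument is exactly the standard one presented there (first variation for $\partial_t d$, index form with a linear cutoff for $\Delta d$ in (a), and a two-sided tent cutoff plus nonnegativity of the index form for (b)), so in that sense you have reproduced precisely the proof the paper is citing.

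One small point: your handling of the borderline case $L\le 2r_0$ in (b) does not actually close.  The ``trivial bound'' $\int Rc\le (m-1)KL\le 2(m-1)Kr_0$ only implies the asserted estimate $2(m-1)\bigl(\tfrac{2}{3}Kr_0+r_0^{-1}\bigr)$ when $Kr_0^2\le 3$; and replacing $r_0$ by $r_0'=L/2$ in the index-form argument gives the bound with $r_0'$ in place of $r_0$, which is not comparable since $r\mapsto \tfrac{2}{3}Kr+r^{-1}$ is not monotone.  This is not a flaw in your main argument but rather an imprecision in the paper's statement: Perelman's Lemma~8.3(b) explicitly assumes $d_{g(t_0)}(x_0,y_0)\ge 2r_0$, and with that hypothesis your proof is complete as written.
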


 Combining Lemma~\ref{lma:ML14_1} with the condition (\ref{eqn:MJ17_11}), we have the following Lemma. 
 
 \begin{lemma}
 Let $\{(M^m, g(t)), 0 \leq t \leq T\}$ be a Ricci flow solution satisfying
\begin{align*}
    t \cdot Rc(x,t) \leq (m-1)A, \quad \forall \; x \in B_{g(t)} \left(x_0, \sqrt{t} \right), \; t \in (0, T].   
\end{align*}
Let $d(x,t)=d_{g(t)}(x,x_0)$.  Then we have
\begin{align}
   \square \left( d+2A\sqrt{t}\right) \geq 0.  \label{eqn:CC22_1}
\end{align}
\label{lma:CC22_1}
\end{lemma}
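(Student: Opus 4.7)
The plan is to decompose the parabolic operator and have the positive time-derivative of $2A\sqrt{t}$ absorb the negative Laplace-comparison error on $d$. Since $2A\sqrt{t}$ depends only on time, $\Delta(2A\sqrt{t}) = 0$, so
\begin{align*}
\square(d + 2A\sqrt{t}) = \square d + \partial_t(2A\sqrt{t}) = \square d + \frac{A}{\sqrt{t}},
\end{align*}
and \eqref{eqn:CC22_1} reduces to the pointwise lower bound $\square d \geq -A t^{-1/2}$, to be interpreted in the barrier sense.

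Fix $t_* \in (0, T]$. The hypothesis \eqref{eqn:MJ17_11} yields $Rc(\cdot, t_*) \leq (m-1) K$ on $B_{g(t_*)}(x_0, r_0)$ for every $r_0 \in (0, \sqrt{t_*}]$ with $K = A / t_*$. For any $x$ with $d(x, t_*) > r_0$, Lemma~\ref{lma:ML14_1}(a) gives
\begin{align*}
\square d(x, t_*) \geq -(m-1)\Bigl(\tfrac{2}{3} K r_0 + r_0^{-1}\Bigr) = -(m-1)\Bigl(\frac{2 A r_0}{3 t_*} + \frac{1}{r_0}\Bigr).
\end{align*}
Minimizing the right-hand side over $r_0 > 0$ at the critical point $r_0^* = \sqrt{3 t_*/(2A)}$ (which lies in the admissible range $(0, \sqrt{t_*}]$ whenever $A \geq 3/2$), we obtain
\begin{align*}
\square d(x, t_*) \geq -\frac{2(m-1)}{\sqrt{t_*}} \sqrt{\frac{2A}{3}},
\end{align*}
which is at least $-A / \sqrt{t_*}$ as soon as $A \geq 8(m-1)^2/3$ -- a condition amply satisfied under the standing assumption $A \geq 1000 m$ of Theorem~\ref{thm:CA02_3}. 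Adding $A/\sqrt{t_*}$ to both sides then delivers \eqref{eqn:CC22_1} at each $(x, t_*)$ with $d(x, t_*) > r_0^*$. Note how the scaling $r_0 \sim \sqrt{t}$ forced by the hypothesis is precisely what produces the $t^{-1/2}$ rate on the right, matching $\partial_t(2A\sqrt{t})$; this is the reason the correction term in \eqref{eqn:CC22_1} has the specific form $2A\sqrt{t}$.

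The main technical obstacle is that $d(\cdot, t_*)$ fails to be smooth at $x_0$ and along its cut locus, so $\square d$ must be interpreted in the barrier sense. This is handled by the standard Calabi trick: for $(x_*, t_*)$ with $x_* \neq x_0$, construct the upper support function $d^{(\varepsilon)}(y) := \varepsilon + d_{g(t_*)}(\gamma(\varepsilon), y)$, where $\gamma$ is a $g(t_*)$-minimizing unit-speed geodesic from $x_0$ to $x_*$ and $\varepsilon > 0$ is small enough that $\gamma(\varepsilon) \in B_{g(t_*)}(x_0, \sqrt{t_*})$. Then $d^{(\varepsilon)} \geq d$ globally, $d^{(\varepsilon)}(x_*) = d(x_*, t_*)$, and $d^{(\varepsilon)}$ is smooth on a neighborhood of $x_*$. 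Applying the preceding optimization to $d^{(\varepsilon)}$ (with base point $\gamma(\varepsilon)$ in place of $x_0$ and the Ricci bound inherited on a slightly smaller ball about $\gamma(\varepsilon)$) establishes the desired inequality for $d^{(\varepsilon)}$, and passing $\varepsilon \to 0^{+}$ proves \eqref{eqn:CC22_1} in the barrier sense on $(M \setminus \{x_0\}) \times (0, T]$. Since all subsequent maximum-principle applications are insensitive to the Lipschitz singularity at the single space-time fiber $\{x_0\} \times (0,T]$, this is the form of \eqref{eqn:CC22_1} that is needed.
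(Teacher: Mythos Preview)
Your proposal is correct and follows essentially the same strategy as the paper: reduce to showing $\square d \geq -A/\sqrt{t}$, then invoke Lemma~\ref{lma:ML14_1}(a) with the Ricci bound $Rc \leq (m-1)K$, $K=A/t$, on $B_{g(t)}(x_0,r_0)$ and verify the resulting constant is dominated by $A$ for $A$ large.

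The one substantive difference is the choice of $r_0$. The paper simply takes $r_0=\sqrt{t}$, the largest radius allowed by the hypothesis, and does not optimize. You instead minimize $(m-1)\bigl(\tfrac{2A r_0}{3t}+r_0^{-1}\bigr)$ over $r_0$, landing at $r_0^{*}=\sqrt{3t/(2A)}<\sqrt{t}$ and the bound $2(m-1)\sqrt{2A/3}\,t^{-1/2}$, which is $\leq A t^{-1/2}$ once $A\geq 8(m-1)^2/3$. This optimization is not merely cosmetic: with the paper's choice $r_0=\sqrt{t}$ and the hypothesis as written, one gets $(m-1)\bigl(\tfrac{2A}{3}+1\bigr)t^{-1/2}$, which for $m\geq 3$ is \emph{not} bounded by $A t^{-1/2}$ however large $A$ is; the paper's displayed line in fact plugs in $K=A/((m-1)t)$, corresponding to the stronger bound $Rc\leq A/t$ rather than the stated $Rc\leq (m-1)A/t$. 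Your optimized choice sidesteps this discrepancy cleanly. Your explicit treatment of the barrier/Calabi interpretation is also more careful than the paper, which leaves that routine point implicit. Finally, since $r_0^{*}<\sqrt{t}$, your inequality is established on the larger region $\{d>r_0^{*}\}$, which comfortably covers every place the lemma is subsequently invoked (namely where the cutoff derivatives in Theorem~\ref{thm:CA02_3} and Proposition~\ref{prn:CE24_1} are nonzero).
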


\begin{proof}
At time $t \in (0,T]$, let $r_0=\sqrt{t}$ and $K=\frac{A}{(m-1)t}$.
  Then we have $Rc(\cdot, t) \leq (m-1)K$ in $B_{g(t)}(x_0,r_0)$. Consequently, we can apply (\ref{eqn:ML14_1}) to obtain
  \begin{align}
    \square d \geq -(m-1) \cdot \left( \frac{2A}{3(m-1)} + 1\right) \cdot t^{-\frac{1}{2}}=-\left( \frac{2A}{3} + (m-1)\right) t^{-\frac{1}{2}} \geq  -\frac{A}{\sqrt{t}},   \label{eqn:MJ17_1}
  \end{align}
which is equivalent to (\ref{eqn:CC22_1}). 
\end{proof}

Now we are ready for the proof of Theorem~\ref{thm:CA02_3}. 

\begin{proof}[Proof of Theorem~\ref{thm:CA02_3}]
  Without loss of generality, we assume $T=1$. Same as Lemma~\ref{lma:CC22_1},  we set $d$ be the function $d_{g(t)}(\cdot, x_0)$. 
    Let $\psi$ be a cutoff function such that $\psi \equiv 1$ on $(-\infty, 1)$,  $\psi \equiv 0$ on $(2, \infty)$ and $-10 \leq \psi' \leq 0$ everywhere.
  Moreover, $\psi$ satisfies
    \begin{align}
    \psi'' \geq -10 \psi, \quad (\psi')^{2} \leq 10 \psi.   \label{eqn:MJ17_2}
  \end{align}
  The choice of $\psi$ is inspired by the proof of pseudo-locality theorem of Perelman~\cite{Pe1}. 
  The proof of the existence of such $\psi$ can be found at the beginning of Section 34 of Kleiner-Lott~\cite{KL}.
  See Figure~\ref{fig:boundedcutoff} for a graph of $\psi$. 
  
 \begin{figure}[H]
 \begin{center}
 \psfrag{A}[c][c]{$\color{brown}{s=2}$}
 \psfrag{B}[c][c]{$1$}
 \psfrag{C}[c][c]{$y$}
 \psfrag{D}[c][c]{$s$}
 \psfrag{E}[c][c]{$s=1$}
 \psfrag{F}[c][c]{$y=\psi(s)$}
 \psfrag{G}[c][c]{$0$}
 \includegraphics[width=0.3 \columnwidth]{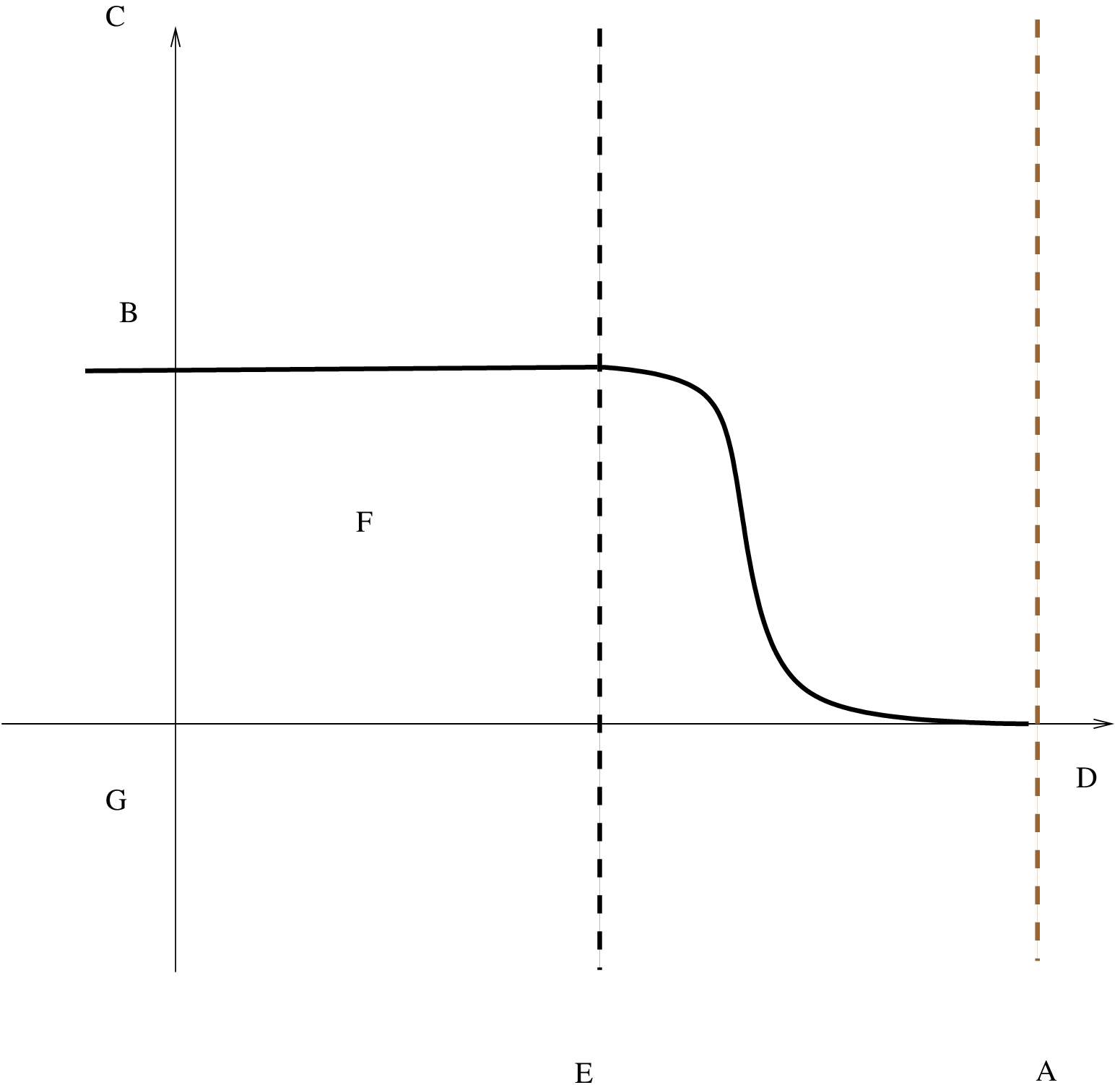}
 \caption{The choice of bounded cutoff function $\psi$}
 \label{fig:boundedcutoff}
 \end{center}
 \end{figure}
  
  \noindent
  By abusing of notations and setting $\psi= \psi\left( \frac{d+2A\sqrt{t}}{10A} \right)$, we can regard $\psi$ as a function on space-time.  
  For each $t \in [0, 1]$, we define
  \begin{align}
     \Omega_t \coloneqq B_{g(t)}\left(x_0, 20A-2A\sqrt{t} \right), \quad \Omega_t'  \coloneqq B_{g(t)}\left( x_0, 10A-2A\sqrt{t} \right).  \label{eqn:CA03_1}
  \end{align}
  It follows from the definition that
  \begin{align}
     \psi (x, t)=
  \begin{cases}
  1, \quad \forall \; x \in \Omega_t';\\
  0, \quad \forall \; x \in M \backslash \Omega_t.
  \end{cases}   
  \label{eqn:CF04_6}
  \end{align}
  We shall study the behavior of the conjugate heat solutions on the space-time support set of $\psi$.  The different domains are illustrated in Figure~\ref{fig:semilocal}. 
  
 \begin{figure}[H]
 \begin{center}
 \psfrag{A}[c][c]{$M$}
 \psfrag{B}[c][c]{$t=1$}
 \psfrag{BBB}[c][c]{$t=0$}
 \psfrag{C}[c][c]{$t$}
 \psfrag{E}[c][c]{$\color{green}{\partial B_{g(t)}\left(x_0, \sqrt{t} \right)}$}
 \psfrag{F}[c][c]{$\partial \Omega_t'$}
 \psfrag{H}[c][c]{$(x_0,0)$}
 \psfrag{I}[c][c]{$\color{brown}{\partial \Omega_t}$}
 \includegraphics[width=0.6 \columnwidth]{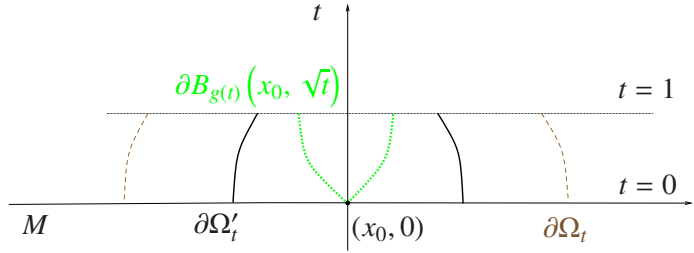}
 \caption{Different domains for almost monotonicity}
 \label{fig:semilocal}
 \end{center}
 \end{figure}
 
  Applying (\ref{eqn:MJ17_1}) and (\ref{eqn:MJ17_2}), we have
  \begin{align*}
    \square \psi=\square  \psi\left( \frac{d+2A\sqrt{t}}{10A} \right) =\frac{1}{10 A} \left(\square  d +\frac{A}{\sqrt{t}} \right) \psi' -\frac{1}{(10A)^2} \psi'' \leq \frac{\psi}{10A^2}. 
  \end{align*}
  Let $h=e^{-\frac{t}{10A^2}}\psi$. Then we have $\square h \leq 0$. 
  At time $t=1$, the support set of $h$ is $\Omega_1=B(x_0, 18A)$ and $h \equiv 1$ on $\Omega_1'=B(x_0, 8A)$.
  Let $\varphi_1^{(\tau_1)}$ be a minimizer for $\boldsymbol{\mu}_1=\boldsymbol{\mu}(\Omega_1', g(1), \tau_1)$ for some positive number $\tau_1$.
  Starting from $u_1=\left( \varphi_1^{(\tau_1)} \right)^{2}$, we solve the equation $\square^* u=0$. Recall that $u>0$ on $M \times [0,1)$.
  Thus, we have
  \begin{align*}
     \frac{d}{dt} \int_{M} uh = \int_{M} \left\{ u(\square h)-h\square^* u \right\}  \leq 0.
  \end{align*}
  Integrating the above inequality yields that
  \begin{align*}
    \left. \int_{M} uh \right|_{t=0} \geq \left. \int_{M} uh \right|_{t=1}= \left. \int_{\Omega} uh \right|_{t=1}=e^{-\frac{1}{10A^2}}  \left. \int_{\Omega} u \right|_{t=1}
    =e^{-\frac{1}{10A^2}}.  
  \end{align*}
  At time $t=0$, $h=\psi$ by definition. Therefore, $h \equiv 1$ on $\Omega_0'$ and $h \equiv 0$ outside $\Omega_0'$. It follows that
  \begin{align}
  1 \geq  \left. \int_{\Omega_0} u \right|_{t=0} \geq   \left. \int_{M} uh \right|_{t=0} \geq e^{-\frac{1}{10A^2}}.   \label{eqn:MJ17_3}
  \end{align}
  Note that $\Omega_0'=B_{g(0)}(x_0,10A)$ and $10A=20 \cdot \frac{A}{2}$. Similar to (\ref{eqn:MJ17_3}), we have
   \begin{align}
    1 \geq  \left.  \int_{\Omega_0'} u \right|_{t=0} \geq e^{-\frac{2}{5A^2}}.  
   \label{eqn:MJ17_5}
  \end{align}
  Also, the gradient estimate of $h$ at $t=0$ is explicit because of (\ref{eqn:MJ17_2}):
  \begin{align*}
    \left| \nabla \sqrt{h}\right|^2=\frac{|\nabla h|^2}{4h}=\frac{(\psi')^2}{400A^2\psi} \leq \frac{1}{40A^2}. 
  \end{align*}
  Applying (\ref{eqn:CF05_1}) in Theorem~\ref{thm:CF07_1} and setting $C_h=\frac{1}{40A^2}$, we obtain
  \begin{align}
   \boldsymbol{\mu}_0-\boldsymbol{\mu}_1  \leq \left\{ \frac{\tau_0}{10 A^2} + e^{-1}\right\} \cdot \left\{ \frac{1}{\int_{\Omega_0'} u}-1 \right\}. 
   \label{eqn:MJ17_6} 
  \end{align}
  Plugging (\ref{eqn:MJ17_5}) into the above inequality, we obtain
  \begin{align}
    \boldsymbol{\mu}_0-\boldsymbol{\mu}_1 \leq \left\{ \frac{\tau_0}{10A^2} + e^{-1}\right\} \cdot  \left( e^{\frac{2}{5A^2}} -1\right). 
  \end{align}
  Since $\tau_0=1+\tau_1 \in (1, 1+A^2)$ and $A \geq 1000m$, the right hand side of the above inequality can be bounded by  $A^{-2}$. 
  Therefore, we finish the proof of (\ref{eqn:MJ17_9}). 
\end{proof}

  The almost monotonicity inequality (\ref{eqn:MJ17_10})  implies the following no-local-collapsing Theorem.

  \begin{theorem}[\textbf{A local version of no-local-collapsing}]
  Suppose $\{(M^m, g(t)), 0 \leq t \leq T\}$ is a Ricci flow solution satisfying (\ref{eqn:MJ17_11}) for some $A \geq 1000m$. 
  Then for each $r \in \left(0, \sqrt{T} \right]$ and geodesic ball $B_{g(T)}(x,r) \subset B_{g(T)}\left(x_0, 8A\sqrt{T} \right)$ where $R(\cdot, T) \leq r^{-2}$, 
  we have
  \begin{align}
      \frac{|B_{g(T)}(x_0, r)|}{\omega_m r^m} \geq \kappa,     \label{eqn:CA02_7}
  \end{align}
  where $\kappa$ is a positive constant and can be chosen as $e^{-2^{m+7}-2+\underset{\tau \in [T, 2T]}{\inf} \boldsymbol{\mu}(\Omega_0, g(0), \tau)}$. 
  \label{thm:CA02_4}
  \end{theorem}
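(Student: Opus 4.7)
The plan is to combine the effective monotonicity of Theorem~\ref{thm:CA02_3} with the volume-ratio-from-$\boldsymbol{\nu}$ estimate of Theorem~\ref{thm:CF21_3}, bridged by the inclusion monotonicity of Proposition~\ref{prn:CA01_2}. In other words, the three-step chain I will carry out is: volume of $B_{g(T)}(x,r)$ is controlled by $\boldsymbol{\nu}$ on that small ball, which is controlled by $\boldsymbol{\nu}$ on the larger ball $\Omega_T'=B_{g(T)}(x_0, 8A\sqrt{T})$, which is controlled by $\boldsymbol{\mu}$ at the initial time on $\Omega_0=B_{g(0)}(x_0, 20A\sqrt{T})$.

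First I would apply Theorem~\ref{thm:CF21_3} to $B=B_{g(T)}(x,r)$ at the fixed time slice $t=T$. Because the hypothesis gives $R(\cdot,T)\leq r^{-2}$ inside $B$, we may take $\bar{\Lambda}=r^{-2}$ so that $\bar{\Lambda}\,r^{2}=1$, yielding
\[
 \frac{|B_{g(T)}(x,r)|}{\omega_m r^m}\;\geq\;\exp\bigl(\boldsymbol{\nu}(B,g(T),r^{2})-2^{m+7}-1\bigr).
\]
Since $B\subset\Omega_T'$ by hypothesis, the inclusion monotonicity (\ref{eqn:CA01_2}) of Proposition~\ref{prn:CA01_2} gives $\boldsymbol{\nu}(B,g(T),r^{2})\geq\boldsymbol{\nu}(\Omega_T',g(T),r^{2})$, which reduces the task to producing a lower bound on the $\boldsymbol{\nu}$-functional of the large ball $\Omega_T'$.

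For that final step I would invoke inequality (\ref{eqn:MJ17_10}) of Theorem~\ref{thm:CA02_3} with $\tau_T=r^{2}$. The assumption $r\in(0,\sqrt{T}\,]$ forces $\tau_T\leq T<A^{2}T$, so the hypothesis of that theorem is met; moreover $\tau_T+T=r^{2}+T\leq 2T$, so the admissible interval $[T,\tau_T+T]$ sits inside $[T,2T]$ and enlarging it only decreases the infimum. Thus
\[
\boldsymbol{\nu}(\Omega_T',g(T),r^{2})\;\geq\;-A^{-2}+\inf_{\tau\in[T,\tau_T+T]}\boldsymbol{\mu}(\Omega_0,g(0),\tau)\;\geq\;-A^{-2}+\inf_{\tau\in[T,2T]}\boldsymbol{\mu}(\Omega_0,g(0),\tau).
\]
Concatenating the three inequalities and using $A\geq 1000m$ so that $A^{-2}+1\leq 2$, the exponent in the resulting lower bound for the volume ratio is at least
\[
-2^{m+7}-2+\inf_{\tau\in[T,2T]}\boldsymbol{\mu}(\Omega_0,g(0),\tau),
\]
which is exactly the claimed $\kappa$. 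There is no substantive obstacle, since all the machinery is already in place from the preceding sections; the only bookkeeping is to match scales so that $\tau_T=r^{2}$ lies in the admissible range of the almost-monotonicity and so that the infimum interval can be widened to the uniform window $[T,2T]$.
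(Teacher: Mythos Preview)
Your proposal is correct and follows essentially the same argument as the paper: chain the inclusion monotonicity of Proposition~\ref{prn:CA01_2}, the almost-monotonicity inequality (\ref{eqn:MJ17_10}) of Theorem~\ref{thm:CA02_3} with $\tau_T=r^2\le T$, and the volume-ratio bound (\ref{eqn:GC28_2}) of Theorem~\ref{thm:CF21_3} with $\bar\Lambda r_0^2=1$. The only cosmetic difference is the order of presentation; the paper first assembles the $\boldsymbol{\nu}$-chain and then invokes (\ref{eqn:GC28_2}), while you begin with (\ref{eqn:GC28_2}) and then bound $\boldsymbol{\nu}$, but the bookkeeping (widening $[T,\tau_T+T]$ to $[T,2T]$ and absorbing $A^{-2}+1\le 2$) is identical.
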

  
  \begin{proof}
    It follows from (\ref{eqn:MJ17_10}) and Proposition~\ref{prn:CA01_2} that
  \begin{align*}
     \boldsymbol{\nu} \left( B_{g(T)}(x,r), g(T), r^2 \right) &\geq  \boldsymbol{\nu} \left(B_{g(T)} \left(x_0, 8A\sqrt{T} \right), g(T), r^2 \right) \\
     &\geq \inf_{\tau \in [T, T+r^2]} \boldsymbol{\mu} \left(B_{g(0)} \left(x_0, 20A\sqrt{T} \right), g(0), \tau+T\right)-A^{-2}\\
     &\geq \inf_{\tau \in [T, 2T]} \boldsymbol{\mu} \left(B_{g(0)} \left(x_0, 20A\sqrt{T} \right), g(0), \tau \right)-1.
  \end{align*}
  Note that $R(\cdot, T) \leq r^{-2}$ in the ball $B_{g(T)}(x,r)$.  Therefore, (\ref{eqn:CA02_7}) and the choice of $\kappa$ follows from (\ref{eqn:GC28_2}) in Theorem~\ref{thm:CF21_3}. 
  \end{proof}

\begin{remark}
 Note that the constant $\kappa$ in (\ref{eqn:CA02_7}) does not depend on $A$ whenever $A \geq 1000m$. 
 Therefore,  Perelman's no-local-collapsing theorem, i.e., Theorem 4.1 of~\cite{Pe1},  is implied by Theorem~\ref{thm:CA02_4} by choosing $A$ large enough. 
 In the paper~\cite{Pe1}, to obtain the volume lower bound, one need the Riemannian curvature bound in the ball $B_{g(T)}(x,r)$. 
 This condition of Riemannian curvature bound is then replaced by scalar curvature bound by Perelman later, using a contradiction argument(c.f. Remark 13.13 of Kleiner-Lott~\cite{KL}).  
 Theorem~\ref{thm:CA02_4} has the advantage that the lower bound of volume ratio is calculated explicitly, due to estimate (\ref{eqn:GC28_2}) in Theorem~\ref{thm:CF21_3}. 
\label{rmk:MJ16_2} 
\end{remark}

  In Theorem~\ref{thm:CA02_4},  $\kappa$ depends only on the initial condition $\displaystyle \inf_{\tau \in [T, 2T]} \boldsymbol{\mu} \left(\Omega_0, g(0), \tau \right)$, which is totally determined
  by the local geometry of $\Omega_0=B_{g(0)} \left(x_0, 20A\sqrt{T} \right)$.
  As $A$ is a big constant, the geodesic ball $\Omega_0$ seems to be large. 
  Therefore, the requirement of  $\displaystyle \inf_{\tau \in [T, 2T]} \boldsymbol{\mu} \left(\Omega_0, g(0), \tau \right)$ at the initial time seems to be strong. 
  Such condition can be replaced by a weaker one(c.f. second part of (\ref{eqn:ML27_3})) in Theorem~\ref{thmin:ML14_1}, with the help of stronger conditions in space-time(c.f.  first part of (\ref{eqn:ML27_3})).

\section{General reduced distance and volume density function}
\label{sec:reduced}

In this section, we shall study the reduced distance and reduced volume density function, starting from a probability measure at some fixed time slice $T$.
Such concepts are natural generalization of the ones(c.f. Definition~\ref{dfn:CF21_1} and Remark~\ref{rmk:CE26_1}) defined by Perelman in~\cite{Pe1}. 

We first recall the concept and basic properties of the reduced distance and reduced volume of Perelman~\cite{Pe1}. 
Suppose $\left\{ (M^m, g(t)), 0 \leq t \leq T \right\}$ is a Ricci flow solution.  Fix $y$ as a base point. Let $\boldsymbol{\gamma}$ be a spacetime curve parametrized by $\tau=T-t$ satisfying 
\begin{align*}
\boldsymbol{\gamma}(\tau)=(\gamma(\tau), T-\tau) \in M \times [0, T], \quad \tau \in [0, \bar{\tau}]. 
\end{align*}
For each such $\boldsymbol{\gamma}$, we can calculate the Lagrangian $\mathcal{L}$ by
\begin{align*}
   \mathcal{L}(\boldsymbol{\gamma})=\int_{0}^{\bar{\tau}} \sqrt{\tau} \left( R + |\dot{\gamma}|^2\right)_{g(T-\tau)} d\tau. 
\end{align*}
Note that the above definition coincides with (\ref{eqn:CF06_3}) by letting $\tau_T \to 0$. 
Fix $(x, T-\bar{\tau})$. Among all such $\boldsymbol{\gamma}$ connecting $(y,T)$ and $(x, T-\bar{\tau})$, 
there exists at least one curve $\boldsymbol{\alpha}$ such that $\mathcal{L}$ is minimized.  Such $\boldsymbol{\alpha}$ is called a shortest reduced geodesic.
The reduced distance between $(y,T)$ and $(x, T-\bar{\tau})$ is defined as 
\begin{align}
  l((y,T), (x, T-\bar{\tau}))=\frac{1}{2\sqrt{\bar{\tau}}} \mathcal{L}(\boldsymbol{\alpha}).   \label{eqn:CE29_1}
\end{align}
Fix $y \in M$, then we denote
\begin{align*}
    l(x,t)=l((y,T), (x,t)).
\end{align*}
By this notation, $l$ is a function defined on the space-time $M \times [0,T)$.  It was proved by Perelman(c.f. Corollary 9.5 of~\cite{Pe1} and Theorem 2.23 of~\cite{Ye1}) that 
\begin{align}
     \square^*  \left\{ \left\{ 4\pi[T-t] \right\}^{-\frac{m}{2}} e^{-l} \right\}  \leq 0         \label{eqn:CE19_1}
\end{align}
in the distribution sense.  Similarly, we have
\begin{align}
      \square \left\{4(T-t)\left(l -\frac{m}{2} \right) \right\} \geq 0.   \label{eqn:CC19_3}
\end{align}
As pointed out by Perelman in section 7 of~\cite{Pe1},  the maximum principle implies that $R \geq -\frac{m}{2(T-t)}$ on $M \times [0, T)$. 
Then direct calculation yields that  $l+\frac{m}{2}$ is a nonnegative function on $M \times [0, T)$.
In other words, we have
\begin{align}
      l((y,T),(x,t)) + \frac{m}{2} \geq 0, \quad \forall \; x,y \in M.   \label{eqn:CC19_4}
\end{align}
Recall that 
\begin{align*}
\displaystyle \lim_{t \to T^{-}} 4(T-t)l((y,T),(\cdot, t))=d_{g(T)}^2(\cdot, y), 
\end{align*}
which is a large deviation formula dates back to Varadhan~\cite{Vara}. 
Therefore, the minimum value of $4(T-t)\left(l -\frac{m}{2} \right)$ is nearly zero as $t \to T^{-}$. 
Consequently, applying the maximum principle on (\ref{eqn:CC19_3}) implies that the minimum value of $4(T-t)\left(l -\frac{m}{2} \right)$ is always non-positive for each $t \in [0,T)$.     
Therefore, we have
\begin{align}
  0 \leq   \min_{x \in M}   \left\{ l((y,T),(x,t)) + \frac{m}{2} \right\}  \leq m.   \label{eqn:CE21_2}
\end{align}

We now generalize the concept of reduced distance and reduced volume density. 

\begin{definition}
  Suppose $p dv_{g(T)}$ is a continuous probability measure of the Riemannian manifold $(M, g(T))$, i.e., $p$ is a nonnegative continuous function satisfying $\int_{M} p(y) dv_{g(T)}(y)=1$. 
  For each $(x,t) \in M \times [0, T)$, we define
  \begin{align*}
     &l(x,t) \coloneqq \int_{M} p(y) l((y,T), (x,t)) dv_{g(T)}(y), \\
     &w(x,t) \coloneqq \int_{M} p(y) \left\{ 4\pi[T-t] \right\}^{-\frac{m}{2}} e^{-l((y,T), (x,t))} dv_{g(T)}(y).
  \end{align*}
  We call $l$ as the reduced distance, $w$ as the reduced volume density function, with respect to the probability measure $p dv_{g(T)}$ at time $T$. 
\label{dfn:CF21_1}  
\end{definition}

 \begin{remark}
 If we choose the base probability measure $p$ as the Dirac measure at time $t=T$, our reduced distance function and reduced volume density function coincide with the ones defined by
 Perelman in~\cite{Pe1}.  For simplicity of notations, we shall also use $l$ and $w$ for our reduced distance  and reduced volume density with respect to probability measures.
 However, it will be clear what is the base measure $p$ in the context.
 \label{rmk:CE26_1}
 \end{remark}

 \begin{proposition}[\textbf{Subsolution and supersolution}]
   Suppose $l$ is the reduced distance function, $w$ is the reduced volume density function, with respect to a probability measure $u_T dv_{g(T)}$ at time $T$. 
   Then we have 
   \begin{align}
      &\square \left\{ 4(T-t)\left( l-\frac{m}{2}\right) \right\} \geq 0,     \label{eqn:CE19_2}\\
      &\square^* w \leq 0    \label{eqn:CC14_4}
   \end{align}
   in the distribution sense.    In particular, on the space-time $M \times [0, T)$, we have
   \begin{align}
      & l + \frac{m}{2} \geq 0, \label{eqn:CE21_1}\\
      & w-u \leq 0,   \label{eqn:CC14_3}
   \end{align}
   where $u$ is the conjugate heat solution $\square^* u=0$ satisfying the initial condition $u=p$ at time $t=T$. 
 \label{prn:CC23_2}  
 \end{proposition}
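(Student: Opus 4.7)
The strategy is to reduce everything to Perelman's pointwise/distributional estimates for the reduced distance and reduced volume density with respect to a single base point, and then integrate against the base probability measure $p\,dv_{g(T)}$. For each fixed $y \in M$ write
\begin{align*}
  l_y(x,t) \coloneqq l((y,T),(x,t)), \qquad \tilde{w}_y(x,t) \coloneqq \{4\pi(T-t)\}^{-\frac{m}{2}} e^{-l_y(x,t)}.
\end{align*}
Perelman's inequalities (\ref{eqn:CE19_1}), (\ref{eqn:CC19_3}) and (\ref{eqn:CC19_4}) then read
\begin{align*}
  \square\{4(T-t)(l_y - m/2)\} \geq 0, \qquad \square^* \tilde{w}_y \leq 0, \qquad l_y + m/2 \geq 0
\end{align*}
on $M \times [0,T)$, with the first two holding in the distributional sense.

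First I would prove (\ref{eqn:CE19_2}) and (\ref{eqn:CC14_4}) by commuting $\square$ and $\square^*$, both of which differentiate only in the $(x,t)$ variables, with the $y$-integration against the probability density $p$. Testing against a nonnegative $\phi \in C_c^{\infty}(M \times [0,T))$ and invoking Fubini, each distributional inequality for $l_y$ and $\tilde{w}_y$ integrates against the nonnegative weight $p(y)\,dv_{g(T)}(y)$ to a distributional inequality for its $p$-average, which by definition is exactly $l$ or $w$; this yields (\ref{eqn:CE19_2}) and (\ref{eqn:CC14_4}) directly. The pointwise bound (\ref{eqn:CE21_1}) is then immediate by integrating $l_y + m/2 \geq 0$ against the probability measure $p\,dv_{g(T)}$.

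For (\ref{eqn:CC14_3}), set $v \coloneqq w - u$. Since $u$ is smooth with $\square^* u = 0$ and $\square^* w \leq 0$ weakly, one has $\square^* v \leq 0$ on $M \times [0,T)$. I would then need to establish the boundary behavior at $t = T$: Perelman's reduced volume monotonicity provides $\int_M \tilde{w}_y(\cdot,t)\,dv_{g(t)} \leq 1$, while the Varadhan-type asymptotic $4(T-t)\,l_y(x,t) \to d_{g(T)}^{2}(x,y)$ shows that each $\tilde{w}_y(\cdot,t)$ concentrates at $y$ as $t \to T^-$; averaging these ``approximate $\delta_y$'' against the continuous density $p$ yields $w(\cdot,t) \to p$ in a suitable weak sense, matching the prescribed data of $u$. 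A maximum principle applied to $\square^* v \leq 0$ --- rewritten in the forward time variable $\tilde{t} = T-t$ as a subsolution of a parabolic equation with locally bounded potential $R$ --- then forces $v \leq 0$ throughout $M \times [0,T)$.

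The main obstacle is the final boundary analysis: first, justifying that $w(\cdot,t) \to p$ at $t = T$ in a sense strong enough to feed into the maximum principle; and second, applying that maximum principle on the noncompact manifold $M$, where some growth control of $v$ at spatial infinity is required. Both steps are standard given the assumption $\sup_M |Rm|_{g(t)} < \infty$ together with Perelman's quantitative estimates on $l_y$, but they constitute the technical core of the argument; in contrast, (\ref{eqn:CE19_2}), (\ref{eqn:CC14_4}) and (\ref{eqn:CE21_1}) reduce to the corresponding one-base-point statements by a direct linearity/Fubini argument.
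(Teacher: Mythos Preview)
Your proposal is correct and follows essentially the same approach as the paper: linearity of $\square$ and $\square^*$ plus Fubini for (\ref{eqn:CE19_2}) and (\ref{eqn:CC14_4}), integration of the pointwise bound (\ref{eqn:CC19_4}) for (\ref{eqn:CE21_1}), and the maximum principle applied to $w-u$ for (\ref{eqn:CC14_3}). The paper's own proof is terser on the last point---it simply asserts that $w-u$ is continuous on $M\times[0,T]$ with value $0$ at $t=T$ and invokes the standard maximum principle for parabolic sub-solutions---whereas you have (correctly) unpacked what that continuity claim at $t=T$ actually entails via the Varadhan-type asymptotics and flagged the noncompactness issue; but the underlying argument is the same.
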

 
 \begin{proof}
Actually, (\ref{eqn:CE19_2}) and (\ref{eqn:CC14_4}) follow from (\ref{eqn:CC19_3}) and (\ref{eqn:CE19_1}) respectively, due to the fact that $\square$ and $\square^*$ are linear operators.  
(\ref{eqn:CE21_1}) follows from (\ref{eqn:CC19_4}), by integration with respect to the probability measure $p(y)dv_{g(T)}(y)$. 
Note that $w-u$ is a continuous function starting from $0$ at time $t=T$, and it satisfies $\square^*(w-u) \leq 0$. Therefore, (\ref{eqn:CC14_3}) follows from the standard maximum principle for parabolic sub-solutions.  
 \end{proof}

\begin{lemma}[\textbf{Construction of ``cutoff" function}]
 For each $A >1000m$,   there exists a non-decreasing positive $C^2$-function 
 $$\psi: (-\infty, 0.2) \to [1, \infty)$$
 such that $\psi \equiv 1$ on $(-\infty, 0.1)$ and increase to $\infty$ on $(0.1, 0.2)$. Moreover, $\psi$ satisfies
  \begin{align}
    2\frac{\left( \psi' \right)^2}{\psi} -\psi'' - 4A \psi' \geq - F_0(A) \psi,   \label{eqn:MJ12_7}
  \end{align}
  where $F_0(A)$ is a constant depending only on $A$ and can be chosen as $1000A^2$.  
\label{lma:CE30_1}  
\end{lemma}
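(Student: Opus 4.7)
The plan is to reduce (\ref{eqn:MJ12_7}) to a first-order ordinary differential inequality via the substitution $\psi = e^{\phi}$. A direct computation gives
\[
2\frac{(\psi')^2}{\psi} - \psi'' - 4A\psi' = \bigl((\phi')^2 - \phi'' - 4A\phi'\bigr)\psi,
\]
so (\ref{eqn:MJ12_7}), after dividing through by $\psi>0$, is equivalent to $(\phi')^2 - \phi'' - 4A\phi' \geq -F_0(A)$. Writing $g = \phi'$ and completing the square, this becomes
\[
g' \leq (g-2A)^2 + F_0(A) - 4A^2.
\]
The choice $F_0(A) = 1000A^2$ makes $F_0(A) - 4A^2 = 996A^2$ strictly positive, which provides a large one-sided margin even when $g$ is close to $0$ and is precisely what permits $g$ to start at zero and yet blow up within the short window $(0.1, 0.2)$.

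Next I would construct a non-decreasing $C^1$ function $g$ on $(-\infty, 0.2)$ satisfying $g \equiv 0$ on $(-\infty, 0.1]$, $g(0.1) = g'(0.1) = 0$ (this guarantees $\psi \in C^2$ at $s = 0.1$), $g \to +\infty$ as $s \to 0.2^-$, and the above ODI. A concrete piecewise choice is: set $g(s) = K(s - 0.1)^3$ on a short initial interval $[0.1, s^*]$ with $K > 0$ small and $s^*$ close to $0.1$, so that $g' = 3K(s-0.1)^2$ stays far below $996A^2$ and the ODI is trivially satisfied; then on $[s^*, 0.2)$ follow the solution of the limiting ODE $g' = (g-2A)^2 + 996A^2$, which is
\[
g(s) = 2A + \sqrt{996}\,A\,\tan\bigl(\sqrt{996}\,A\,(s - s_{\ast\ast})\bigr)
\]
and blows up in time $\pi/(2\sqrt{996}\,A) = O(1/A)$. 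Since $A \geq 1000m$ is large, this blow-up time is much smaller than $0.1$, so by adjusting $s_{\ast\ast}$ and tuning $K, s^*$ for $C^1$-continuity across $s^*$, the blow-up of $g$ can be arranged to occur at exactly $s = 0.2$. Then $\phi(s) \coloneqq \int_{0.1}^{s} g(t)\,dt$ is $C^2$ on $(-\infty, 0.2)$, vanishes on $(-\infty, 0.1]$, and diverges at $s = 0.2^-$; the function $\psi \coloneqq e^{\phi}$ is therefore $C^2$, identically $1$ on $(-\infty, 0.1]$, positive and monotone increasing to $+\infty$ on $(0.1, 0.2)$, and satisfies (\ref{eqn:MJ12_7}) by the reduction above.

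The only technical point is the $C^1$ matching of $g$ at $s^*$ while respecting the ODI throughout. Thanks to the strictly positive floor $996A^2$ on the right-hand side and the slow growth of the initial piece $K(s-0.1)^3$, there is generous slack, so the matching reduces to an elementary book-keeping step with no essential obstacle. The constant $1000$ in $F_0(A)$ is not sharp --- any $F_0(A) > 4A^2$ would suffice for the reduction; the particular choice $1000A^2$ is simply the round integer that ensures the margin $F_0(A) - 4A^2$ is comfortably positive and matches the statement of the lemma.
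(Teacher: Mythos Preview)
Your substitution $\psi = e^{\phi}$, $g = \phi'$ is correct and arguably cleaner than the paper's direct construction: it reduces the second-order inequality for $\psi$ to the first-order inequality $g' \leq (g-2A)^2 + 996A^2$, and the tangent solution is the natural blow-up mechanism. However, the matching step has a concrete arithmetic obstruction when the cubic is anchored at $s = 0.1$. The $C^1$-match at $s^*$ reads, from the tangent side, $g'(s^*) = (g(s^*)-2A)^2 + 996A^2 \geq 996A^2$; combined with $g'(s^*) = 3g(s^*)/(s^*-0.1)$ from the cubic side, this gives
\[
s^* - 0.1 \;=\; \frac{3y}{(y-2A)^2 + 996A^2}, \qquad y \coloneqq g(s^*).
\]
An elementary calculus shows the right-hand side is at most about $0.05/A$ for all $y > 0$ (the maximum is at $y = \sqrt{1000}\,A$). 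The tangent piece then contributes at most another $\pi/(\sqrt{996}\,A)$ before blow-up. Hence the entire transition has width $O(1/A) \ll 0.1$, and the blow-up \emph{cannot} be placed at $s = 0.2$: the three constraints (value match, derivative match, blow-up at $0.2$) are simply incompatible with the anchor at $0.1$. In particular $K$ cannot be small; matching forces $K$ of order at least $A^{4}$, contrary to your description.

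The fix is immediate: anchor the cubic at some $s_0 = 0.2 - O(1/A) > 0.1$ instead of at $0.1$, and set $g \equiv 0$ on $(-\infty, s_0]$. Then $\psi \equiv 1$ on $(-\infty, s_0] \supset (-\infty, 0.1]$ as the lemma requires, and the remaining $O(1/A)$ window accommodates cubic $+$ tangent with blow-up exactly at $0.2$. This is essentially what the paper does, though it works with $\psi$ directly: it takes $\psi(s) = 2(e^{4}-1)/\bigl(e^{4A(0.2-s)}-1\bigr)$ on $[0.2-1/A,\,0.2)$ (the exact solution of $2(\psi')^2/\psi - \psi'' - 4A\psi' = 0$), a quintic-polynomial interpolant to $\psi \equiv 1$ on an interval of width $1/\sqrt{A}$ to its left, and $\psi \equiv 1$ further left. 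Your logarithmic substitution buys a first-order rather than second-order problem and makes the differential inequality transparent, but the transition window must still sit near $0.2$, not near $0.1$.
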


\begin{proof}
Solving the second order ODE 
$$y''-2\frac{(y')^2}{y}+4Ay'=0$$
with the condition that $y$ blows up at $s=0.2$, it is not hard to see the general solution is $\frac{C}{e^{4A(0.2-s)}-1}$. 
For simplicity, we choose $C=2(e^4-1)$ and denote $\frac{2(e^4-1)}{e^{4A(0.2-s)}-1}$ by $y$. 
At $s=s_1=0.2-\frac{1}{A}$, we have
\begin{align*}
  y(s_1)=2>1, \quad
  y'(s_1)=8A \cdot \frac{e^{4}}{(e^{4}-1)}, \quad
  y''(s_1)=32A^2 \cdot \frac{e^{4}(e^{4}+1)}{(e^{4}-1)^2}.
\end{align*}
For simplicity of notation, we denote
\begin{align}
   a_1 \coloneqq \frac{8e^{4}}{(e^{4}-1)} \sim 8.149, \quad a_2 \coloneqq \frac{32e^{4}(e^{4}+1)}{(e^{4}-1)^2} \sim 33.813.   \label{eqn:CE31_5}
\end{align}
We observe that the following interpolation holds.
\begin{claim}
 For every $k \in (2,A)$, there is an increasing positive function $f$ on $[0, \frac{k}{A}]$ such that
 \begin{align}
 \begin{cases}
  &f(0)=f'(0)=f''(0)=0, \\
  &f\left(\frac{k}{A} \right)=1, \quad f' \left(\frac{k}{A} \right)=a_1 A, \quad f'' \left(\frac{k}{A} \right)=a_2 A^2, 
 \end{cases} 
 \label{eqn:CE31_2}
 \end{align}
 where $a_1$ and $a_2$ are defined in (\ref{eqn:CE31_5}).  
\label{clm:CE31_1} 
\end{claim}

This is done by an elementary interpolation. On $[0, \frac{k}{A}]$, let $h$ be defined as follows
\begin{align}
   h \coloneqq A^2\left\{ c_1 \left( \frac{At}{k}\right) + c_2 \left( \frac{At}{k}\right)^2 + c_3\left( \frac{At}{k}\right)^3 \right\} .     \label{eqn:CE31_3}
\end{align}
Let $H$ be the anti-derivative of $h$ with $H(0)=0$, $\mathcal{H}$ be the anti-derivative of $H$ with $\mathcal{H}(0)=0$.  Then we have
\begin{align*}
    &H=A^2 \left\{ \frac{c_1t}{2} \left( \frac{At}{k}\right)  + \frac{c_2 t}{3} \left( \frac{At}{k}\right)^2 + \frac{c_3 t}{4} \left( \frac{At}{k}\right)^3 \right\}, \\
    &\mathcal{H}=A^2 \left\{ \frac{c_1t^2}{6} \left( \frac{At}{k}\right)  + \frac{c_2 t^2}{12} \left( \frac{At}{k}\right)^2 + \frac{c_3 t^2}{20} \left( \frac{At}{k}\right)^3 \right\}.
\end{align*}
We want to figure out $c_1, c_2, c_3$ such that $h=f''$, $H=f'$ and $\mathcal{H}=f$. For this purpose, we need 
\begin{align*}
\begin{cases}
  &1=f\left(\frac{k}{A} \right)=\mathcal{H}\left(\frac{k}{A} \right)=k^2\left\{ \frac{c_1}{6} + \frac{c_2}{12} + \frac{c_3}{20}\right\}, \\
  &a_1A =f'\left(\frac{k}{A} \right)=H\left(\frac{k}{A} \right)=Ak \left\{ \frac{c_1}{2} + \frac{c_2}{3} + \frac{c_3}{4} \right\}, \\
  &a_2 A^2 =f''\left(\frac{k}{A} \right)=h\left(\frac{k}{A} \right)= A^2\left\{ c_1+c_2+c_3 \right\}. 
\end{cases}  
\end{align*}
Solving this equation, we obtain
  \begin{align}
  \begin{cases}
    &c_1=3a_2-24 a_1k^{-1} + 60 k^{-2},\\
    &c_2=-12 a_2+84a_1 k^{-1}-180 k^{-2}, \\
    &c_3=10a_2-60a_1k^{-1} +120k^{-2}.
  \end{cases}  
  \label{eqn:CE31_4} 
  \end{align}
  Let $\theta=\frac{At}{k}$.  Then we have
  \begin{align}
     f'(t)=kA \theta^2 \left\{ \frac{c_1}{2}  + \frac{c_2}{3} \theta + \frac{c_3}{4} \theta^2  \right\}.     \label{eqn:CE31_1}
  \end{align}
  Note that $c_3>0$ and
  \begin{align*}
     -\frac{c_2}{3c_1}=\frac{12 a_2-84a_1 k^{-1}+180 k^{-2}}{3(3a_2-24 a_1k^{-1} + 60 k^{-2})}=\frac{12 -84a_1a_2^{-1} k^{-1}+180 a_2^{-1}k^{-2}}{9-72 a_1a_2^{-1}k^{-1} + 180 a_2^{-1}k^{-2}}. 
  \end{align*}
 Since $k>2$, it follows from a direct calculation that 
  \begin{align*}
    \frac{12 -84a_1a_2^{-1} k^{-1}}{9-72 a_1a_2^{-1}k^{-1} }>1.
  \end{align*}
  Consequently, we obtain that 
  \begin{align*}
    \min_{\theta \in [0,1]} \left\{ \frac{c_1}{2}  + \frac{c_2}{3} \theta + \frac{c_3}{4} \theta^2  \right\}=\min \left\{ \frac{c_1}{2},  \frac{c_1}{2}  + \frac{c_2}{3} + \frac{c_3}{4} \right\}
    =\min \left\{ \frac{c_1}{2},  \frac{a_1}{k}\right\}>0. 
  \end{align*}
  It follows from (\ref{eqn:CE31_1}) that $f'$ is a positive function on $(0, \frac{k}{A}]$.  The condition (\ref{eqn:CE31_2}) holds naturally by the construction of $f$.
  Therefore, the proof of  Claim~\ref{clm:CE31_1} is complete. \\

  Recall that $A$ is very large.    Let $k=\sqrt{A}$ and define
  \begin{align*}
     \psi(s) \coloneqq
  \begin{cases}   
     &\frac{1}{e^{4A(0.2-s)}-1},   \quad s \in [0.2-\frac{1}{A}, 0.2);\\
     &1+f(s-0.2+\frac{k+1}{A}), \quad s \in [0.2-\frac{k+1}{A}, 0.2-\frac{1}{A}];\\
     &1, \quad s \in (-\infty, 0.2-\frac{k+1}{A}].
  \end{cases}   
  \end{align*}
  Here $f$ is the positive increasing function defined in Claim~\ref{clm:CE31_1}.   Then $\psi$ obviously satisfies all the required conditions except the differential inequality (\ref{eqn:MJ12_7}),
  which will be verified in the following steps. 
  It is clear that $\psi$ is an increasing $C^2$-function.  On the interval $(-\infty, 0.2-\frac{k+1}{A}] \cup [0.2-\frac{1}{A}, 0.2]$, we have 
  $2\frac{\left( \psi' \right)^2}{\psi} -\psi'' -4A \psi' =0$ and (\ref{eqn:MJ12_7}) holds trivially. 
  Therefore, we focus our attention on the intermediate interval $[0.2-\frac{k+1}{A}, 0.2-\frac{1}{A}]$, where we have $1 \leq \psi \leq 2$.  Thus, we have
  \begin{align*}
    2\frac{\left( \psi' \right)^2}{\psi} -\psi'' -4A \psi' &\geq \frac{\left( \psi' \right)^2}{\psi} -\psi'' -4A \psi' =    \left( f' \right)^2 -f'' - 4A f'  \geq -4A^2 -f''. 
  \end{align*}
  However, it follows from the construction(c.f. (\ref{eqn:CE31_3})) of $f$ that
  \begin{align*}
     \max_{s \in [0, \frac{k}{A}]} f'' = \max_{s \in [0, \frac{k}{A}]} h(s)=\max_{\theta \in [0,1]} \left\{ c_1 \theta + c_2 \theta^2 + c_3 \theta^3 \right\} \leq |c_1|+|c_2|+|c_3|<950, 
  \end{align*}
  where we used the explicit value of $c_1, c_2$ and $c_3$ in the last step(c.f. (\ref{eqn:CE31_4}) and (\ref{eqn:CE31_5})). 
  Combining the previous two inequalities, we have
  \begin{align*}
     2\frac{\left( \psi' \right)^2}{\psi} -\psi'' -4A \psi' \geq -1000A^2 \geq -1000 A^2 \psi
  \end{align*}
  on the interval $[0.2-\frac{k+1}{A}, 0.2-\frac{1}{A}]$. 
  Therefore, (\ref{eqn:MJ12_7}) holds on all $(-\infty, 0.2)$. 
  The proof of the lemma is complete.  
\end{proof}

The existence of $\psi$ in Lemma~\ref{lma:CE30_1} was pointed out by Perelman~\cite{Pe1}.  Its construction was described in Theorem 28.2 of Kleiner-Lott~\cite{KL}. 
In Lemma~\ref{lma:CE30_1}, we provide the full details of the construction and calculate the explicit value of $F_0(A)$, which will be used in our forthcoming discussions(c.f. Remark~\ref{rmk:CF18_1}).  
For intuition,  the graph of $\psi$ is shown in Figure~\ref{fig:unboundedcutoff}.

 \begin{figure}[H]
 \begin{center}
 \psfrag{A}[c][c]{$\color{brown}{s=\frac{1}{5}}$}
 \psfrag{B}[c][c]{$y=\psi(s)$}
 \psfrag{C}[c][c]{$y$}
 \psfrag{D}[c][c]{$s$}
 \psfrag{E}[c][c]{$s=\frac{1}{20}$}
 \psfrag{F}[c][c]{$1$}
 \psfrag{G}[c][c]{$0$}
 \includegraphics[width=0.5 \columnwidth]{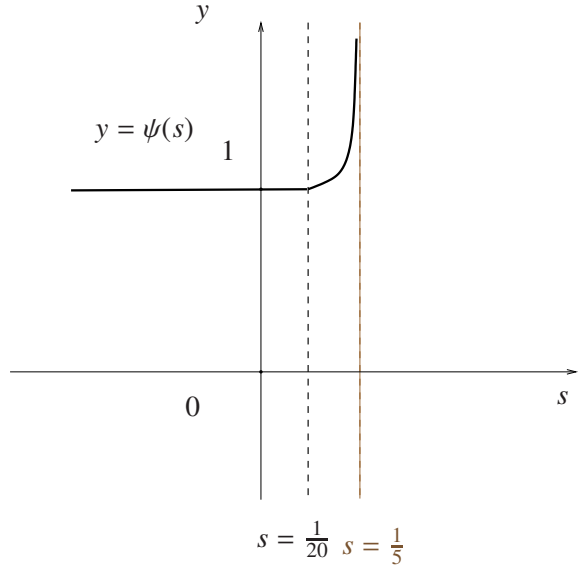}
 \caption{The choice of unbounded cutoff function $\psi$}
 \label{fig:unboundedcutoff}
 \end{center}
 \end{figure}

The following property is inspired by the proof of Theorem 8.2 of Perelman~\cite{Pe2}. 

\begin{proposition}[\textbf{Harnack type Estimate of positive super solution}]
Suppose $\{(M^m, g(t)), 0 \leq t \leq T\}$ is a Ricci flow solution satisfying
\begin{align}
    t \cdot Rc(x,t) \leq (m-1)A, \quad \forall \; x \in B_{g(t)} \left(x_0, \sqrt{t} \right), \; t \in (0, T].   \label{eqn:CC21_1}
\end{align}
Suppose $\square H \geq 0$ and $H>0$  on $M \times [t_0, T]$ for some $t_0 \in [0.5T, T]$.
Then we have
\begin{align}
    \min_{z \in \bar{B}_{g(t_0)} \left(x_0, 0.2\sqrt{T} \right)} H(z,t) \leq e^{F_0(1-\frac{t_0}{T})}  \min_{z \in \bar{B}_{g(1)} \left(x_0, A\sqrt{T} \right)} H(z,1),   \label{eqn:CE24_1}
\end{align}
where $F_0=F_0(A)$ is a large positive number satisfying (\ref{eqn:MJ12_7}). 
\label{prn:CE24_1}
\end{proposition}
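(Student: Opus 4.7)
The strategy adapts Perelman's localized maximum principle argument from Section~8 of~\cite{Pe1}, combining the unbounded cutoff $\psi$ of Lemma~\ref{lma:CE30_1} with the distance-function bound of Lemma~\ref{lma:CC22_1} to derive a forward minimum principle for the supersolution $H$. By parabolic rescaling I may assume $T=1$, so the target inequality reads $\min_{\bar B_{g(t_0)}(x_0,\, 0.2)} H(\cdot,t_0) \leq e^{F_0(1-t_0)} \min_{\bar B_{g(1)}(x_0,\, A)} H(\cdot,1)$.

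The first step is to build a space-time barrier. Setting $f(x,t) = \bigl(d_{g(t)}(x,x_0) + 2A\sqrt t\bigr)/(30A)$ and $\phi(x,t) = \psi(f(x,t))$, the scale $30A$ is chosen so that $f\le 0.1$ (and hence $\phi=1$) both on $\bar B_{g(t_0)}(x_0,0.2)$ at time $t_0$ and on $\bar B_{g(1)}(x_0,A)$ at time $1$, while $\phi$ blows up on the lateral set $\{f=0.2\}$. Lemma~\ref{lma:CC22_1} gives $\square f \geq 0$, and combining $|\nabla f|^2 = (30A)^{-2}$ with the key inequality (\ref{eqn:MJ12_7}) of Lemma~\ref{lma:CE30_1} yields
\[
\square\phi + \frac{2|\nabla\phi|^2}{\phi} \;=\; \psi'(f)\,\square f + \frac{2(\psi')^2/\psi - \psi''}{(30A)^2} \;\geq\; -\frac{F_0}{(30A)^2}\,\phi.
\]

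The main step is a localized minimum principle applied to the auxiliary function $\hat H = H\,e^{-F_0(1-t)}\,\phi$ on the parabolic region $\bar\Sigma = \{(x,t): t_0 \leq t \leq 1,\ f(x,t) \leq 0.2\}$. Writing $\bar H = H e^{-F_0(1-t)}$, one checks $\square\bar H \geq F_0 \bar H$, and at any interior minimum of $\hat H$ the condition $\nabla\hat H = 0$ forces $\nabla\bar H = -\bar H\,\nabla\phi/\phi$. Substituting into the product rule and applying the preceding differential inequality for $\phi$ gives
\[
\square\hat H \;=\; (\square\bar H)\phi + \bar H\!\left(\square\phi + \tfrac{2|\nabla\phi|^2}{\phi}\right) \;\geq\; F_0\,\hat H\!\left(1 - (30A)^{-2}\right) \;>\; 0,
\]
contradicting the parabolic condition $\square\hat H \leq 0$ at any interior minimum. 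Since $\hat H \equiv +\infty$ on the lateral part of $\partial\Sigma$, the infimum of $\hat H$ on $\bar\Sigma$ is forced onto the bottom face $t=t_0$.

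For the final step, given any $y \in \bar B_{g(1)}(x_0,A)$ the identity $\phi(y,1)=1$ gives $\hat H(y,1) = H(y,1)$, so the minimum-principle conclusion $\hat H(y,1) \geq \inf_{\bar\Sigma_{t_0}}\hat H(\cdot,t_0)$ yields, after taking the minimum over $y$, the inequality $\min_{\bar B_{g(1)}(x_0,A)}H(\cdot,1) \geq \inf_{\bar\Sigma_{t_0}}\hat H(\cdot,t_0)$; the proof is completed by identifying the right-hand side with $e^{-F_0(1-t_0)}\min_{\bar B_{g(t_0)}(x_0,0.2)}H(\cdot,t_0)$. I expect the main technical obstacle to be precisely this last identification: a priori, the infimum on $\bar\Sigma_{t_0}$ could be attained at points outside $\bar B_{g(t_0)}(x_0,0.2)$ where $H$ happens to be smaller, so one must exploit the growth of $\phi$ (together, if necessary, with a secondary refinement of the cutoff) to ensure the infimum is effectively controlled by $H$ on the small ball alone — the careful bookkeeping of the constants, including the precise value of $F_0(A)=1000A^2$ from Lemma~\ref{lma:CE30_1}, should be concentrated in this step.
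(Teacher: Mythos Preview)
Your overall strategy—localized minimum principle with the unbounded cutoff $\psi$—is the same as the paper's, but your choice of argument for $\psi$ is wrong, and this creates exactly the gap you anticipated in your final paragraph.

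With $f=(d+2A\sqrt t)/(30A)$, compute the time-$t_0$ slice of your region: $\bar\Sigma_{t_0}=\{f(\cdot,t_0)\le 0.2\}=\bar B_{g(t_0)}\bigl(x_0,\,6A-2A\sqrt{t_0}\bigr)$, a ball of radius roughly $4A$. Worse, $\phi\equiv 1$ on the ball of radius $3A-2A\sqrt{t_0}\gtrsim A$. So $\inf_{\bar\Sigma_{t_0}}\hat H$ is at best $e^{-F_0(1-t_0)}\min_{\bar B_{g(t_0)}(x_0,\,\sim 4A)}H$, and since the minimum over a large ball is \emph{smaller} than the minimum over $\bar B_{g(t_0)}(x_0,0.2)$, you cannot pass from the former to the latter. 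The ``growth of $\phi$'' you invoke is invisible on the inner ball of radius $\sim A$ where $\phi\equiv 1$, so no refinement of constants will rescue this.

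The fix is to replace your cutoff argument by one whose support \emph{shrinks} as $t$ decreases to $t_0$. The paper takes
\[
\tilde d(x,t)=d_{g(t)}(x,x_0)-\frac{A(t-t_0)}{1-t_0},\qquad h=H\,\psi(\tilde d),
\]
so that at $t=1$ the support is $B_{g(1)}(x_0,A+0.2)$ while at $t=t_0$ it collapses to $B_{g(t_0)}(x_0,0.2)$, which is precisely the small ball in the statement. The price is that $\square\tilde d$ is no longer $\ge 0$; instead $\square\tilde d\ge -A/\sqrt t-A/(1-t_0)\ge -4A$ on $[t_0,1]$ (using $t_0\ge 0.5$), and this is where the term $-4A\psi'$ in inequality~(\ref{eqn:MJ12_7}) is actually used—your computation discarded it. Tracking the spatial minimum $h_{\min}(t)$ then gives $\square(e^{F_0t}h)\ge 0$ at the minimum, hence $h_{\min}(t_0)\le e^{F_0(1-t_0)}h_{\min}(1)$, and the final identification is now automatic because the $t_0$-support \emph{is} the small ball.
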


\begin{proof}
Let $\psi$ be the cutoff function defined in Lemma~\ref{lma:CE30_1} and define auxiliary functions 
 \begin{align}
   &\tilde{d}(x,t) \coloneqq d(x,t)-\frac{A(t-t_0)}{1-t_0}, \label{eqn:CC23_3}\\
   & h \coloneqq H \psi\left( \tilde{d}(x,t) \right).  \label{eqn:CC23_2}
 \end{align}
 By the positivity assumption of $H$,  it is clear that $h(\cdot, t)$ is positive and finite in $B_{g(t)}\left(x_0, \frac{A(t-t_0)}{1-t_0}+0.2 \right)$ for each $t \in [t_0, 1]$. 
 Moreover, $h(\cdot, t)=H(\cdot, t)$ in $B_{g(t)}\left(x_0, \frac{A(t-t_0)}{1-t_0}+0.05 \right)$. 
 Also,  $h(x,t) \to \infty$ whenever $x \to \partial B_{g(t)}\left(x_0, \frac{A(t-t_0)}{1-t_0}+0.2 \right)$. 
 In particular, at time $t=1$, we have $h(\cdot, 1)=H(\cdot, 1)$ in $B_{g(1)}(x_0, A)$. 
 The different domains are illustrated in Figure~\ref{fig:conjugateheat}.

 \begin{figure}[H]
 \begin{center}
 \psfrag{A}[c][c]{$M$}
 \psfrag{B}[c][c]{$t=1$}
 \psfrag{BB}[c][c]{$t=t_0$}
 \psfrag{BBB}[c][c]{$t=0$}
 \psfrag{C}[c][c]{$t$}
 \psfrag{D}[c][c]{$\color{blue}{\partial B_{g(t)}\left(x_0, \frac{A(t-t_0)}{1-t_0}+0.05 \right)}$}
 \psfrag{E}[c][c]{$\color{green}{\partial B_{g(t)}\left(x_0, \sqrt{t} \right)}$}
 \psfrag{F}[c][c]{$\color{blue}{B_{g(0)}(x_0,1) \times [0,1]}$}
 \psfrag{G}[c][c]{$\color{red}{B_{g(1)}(x,r)}$}
 \psfrag{H}[c][c]{$x_0$}
 \psfrag{I}[c][c]{$\color{brown}{\partial B_{g(t)}\left(x_0, \frac{A(t-t_0)}{1-t_0}+0.2 \right)}$}
 \includegraphics[width=0.6 \columnwidth]{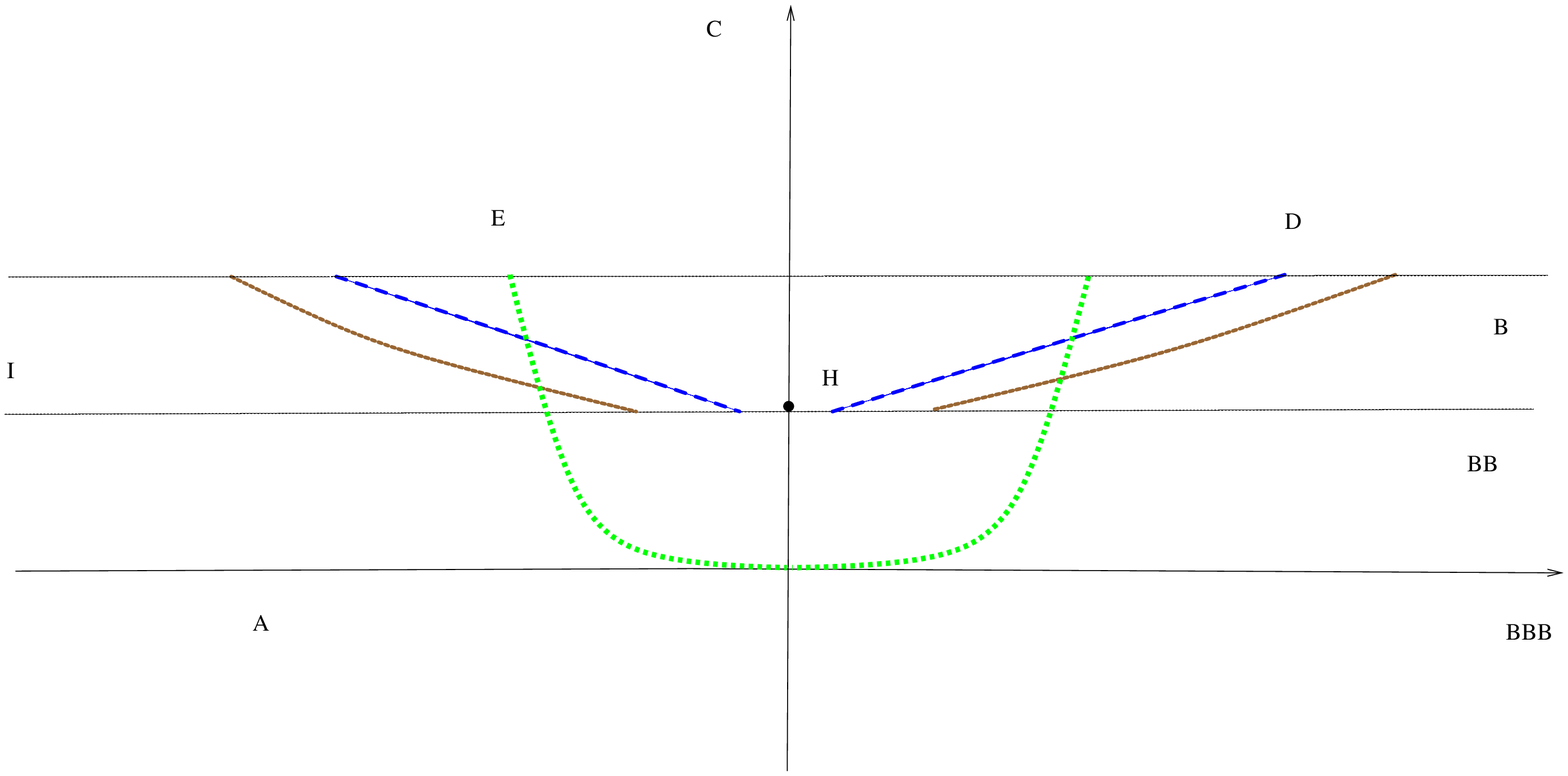}
 \caption{Different domains for applying conjugate heat equations}
 \label{fig:conjugateheat}
 \end{center}
 \end{figure}
 
Note that
\begin{align*}
  \square h=H \square \psi + \psi \square H
    -2 \left\langle \nabla \psi, \nabla H \right\rangle, 
  \quad
  \nabla h=H \nabla \psi + \psi \nabla H.
\end{align*}
At space minimum points of $h$, we have $\nabla h=0$, which implies that $\nabla H=-\psi^{-1}H \nabla \psi$. 
Consequently, by applying the condition $\square H \geq 0$,  on such minimum points, we have
\begin{align}
  \square h=H \left( \square \psi +\frac{2}{\psi} |\nabla \psi|^2 \right) + \psi \square H \geq H \left( \square \psi +\frac{2}{\psi} |\nabla \psi|^2 \right) .
\label{eqn:MJ13_4}  
\end{align}  
However, it is clear that
\begin{align}
 \square \psi=\psi' \left(\square d-\frac{A}{1-t_0}\right) -\psi''. 
 \label{eqn:MJ13_5}
\end{align}
By our assumption (\ref{eqn:CC21_1}), it follows from Lemma~\ref{lma:CC22_1} and the fact $t_0 \in [0.5, 1]$ that
\begin{align}
    \square d-\frac{A}{1-t_0} \geq -\frac{A}{\sqrt{t}}-\frac{A}{1-t_0} \geq -4A, \quad \forall\;   t \in [t_0, 1].   \label{eqn:MJ13_6}
\end{align}
Combining (\ref{eqn:MJ13_4}), (\ref{eqn:MJ13_5}) and (\ref{eqn:MJ13_6}), we obtain  
\begin{align*}  
 \square h &\geq H \left( \psi' \left(\square d -\frac{A}{1-t_0}\right) + \frac{2(\psi')^2}{\psi} -\psi'' \right)
 \geq H \left( -4A\psi' + \frac{2(\psi')^2}{\psi} -\psi'' \right).
 \end{align*}
Plugging (\ref{eqn:MJ12_7}) into the above inequality, at each space minimum point of $h(\cdot, t)$, we have
\begin{align*}
  \square h \geq -F_0 H  \psi \geq -F_0 h, \quad \Rightarrow \quad \square \left( e^{F_0 t} h \right) \geq 0.
\end{align*}
Hence, it follows from the maximum principle that 
\begin{align}
  h_{min}(t) \leq e^{F_0(1-t)} h_{min}(1), \quad \forall \; t \in [t_0,1].   \label{eqn:MJ12_1}
\end{align}
Suppose $h_{min}(t)$ is achieved at point $z_t$.   In other words, $h_{min}(t)=h(z_t,t)$. 
Clearly,  $h(z_t,t) >0$ and $z_t \in B_{g(t)}\left(x_0, \frac{A(t-t_0)}{1-t_0} +0.2 \right)$. 
Using the fact that $\psi \geq 1$ and inequality (\ref{eqn:MJ12_1}), then we have
\begin{align}
  H(z_t,t)=\frac{h(z_t,t)}{\psi(d(z,t))}=\frac{h_{min}(t)}{\psi(d(z,t))} \leq h_{min}(t) \leq h_{min}(1) e^{F_0(1-t)}.   \label{eqn:MJ12_3}
\end{align}
Note that
\begin{align*}
  &\min_{z \in \bar{B}_{g(t_0)}(x_0, 0.2)} H(z,t_0) \leq H(z_{t_0}, t_0), \\
  &h_{min}(1) \leq  \min_{z \in \bar{B}_{g(1)}(x_0, A)} h(z,1) =\min_{z \in \bar{B}_{g(1)}(x_0, A)} H(z,1).
\end{align*}
Combining the previous inequalities, we obtain
\begin{align*}
   \min_{z \in \bar{B}_{g(t_0)}(x_0, 0.2)} H(z,t_0) \leq e^{F_0(1-t_0)}  \min_{z \in \bar{B}_{g(1)}(x_0, A)} H(z,1),
\end{align*}
which is equivalent to (\ref{eqn:CE24_1}) after rescaling $1$ to $T$. 
\end{proof}

\begin{proposition}[\textbf{Estimate of reduced distance}]
Let $\{(M^m, g(t)), 0 \leq t \leq T\}$ be a Ricci flow solution satisfying
\begin{align}
    t \cdot |Rc|(x,t) \leq (m-1)A, \quad \forall \; x \in B_{g(t)} \left(x_0, \sqrt{t} \right), \; t \in (0, T].   \label{eqn:CE26_1}
\end{align}
Then for each point $y \in B_{g(T)}\left(x_0, A\sqrt{T} \right)$ and $x \in B_{g(0.5T)}(x_0, 0.1)$,  we have
\begin{align}
  l((y,T), (x, 0.5T)) \leq 4m e^{0.5 F_0}        \label{eqn:CC21_12}
\end{align}
where $F_0=F_0(A)$ is a large positive number satisfying (\ref{eqn:MJ12_7}). 
\label{prn:CC21_1}
\end{proposition}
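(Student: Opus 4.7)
The plan is to apply the Harnack-type estimate of Proposition~\ref{prn:CE24_1} to a strictly positive super-solution of the heat equation built from the reduced distance $l((y,T),\cdot,\cdot)$, and then to transfer the resulting one-point bound to the pointwise estimate claimed over the small ball.

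By the scaling invariance of the Ricci flow and all quantities appearing in the statement, I would first reduce to $T=1$. Fix $y \in B_{g(1)}(x_0, A)$ and set $l(z,t) \coloneqq l((y,1),(z,t))$. Using the pointwise lower bound $l+m/2 \geq 0$ from~(\ref{eqn:CC19_4}) together with the differential inequality $\square\{4(1-t)(l-m/2)\} \geq 0$ from~(\ref{eqn:CC19_3}), the auxiliary function
\begin{align*}
H(z,t) \coloneqq 4(1-t)\left(l(z,t)-\tfrac{m}{2}\right)+4m+\epsilon
\end{align*}
is a strictly positive (by the choice $\epsilon>0$) distributional super-solution $\square H \geq 0$ on $M \times [0,1)$. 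Moreover, $H$ extends continuously to $t=1$ via $H(z,1)=d_{g(1)}^2(z,y)+4m+\epsilon$, using the Varadhan-type limit $4(1-t)l(z,t) \to d_{g(1)}^2(z,y)$ as $t \to 1^-$.

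Next I would apply Proposition~\ref{prn:CE24_1} with $t_0 = 1/2$ to $H$, which gives
\begin{align*}
\min_{z\in \bar{B}_{g(1/2)}(x_0,\,0.2)} H(z,1/2) \;\leq\; e^{F_0/2} \min_{z\in \bar{B}_{g(1)}(x_0,\,A)} H(z,1).
\end{align*}
Since $y \in \bar{B}_{g(1)}(x_0,A)$ and $d_{g(1)}(y,y)=0$, the right-hand side is dominated by $H(y,1)=4m+\epsilon$. Letting $\epsilon \to 0$ produces some $z^* \in \bar{B}_{g(1/2)}(x_0,0.2)$ with $H(z^*,1/2) \leq 4m\,e^{F_0/2}$, equivalently $l(z^*,1/2)\leq 2m\,e^{F_0/2}-\tfrac{3m}{2}$.

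The main difficulty then lies in upgrading this one-point estimate to the uniform bound claimed over every $x \in B_{g(1/2)}(x_0,0.1)$, which costs at most the factor of $2$ visible between $2m\,e^{F_0/2}$ and the stated constant $4m\,e^{F_0/2}$. The natural route is to exploit Perelman's gradient and Hessian estimates for the reduced distance (Section~7 of~\cite{Pe1}); under the Ricci hypothesis~(\ref{eqn:CE26_1}) these yield effective control on the variation of $l(\cdot,1/2)$ across the scale $\sim 0.3$ around $x_0$, which absorbs the passage from $z^*$ to a generic $x$ into the allowed factor. A complementary option is to rerun the $h=H\psi$ maximum-principle argument from the proof of Proposition~\ref{prn:CE24_1} with the cutoff $\psi$ of Lemma~\ref{lma:CE30_1} recentered at~$x$, after verifying that the Ricci hypothesis propagates into the slightly enlarged ball around~$x$ that this requires. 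Throughout, the only-Lipschitz regularity of $l$ forces every maximum-principle assertion to be read in the distributional (or barrier) sense, as is standard for reduced-distance arguments.
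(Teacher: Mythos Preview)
Your setup matches the paper's: define a positive supersolution from $4(1-t)(l-m/2)$ plus a constant, apply Proposition~\ref{prn:CE24_1}, and read off a one-point bound $l(z^*,1/2)\lesssim 2m e^{F_0/2}$ for some $z^*\in \bar B_{g(1/2)}(x_0,0.2)$. This part is fine.

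The gap is in your upgrade step. Option~A (Perelman's gradient estimates for $l$) is not available here: those estimates require curvature control along the shortest $\mathcal{L}$-geodesic from $(y,1)$ to the point in question, and that geodesic can leave the region $B_{g(t)}(x_0,\sqrt{t})$ where~(\ref{eqn:CE26_1}) gives you anything. So you cannot bound $|\nabla l(\cdot,1/2)|$ on the small ball from the hypotheses alone. Option~B (recentering the cutoff at $x$) still only produces a minimum over a ball near $x$, not the value at $x$ itself, so it does not close the argument either.

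The paper resolves this differently: it applies Proposition~\ref{prn:CE24_1} not at $t_0=1/2$ but at a slightly \emph{later} time $s=1/2+\epsilon_A$ with $\epsilon_A=(1000mA)^{-1}$, obtaining $z_0\in \bar B_{g(s)}(x_0,0.2)$ with $l(z_0,s)<2me^{0.5F_0}$. It then \emph{concatenates} a shortest $\mathcal{L}$-geodesic from $(y,1)$ to $(z_0,s)$ with an explicit short curve $\boldsymbol\beta$ from $(z_0,s)$ down to $(x,1/2)$, where $\beta$ lies in the small ball around $x_0$. A ball-containment claim (your hypothesis~(\ref{eqn:CE26_1}) keeps the relevant balls nested over $[1/2,1/2+\epsilon_A]$) guarantees that the Ricci bound applies along $\boldsymbol\beta$, so $\mathcal{L}(\boldsymbol\beta)<2000mA<F_0$ directly. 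Since $l$ is an infimum over curves, this concatenation yields $l(x,1/2)\le(\mathcal{L}(\boldsymbol\alpha)+\mathcal{L}(\boldsymbol\beta))/\sqrt{2}<4me^{0.5F_0}$. The time shift is the key idea you are missing: it creates the room needed to run a connecting curve whose $\mathcal{L}$-cost you can estimate with only the local Ricci bound, bypassing any need for gradient control of $l$.
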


\begin{proof}
 In light of the scaling invariance of the reduced distance,  we may assume that $T=1$ without loss of generality. 
 Fix $y \in B_{g(1)}(x_0, A)$, $s \in [0.5, 0.6]$ and let $l(x,t)=l((y,1),(x,t))$.  We define an auxiliary function
 \begin{align}
     \tilde{L} \coloneqq 4(1-t)\left(l-\frac{m}{2} \right)+2m+1.    \label{eqn:CE25_3}
 \end{align}
 It follows directly from (\ref{eqn:CC19_3}) that 
 \begin{align}
    \square \tilde{L}  \geq 0.   \label{eqn:CC20_2} 
 \end{align}
 On the other hand, one can apply (\ref{eqn:CC19_4}) to obtain
 \begin{align}
   \tilde{L}=4(1-t)\left(l+\frac{m}{2} \right)+2m+1-4m(1-t) \geq 4mt-2m+1 \geq 1    \label{eqn:CE25_1}
 \end{align}
 whenever $t \in [0.5, 1]$.  In light of (\ref{eqn:CC20_2}) and (\ref{eqn:CE25_1}), we can apply Proposition~\ref{prn:CE24_1} for $\tilde{L}$.
 Then (\ref{eqn:CE24_1}) reads as
 \begin{align}
    \min_{z \in \bar{B}_{g(s)} \left(x_0, 0.2 \right)} \tilde{L}(z,t) \leq e^{F_0(1-s)}  \min_{z \in \bar{B}_{g(1)} \left(x_0, A\right)} \tilde{L}(z,1) \leq e^{0.5 F_0} \min_{z \in \bar{B}_{g(1)} \left(x_0, A\right)} \tilde{L}(z,1).  
\label{eqn:CE25_2}    
\end{align}
However, for each $z \in M$, we have
\begin{align*}
   \tilde{L}(z,1)=\lim_{t \to 1^{-}} \tilde{L}(z,t)=2m+1+d_{g(1)}^2(z, y).
\end{align*}
Since $y \in B_{g(1)} \left(x_0, A\right)$, it is clear that $\displaystyle  \min_{z \in B_{g(1)} \left(x_0, A\right)} \tilde{L}(z,1)=\tilde{L}(y,1)=2m+1$.  Threfore, it follows from (\ref{eqn:CE25_2}) that
\begin{align*}
    \min_{z \in \bar{B}_{g(s)} \left(x_0, 0.2 \right)} \tilde{L}(z,t) \leq (2m+1) e^{0.5 F_0}. 
\end{align*} 
Plugging the above inequality into the defining equation (\ref{eqn:CE25_3}), we arrive at
\begin{align}
  \min_{z \in \bar{B}_{g(s)} \left(x_0, 0.2 \right)} l(z,s) \leq \frac{m}{2} + \frac{2m+1}{4(1-s)} \left( e^{0.5F_0}-1\right) < 2m e^{0.5 F_0},   \label{eqn:CE25_4}
\end{align}
as $s \in [0.5, 0.6]$.   Let $z_0$ be a point such that
\begin{align}
   l(z_0,s)=\min_{z \in \bar{B}_{g(s)} \left(x_0, 0.2 \right)} l(z,s).        \label{eqn:CE25_5}
\end{align}

\begin{claim}
For each $t \in \left[s, s+\epsilon_A\right]$, we have
\begin{align}
 B_{g(s)}(x_0, 0.1) \subset B_{g(s+\epsilon_A)}(x_0, 0.2) \subset B_{g(t)}\left(x_0, \sqrt{t}\right).  \label{eqn:CC21_3}
\end{align}
where $\epsilon_A$ is a small constant defined by
 \begin{align}
    \epsilon_A  \coloneqq \frac{1}{1000mA}.      \label{eqn:CC23_1}
 \end{align}
\label{clm:CC21_1} 
\end{claim}

The proof of Claim~\ref{clm:CC21_1} follows from a standard application of the maximum principle of ball containment.  
For simplicity, we shall only prove the second inequality of (\ref{eqn:CC21_3}). The proof of the first inequality is almost the same and will be left to interested readers. 
Actually, since $0.2<\sqrt{s+\epsilon_A}$, it is clear that the second inequality of (\ref{eqn:CC21_3}) holds strictly whenever $t=s+\epsilon_A$.
Suppose it fails for some $t \in \left[s, s+\epsilon_A\right]$.
Then we can assume $t'$ to be the smallest $t$ such that (\ref{eqn:CC21_3}) starts to fail. 
In other words,  the second inequality of (\ref{eqn:CC21_3}) holds on $[t', s+\epsilon_A]$ and 
\begin{align}
  \partial B_{g(s+\epsilon_A)}(x_0, 0.2) \cap \partial B_{g(t')}\left(x_0, \sqrt{t'}\right) \neq \emptyset.  \label{eqn:CC21_5}
\end{align}
Therefore, we can find a point $q$ in the above intersection set and a shortest geodesic $\sigma$ connecting $q$ to $x_0$, with respect to metric $g(s+\epsilon_A)$. 
According to the choice of $\sigma$, it is clear that
\begin{align}
  |\sigma|_{g(s+\epsilon_A)}=0.2.   \label{eqn:CC21_4}
\end{align}
Also, as $\sigma$ is contained in the ball $B_{g(t)}\left(x_0, \sqrt{t}\right)$ for each $t \in [t', s+\epsilon_A]$, we can apply our assumption (\ref{eqn:CC21_1}) to obtain
\begin{align*}
 \frac{|\sigma|_{g(s+\epsilon_A)}}{|\sigma|_{g(t')} } \geq e^{-\int_{t'}^{s+\epsilon_A} \frac{(m-1)A}{t}dt} \geq \left\{ \frac{t'}{s+\epsilon_A}\right\}^{(m-1)A} \geq \left\{ 1-\frac{1}{500mA}\right\}^{(m-1)A} \sim e^{-\frac{m-1}{500m}}>0.5,
\end{align*}
where we used the explicit value of $\epsilon_A$ in (\ref{eqn:CC23_1}). 
Combining (\ref{eqn:CC21_4}) with the above inequality, we obtain that
\begin{align}
  |\sigma|_{g(t')}  < 2 |\sigma|_{g(s+\epsilon_A)} <0.5.    \label{eqn:CC21_6}
\end{align}
However, in view of the facts that  $q \in \partial B_{g(t')}\left(x_0, \sqrt{t'}\right)$ and that $\sigma$ connects $q$ and $x_0$, we have
\begin{align*}
  |\sigma|_{g(t')} \geq d_{g(t')}(x_0, q)=\sqrt{t'} >\sqrt{0.5}>0.5,
\end{align*}
which contradicts (\ref{eqn:CC21_6}).  This contradiction establishes the proof of the second inequality of (\ref{eqn:CC21_3}). 
The first inequality of (\ref{eqn:CC21_3}) can be proved similarly and is left to the interested readers.\\

Combining (\ref{eqn:CE25_4}) and (\ref{eqn:CC21_3}), we have
\begin{align}
\begin{cases}
 &\min_{z \in \bar{B}_{g(0.5+\epsilon_A)} \left(x_0, 0.2 \right)} l(z,s) < 2m e^{0.5 F_0}, \\
 &B_{g(0.5)}(x_0, 0.1) \subset B_{g(0.5+\epsilon_A)}(x_0, 0.2) \subset B_{g(t)}\left(x_0, \sqrt{t}\right), \quad \forall \; t \in [0.5, 0.5+\epsilon_A]. 
 \end{cases}
\label{eqn:CE26_2} 
\end{align}
Fix an arbitrary point $x \in B_{g(0.5)}(x_0, 0.1)$. 
Based on Claim~\ref{clm:CC21_1}, we shall construct a space-time curve $\boldsymbol{\beta}$ connecting $(z_0, 0.5+\epsilon_A)$ and $(x, 0.5)$.
Its construction is described in details as follows. 
With respect to the metric $g(0.5+\epsilon_A)$, we can find a piecewise smooth geodesic $\beta$ connecting $z_0$ and $x$. 
For example, $\beta$ can be chosen as the concatenation of the shortest geodesic connecting $z_0$ to $x_0$ and $x_0$ to $x$. 
The length of $\beta$ is less than $0.4$. 
Furthermore, we can parametrize $\beta$ by  $\tau=1-t \in [0.5-\epsilon_A, 0.5]$ such that
\begin{align*}
   \beta(0.5-\epsilon_A)=z_0, \quad \beta\left( 0.5 \right)=x. 
\end{align*}
Clearly, with respect to $g(0.5+\epsilon_A)$, we have
\begin{align}
  |\beta'|_{g(0.5+\epsilon_A)} \equiv \frac{d_{g(0.5+\epsilon_A)}(z_0,x)}{\epsilon_A}=1000mA d_{g(0.5+\epsilon_A)}(z_0,x)<400 mA,   \label{eqn:CC21_7}
\end{align}
where we used the fact that the length of $\beta$ is bounded by $0.4$ in the last step. 
Since $\beta \subset B_{g(0.5+\epsilon_A)}(x_0, 0.2)$, it follows from Claim~\ref{clm:CC21_1} that a Ricci upper bound holds for each $\beta(\tau)$ whenever $\tau \in [0.5-\epsilon_A, 0.5]$. 
Consequently, as $A$ is large, it follows from (\ref{eqn:CC21_7}) that
\begin{align}
   |\beta'|_{g(1-\tau)} \leq e^{\int_{1-\tau}^{0.5+\epsilon_A} \frac{(m-1)A}{t}dt} |\beta'|_{g(0.5+\epsilon_A)}<2|\beta'|_{g(0.5+\epsilon_A)}<1000mA.    \label{eqn:CC21_80}
\end{align}
Hence, by (\ref{eqn:CC21_80}) and the Ricci upper bound guaranteed by Claim~\ref{clm:CC21_1},  we obtain
\begin{align}
  \int_{0.5-\epsilon_A}^{0.5} \sqrt{\tau} \left( R + |\beta'|^2 \right) d\tau &<  \int_{0.5-\epsilon_A}^{0.5} \left( 4m(m-1)A + |\beta'|^2 \right) d\tau <2000mA.  \label{eqn:CC21_8}
\end{align}
In conclusion, we have a space-time curve $\boldsymbol{\gamma}(\tau)=(\gamma(\tau), 1-\tau)$ connecting $(y,1)$ to $(x,0.5)$, parametrized by $\tau=1-t$ such that
\begin{align*}
   \gamma(\tau)=
\begin{cases}
 \alpha(\tau),     & \textrm{if} \; \tau \in [0, 0.5-\epsilon_A];\\
 \beta(\tau), &\textrm{if} \; \tau \in [0.5-\epsilon_A, 0.5].
\end{cases}   
\end{align*}
Here $\alpha$ is the space projection of a reduced geodesic $\boldsymbol{\alpha}$ connecting $(y,1)$ and $(z_0, 0.5+\epsilon_A)$.  
It follows from (\ref{eqn:CE25_4}) and (\ref{eqn:CE25_5}) that
\begin{align}
   \mathcal{L}(\boldsymbol{\alpha})&=4 \cdot (0.5-\epsilon_A) \cdot l((y,1),(z_0,0.5+\epsilon_A)) \notag\\
   &\leq 2 \max\{ l((y,1),(z_0,0.5+\epsilon_A)), 0\} \leq 4m e^{0.5 F_0}.   \label{eqn:CC21_9}
\end{align}
The inequality (\ref{eqn:CC21_8}) can be understood as
\begin{align}
   \mathcal{L}(\boldsymbol{\beta})<2000mA < 2A^2<F_0.  \label{eqn:CC21_10}
\end{align}
Combining (\ref{eqn:CC21_9}) and (\ref{eqn:CC21_10}), we obtain
\begin{align*}
   \mathcal{L}(\boldsymbol{\gamma})=\mathcal{L}(\boldsymbol{\alpha})+\mathcal{L}(\boldsymbol{\beta})< 4m e^{0.5 F_0} + F_0.
\end{align*}
As $\bar{\tau}=0.5$, we have $2\sqrt{\bar{\tau}}=\sqrt{2}$.
It follows from the definition(c.f. (\ref{eqn:CE29_1})) of reduced distance $l$ and the above inequality that
\begin{align*}
 l < \frac{ 4m e^{0.5 F_0} + F_0}{\sqrt{2}}< 4me^{0.5 F_0}, 
\end{align*}
whence the inequality (\ref{eqn:CC21_12}) holds. 
\end{proof}

\begin{theorem}[\textbf{Lower bound of reduced volume density and conjugate heat solution}]
Let $\{(M^m, g(t)), 0 \leq t \leq T\}$ be a Ricci flow solution satisfying (\ref{eqn:CE26_1}).
Suppose $\varphi_T$ is a nonnegative function supported on $B_{g(T)}\left(x_0, A\sqrt{T} \right)$ satisfying $\int_{M} \varphi_T^2 dv_{g(T)}=1$. 
Let $w$ be the reduced volume density function with respect to the probability measure $\varphi_T^2 dv_{g(T)}$. 
Let $u$ be the conjugate heat solution starting from $\varphi_T^2$. 
In other words, $\square^*u=(\partial_{\tau}-\Delta +R)u=0$ on $M \times [0,T]$ and $u(\cdot, T)=\varphi_T^2$.  
Then for every $x \in B_{g(0.5)}(x_0, 0.2)$,  we have
\begin{align}
   u(x, 0.5T) \geq  w(x, 0.5T)   \geq  (2\pi )^{-\frac{m}{2}}  e^{-4me^{0.5 F_0}} T^{-\frac{m}{2}}   \label{eqn:CC23_5}     
\end{align}
where $F_0=F_0(A)$ is a large positive number satisfying (\ref{eqn:MJ12_7}). 
\label{thm:CF21_1}
\end{theorem}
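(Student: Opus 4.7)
The plan is to reduce the desired estimate to a pointwise integration of the reduced volume density definition, exploiting two ingredients already established: the inequality $w \leq u$ from Proposition \ref{prn:CC23_2}, and the uniform upper bound on the reduced distance from Proposition \ref{prn:CC21_1}.

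As a first step I would invoke Proposition \ref{prn:CC23_2} applied to the probability measure $p\,dv_{g(T)} = \varphi_T^2\,dv_{g(T)}$. Since the conjugate heat solution $u$ starts from exactly the same data as $w$ (the probability density $\varphi_T^2$ at time $T$), the inequality $w - u \leq 0$ on $M \times [0,T)$ immediately gives the first inequality $u(x,T/2) \geq w(x,T/2)$ in (\ref{eqn:CC23_5}). Thus the entire task reduces to producing a lower bound on $w(x,T/2)$.

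Next I would unfold the definition of $w$ from Definition \ref{dfn:CF21_1}. By the scaling invariance of reduced distance and reduced volume I may rescale and assume $T=1$. At $t = 1/2$, the prefactor is $(4\pi \cdot 1/2)^{-m/2} = (2\pi)^{-m/2}$. Hence
\begin{align*}
w(x, 1/2) = (2\pi)^{-m/2}\int_M \varphi_T^2(y)\, e^{-l((y,1),(x,1/2))}\, dv_{g(1)}(y).
\end{align*}
The integrand is supported on $B_{g(1)}(x_0, A)$, the support of $\varphi_T^2$. For $x \in B_{g(1/2)}(x_0, 0.2)$ (possibly after a harmless shrinking of the radius to $0.1$ so that Proposition \ref{prn:CC21_1} applies verbatim, or by iterating its proof with the slightly larger radius), Proposition \ref{prn:CC21_1} supplies the uniform bound $l((y,1),(x,1/2)) \leq 4m e^{0.5 F_0}$ for every $y$ in the support of $\varphi_T^2$. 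Inserting this into the integrand gives
\begin{align*}
w(x, 1/2) \geq (2\pi)^{-m/2} e^{-4m e^{0.5 F_0}} \int_M \varphi_T^2(y)\, dv_{g(1)}(y) = (2\pi)^{-m/2} e^{-4m e^{0.5 F_0}},
\end{align*}
using the normalization $\int \varphi_T^2\,dv_{g(1)} = 1$. Undoing the rescaling reintroduces the factor $T^{-m/2}$ and produces the stated bound.

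The only genuine subtlety is the compatibility of the radius in the hypothesis ($x \in B_{g(0.5T)}(x_0, 0.2)$) with Proposition \ref{prn:CC21_1} (which is stated on $B_{g(0.5T)}(x_0, 0.1)$). I would handle this either by a minor adjustment of the cutoff parameters $(0.1, 0.2)$ throughout the construction in Lemma \ref{lma:CE30_1} and Proposition \ref{prn:CC21_1}, or simply by observing that the argument in Proposition \ref{prn:CC21_1} carries through mutatis mutandis after enlarging the initial radius slightly while keeping $F_0(A)$ of the same order, absorbing any loss into the definition of $F_0$. This is the only technical point; the rest is essentially a direct substitution.
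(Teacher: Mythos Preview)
Your proposal is correct and follows essentially the same approach as the paper: invoke $w\leq u$ from Proposition~\ref{prn:CC23_2} for the first inequality, then unfold Definition~\ref{dfn:CF21_1} at $t=T/2$, apply the reduced-distance bound from Proposition~\ref{prn:CC21_1} on the support of $\varphi_T^2$, and use the normalization $\int\varphi_T^2=1$. You also correctly flagged the radius mismatch between the statement ($0.2$) and Proposition~\ref{prn:CC21_1} ($0.1$); the paper does not address this and in fact its proof only computes $w(x_0,0.5T)$ rather than $w(x,0.5T)$ for general $x$, so your remark about absorbing the discrepancy into $F_0$ is, if anything, more careful than the original.
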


\begin{proof}
The first inequality of (\ref{eqn:CC23_5}) follows directly from (\ref{eqn:CC14_3}).  
We focus on the proof of the second inequality of (\ref{eqn:CC23_5}). By (\ref{eqn:CC21_12}), direct calculation implies that
\begin{align*}
   w(x_0, 0.5T)=\int_{M} (2\pi T)^{-\frac{m}{2}} e^{-l((y,T),(x_0, 0.5T))} \varphi_T^2(y) dv_{g(T)}(y)
     \geq (2\pi T)^{-\frac{m}{2}} e^{-4me^{0.5F_0}} \int_M \varphi_T^2(y) dv_{g(T)}(y)
\end{align*}
which yields (\ref{eqn:CC23_5}) since $\int_M \varphi_T^2(y) dv_{g(T)}(y)=1$. 
\end{proof}

As discussed in the paragraph before Theorem~\ref{thm:CA02_3},  to relate the local functionals of different time slices, it is a key step to
estimate the uniform lower bound of $c_u$.   Now Theorem~\ref{thm:CF21_1} provides such a lower bound, by comparing $u$ with the reduced volume density function $w$ starting from
a probability measure(c.f. Definition~\ref{dfn:CF21_1}). 
Consequently, we are now ready to compare the local functionals of $\Omega$ with the local functionals of $\Omega_0$, 
even when $\Omega_0$ is very small compared to $\Omega$, in terms the notation
of effective monotonicity theorem, i.e., Theorem~\ref{thm:CF07_1}.

\section{A generalization of the no-local-collapsing theorem}
\label{sec:alter}

The purpose of this section is to generalize Section 8 of Perelman's paper~\cite{Pe1}. 
Let us first recall Theorem 8.2 of Perelman(\cite{Pe1}).

\begin{theorem}[Theorem 8.2 of Perelman~\cite{Pe1}]
  For any $A>1$ there exists $\kappa=\kappa(m,A)>0$ with the following property. 
  If $g_{ij}(t)$ is a smooth solution to the Ricci flow
  \begin{align*}
    \D{}{t} g_{ij}=-2R_{ij}, \quad   0 \leq t \leq r_0^2,
  \end{align*}
  which has $|Rm|(x,t) \leq m^{-1}r_0^{-2}$ for all $(x,t)$ satisfying $d_{g(0)}(x,x_0)<r_0$, and 
  the volume of the metric ball $B_{g(0)}(x_0, r_0)$ is at least $A^{-1} r_0^{m}$, then 
  $g_{ij}(t)$ cannot be $\kappa$-collapsed on the scales less than $r_0$
  at a point $(x,r_0^2)$ with $d_{g(r_0^2)}(x,x_0) \leq Ar_0$.
  \label{thm:Pe8.2}
\end{theorem}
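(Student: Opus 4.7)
The plan is to prove Theorem~\ref{thm:Pe8.2} by contradiction using Perelman's reduced volume monotonicity, with base point chosen at the target $(x, r_0^2)$. After the parabolic rescaling $r_0 = 1$, the hypotheses become $|Rm| \le m^{-1}$ on the parabolic cylinder $B_{g(0)}(x_0, 1) \times [0,1]$, $|B_{g(0)}(x_0, 1)| \ge A^{-1}$, and $d_{g(1)}(x, x_0) \le A$. Suppose for contradiction there is $r \in (0, 1]$ with $|Rm|(\cdot, 1) \le r^{-2}$ on $B_{g(1)}(x, r)$ yet $r^{-m} |B_{g(1)}(x, r)| < \kappa$, where $\kappa = \kappa(m,A)$ is to be determined. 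Writing $\tau = 1 - t$, $l(y, \tau) \coloneqq l((x,1), (y, 1-\tau))$, and Perelman's reduced volume
\begin{align*}
  \tilde{V}(\tau) \coloneqq \int_M (4\pi \tau)^{-m/2} e^{-l(y, \tau)}\, dv_{g(1-\tau)}(y),
\end{align*}
the monotonicity $\tilde V(1) \le \tilde V(r^2)$ reduces the task to establishing a small upper bound for $\tilde V(r^2)$ and a uniform positive lower bound for $\tilde V(1)$.

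For the upper bound $\tilde V(r^2) \le \Psi(\kappa\,|\,m)$ with $\Psi(\kappa) \to 0$ as $\kappa \to 0$, I would first rescale parabolically around $(x,1)$ by factor $r^{-1}$, reducing the estimate to that of $\tilde V(1)$ for a flow with unit-scale curvature bound near the new base and a ball of volume ratio at most $\kappa$. I would then split the integration into $B_{g}(x, \sqrt{C})$ (where the pointwise bound $e^{-l} \le C(m)$ follows from the inequality $l + m/2 \ge 0$ after shifting $\tau$ slightly, while the volume is $\le C(m)\kappa$ by the collapsing hypothesis and Hamilton's distortion estimate) and its complement (where any $\mathcal{L}$-geodesic to $(y, 0)$ must travel distance $\gg 1$ in unit parabolic time, forcing $l(y,1) \gtrsim |y|^2$ and hence Gaussian decay of $e^{-l}$). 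This is precisely the argument of Section~7 of Perelman~\cite{Pe1}.

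For the lower bound on $\tilde V(1)$ I would exploit the space-time bounded geometry near $(x_0, 0)$. Applying the nonnegativity (\ref{eqn:CE21_1}) together with the supersolution property (\ref{eqn:CE19_2}) of the function $4\tau(l-m/2)$, a minimum-principle argument in the spirit of Proposition~\ref{prn:CC21_1} produces an intermediate-time point $(\bar y, 1/2)$ with $d_{g(1/2)}(\bar y, x_0) \le c(A)$ and $l(\bar y, 1/2) \le C_1(m, A)$. For every $y \in B_{g(0)}(x_0, 1/10)$ I then construct an explicit space-time curve $\boldsymbol\beta$ on $[0, 1/2]$ joining $(y, 0)$ to $(\bar y, 1/2)$ and staying inside the controlled tube $B_{g(t)}(x_0, c') \times [0, 1/2]$, with $\mathcal{L}(\boldsymbol\beta) \le C_2(m, A)$; concatenating with a minimizing $\mathcal{L}$-geodesic from $(x, 1)$ to $(\bar y, 1/2)$ yields $l(y, 1) \le C_3(m, A)$ uniformly. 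Bishop--Gromov combined with $|B_{g(0)}(x_0, 1)| \ge A^{-1}$ and $|Rm|(\cdot, 0) \le m^{-1}$ gives $|B_{g(0)}(x_0, 1/10)|_{g(0)} \ge c'(m, A)$, whence
\begin{align*}
  \tilde V(1) \ge (4\pi)^{-m/2} e^{-C_3(m,A)} \, c'(m, A) \eqqcolon \eta(m, A) > 0.
\end{align*}
Choosing $\kappa$ so that $\Psi(\kappa\,|\,m) < \eta(m, A)$ produces the contradiction.

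The main obstacle is the construction of the connecting curve $\boldsymbol\beta$ in the lower-bound step: its $\mathcal{L}$-length can only be estimated while it remains inside the controlled-curvature tube, and this requires Hamilton's distance distortion estimate (Lemma~\ref{lma:ML14_1}) to guarantee that balls of controlled radius around $x_0$ at time $0$ remain of controlled radius throughout $[0, 1/2]$---exactly the propagation mechanism employed in Proposition~\ref{prn:CC21_1}. Once the tube is secured, the estimate on $\mathcal{L}(\boldsymbol\beta)$ reduces to integrating $\sqrt{\tau}(R + |\dot\beta|^2)$ against the explicit curvature bound and a kinetic term arising from an almost-straight parametrization of $\beta$, both depending only on $m$ and $A$.
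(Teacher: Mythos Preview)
Your outline is the standard reduced-volume proof of Perelman's Theorem~8.2, and it is essentially correct as sketched; the two halves (small $\tilde V(r^2)$ via the $\mathcal{L}$-exponential map splitting, and a definite $\tilde V(1)$ via the maximum-principle argument for $4\tau(l-\tfrac{m}{2})$ together with a hand-built connecting curve inside the controlled tube) are exactly the mechanism in Section~8 of~\cite{Pe1} and Section~27 of Kleiner--Lott~\cite{KL}.

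Note, however, that the paper does \emph{not} supply its own proof of Theorem~\ref{thm:Pe8.2}: it is merely quoted as background, with the reader referred to~\cite{Pe1} and~\cite{KL}. The paper's contribution in Section~\ref{sec:alter} is the strictly stronger Theorem~\ref{thm:CE26_2} (equivalently Theorem~\ref{thmin:ML14_1}), and the route taken there is genuinely different from yours. Rather than arguing by contradiction via reduced-volume monotonicity, the paper works directly with the \emph{localized} $\boldsymbol{\nu}$-functional: it uses the generalized Harnack inequality (Theorem~\ref{thm:CA02_2}) and the effective monotonicity formula (Theorem~\ref{thm:CF07_1}) to compare $\boldsymbol{\nu}(B_{g(T)}(y_0,r),g(T),r^2)$ with $\boldsymbol{\nu}$ of a small ball near $(x_0,0.5T)$ where the geometry is controlled; the reduced-distance machinery enters only to furnish the lower bound $c_u$ for the conjugate heat solution in Theorem~\ref{thm:CF21_1}. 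The payoff of the paper's approach is that only a \emph{scalar} curvature bound on a single time-slice is needed at the target point, whereas your (Perelman's) argument uses the full $|Rm|$ bound in a space-time neighborhood of $(x,r_0^2)$ through the upper estimate of $\tilde V(r^2)$; moreover the paper's estimates are explicit rather than by contradiction. Your approach, on the other hand, is self-contained within Perelman's reduced-volume framework and does not require the localized entropy apparatus of Sections~\ref{sec:localization}--\ref{sec:almostmon}.
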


We have modified the statement of Theorem 8.2 of Perelman(\cite{Pe1}) slightly following Kleiner-Lott~\cite{KL}.
One can check Theorem 27.2 and Remark 27.3 of Kleiner-Lott~\cite{KL} for more details.
Clearly, such modifications do not affect its application at all. 
Note that the ``$\kappa$-collapsed" in Theorem~\ref{thm:Pe8.2} means that 
\begin{align*}
  |B_{g(t_0)}(x_0, r)|_{g(t_0)} r^{-m}<\kappa
\end{align*}
whenever $|Rm|(x,t) \leq r^{-2}$ for all $(x,t)$ satisfying $d_{g(t_0)}(x,x_0)<r$ and $t_0-r^2 \leq t \leq t_0$. 
This seems to be a strong condition and sometimes hard to obtain, especially in the high dimensional case. 
In the following theorem, we shall show that the Riemannian curvature bound in a space-time neighborhood of $(x,t_0)$ is not needed, 
a scalar curvature bound in a space neighborhood of $(x, t_0)$ is sufficient to draw similar non-collapsing conclusion. 

Our starting point is the relationships between volume ratios and the local $\boldsymbol{\nu}$-functionals, expressed explicitly in 
Theorem~\ref{thm:CF21_3} and Theorem~\ref{thm:CF21_2}.   
Based on the effective monotonicity(c.f. Theorem~\ref{thm:CF07_1}) and the uniform lower bound of $c_u$(c.f. Theorem~\ref{thm:CF21_1}), we can first set up an  estimate of the propagation of the local $\boldsymbol{\nu}$-functionals.   Then we use the equivalent relationships to transform it to an estimate of the  propagation of the volume ratios.

\begin{theorem}[\textbf{Propagation estimate of local $\boldsymbol{\nu}$-functional}]
  Suppose $\{(M^{m}, g(t)), 0 \leq t \leq  T\}$ is a Ricci flow solution.
  Suppose $0<r<\sqrt{T}$ and $B_{g(T)}(y_0,r)$ is a geodesic ball where the scalar curvature $R(\cdot, T)<r^{-2}$. 
  Suppose there is a big constant $A>1000m$ such that
  \begin{align}
  \begin{cases}
   & t|Rc|(x,t)<(m-1)A, \quad \textrm{for each} \; x \in B_{g(t)}\left(x_0, \sqrt{t} \right), \quad   0<t<T;\\
   &B_{g(T)}(y_0,r) \subset B_{g(T)}\left(x_0, A\sqrt{T} \right). 
  \end{cases}
  \label{eqn:CF25_2}
  \end{align}
 Define
 \begin{align}
 \begin{cases}
  &\boldsymbol{\nu}_{a} \coloneqq \boldsymbol{\nu} \left(B_{g(0.5T)}\left(x_0,0.1\sqrt{T} \right), g(0.5T), 1.5T \right),\\
  &\boldsymbol{\nu}_{b} \coloneqq \boldsymbol{\nu} \left(B_{g(T)}\left(x_0, A \sqrt{T} \right), g(T), T \right),\\
  &\boldsymbol{\nu}_{c} \coloneqq \boldsymbol{\nu} \left(B_{g(T)}\left(y_0, r\right), g(T), r^2 \right).
\end{cases}
\label{eqn:CF10_4}  
\end{align}
Then the following inequality holds:
\begin{align}
\boldsymbol{\nu}_c \geq \boldsymbol{\nu}_b \geq \boldsymbol{\nu}_{a}- e^{-\boldsymbol{\nu}_{a}+G_0},      \label{eqn:CE24_2}
\end{align}
where $G_0$ is a large constant chosen as
\begin{align}
  G_0 \coloneqq 5m e^{500A^2}.    
\label{eqn:CF09_5}  
\end{align}
\label{thm:CE26_1}
\end{theorem}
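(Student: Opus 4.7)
The first inequality $\boldsymbol{\nu}_{c} \geq \boldsymbol{\nu}_{b}$ in \eqref{eqn:CE24_2} is immediate: since $r^{2} \leq T$, the $\tau$-monotonicity of $\boldsymbol{\nu}$ (Proposition~\ref{prn:CF10_2}) gives $\boldsymbol{\nu}(B_{g(T)}(y_{0},r), g(T), r^{2}) \geq \boldsymbol{\nu}(B_{g(T)}(y_{0},r), g(T), T)$; then $B_{g(T)}(y_{0},r) \subset B_{g(T)}(x_{0},A\sqrt{T})$ combined with the inclusion monotonicity (Proposition~\ref{prn:CA01_2}) delivers $\boldsymbol{\nu}(B_{g(T)}(y_{0},r), g(T), T) \geq \boldsymbol{\nu}_{b}$.

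For the harder second inequality the plan is to apply the effective monotonicity formula of Theorem~\ref{thm:CF07_1} to the sub-flow on $[0.5T, T]$, treating $t=0.5T$ as the \emph{initial time} (the statement of Theorem~\ref{thm:CF07_1} is manifestly invariant under a time shift, and the hypothesis \eqref{eqn:CF25_2} restricts cleanly to this sub-flow). I choose $\Omega = B_{g(T)}(x_{0}, A\sqrt{T})$ and $\tau_{T} = T$, so that $\tau_{0} = \tau_{T} + (T - 0.5T) = 1.5T$ and $\boldsymbol{\mu}(\Omega, g(T), \tau_{T}) \geq \boldsymbol{\nu}_{b}$; together with $\Omega_{0} = B_{g(0.5T)}(x_{0}, 0.1\sqrt{T})$, which is exactly the ball appearing in $\boldsymbol{\nu}_{a}$, and $\Omega_{0}' \subsetneq \Omega_{0}$ a slightly shrunken concentric ball equipped with a standard annular cutoff $h$ satisfying $C_{h} = O(T^{-1})$. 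Theorem~\ref{thm:CF07_1} then yields
\[
\boldsymbol{\nu}_{a} - \boldsymbol{\nu}_{b} \;\leq\; \bigl(4\tau_{0} C_{h} + e^{-1}\bigr)\bigl(c_{u}^{-1} - 1\bigr),
\]
with the prefactor a dimensional constant, and the entire task collapses to producing a serviceable lower bound for $c_{u}$.

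The lower bound on $c_{u}$ is assembled from two ingredients. First, the conjugate heat solution $u^{(\lambda)}$ starts from the minimizer of $\boldsymbol{\mu}(\Omega, g(T), \lambda)$, which is supported on $B_{g(T)}(x_{0}, A\sqrt{T})$ and $L^{2}(g(T))$-normalized; hypothesis \eqref{eqn:CF25_2} is exactly the assumption \eqref{eqn:CE26_1} of Theorem~\ref{thm:CF21_1} applied to the original flow on $[0,T]$, which therefore delivers the uniform pointwise bound $u^{(\lambda)}(x, 0.5T) \geq (2\pi T)^{-m/2} e^{-4m e^{0.5 F_{0}}}$ for all $x \in B_{g(0.5T)}(x_{0}, 0.2\sqrt{T}) \supset \Omega_{0}'$. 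Integrating gives $c_{u} \geq (2\pi T)^{-m/2} e^{-4m e^{0.5 F_{0}}} |\Omega_{0}'|_{g(0.5T)}$. Second, the volume $|\Omega_{0}'|_{g(0.5T)}$ is itself controlled by $\boldsymbol{\nu}_{a}$ via Theorem~\ref{thm:CF21_3}: the hypothesis \eqref{eqn:CF25_2} forces $R(\cdot, 0.5T) \leq O(A/T)$ on $\Omega_{0}$, hence
\[
|\Omega_{0}'|_{g(0.5T)} \;\geq\; \omega_{m} r_{0}^{m} \exp\bigl(\boldsymbol{\nu}(\Omega_{0}', g(0.5T), r_{0}^{2}) - 2^{m+7} - O(A)\bigr),
\]
with $r_{0}$ the radius of $\Omega_{0}'$. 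Chaining the inclusion monotonicity $\boldsymbol{\nu}(\Omega_{0}', \cdot, \cdot) \geq \boldsymbol{\nu}(\Omega_{0}, \cdot, \cdot)$ with the $\tau$-monotonicity $\boldsymbol{\nu}(\Omega_{0}, g(0.5T), r_{0}^{2}) \geq \boldsymbol{\nu}(\Omega_{0}, g(0.5T), 1.5T) = \boldsymbol{\nu}_{a}$ transports this back to $\boldsymbol{\nu}_{a}$ and produces $c_{u} \geq C_{m}\, e^{\boldsymbol{\nu}_{a} - 4m e^{0.5 F_{0}} - O_{m}(A)}$.

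Substituting into the effective monotonicity and recalling $F_{0} = 1000 A^{2}$ from Lemma~\ref{lma:CE30_1}, one obtains an inequality of the shape $\boldsymbol{\nu}_{a} - \boldsymbol{\nu}_{b} \leq e^{-\boldsymbol{\nu}_{a} + 4m e^{500 A^{2}} + O_{m}(A)}$; since the double exponential $e^{500 A^{2}}$ dominates every polynomial-in-$A$ correction when $A \geq 1000m$, the lower-order terms are absorbed into a bound of the form $e^{-\boldsymbol{\nu}_{a} + G_{0}}$ with $G_{0} = 5 m e^{500 A^{2}}$, as in \eqref{eqn:CF09_5}. The principal obstacle is bookkeeping rather than conceptual: one must carefully track the chain of monotonicities used to transport $\boldsymbol{\nu}(\Omega_{0}', g(0.5T), r_{0}^{2})$ back to $\boldsymbol{\nu}_{a}$, verify that the restriction of \eqref{eqn:CF25_2} to the shifted sub-flow continues to satisfy the hypotheses of both Theorem~\ref{thm:CF07_1} and Theorem~\ref{thm:CF21_1}, and absorb every lower-order exponent into the doubly exponential constant $G_{0}$ so that the explicit value \eqref{eqn:CF09_5} is achieved.
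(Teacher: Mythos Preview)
Your proposal is correct and follows essentially the same route as the paper: apply the effective monotonicity Theorem~\ref{thm:CF07_1} on the sub-flow $[0.5T,T]$ with $\Omega=B_{g(T)}(x_0,A\sqrt{T})$ and $\Omega_0=B_{g(0.5T)}(x_0,0.1\sqrt{T})$, feed in the pointwise lower bound for the conjugate heat solution from Theorem~\ref{thm:CF21_1} to control $c_u$, convert the remaining volume factor into $\boldsymbol{\nu}_a$ via Theorem~\ref{thm:CF21_3}, and absorb all lower-order terms into the doubly exponential $G_0$. The only cosmetic difference is that the paper first compares $|\Omega_0'|$ to $|\Omega_0|$ by Bishop--Gromov and then applies Theorem~\ref{thm:CF21_3} to $\Omega_0$, whereas you apply Theorem~\ref{thm:CF21_3} directly to $\Omega_0'$ and then invoke the inclusion and $\tau$-monotonicities to reach $\boldsymbol{\nu}_a$; both chains are valid and lead to the same estimate.
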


\begin{proof}

The first part of (\ref{eqn:CE24_2}) follows directly from the definition (\ref{eqn:MJ16_d}) and Proposition~\ref{prn:CA01_2}. 
We focus on the proof of the second part of (\ref{eqn:CE24_2}). 

Without loss of generality, we assume $T=1$.   For simplicity of notations,  we define(c.f. Figure~\ref{fig:functionaldomain}): 
\begin{align}
\Omega_a' \coloneqq B_{g(0.5)}(x_0, 0.05), \quad
\Omega_a \coloneqq B_{g(0.5)}(x_0, 0.1), \quad
\Omega_b \coloneqq B_{g(1)}(x_0, A), \quad \Omega_c \coloneqq B_{g(1)}(y_0, r).
\label{eqn:CE26_4}
\end{align}

 \begin{figure}[H]
 \begin{center}
 \psfrag{A}[c][c]{$M$}
 \psfrag{B}[c][c]{$t=1$}
 \psfrag{BB}[c][c]{$t=0.5$}
 \psfrag{BBB}[c][c]{$t=0$}
 \psfrag{C}[c][c]{$t$}
 \psfrag{D}[c][c]{$\color{red}{\Omega_c}$}
 \psfrag{E}[c][c]{$\color{green}{\partial B_{g(t)}\left(x_0, \sqrt{t} \right)}$}
 \psfrag{F}[c][c]{$\color{blue}{B_{g(0)}(x_0,1) \times [0,1]}$}
 \psfrag{G}[c][c]{$\color{red}{B_{g(1)}(x,r)}$}
 \psfrag{H}[c][c]{$\color{magenta}{\Omega_a'}$}
 \psfrag{I}[c][c]{$\color{brown}{\Omega_a}$}
 \psfrag{J}[c][c]{$\color{blue}{\Omega_b}$}
 \psfrag{K}[c][c]{$x_0$}
 \psfrag{L}[c][c]{$y_0$}
 \includegraphics[width=0.6 \columnwidth]{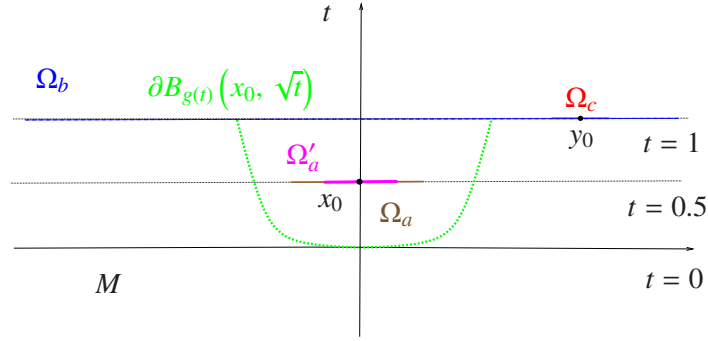}
 \caption{Different domains for comparing $\boldsymbol{\nu}$-functionals}
 \label{fig:functionaldomain}
 \end{center}
 \end{figure}

We shall apply Theorem~\ref{thm:CF07_1}, regarding $\Omega_a$ as $\Omega_0$, $\Omega_a'=\Omega_0'$ and $\Omega_b=\Omega$ respectively. 
Naturally, the default metric for the discussion with respect to $\Omega_a'$ and $\Omega_a$ is $g(0.5)$,  the default metric for $\Omega_b$ and $\Omega_c$ is $g(1)$.

Using the terminology in Theorem~\ref{thm:CF07_1}, it follows from (\ref{eqn:CC23_5}) in Theorem~\ref{thm:CF21_1}, (\ref{eqn:CF07_1}) and (\ref{eqn:CF07_6}) that
\begin{align}
 C_h \leq 1600,\quad
 c_u \geq  (2\pi)^{-\frac{m}{2}} e^{-4me^{0.5F_0}}|\Omega_a'|.  \label{eqn:CF20_5}
\end{align}
As mentioned before, the lower bound of $c_u$ is a key. 
Applying (\ref{eqn:CF04_5}) of Theorem~\ref{thm:CF07_1}, it follows from the above inequality and the fact that $\tau_{0.5} \leq 1.5$ that
\begin{align}
\boldsymbol{\nu}_a-\boldsymbol{\nu}_b \leq \frac{6400 \tau_{0.5}+1}{c_u} < \frac{10000 (2 \pi)^{\frac{m}{2}} e^{4me^{0.5F_0}}}{|\Omega_a'|}.
\label{eqn:CE03_1}  
\end{align}
Recall that(c.f. (\ref{eqn:CF25_2}) and (\ref{eqn:CE26_4})) we have $Rc \geq -2(m-1)A=\bar{\Lambda}$ in $B_{g(0.5)} \left(x_0, \sqrt{0.5} \right)$,
which contains $5\Omega_a$ and $\Omega_a'$.   Using Gromov-Bishop volume comparison, similar to the discussion around (\ref{eqn:CF19_3}), 
we can apply the fact $1<\frac{\sinh t}{t}<e^{t}$ to obtain
\begin{align}
    \frac{|\Omega_a'|}{|\Omega_a|} \geq  \frac{\int_{0}^{0.05} \left( \frac{\sinh \sqrt{2A} r}{\sqrt{2A}}\right)^{m-1}dr}{\int_{0}^{0.1} \left( \frac{\sinh \sqrt{2A} r}{\sqrt{2A}}\right)^{m-1}dr}
    \geq  \frac{m^{-1} (0.05)^m}{e^{0.1(m-1)\sqrt{2A}} \cdot m^{-1} \cdot \left( 0.1 \right)^{m}}
    =2^{-m}e^{-0.1(m-1)\sqrt{2A}}. 
\label{eqn:CF20_6}    
\end{align}
Now we combine (\ref{eqn:CE03_1}) and (\ref{eqn:CF20_6}). 
Because $F_0=1000A^2$(c.f. the choice of $F_0$ in Lemma~\ref{lma:CE30_1}) is very large, we can  absorb the extra constants and obtain that
\begin{align}
\boldsymbol{\nu}_a-\boldsymbol{\nu}_b < \frac{e^{4.2me^{0.5F_0}}}{|\Omega_a|}. 
\label{eqn:CF21_1}
\end{align}
However, in light of the scalar curvature upper bound $R \leq 2m(m-1)A$ in $\Omega_a$, we can bound $|\Omega_a|$ from below
\begin{align}
   \frac{|\Omega_a|}{\omega_m (0.1)^m} \geq e^{\boldsymbol{\nu}-2^{m+7}-2m(m-1)A \cdot (0.1)^2},
\label{eqn:CF20_7}   
\end{align}
where $\boldsymbol{\nu}=\boldsymbol{\nu}(\Omega_a, 0.01)$.   Since $0.01<1.5$, it is clear(c.f. the first inequality of (\ref{eqn:CF10_3}) in Proposition~\ref{prn:CF10_2}) that
\begin{align*}
 \boldsymbol{\nu}= \boldsymbol{\nu}(\Omega_a, 0.01) \geq \boldsymbol{\nu}(\Omega_a, 1.5)=\boldsymbol{\nu}_a.
\end{align*}
Thus, putting the above inequality into (\ref{eqn:CF20_7}) yields that
\begin{align}
   |\Omega_a| \geq e^{\boldsymbol{\nu}_a-2^{m+7}-0.02m(m-1)A-m\log 10} \omega_m \geq e^{\boldsymbol{\nu}_a-2^{m+7}-0.1m^2A} \omega_m> e^{\boldsymbol{\nu}_a-2^{m+7}-m^2A-\log \Gamma(\frac{m}{2}+1)},
\label{eqn:CF20_8}   
\end{align}
where we used the volume formula (\ref{eqn:CF19_9}).  
The combination of (\ref{eqn:CF21_1}) and (\ref{eqn:CF20_8}) yields that
\begin{align*}
   \boldsymbol{\nu}_a-\boldsymbol{\nu}_b \leq  e^{-\boldsymbol{\nu}_a}  \cdot e^{4.2me^{0.5F_0} +2^{m+7}+m^2A +\log \Gamma(\frac{m}{2}+1)}<e^{-\boldsymbol{\nu}_a+5me^{0.5F_0}}, 
\end{align*}
where we used again the fact that $F_0=1000A^2$ is very large. Plugging $F_0=1000A^2$ into the above inequality, we arrive at the second inequality of (\ref{eqn:CE24_2}).  
The proof of the Theorem is complete.  
\end{proof}

From the Ricci flow point of view, the inequality (\ref{eqn:CE24_2}) in Theorem~\ref{thm:CE26_1} is natural. 
However,  many geometers are not acquainted with the local functional $\boldsymbol{\nu}$.
Therefore, for the convenience of the readers, we also provide a \textit{volume ratio} version of inequality (\ref{eqn:CE24_2}) in the following Theorem. 
The basic idea is first to transform the volume ratio information into the local functional information in the domain with bounded Ricci curvature, by Theorem~\ref{thm:CF21_2}.
Then we apply inequality (\ref{eqn:CE24_2}) in Theorem~\ref{thm:CE26_1} to obtain an estimate of the local $\boldsymbol{\nu}$-functional in a destination domain far away.
Finally, we transform the local functional information back into volume ratio information at the destination domain, by Theorem~\ref{thm:CF21_3}. 
In this way, we obtain a propagation estimate (\ref{eqn:CF09_4}) of the volume ratio. 

\begin{theorem}[\textbf{Propagation of non-collapsing constant}]
Same conditions as in Theorem~\ref{thm:CE26_1}.  
Set
  \begin{align}
     \boldsymbol{\rho}_a \coloneqq  \frac{\left|B_{g\left(0.5\sqrt{T} \right)}\left(x_0, 0.1 \sqrt{T} \right) \right|}{\omega_m \left(0.1\sqrt{T} \right)^{m}}, 
     \quad  \boldsymbol{\rho}_c \coloneqq  \frac{|B_{g(T)}(y_0,r)|}{\omega_m r^m}. 
  \label{eqn:CF10_5}   
  \end{align}
  Then we have
  \begin{align}
   \boldsymbol{\rho}_c \geq  \boldsymbol{\rho}_a^{m+1} e^{-\frac{G_0}{\boldsymbol{\rho}_a}}
  \label{eqn:CF09_4} 
  \end{align}
  where $G_0=5m e^{500A^2}$, a large constant as defined in (\ref{eqn:CF09_5}). 
  
\label{thm:CF03_1}  
\end{theorem}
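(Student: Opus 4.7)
I plan a three-link conversion chain: translate the source volume ratio $\boldsymbol{\rho}_a$ into a lower bound on the source $\boldsymbol{\nu}$-functional using the ``dictionary'' Theorem~\ref{thm:CF21_2}; propagate this $\boldsymbol{\nu}$-bound along the Ricci flow to the destination time slice via the machinery of Theorem~\ref{thm:CE26_1}; and translate the resulting destination $\boldsymbol{\nu}$-bound back into a volume ratio lower bound $\boldsymbol{\rho}_c$ using the reverse dictionary Theorem~\ref{thm:CF21_3}. The setup and the dictionary steps are essentially automatic from results already in hand; the delicate point is the constant-tracking in the middle step.

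\textbf{Implementation.} By scale invariance of $\boldsymbol{\nu}$-functionals and volume ratios, I normalize $T = 1$. At time $t = 1/2$, the five-fold enlargement $B_{g(1/2)}(x_0, 1/2)$ of $B_{g(1/2)}(x_0, 0.1)$ sits inside $B_{g(1/2)}(x_0, \sqrt{1/2})$, so hypothesis~(\ref{eqn:CF25_2}) provides both the Ricci bound $|Rc| \leq 2(m-1)A$ and the scalar bound $R \leq 2m(m-1)A$ there, fulfilling the curvature assumptions of Theorem~\ref{thm:CF21_2}. Applied with $r_0 = 0.1$, $K = \sqrt{2A}$, $\underline{\Lambda} = 2(m-1)A$, it yields
\[
\boldsymbol{\nu}_a \;\geq\; (m+1)\log\boldsymbol{\rho}_a - C_1(m,A)
\]
for an explicit constant $C_1(m,A)$. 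Symmetrically, at the destination ball $B_{g(1)}(y_0, r)$ the assumption $R(\cdot, 1) \leq r^{-2}$ gives $\bar{\Lambda} r^2 \leq 1$, so Theorem~\ref{thm:CF21_3} produces $\log\boldsymbol{\rho}_c \geq \boldsymbol{\nu}_c - 2^{m+7} - 1$. For the middle transport step, using the black-box inequality~(\ref{eqn:CE24_2}) introduces an exponential-in-$\boldsymbol{\nu}_a$ penalty which is too crude for the stated polynomial-in-$\boldsymbol{\rho}_a^{-1}$ conclusion; instead, I would re-enter the proof of Theorem~\ref{thm:CE26_1} at its intermediate estimate~(\ref{eqn:CE03_1}), which controls $\boldsymbol{\nu}_a - \boldsymbol{\nu}_b$ directly by a constant times $|\Omega_a'|^{-1}$. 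Combining this with the Bishop--Gromov comparison~(\ref{eqn:CF20_6}) and the identity $|\Omega_a| = \omega_m (0.1)^m \boldsymbol{\rho}_a$ yields
\[
\boldsymbol{\nu}_c \;\geq\; \boldsymbol{\nu}_b \;\geq\; \boldsymbol{\nu}_a - \frac{G_0}{\boldsymbol{\rho}_a},
\]
with the multiplicative constants from~(\ref{eqn:CE03_1}) and~(\ref{eqn:CF20_6}) absorbed into $G_0$. Chaining the three links,
\[
\log\boldsymbol{\rho}_c \;\geq\; \boldsymbol{\nu}_c - 2^{m+7} - 1 \;\geq\; (m+1)\log\boldsymbol{\rho}_a - \frac{G_0}{\boldsymbol{\rho}_a} - C_1 - 2^{m+7} - 1,
\]
and absorbing the $O(1)$ additive errors into $G_0$ (using $G_0/\boldsymbol{\rho}_a \geq G_0$ which dwarfs $C_1 + 2^{m+7} + 1$) gives the claimed inequality~(\ref{eqn:CF09_4}) upon exponentiation.

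\textbf{Main obstacle.} The crucial technical point is the bookkeeping of constants in the transport step. The ``clean'' black-box form~(\ref{eqn:CE24_2}) produces a penalty of order $e^{-\boldsymbol{\nu}_a + G_0}$, which is of the wrong shape to produce a polynomial penalty $G_0/\boldsymbol{\rho}_a$ after exponentiation; one must open the proof of Theorem~\ref{thm:CE26_1}, extract the sharper intermediate estimate~(\ref{eqn:CE03_1}), and verify that the multiplicative constant from it together with the factor $|\Omega_a|/|\Omega_a'|$ from~(\ref{eqn:CF20_6}) is indeed dominated by the explicit value $G_0 = 5me^{500A^2}$ defined in~(\ref{eqn:CF09_5}). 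This explicit matching of constants is where the argument genuinely requires care; the scaling, dictionary, and chaining steps themselves are mechanical.
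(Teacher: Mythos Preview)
Your approach is essentially the paper's: re-enter the proof of Theorem~\ref{thm:CE26_1} at the sharper intermediate estimate~(\ref{eqn:CE03_1}) (the paper cites its consequence~(\ref{eqn:CF21_1})), then sandwich with the volume--$\boldsymbol{\nu}$ dictionaries. Your diagnosis of the main obstacle is exactly right.

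Two minor bookkeeping slips need fixing. First, Theorem~\ref{thm:CF21_2} applied with $r_0=0.1$ bounds $\boldsymbol{\nu}(\Omega_a,0.01)$ from below, not $\boldsymbol{\nu}_a=\boldsymbol{\nu}(\Omega_a,1.5)$; since $\boldsymbol{\nu}$ is non-increasing in $\tau$ this is the wrong direction, and you need the second half of~(\ref{eqn:CF10_3}) in Proposition~\ref{prn:CF10_2} (costing an extra $O(m^2A)$ which is harmless) to bridge the two scales---the paper does this explicitly. Second, your absorption step ``$G_0/\boldsymbol{\rho}_a \geq G_0$'' assumes $\boldsymbol{\rho}_a \leq 1$, which is not guaranteed under a mere Ricci lower bound; the paper instead bounds $\boldsymbol{\rho}_a \leq e^{0.1\sqrt{2A}(m-1)}$ by Bishop--Gromov (see the display before~(\ref{eqn:CF21_3})) and checks that $G_0/\boldsymbol{\rho}_a$ still dominates the additive constants. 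Both fixes are mechanical and your constant $G_0$ survives them.
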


\begin{proof}
Without loss of generality, we assume $T=1$. 
It follows from (\ref{eqn:CF21_1}) that
   \begin{align}
     \boldsymbol{\nu}_a -\boldsymbol{\nu}_c \leq \boldsymbol{\nu}_a -\boldsymbol{\nu}_b \leq  \boldsymbol{\rho}_a^{-1} \cdot e^{4.5me^{0.5F_0}}  .  \label{eqn:CF09_3}
   \end{align}
   Recall that $\boldsymbol{\nu}_a=\boldsymbol{\nu}(B_{g(0.5)}(x_0, 0.1), g(0.5), 1.5)$. 
   Since $|R| \leq 2m(m-1)A$ on $\Omega_a=B_{g(0.5)}(x_0, 0.1)$, we can apply (\ref{eqn:CF10_3}) in Proposition~\ref{prn:CF10_2} so that
   \begin{align*}
    \boldsymbol{\nu}(\Omega_a, 0.01) \leq \boldsymbol{\nu}_a  + \frac{m}{2} \log 150 + 1.51 \cdot 2m(m-1)A \leq \boldsymbol{\nu}_a +4m^2A. 
   \end{align*}
   Plugging the above inequality into (\ref{eqn:CF09_3}), we arrive at
   \begin{align}
     -4m^2A + \boldsymbol{\nu}(\Omega_a, 0.01)- \frac{e^{4.5me^{0.5F_0}}}{\boldsymbol{\rho}_a} \leq \boldsymbol{v} (\Omega_c, r^2) =\boldsymbol{v}_c.
   \label{eqn:CF20_9}  
   \end{align}
   Now we need to transform all $\boldsymbol{\nu}$'s into the volume ratios.   Notice that (\ref{eqn:CF20_3}) in Theorem~\ref{thm:CF21_2} can be applied by choosing $K=\sqrt{2A}$ and $\underline{\Lambda}=2m(m-1)A$.
   Using the fact that $A$ is large, we have
   \begin{align}
     -2m^2A + (m+1)\log \boldsymbol{\rho}_a \leq  \boldsymbol{\nu}(\Omega_a, 0.01).    \label{eqn:CF20_10}
   \end{align}
   Similarly, as $R \leq r^{-2}$ in $\Omega_c$ at time $t=1$, we can apply (\ref{eqn:CF20_3}) again by choosing $\bar{\Lambda}=r^{-2}$.   So we have
   \begin{align}
    \boldsymbol{\nu}_c=\boldsymbol{\nu}(\Omega_c, r^2) \leq \log \boldsymbol{\rho}_c + \left\{ 2^{m+7}+1\right\}. 
   \label{eqn:CF20_11}
   \end{align}
   Putting (\ref{eqn:CF20_9}), (\ref{eqn:CF20_10}) and (\ref{eqn:CF20_11}) together, we obtain
   \begin{align*}
     -6m^2A +(m+1) \log  \boldsymbol{\rho}_a -  \frac{e^{4.5me^{0.5F_0}}}{\boldsymbol{\rho}_a} \leq  \log \boldsymbol{\rho}_c + 2^{m+8}.
   \end{align*}
   Absorbing the extra terms into $e^{F_0}$, we have
   \begin{align}
    \log \boldsymbol{\rho}_c \geq (m+1) \log  \boldsymbol{\rho}_a - \frac{e^{4.5me^{0.5F_0}}}{\boldsymbol{\rho}_a} - e^{F_0}
    =(m+1) \log  \boldsymbol{\rho}_a - \frac{\left\{e^{4.5me^{0.5F_0}} + e^{F_0} \boldsymbol{\rho}_a \right\}}{\boldsymbol{\rho}_a}.
   \label{eqn:CF21_2} 
   \end{align}
   However, it follows from Gromov-Bishop volume comparison that
\begin{align*}
   \boldsymbol{\rho}_a \leq 10^m \cdot m\int_{0}^{0.1} \left( \frac{\sinh \sqrt{2A}r}{\sqrt{2A}}\right)^{m-1} dr \leq e^{0.1\sqrt{2A}(m-1)}.
\end{align*}
Thus we obtain
\begin{align}
  e^{F_0} \boldsymbol{\rho}_a \leq e^{F_0 + 0.1\sqrt{2A}(m-1)} \leq e^{2F_0}<<e^{4.5m e^{0.5F_0}}. 
\label{eqn:CF21_3}   
\end{align}
Combining (\ref{eqn:CF21_2}) with (\ref{eqn:CF21_3}), noting that $2e^{4.5m e^{0.5F_0}} << e^{5me^{0.5F_0}}$, we thus arrive at
\begin{align*}
   \log \boldsymbol{\rho}_c \geq (m+1) \log  \boldsymbol{\rho}_a - \frac{e^{5me^{0.5F_0}}}{\boldsymbol{\rho}_a},
\end{align*}
which is equivalent to (\ref{eqn:CF09_4}), since $F_0=1000A^2$ as chosen in Lemma~\ref{lma:CE30_1}. 
The proof of the Theorem is complete. 
\end{proof}

\begin{remark}
 Suppose the Ricci flow in Theorem~\ref{thm:CF03_1} is Ricci flat. Then one can use standard ball containment argument and Bishop-Gromov volume comparison to obtain that 
 \begin{align}
    \boldsymbol{\rho}_c \geq \frac{\left|B_{g(T)}\left(y_0, 3A\sqrt{T} \right) \right|}{\omega_m \left(3A \sqrt{T} \right)^{m}} 
    \geq \frac{\left|B_{g(T)}\left(x_0, 0.1 \sqrt{T} \right) \right|}{\omega_m \left(3A \sqrt{T} \right)^{m}}
    =\boldsymbol{\rho}_a \cdot (30 A)^{-m}. 
 \label{eqn:CF18_1}   
 \end{align}
 Comparing the above inequality with (\ref{eqn:CF09_4}) in Theorem~\ref{thm:CF03_1}, it is clear that the above inequality is much stronger. 
 This suggests that the estimate (\ref{eqn:CF09_4}) should not be sharp. 
  It will be interesting to ask whether (\ref{eqn:CF09_4}) can be improved to an inequality similar to (\ref{eqn:CF18_1}).
  One key step for such an improvement will be the construction of cutoff function with better $F_0(A)$ in Lemma~\ref{lma:CE30_1}.
  This is partly the reason why we explicitly estimate $F_0(A)$ in details there.  
\label{rmk:CF18_1} 
\end{remark}

Now we are ready to finish the proof of Theorem~\ref{thmin:ML14_1}, which is nothing but a corollary of Theorem~\ref{thm:CF03_1}. 
For the convenience of the readers, we rewrite Theorem~\ref{thmin:ML14_1} in the following slightly more general way. 

\begin{theorem}[\textbf{Improved version of no-local-collapsing}] 
  For every $A>1$ there exists $\kappa=\kappa(m,A)>0$ with the following property. 
  Suppose $\{(M^{m}, g(t)), 0 \leq t \leq  r_0^2\}$ is a Ricci flow solution of the type
  \begin{align}
     \partial_t g=-k \left\{ -Rc + \lambda(t) g\right\}    \label{eqn:CF16_1}
  \end{align}
  where $1 \leq k \leq 2$ and $|\lambda(t)| \leq 1$.  Suppose
  \begin{align}
   r_0^2 |Rm|(x,t) \leq m^{-1}, \quad \forall \; x \in B_{g(0)}(x_0, r_0),  \; 0 \leq t \leq r_0^2;   \quad   r_0^{-m} \left| B_{g(0)}(x_0, r_0) \right|_{dv_{g(0)}} \geq A^{-1}.  \label{eqn:CE26_7}
  \end{align}
  Then we have
  \begin{align}
    r^{-m} \left|B_{g(t_0)}(x,r) \right|_{dv_{g(t_0)}} \geq \kappa      \label{eqn:CE26_8}
  \end{align}
  whenever $A^{-1}r_0^2 \leq t_0 \leq r_0^2$, $0<r \leq r_0$, and $B_{g(t_0)}(x,r) \subset B_{g(t_0)}(x_0,Ar_0)$ is a geodesic ball satisfying $r^2R(\cdot, t_0) \leq 1$. 
\label{thm:CE26_2}
\end{theorem}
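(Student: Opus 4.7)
The plan is to deduce Theorem~\ref{thm:CE26_2} as a corollary of Theorem~\ref{thm:CF03_1}. I would first dispose of the generalized flow equation~(\ref{eqn:CF16_1}) by a standard conformal-and-time reparametrization: writing $\tilde{g}(\tilde t) = \phi(t)g(t)$ with $\phi(t) = \exp(\int_0^t k\lambda)$ and $\tilde t = \int_0^t \tfrac{k}{2}\phi(s)\,ds$, the rescaled metric $\tilde g$ satisfies the standard Ricci flow, and because $k \in [1,2]$ and $|\lambda| \leq 1$, the factors $\phi$ and $d\tilde t/dt$ are bounded above and below by absolute positive constants over $t \in [0, r_0^2]$; this changes the hypotheses~(\ref{eqn:CE26_7}) only by harmless multiplicative constants, which can be absorbed into $A$. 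After this, parabolic rescaling lets me assume $r_0 = 1$. Given any target time $t_0 \in [A^{-1}, 1]$, the further parabolic rescaling $g \mapsto t_0^{-1} g(t_0 \cdot)$ sends $t_0$ to $\tilde t_0 = 1$ and converts the initial radius to $\tilde r_0 = t_0^{-1/2} \in [1, A^{1/2}]$; the scale-invariant hypotheses~(\ref{eqn:CE26_7}) are preserved and the target inclusion becomes $B(x, r\,t_0^{-1/2}) \subset B(x_0, A\,t_0^{-1/2})$, so up to replacing $A$ by $A^{3/2}$ it suffices to treat the case $\tilde t_0 = 1$, $\tilde r_0 \geq 1$.

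I would then apply Theorem~\ref{thm:CF03_1} with $T = 1$, $y_0 = x$, and $A' \coloneqq \max(A^{3/2}, 1000m)$. The target-ball inclusion, the scalar bound $r^2 R \leq 1$, and $r < \sqrt{T} = 1$ (with the borderline $r=1$ handled by approximation) are immediate. The essential hypothesis to check is $t|Rc|(y,t) \leq (m-1)A'$ on $B_{g(t)}(x_0, \sqrt{t})$ for $t \in (0,1)$. The rescaled bound~(\ref{eqn:CE26_7}) gives $|Rm| \leq m^{-1}\tilde r_0^{-2}$ on $B_{g(0)}(x_0, \tilde r_0) \times [0, \tilde r_0^2]$, whence $|Rc| \leq (m-1)m^{-1}\tilde r_0^{-2} < (m-1)A'$ throughout this controlled region, and it suffices to verify the containment $B_{g(t)}(x_0, \sqrt{t}) \subset B_{g(0)}(x_0, \tilde r_0)$ for $t \in (0, 1)$. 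Ricci-flow distance distortion inside $B_{g(0)}(x_0, \tilde r_0)$ over the unit time interval is bounded by $e^{(m-1)/(m\tilde r_0^2)}$, and a bootstrap along $g(t)$-minimizing geodesics from $x_0$ (comparing the $g(0)$- and $g(t)$-lengths of the portion inside $B_{g(0)}(x_0, \tilde r_0)$) closes the containment as soon as $\tilde r_0 \geq e$. The remaining range $\tilde r_0 \in [1, e]$, equivalently $t_0 \in [e^{-2}, 1]$, I would handle by applying Theorem~\ref{thm:CF03_1} with a slightly smaller $T' < 1$ (for which the containment argument does close at the rescaled initial radius $t_0^{-1/2} T'^{1/2}$), and then propagating the non-collapsing conclusion forward from time $T'$ to time $1$ using the scalar-curvature bound $R \leq r^{-2}$ on $B_{g(1)}(x, r)$ combined with the short-time volume distortion identity $\partial_t\,dv = -R\,dv$ and the induced distance distortion.

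The lower bound on $\boldsymbol{\rho}_a = |B_{g(1/2)}(x_0, 0.1)|/(\omega_m (0.1)^m)$ needed by Theorem~\ref{thm:CF03_1} follows from the controlled geometry near $x_0$: the curvature bound $|Rm| \leq m^{-1}$ on $B_{g(0)}(x_0, 1) \times [0,1]$ makes $g(0)$ and $g(1/2)$ uniformly comparable on this ball, so $|B_{g(1/2)}(x_0, 0.1)|$ and $|B_{g(0)}(x_0, 0.1)|$ differ by at most an $m$-dependent factor. Bishop--Gromov volume comparison on $(B_{g(0)}(x_0,1), g(0))$ with the Ricci lower bound $-(m-1)/m$ gives $|B_{g(0)}(x_0, 0.1)| \geq c(m)(0.1)^m |B_{g(0)}(x_0, 1)| \geq c(m)(0.1)^m A^{-1}$, so $\boldsymbol{\rho}_a \geq c_0(m) A^{-1} > 0$. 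Substituting into~(\ref{eqn:CF09_4}) produces $\boldsymbol{\rho}_c \geq \boldsymbol{\rho}_a^{m+1} \exp(-G_0(A')/\boldsymbol{\rho}_a) \geq \kappa(m, A) > 0$, which is exactly the non-collapsing constant claimed in~(\ref{eqn:CE26_8}). The main technical obstacle I anticipate is the containment check for $\tilde r_0$ only marginally above $1$, where the distance distortion factor $e^{(m-1)/(m\tilde r_0^2)}$ exceeds $1$ just enough to defeat a direct bootstrap; carrying out the auxiliary rescaling and the short-time propagation from $T'$ to $1$ with explicit tracking of constants is the central calculation the proof will hinge on.
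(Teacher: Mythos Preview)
Your overall strategy---reduce the generalized flow~(\ref{eqn:CF16_1}) to standard Ricci flow by a conformal-time change, normalize $r_0=1$, and then invoke Theorem~\ref{thm:CF03_1}---matches the paper, and your estimate of $\boldsymbol{\rho}_a$ via Bishop--Gromov in the controlled cylinder is exactly what the paper does. The difference, and the source of your difficulty, is the \emph{time interval} on which you apply Theorem~\ref{thm:CF03_1}.

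You rescale $t_0\to 1$ and apply Theorem~\ref{thm:CF03_1} with $T=1$, which forces you to verify the containment $B_{g(t)}(x_0,\sqrt{t})\subset B_{g(0)}(x_0,\tilde r_0)$ for $t$ up to~$1$; as you correctly note, this fails when $\tilde r_0$ is only slightly larger than~$1$. Your proposed repair---apply the theorem with a smaller $T'<1$ and then ``propagate the non-collapsing conclusion forward from time $T'$ to time $1$''---has a genuine gap. Propagating a volume-ratio lower bound for $B_{g(1)}(x,r)$ backwards or forwards in time requires control on both the volume element \emph{and} the distance function on that ball over the interval $[T',1]$; the scalar upper bound $R\le r^{-2}$ is available only at the single time $t=1$ and says nothing about distances, while Ricci bounds are available only inside the controlled region $B_{g(0)}(x_0,\tilde r_0)$, which need not contain $B_{g(1)}(x,r)$ at all. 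For the same reason the hypothesis $B_{g(T')}(y_0,r)\subset B_{g(T')}(x_0,A'\sqrt{T'})$ of Theorem~\ref{thm:CF03_1} at time $T'$ cannot be deduced from the given inclusion at time~$1$.

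The paper sidesteps this entirely by applying Theorem~\ref{thm:CF03_1} not on the full interval but on the \emph{short} window $[t_0-A^{-1},t_0]$, with $T=A^{-1}$ and the enlarged constant $\bar A=A^2$. On this window the relevant balls $B_{g(t)}(x_0,\sqrt{s})$ (with $s=t-t_0+A^{-1}$) have radius at most $A^{-1/2}\ll 1$ and therefore sit well inside $B_{g(0)}(x_0,1)$ after the standard distance-distortion bootstrap, so the Ricci hypothesis~(\ref{eqn:CE26_1}) is trivially satisfied. The target inclusion becomes $B_{g(t_0)}(x,r)\subset B_{g(t_0)}(x_0,A)=B_{g(t_0)}(x_0,\bar A\sqrt{T})$, which is immediate. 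The quantity $\boldsymbol{\rho}_a$ is then the volume ratio of $B_{g(t_a)}(x_0,0.1A^{-1/2})$ at time $t_a=t_0-\tfrac12 A^{-1}$, still inside the bounded-geometry region, and your Bishop--Gromov argument goes through unchanged. In short: replace your second rescaling $t_0\to 1$ by a time shift to the short interval of length $A^{-1}$, and the containment obstacle disappears.
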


Comparing Theorem~\ref{thm:CE26_2} with Theorem~\ref{thm:Pe8.2}, the conclusion of Theorem~\ref{thm:CE26_2} is much stronger. 
We obtain uniform non-collapsing estimates only assuming the scalar curvature's local upper bound in a given time-slice, rather than the Riemannian curvature bound in a space-time neighborhood. 
The statement of Theorem~\ref{thm:CE26_2} seems to be slightly more general than Theorem~\ref{thmin:ML14_1}. 
However, by a standard parabolic rescaling and the fact that both $\log k$ and $\lambda(t)$ are uniformly bounded,  it is clear that 
Theorem~\ref{thm:CE26_2} and Theorem~\ref{thmin:ML14_1} are equivalent.

We close this section by providing the proof of Theorem~\ref{thmin:ML14_1}.  The proof is basically to use Theorem~\ref{thm:CF03_1} on a very short period of Ricci flow before the time $t=t_0$
to estimate the lower bound of $r^{-m} \left|B_{g(t_0)}(x,r) \right|_{dv_{g(t_0)}}$ by the volume ratio of a small ball in the domain with bounded geometry.  
By Bishop-Gromov volume comparison, a lower bound of the volume ratio of the small ball can be obtained by the non-collapsing condition of the relatively bigger ball $B_{g(0)}(x_0, r_0)$.
The full details are given below.

\begin{figure}[H]
 \begin{center}
 \psfrag{A}[c][c]{$M$}
 \psfrag{B0}[c][c]{$0$}
 \psfrag{B1}[l][l]{$t_0-A^{-1}$}
 \psfrag{B2}[c][c]{$t_a$}
 \psfrag{B3}[c][c]{$t_0$}
 \psfrag{B4}[c][c]{$1$}
 \psfrag{C}[c][c]{$t$}
 \psfrag{D}[c][c]{$\color{red}{B_{g(t_0)}(x,r)}$}
 \psfrag{E}[c][c]{$\color{blue}{\partial \left\{B_{g(0)}(x_0,1) \times [0,1] \right\}}$}
 \psfrag{F}[c][c]{$\color{green}{\partial B_{g(t)}\left(x_0, \sqrt{t-t_0+A^{-1}} \right)}$}
  \psfrag{G}[c][c]{$\color{red}{B_{g(1)}(x,r)}$}
 \psfrag{I}[c][c]{$\color{brown}{\Omega_a}$}
 \psfrag{K}[c][c]{$x_0$}
 \psfrag{L}[c][c]{$x$}
 \includegraphics[width=0.8 \columnwidth]{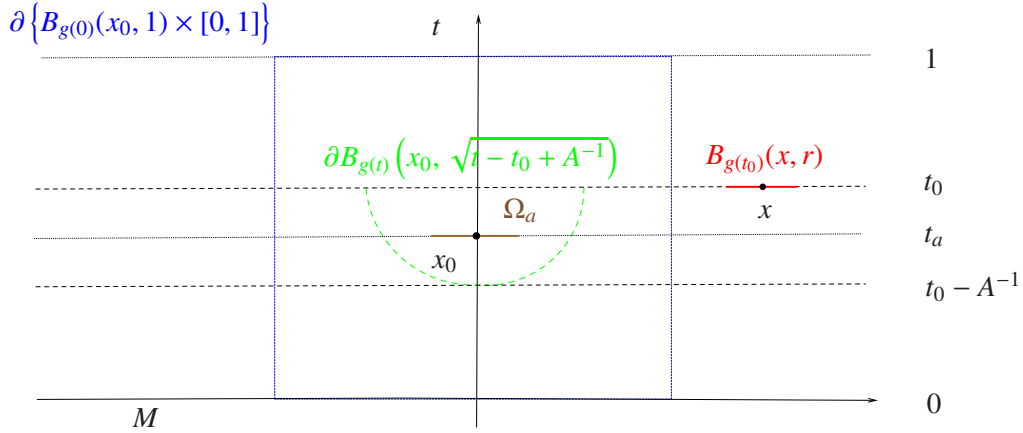}
 \caption{A short period of Ricci flow}
 \label{fig:domainpiece}
 \end{center}
 \end{figure}

 \begin{proof}[Proof of Theorem~\ref{thmin:ML14_1}]
 
 Without loss of generality, we assume $r_0=1$. 
 Fix a time $t=t_0 \in [A^{-1}, 1]$, it suffices to show that
 \begin{align}
     r^{-m} |B_{g(t_0)}(x,r)|_{dv_{g(t_0)}} \geq \kappa.       \label{eqn:CF17_1}
 \end{align}
  We focus attention on the Ricci flow space-time $\{(M^m, g(t)), t_0-A^{-1} \leq t \leq t_0\}$. 
  Let $\bar{A}=A^2$ and $s=t-t_0+A^{-1}$.   By the first part of (\ref{eqn:CE26_7}), we have 
  \begin{align*}
   s |Rc|(x,t) \leq (m-1)A \leq (m-1) \bar{A}, \quad \forall \; x \in B_{g(t)}\left(x_0, \sqrt{s} \right), \;  t_0-A^{-1} \leq t \leq t_0.
  \end{align*}
  Note that 
  \begin{align*}
    B_{g(t_0)}(x,r) \subset B_{g(t_0)}(x_0, A) \subset B_{g(t_0)}\left(x_0, \bar{A} \sqrt{A^{-1}} \right). 
  \end{align*}
  Therefore, up to a time shifting, we can apply Theorem~\ref{thm:CF03_1} with constant $\bar{A}=A^2$ and $T=A^{-1}$.
  The upshot is
  \begin{align}
     \omega_m^{-1}r^{-m} |B_{g(t_0)}(x,r)|_{dv_{g(t_0)}} \geq \boldsymbol{\rho}_a^{m+1} e^{-\frac{500A^4}{\boldsymbol{\rho}_a}},       \label{eqn:CF17_3}
  \end{align}
  where $\boldsymbol{\rho}_a$ is the volume ratio(c.f. the first part of (\ref{eqn:CF22_3})) of the ball $B\left(x_0, 0.1 A^{-\frac12} \right)$ at the time $t=t_0-0.5A^{-1}$.
  For simplicity, we denote this ball as $\Omega_a$ and denote the time $t_0-0.5A^{-1}$ by $t_a$. 
  Note that $|t_a-t_0|=0.5A^{-1}$, which is a very small number. 
  However,  from the first part of (\ref{eqn:CE26_7}) and the Ricci flow equation, we can compare small geodesic balls at time $t_a$ and time $t_0$.
  In particular, the following relationship holds:
  \begin{align*}
            B_{g(0)}\left(x_0, 0.03 A^{-\frac12} \right)   \subset         \Omega_a \subset B_{g(0)} \left(x_0, 0.3 A^{-\frac12} \right). 
  \end{align*}
  Recall that the volume element evolution is dominated by $-R$.  In the very short time period, the volume element is almost fixed. 
  In other words,  we can apply the first part of (\ref{eqn:CE26_7}) again and obtain that
    \begin{align*}
    |\Omega_a|_{dv_{g(t_a)}} \geq  \left| B_{g(0)}\left(x_0, 0.03 A^{-\frac12} \right) \right|_{dv_{g(t_a)}} \geq 0.1  \left| B_{g(0)}\left(x_0, 0.03 A^{-\frac12} \right) \right|_{dv_{g(0)}}.
  \end{align*}
  Applying Bishop-Gromov volume comparison at time $t=0$, as $|Rc| \leq 1 \leq (m-1) \cdot 1^2$, we have
  \begin{align*}
       \frac{\left| B_{g(0)}\left(x_0, 0.03 A^{-\frac12} \right) \right|_{dv_{g(0)}}}{\left| B_{g(0)}(x_0, 1)\right|_{dv_{g(0)}}} \geq \frac{\int_{0}^{0.03 A^{-\frac12} } \sinh^{m-1} r dr}{\int_{0}^{1} \sinh^{m-1} r dr}
       \geq \frac{\left( 0.03 A^{-\frac12} \right)^{m}}{e^{m-1} \cdot 1^m} \geq  10^{-2m} A^{-\frac{m}{2}},
  \end{align*}
  where we again used the fact that $1<\frac{\sinh s}{s}<e^{s}$ for each positive $s$, as done in (\ref{eqn:CF19_3}). 
  Recall that $\left| B_{g(0)}(x_0, 1)\right|_{dv_{g(0)}} \geq A^{-1}$ according to our assumption.  
  Combining the previous steps, we obtain
  \begin{align*}
    |\Omega_a|_{dv_{g(t_a)}} \geq  0.1 \left| B_{g(0)}\left(x_0, 0.03 A^{-\frac12} \right) \right|_{dv_{g(0)}} >10^{-2m-1} A^{-\frac{m}{2}-1}.
  \end{align*}
  In other words, it means that
  \begin{align}
      \boldsymbol{\rho}_a =\omega_m^{-1} \left(0.1 A^{-\frac{1}{2}} \right)^{-m}  |\Omega_a|_{dv_{g(t_a)}} \geq \omega_m^{-1} 10^{-m-1} A^{-1}.     \label{eqn:CF22_3}
  \end{align}
  Plugging this into (\ref{eqn:CF17_3}), we obtain (\ref{eqn:CF17_1}) for some positive $\kappa=\kappa(m,A)$, which can be chosen as
  \begin{align*}
    \omega_m \cdot \left( 10^{m+1} \omega_m A\right)^{-(m+1)} \cdot e^{-500 \cdot 10^{m+1} \cdot \omega_m A^{5}}.
  \end{align*}
  The proof of the Theorem is complete.     
  \end{proof}

\section{K\"ahler Ricci flow on minimal models of general type}
\label{sec:krf}

In this section, we briefly discuss one application of our general theory developed in previous sections. 
Let $X$ be a projective manifold of complex dimension $n$.
$X$ is called a minimal model of general type if the canonical bundle $K_X$ is big ($K_{X}^n \neq 0$) and nef(numerically effective).
Starting from an initial K\"ahler metric $g_0$, we run the Ricci flow
\begin{align}
   \frac{\partial}{\partial t} g=-Ric -g.   \label{eqn:ML26_4}
\end{align}
Let $\omega_0$ be the metric form corresponding to $g_0$, i.e., $\omega_0(\cdot, \cdot)=g_0(J\cdot, \cdot)$ for the complex structure $J$. 
Then the K\"ahler condition $\nabla J \equiv 0$ is preserved by the flow (\ref{eqn:ML26_4}). 
Let $\omega(t)$ be the metric form compatible to both $g(t)$ and $J$. Then (\ref{eqn:ML26_4}) can be rewritten as the following evolution equation:
\begin{align}
\frac{\partial \omega}{\partial t}  = -Rc(\omega) - \omega. 
\label{eqn:ML26_5}
\end{align}
With some efforts(c.f. Tsuji~\cite{Ts}, Tian-Zhang~\cite{TiZha}), the above equation (\ref{eqn:ML26_5}) can be simplified as an evolution equation of scalar functions. 
Actually,  we can choose a smooth volume form $\Omega$ on $X$ and denote $\sqrt{-1} \partial \bar{\partial} \log \Omega$ by $\chi$.
It is clear that $[\chi]=c_1(K_X)=-c_1(X)$.  Then we have
\begin{align}
[\omega(t)]=e^{-t}[\omega_0] + (1-e^{-t}) [\chi].   \label{eqn:CF22_1}
\end{align}
Therefore, up to an additive constant, $\omega(t)$ can be uniquely determined as $\chi + e^{-t}(\omega_0 - \chi) + \sqrt{-1} \partial \bar{\partial}u$
for some smooth function $u=u(\cdot, t)$.    In terms of $u$, the equation (\ref{eqn:ML26_5}) can be translated as the following complex Monge-Amp\`{e}re equation
\begin{align}
\frac{\partial u}{\partial t} &= \log \frac{\left(\chi + e^{-t}(\omega_0 - \chi) + \sqrt{-1} \partial \bar{\partial} u \right)^n}{\Omega} -u, 
\label{eqn:ML26_6}
\end{align}
starting from $u(\cdot, 0) \equiv 0$.  By the work of Tsuji~\cite{Ts} and Tian-Zhang~\cite{TiZha},  the equation (\ref{eqn:ML26_6}) has long time existence. 
It was also shown there that (\ref{eqn:ML26_6}) converges in the distribution sense to a K\"ahler-Einstein current. 
Moreover,  the convergence can be improved to be in the smooth topology outside the exceptional locus $\mathcal{B}$, which is canonically determined and will be explained in the next paragraph. 
A natural question is whether one have the ``global" convergence of (\ref{eqn:ML26_5}) or (\ref{eqn:ML26_6}), including the behavior of the exceptional locus.
The following conjecture is well-known.

\begin{conjecture}[c.f. Conjecture 4.1 of~\cite{S1} and Conjecture 6.2 of~\cite{ST3}]
Suppose $X$ is a smooth minimal model of general type. 
The normalized K\"ahler-Ricci flow (\ref{eqn:ML26_5}) converges to the unique (possibly singular) K\"ahler-Einstein metric $\omega_{KE}$ on $X_{can}$ in the Gromov-Hausdorff topology as $t \rightarrow \infty$.
\label{cje:GB03_1}
\end{conjecture}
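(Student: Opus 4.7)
The plan is to reduce the Gromov--Hausdorff convergence statement to two uniform estimates along the normalized K\"ahler--Ricci flow: a uniform $\kappa$-non-collapsing estimate and a uniform diameter bound. Once these are in hand, standard precompactness and an identification of the limit space with the canonical model $X_{can}$ equipped with $\omega_{KE}$ will close the argument. The smooth convergence on the regular locus $X \setminus \mathcal{B}$ to $\omega_{KE}$ is already known by Tsuji~\cite{Ts} and Tian--Zhang~\cite{TiZha}; what is missing is control of what happens over the exceptional locus, and this is exactly what the two estimates supply.

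First, I would establish the uniform non-collapsing. The idea is to apply Theorem~\ref{thmin:ML14_1} to a suitable parabolic rescaling of (\ref{eqn:ML26_5}). At any large time $t_0$ and any point $x$ where $R(x,t_0) \leq r^{-2}$, look at the unnormalized K\"ahler--Ricci flow on a short interval $[t_0-r^2, t_0]$. To verify the hypothesis (\ref{eqn:ML27_3}) at a suitable base point $(x_0, t_0-r^2)$, one chooses $x_0$ in the ample locus where, by the fundamental regularity theory for the parabolic complex Monge--Amp\`ere equation (\ref{eqn:ML26_6}) descending from Yau~\cite{Yau} and worked out in the K\"ahler--Ricci flow setting by Tsuji and Tian--Zhang, the metric $\omega(t)$ is smooth and uniformly equivalent to a fixed background metric. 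Thus $|Rm|$ and the volume ratio of a small ball around $x_0$ are uniformly controlled, so after rescaling the first half of (\ref{eqn:ML27_3}) holds and the volume lower bound follows from smooth comparison with $\chi$. Theorem~\ref{thmin:ML14_1} then yields a uniform $\kappa > 0$, independent of $t_0$, $x$, and $r$, with $r^{-2n}|B_{g(t_0)}(x,r)|\geq\kappa$ whenever $r^2 R(\cdot,t_0) \leq 1$.

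Second, I would derive a uniform diameter bound for $(X, g(t))$. Here I would couple the non-collapsing with the fact that $[\omega(t)] \to c_1(K_X)$ by (\ref{eqn:CF22_1}) and that $\omega(t)$ subconverges smoothly on the ample locus to $\omega_{KE}$, which has bounded diameter on $X_{can}$. The argument is by contradiction: if $\operatorname{diam}_{g(t_i)}(X) \to \infty$, take two points realizing the diameter, join them by a minimizing geodesic, and cover the geodesic by balls of controlled scalar curvature (scalar curvature has a uniform lower bound along (\ref{eqn:ML26_5}) by the maximum principle applied to $R+n$). The $\kappa$-non-collapsing on each ball gives a lower bound of the total volume proportional to the diameter, contradicting the uniform bound $[\omega(t)]^n \to c_1(K_X)^n$ on the total volume. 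This second step is where I expect the main obstacle: turning the non-collapsing along the flow into a diameter bound requires careful handling of the scalar curvature upper bound on most of the manifold (controlling the portion of the geodesic along which the hypothesis $R \le r^{-2}$ of Theorem~\ref{thmin:ML14_1} is usable), and ruling out long thin necks pinching off toward the exceptional locus.

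With uniform non-collapsing, uniform diameter, and a uniform Ricci lower bound (the latter following from the smooth convergence $Rc(\omega(t)) \to -\omega_{KE}$ on the ample locus together with the flow equation), the family $(X, g(t))$ is precompact in the Gromov--Hausdorff topology by Gromov's compactness theorem. Let $(Y, d_\infty)$ be any sequential limit. Smooth convergence on $X \setminus \mathcal{B}$ already identifies an open dense subset of $Y$ isometrically with $(X_{can}^{reg}, \omega_{KE})$; the $\kappa$-non-collapsing prevents any lower-dimensional stratum from appearing and forces the Hausdorff dimension of $Y$ to be $2n$. Combining this with the uniqueness of the metric completion of $(X_{can}^{reg}, \omega_{KE})$ established in the pluripotential/K\"ahler--Einstein literature identifies $(Y, d_\infty)$ with the metric completion of the smooth K\"ahler--Einstein metric on $X_{can}^{reg}$, which is precisely $(X_{can}, \omega_{KE})$. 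Since every subsequential limit is the same, the full flow converges in Gromov--Hausdorff topology, proving Theorem~\ref{thmin:ML28_1}.
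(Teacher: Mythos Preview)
Your outline has the right shape---non-collapsing plus diameter bound plus identification of the limit---and your step~1 matches the paper's use of Theorem~\ref{thmin:ML14_1} based at a point $x_0$ in the ample locus with uniformly bounded geometry. But steps~2 and~3 each contain a genuine gap.

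\textbf{The diameter bound.} Your ball-packing argument is circular. In Theorem~\ref{thmin:ML14_1} the constant $\kappa$ depends on $A$, and $A$ controls the ratio $d_{g(t)}(x,x_0)/r_0$. If you pack disjoint $r$-balls along a minimizing geodesic of length $D_i\to\infty$, the balls far from $x_0$ require $A_j\sim D_i/r_0$, and the explicit estimates (see the proof of Theorem~\ref{thm:CE26_2} and Remark~\ref{rmk:CF18_1}) show that $\kappa(m,A)$ decays doubly exponentially in $A$. Hence $\sum_j \kappa(m,A_j)\,r^{2n}$ stays bounded as the number of balls grows, and no contradiction with the total volume follows. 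The scalar upper bound is \emph{not} the obstacle---Z.~Zhang's uniform bound $|R|\le\Lambda$~\cite{Zh} makes $r^2R\le 1$ automatic for $r\le\Lambda^{-1/2}$---the obstacle is the $A$-dependence of $\kappa$. The paper breaks this circularity by importing J.~Song's result~\cite{S1} that $(X_{can},\omega_{KE})$ has finite diameter $L$. One then argues that if $d_{g(t_i)}(x_0,y_i)=L+2\epsilon$, the ball $B_{g(t_i)}(y_i,\epsilon)$ is eventually disjoint from every compact $K\Subset X\setminus\mathcal{B}$ (by smooth convergence on $K$ and the definition of $L$), so its volume is squeezed to zero by (\ref{eqn:CF22_2}); yet Theorem~\ref{thmin:ML14_1} applied with the \emph{fixed} $A$ determined by $L+2$ gives a positive lower bound. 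This volume-squeezing idea, using Song's bound as an a~priori input, is what your proposal is missing.

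\textbf{The Ricci lower bound.} You invoke Gromov's compactness theorem via ``a uniform Ricci lower bound \ldots\ following from the smooth convergence $Rc(\omega(t))\to -\omega_{KE}$ on the ample locus together with the flow equation.'' But smooth convergence on the ample locus only gives Ricci control on compact subsets of $X\setminus\mathcal{B}$, not globally; a global Ricci lower bound is precisely the extra hypothesis under which Guo~\cite{BGuo} had already settled the conjecture, and it is not known in general. The paper avoids Gromov precompactness entirely: once the diameter bound and uniform $\kappa$-non-collapsing are established, it shows directly that the identity map on $X\setminus\mathcal{B}$ is an $\epsilon$-Gromov--Hausdorff approximation between $(X,\omega(t))$ and $(X_{can},\omega_{KE})$ for large $t$ (cf.~\cite{BGuo},~\cite{TiZhL}), which needs only non-collapsing and the volume identity (\ref{eqn:CF22_2}), not a Ricci bound.
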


Let us briefly explain the meaning and history of the above conjecture.  
The condition that $X$ is a minimal model of general type is equivalent to the fact that $K_X$ is big and nef. 
Hence one can use sections of the pluri-canonical line bundle $K_X^{\nu}$ to define a map $\iota$ from $X$ to $\CP^N$, for $\nu$ sufficiently large. 
This map is an embedding map on $X \backslash \mathcal{B}$ for some exceptional set $\mathcal{B}$.
However, $\iota$ fails to be an embedding on $\mathcal{B}$, which is at least complex co-dimension $1$ and does not depend on $\nu$. 
If the exceptional set $\mathcal{B}=\emptyset$, i.e., $K_X$ is ample or $c_1(X)<0$, then the above conjecture holds automatically by the  classical result of H.D. Cao~\cite{HDC}. 
In general, $\mathcal{B} \neq \emptyset$, $\iota(X)$ is isomorphic to $X_{can}$, the canonical model of $X$. 
There is a \textit{unique} K\"ahler Einstein current $\omega_{KE}$ on $X$ such that $[\omega_{KE}]=[c_1(K_X)]$ and its restriction on $X \backslash \mathcal{B}$ 
is a genuine smooth K\"ahler Einstein metric(c.f.~\cite{EGZ} and the references therein).   The metric completion of $(X \backslash \mathcal{B}, \omega_{KE})$ can be regarded as a canonical metric
on the singular variety $X_{can}=\iota(X)$.   By abusing of notations, we denote this metric completion by $(X_{can}, \omega_{KE})$, which is clearly also \textit{unique}. 
To answer Conjecture~\ref{cje:GB03_1},  it is important to understand the degeneration of metrics $\omega(t)$ 
along the exceptional set $\mathcal{B}$.
Global estimates along the flow need to be developed along the flow. 
The last decade witnessed many important progresses along this direction.
First,  it was confirmed by J. Song and B. Weinkove(c.f.~\cite{SW1} and~\cite{SW2}) that Conjecture~\ref{cje:GB03_1} holds for many 2-dimensional manifolds. 
Then, B. Guo~\cite{BGuo} proved Conjecture~\ref{cje:GB03_1} under the assumption that Ricci curvature is uniformly bounded from below. 
Recently, Conjecture~\ref{cje:GB03_1} was solved completely in low dimension, by Guo-Song-Weinkove~\cite{GSW} in dimension 2 and Tian-Zhang~\cite{TiZhL} in dimension 3. 
We confirm this conjecture for general dimension by Theorem~\ref{thmin:ML28_1}.
In our solution, the uniform scalar curvature bound by Z. Zhang~\cite{Zh} and the diameter bound of $(X_{can}, \omega_{KE})$ by J. Song~\cite{S1} will play important roles. 
For the convenience of the readers, we rewrite Theorem~\ref{thmin:ML28_1} as follows.

\begin{theorem}
Let $X$ be a projective manifold with $K_X$ big and nef. 
Then the K\"ahler Ricci flow (\ref{eqn:ML26_5}) has uniformly bounded diameter and converges to 
the unique singular K\"ahler-Einstein metric $\omega_{KE}$ on $X_{can}$ in the sense of Gromov-Hausdorff as $t \rightarrow \infty$.
\label{thm:GD05_1}
\end{theorem}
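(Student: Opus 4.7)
The strategy, following the author's outline in Section~\ref{sec:krf}, is to establish two uniform estimates along the flow (\ref{eqn:ML26_5}) -- a uniform $\kappa$-non-collapsing constant and a uniform diameter bound -- and then combine them with the smooth convergence of Tsuji and Tian-Zhang on $X\setminus\mathcal{B}$ and the uniqueness of $\omega_{KE}$ on $X_{can}$ to identify the Gromov-Hausdorff limit. The crucial new input will be Theorem~\ref{thmin:ML14_1}; the known inputs will be Z.~Zhang's uniform scalar curvature estimate $|R(g(t))|\leq C_0$, the cohomological volume identity $\Vol(X,g(t))\to (K_X)^n <\infty$ coming from (\ref{eqn:CF22_1}), Tian-Zhang's smooth convergence $\omega(t)\to\omega_{KE}$ on compact subsets of $X\setminus\mathcal{B}$, and J.~Song's diameter bound $\diam(X_{can},\omega_{KE})=D_0<\infty$.

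For the first step, the uniform $\kappa$-non-collapsing, I would fix a base point $x_0\in X\setminus\mathcal{B}$. By Tian-Zhang, in a fixed small geodesic neighborhood of $x_0$ and for $T$ large, the full Riemannian curvature of $g(t)$ is uniformly bounded and the volume ratios of small balls around $x_0$ are bounded below. Thus for a sufficiently small but fixed $r_0$, and for all large $T$, the hypotheses (\ref{eqn:ML27_3}) of Theorem~\ref{thmin:ML14_1} are satisfied on the parabolic window $[T-r_0^2,T]$ centered at $x_0$, after the standard parabolic rescaling $g(t)\mapsto r_0^{-2}g(T-r_0^2+r_0^2 s)$. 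Because $R\leq C_0$ globally, every ball of radius $r\leq \min(r_0,C_0^{-1/2})$ automatically satisfies $r^2 R\leq 1$. Theorem~\ref{thmin:ML14_1} then yields a uniform $\kappa=\kappa(m,A)>0$ such that every geodesic ball $B_{g(T)}(x,r)\subset B_{g(T)}(x_0, Ar_0)$ with $r\leq r_0$ is $\kappa$-non-collapsed, for any prescribed $A$.

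The uniform diameter bound then follows from a Bishop-Gromov-style packing argument against the finite total volume. To circumvent the apparent circularity (the non-collapsing only controls points inside $B_{g(T)}(x_0, Ar_0)$), I would fix $A$ so large that $Ar_0>10D_0$ and exploit the Tian-Zhang smooth convergence on a compact exhaustion $K_j\Subset X\setminus\mathcal{B}$: for all $T$ large every $K_j$ lies inside $B_{g(T)}(x_0,Ar_0)$, and the complement $X\setminus K_j$ has arbitrarily small volume uniformly in $T$. Packing disjoint $r_0/2$-balls along any minimizing geodesic inside $B_{g(T)}(x_0,Ar_0)$ and using $\kappa$-non-collapsing against the volume bound shows that no point of $X$ can sit at $g(T)$-distance greater than $C \Vol(X,g(T))/(\kappa r_0^{m-1})$ from $K_j$. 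This yields $\diam(X,g(T))\leq 2D_0+\varepsilon$ for any prescribed $\varepsilon>0$ once $T$ is large.

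Finally, uniform diameter and $\kappa$-non-collapsing together with the scalar bound give Gromov-Hausdorff pre-compactness of $\{(X,g(t))\}$, so along any subsequence $(X,g(t_k))$ converges to a limit $(Z,d_Z)$. The Tian-Zhang smooth convergence identifies the regular part of $Z$ with the smooth part of $(X_{can},\omega_{KE})$. The main obstacle, and the technical heart of the argument, is to show that the metric closure of this regular part is precisely $(X_{can},\omega_{KE})$ -- that no collapsed or spurious directions appear over the image $\iota(\mathcal{B})$ of the exceptional locus under the pluri-canonical map $\iota:X\to X_{can}\subset\CP^N$. I would resolve this by showing that the non-collapsed smooth part is dense in $Z$ (a consequence of non-collapsing together with the fact that $\Vol(X\setminus K_j,g(t))\to 0$), and by comparing distances via $\iota$: the smooth convergence yields $d_{g(t)}\geq d_{\omega_{KE}}\circ\iota - o(1)$ on the regular part, while the packing/diameter bound gives the matching upper bound. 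Uniqueness of the singular K\"ahler-Einstein metric in $c_1(K_X)$ then forces $(Z,d_Z)=(X_{can},\omega_{KE})$ independently of the subsequence, upgrading subsequential convergence to convergence of the full flow.
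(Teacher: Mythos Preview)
Your strategy matches the paper's exactly: same external inputs (Tian--Zhang smooth convergence on $X\setminus\mathcal B$, Z.~Zhang's scalar bound, Song's finite diameter $L=\diam(X_{can},\omega_{KE})$) and the same use of Theorem~\ref{thmin:ML14_1} on a sliding parabolic window around a regular base point $x_0$. The one step where your write-up differs from the paper, and where it is imprecise as stated, is the diameter bound.

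Your packing argument only controls the segment of a minimizing geodesic that lies \emph{inside} $B_{g(T)}(x_0,Ar_0)$; it does not by itself rule out points of $X$ lying outside that ball, so the conclusion ``no point of $X$ can sit at distance greater than $C\Vol(X,g(T))/(\kappa r_0^{m-1})$ from $K_j$'' does not follow, and in any case a bound involving the total volume rather than the small quantity $|X\setminus K_j|_{g(T)}$ cannot yield $\diam\le 2D_0+\varepsilon$. The paper avoids this circularity with a single-ball \emph{volume-squeezing} contradiction: assuming $\limsup_t\diam>L$, one can pick (after adjusting along a geodesic) points $y_i$ with $d_{g(t_i)}(x_0,y_i)=L+2\epsilon<L+2$; then $B_{g(t_i)}(y_i,\epsilon)\subset B_{g(t_i)}(x_0,L+3)\subset B_{g(t_i)}(x_0,Ar_0)$ automatically lies in the region where Theorem~\ref{thmin:ML14_1} gives $|B_{g(t_i)}(y_i,\epsilon)|\ge\kappa\epsilon^{2n}$, while the smooth convergence forces $B_{g(t_i)}(y_i,\epsilon)\cap K=\emptyset$ for every $K\Subset X\setminus\mathcal B$, so $|B_{g(t_i)}(y_i,\epsilon)|\le |X\setminus K|_{g(t_i)}\to 0$. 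Replacing your packing by this one-ball argument closes the gap. For the final Gromov--Hausdorff identification the paper simply cites \cite{BGuo} and \cite{TiZhL} that, given uniform non-collapsing (\ref{eqn:MM03_1}) and the volume convergence (\ref{eqn:CF22_1}), the identity map on $X\setminus\mathcal B$ is an $\epsilon$-almost isometry to $(X_{can},\omega_{KE})$; your compactness-plus-density sketch is a valid alternative route to the same conclusion.
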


\begin{proof}[Proof of Theorem~\ref{thm:GD05_1}]
It is known (c.f. Tsuji~\cite{Ts} and Tian-Zhang~\cite{TiZha}) that $\omega_{t}$ converges smoothly to  $\omega_{KE}$ on
$X \backslash \mathcal{B}$ as time $t \to \infty$.   
Fix $x_0 \in X \backslash \mathcal{B}$ and choose $r_0$ small enough such that the $2r_0$-neighborhood (with respect to the metric $\omega_{KE}$)
of $x_0$ locates in $X \backslash \mathcal{B}$.
Recall that $(X_{can}, \omega_{KE})$ is  the metric completion of $(X \backslash \mathcal{B}, \omega_{KE})$.   
In light of the results of J. Song(c.f. Theorem 4.1 of~\cite{S1}), we can assume $L<\infty$ to be the diameter of $(X_{can}, \omega_{KE})$. 
We claim that
\begin{align}
  \lim_{t \to \infty}  \diam(X, g(t)) \leq L.
  \label{eqn:GC23_2}  
\end{align}
For otherwise, we can find $t_i \to \infty$ and $y_i \in X$ such that  $\displaystyle \lim_{t \to \infty} d_{g(t_i)}(x_0, y_i) \geq L +2\epsilon$ for some positive number $\epsilon \in (0, 1)$. 
By adjusting $y_i$ if necessary, we can further assume that
\begin{align}
   \lim_{t \to \infty} d_{g(t_i)}(x_0, y_i) =L +2\epsilon<L+2.   \label{eqn:ML26_7}
\end{align}
In light of (\ref{eqn:CF22_1}), we have the global volume estimate
\begin{align}
    \lim_{t \to \infty}  \left| X\right|_{\omega_t^n}= \left| X \backslash \mathcal{B}\right|_{\omega_{KE}^n}.    \label{eqn:CF22_2}
\end{align}
For each compact set $K \Subset X \backslash \mathcal{B}$, it follows from the choice of $L$ and (\ref{eqn:ML26_7}) that $B_{g(t_i)}(y_i,\epsilon) \cap K=\emptyset$ for large $i$.  Therefore the ``volume-squeezing" implies
\begin{align}
  \lim_{i \to \infty} \left| B_{g(t_i)}(y_i, \epsilon) \right|_{\omega_{t_i}^{n}} \leq \inf_{K \Subset X \backslash \mathcal{B}} \limsup_{i \to \infty} \left| X \backslash K\right|_{\omega_{t_i}^n}
  =\inf_{K \Subset X \backslash \mathcal{B}}  \left| \left\{ X \backslash \mathcal{B} \right\} \backslash K\right|_{\omega_{KE}^n}=0. 
  \label{eqn:GC23_1}  
\end{align}
Let $g_i(t)=g(t+t_i-r_0^2)$. By the currents convergence, we know $B_{g_i(0)}(x_0, r_0) \times [-r_0^2, r_0^2]$ smoothly converges to
$B_{g_{KE}}(x_0, r_0) \times [-r_0^2, r_0^2]$.  Namely,  the regularity theory of complex Monge-Amp\`{e}re equations(c.f. Tsuji~\cite{Ts},  Kolodziej~\cite{Kol} and Tian-Zhang~\cite{TiZha})
 implies that $B_{g_i(0)}(x_0, r_0) \times [-r_0^2, r_0^2]$ has uniformly bounded geometry. 
Recall that $g_i$ is a solution of (\ref{eqn:ML26_4}), $y_i$ has bounded distance to $x_0$ by (\ref{eqn:ML26_7}),  scalar curvature is uniformly bounded by $\Lambda$ along (\ref{eqn:ML26_4}) in view of the estimate of Z. Zhang~\cite{Zh}.
Consequently, up to a parabolic rescaling, we can apply Theorem~\ref{thmin:ML14_1}(or apply Theorem~\ref{thm:CE26_2} directly for $k=1$ and $\lambda(t) \equiv 1$) to the flow $g_i$ and obtain that 
\begin{align}
  \epsilon^{-2n} \left| B_{g(t_i)}(y_i, \epsilon) \right|_{\omega_{t_i}^{n}}= \epsilon^{-2n} \left| B_{g_i(r_0^2)}(y_i, \epsilon) \right|_{dv_{g_i(0)}} \geq \kappa  \label{eqn:ML26_8}
\end{align}
uniformly for some positive $\kappa=\kappa(\omega_{KE}, x_0, L, n, \Lambda)$. However, the above inequality contradicts (\ref{eqn:GC23_1}).  
This contradiction establishes the proof of (\ref{eqn:GC23_2}).

The equation (\ref{eqn:GC23_2}) implies that $(X, \omega(t))$ has uniformly bounded diameter along the flow (\ref{eqn:ML26_5}).
Similar to the proof of (\ref{eqn:ML26_8}), we can apply Theorem~\ref{thmin:ML14_1} again to obtain uniform non-collapsing.  In other words, there is a constant $\kappa>0$ such that
\begin{align}
 r^{-m} |B(y,r)|_{dv_{g(t)}}> \kappa    \label{eqn:MM03_1}
\end{align}
uniformly for every $(y, t)$ in the flow space-time and $r \in (0,1)$.
Based on (\ref{eqn:MM03_1}) and (\ref{eqn:CF22_2}), it is not hard to see(c.f. Guo~\cite{BGuo}, Tian-Zhang~\cite{TiZhL}) that the identity map restricted on $X \backslash \mathcal{B}$ is an $\epsilon$-almost isometry from $(X, \omega(t))$ to $(X_{can}, \omega_{KE})$, for every $\epsilon$ and correspondingly large $t$.  
It follows that
\begin{align*}
   \lim_{t \to \infty}  d_{GH}\left( \left( X, \omega(t) \right),  (X_{can}, \omega_{KE}) \right) \leq \epsilon, 
\end{align*}
whence we finish the proof of convergence by letting $\epsilon \to 0$.  
\end{proof}

\begin{remark}
The uniform $\kappa$-non-collapsing (\ref{eqn:MM03_1}) and uniform diameter bound (\ref{eqn:GC23_2}) are the key difficulties for proving Theorem~\ref{thm:GD05_1}.  
The correspondent estimates in the Fano K\"ahler Ricci flow were discovered by Perelman and refined by Sesum-Tian~\cite{SeT}.
\label{rmk:CF25_1}
\end{remark}

A natural idea to show (\ref{eqn:MM03_1}) is to apply Perelman's $\boldsymbol{\nu}$-functional.  
Let $\Omega$ be $B_{g(t)}(x,r)$.  Then we have the following formal inequalities:
\begin{align}
    \boldsymbol{\nu}(\Omega, g(t), r^2) \geq  \boldsymbol{\nu}(M, g(t), r^2) \geq  \boldsymbol{\nu}(M, g(0), t+r^2) \geq   \boldsymbol{\nu}(M, g(0)).  \label{eqn:ML26_9}
\end{align}
Consequently, we can apply the scalar curvature bound and Theorem~\ref{thm:CF21_3} to obtain the volume ratio lower bound. 
However, a pitfall is that $\boldsymbol{\nu}(M, g(0))$ could be $-\infty$, which makes the final inequality trivial 
and prevent us from extracting  useful information from the inequalities. 
In the proof of Theorem~\ref{thm:GD05_1}, we use  Theorem~\ref{thmin:ML14_1}, where the inequalities (\ref{eqn:ML26_9}) were localized and the aforementioned pitfall was avoided.

\begin{remark}
Under the conditions of Theorem~\ref{thm:GD05_1}, beyond (\ref{eqn:MM03_1}) and (\ref{eqn:GC23_2}), many other uniform estimates hold along the flow.
For example, there exists uniform non-inflation bound, dual to the Fano K\"ahler Ricci flow case(c.f. Q. Zhang~\cite{Zhq3} and Chen-Wang~\cite{CW1}).
The limit length space $(X_{can}, \omega_{KE})$ has a regular-singular decomposition $\mathcal{R} \cup \mathcal{S}$ such that $\mathcal{R}$ is a geodesic convex Einstein manifold 
and $\mathcal{S}$ has Hausdorff codimension at least $4$(c.f. Tian-Wang~\cite{TiWa} and Song~\cite{S1}). 
Furthermore,  the convergence topology of Theorem~\ref{thm:GD05_1} could be better and one can discuss the ``space-time" convergence, 
in the so called $\hat{C}^{\infty}$-Cheeger-Gromov topology, of the K\"ahler Ricci flow on general-type minimal projective manifolds, 
which mirrors the picture of the K\"ahler Ricci flow on Fano manifolds(c.f. Chen-Wang~\cite{CW2},~\cite{CW3}).
The full details will be provided in a separate paper~\cite{Wang}. 
\label{rmk:ML26_1}
\end{remark}

\vspace{0.5in}

Bing  Wang, Department of Mathematics, University of Wisconsin-Madison,
Madison, WI, 53706, USA;  bwang@math.wisc.edu.\\

\end{document}